\documentclass[11pt]{amsart}
\usepackage{amsaddr}
\usepackage{amsmath,amssymb,amsfonts,amsthm,mathrsfs}
\allowdisplaybreaks
\usepackage{mathrsfs}
\usepackage{dsfont}
\usepackage{algorithm}  
\usepackage{mdwlist}
\usepackage{algpseudocode}  
\usepackage[sort&compress,numbers]{natbib}
\usepackage{enumerate}
\usepackage[T1]{fontenc} % Use 8-bit encoding that has 256 glyphs
\usepackage{graphicx} % Required for including images
\graphicspath{{Figures/}} % Set the default folder for images
\usepackage{subfigure} 
\usepackage{float}
\usepackage{soul}
\usepackage{paralist}
\usepackage{varioref} % More descriptive referencing
\usepackage{hyperref}
\usepackage{mathtools}
\hypersetup{hidelinks,
	colorlinks=true,
	allcolors=blue,
	pdfstartview=Fit,
	breaklinks=true}

\allowdisplaybreaks

\newtheorem{theorem}{Theorem}[section]
\newtheorem{lemma}[theorem]{Lemma}
\newtheorem{corollary}[theorem]{Corollary}
\newtheorem{proposition}[theorem]{Proposition}

\newtheorem{remark}[theorem]{Remark}
\newtheorem{claim}[theorem]{Claim}

\newcommand{\vep}{\varepsilon}

\DeclareMathOperator{\const}{const}
\DeclareMathOperator{\dist}{dist}
\DeclareMathOperator{\re}{Re}
\DeclareMathOperator{\im}{Im}
\DeclareMathOperator{\Ker}{Ker}

\newcommand{\sgn}{\operatorname{sgn}}
\newcommand{\triplenorm}[1]{\left|\!\left|\!\left| #1 \right|\!\right|\!\right|}
\begin{document}
\let\oldsection\section
\renewcommand\section{\setcounter{equation}{0}\oldsection}
\renewcommand\thesection{\arabic{section}}
\renewcommand\theequation{\thesection.\arabic{equation}}

\newcommand{\beq}{\begin{equation}}
\newcommand{\noi}{\noindent}
\newcommand{\eeq}{\end{equation}}
\newcommand{\dis}{\displaystyle}
\newcommand{\mint}{-\!\!\!\!\!\!\int}

\title[transition layer solutions to MCRD system]{Radially symmetric transition-layer solutions in mass-conserving reaction-diffusion systems with bistable nonlinearity}
\author{Xiaoqing He}
\address{School of Mathematical Sciences, Key Laboratory of MEA (Ministry of Education) \& Shanghai Key Laboratory of PMMP,  East China Normal University, Shanghai, China}
\email[Corresponding author]{xqhe@cpde.ecnu.edu.cn}
\thanks{Corresponding author: Xiaoqing He, Email: xqhe@cpde.ecnu.edu.cn}
\author{Quan-Xing Liu}
\address{School of Mathematical Sciences, Shanghai Jiao Tong University, Shanghai, China}
\email{qx.liu@sjtu.edu.cn}
\author{Dong Ye}
\address{School of Mathematical Sciences,  Key Laboratory of MEA (Ministry of Education) \& Shanghai Key Laboratory of PMMP,  East China Normal University, Shanghai, China}
\email{dye@math.ecnu.edu.cn}

\subjclass[2010]{35J57, 35B36, 35B30, 92C15}

\keywords{Mass-Conserving Reaction-Diffusion systems, Phase separation, Pattern formation, Singular perturbation, Matched asymptotic expansions, Spectral analysis}

\date{}
\begin{abstract}
Mass-conserving reaction-diffusion (MCRD) systems are widely used to model phase separation and pattern formation in cell polarity, biomolecular condensates, and ecological systems. Numerical simulations and formal asymptotic analysis suggest that such models can support stationary patterns with sharp internal interfaces. 
In this work, we establish for a general class of bistable MCRD systems the existence of nonconstant radially symmetric stationary solutions with a single internal transition layer on an $N$-dimensional ball, for general spatial dimension $N$.
Our approach incorporates the global mass constraint directly into a refined matched-asymptotic framework complemented by a uniform spectral/linear analysis. Beyond mere existence, our framework yields arbitrarily high-order asymptotic approximations of the constructed solutions together with quantitative uniform error estimates, which provides a quantitative higher-dimensional theory of transition-layer patterns in MCRD systems and a rigorous justification for their use in modeling phase separation and pattern formation in biological and ecological settings.
\end{abstract}

\maketitle
%-------------------------------------------------------------------------------------
\section{Introduction and Statement of Results}

\subsection{Mass-conserving reaction-diffusion system}

Pattern formation is a fundamental phenomenon underlying a wide range of processes in biological development and ecological systems. Over the past several decades, various theoretical frameworks have been proposed to elucidate the mechanisms driving such self-organized dynamics. Among these, mass-conserving reaction-diffusion (MCRD) systems have emerged
as a powerful paradigm for studying pattern formation in systems where the total quantity of certain chemical species is conserved. A pioneering contribution in this direction was made by Otsuji {\it et al.} \cite{OICetal} and Ishihara {\it et al.} \cite{IOM} in 2007, who demonstrated that MCRD systems can successfully capture the spatiotemporal dynamics of cell polarity. Subsequently, in 2008, Mori {\it et al.} \cite{MJE_2008} proposed a related two-component MCRD model with a distinct form of reaction term to further investigate cell polarization. In dimensionless form, their one-dimensional system reads:
\begin{align}\label{eqn:MCRD_mori}
\begin{cases}
\vep u_t = \vep^2 u_{xx} + f(u,v)  & \text{\upshape{in} }(0,1)\times(0,\infty),\\
\vep v_t = d v_{xx} - f(u,v)   & \text{\upshape{in} }(0,1)\times(0,\infty),\\
u_x(0)=0=u_x(1), ~ v_x(0)=0=v_x(1),\\
u(x,0) =u_0(x), \quad v(x,0) =v_0(x)   & \text{\upshape{in} } (0,1),
\end{cases}
\end{align}
where $u(x,t)$ and $v(x,t)$ denote the concentrations of two interconverting chemical species. The diffusion coefficients $\vep^2$ (for $u$) and $d$ (for $v$) satisfy $0<\vep^2\ll d$; for instance, Mori {\it et al.} \cite{MJE_2011}  considered $\vep\approx 0.03$ and $d\approx 0.1$. The nonlinear function $f(u, v)$ models the interconversion from $v$ to $u$ and is assumed to be \textit{bistable} in $u$ for fixed $v$. A canonical example \cite{MJE_2008, MJE_2011} is
\begin{align}\label{eqn:f(u,v)}
	f(u,v) &=  -u+\left(\delta+\frac{\gamma u^2}{1+u^2}\right)v,
\end{align}
where $\gamma, \delta>0$ satisfy $\gamma>8\delta$ to ensure bistability. Using formal asymptotic analysis combined with a  wave-pinning argument and numerical computations, 
the authors of \cite{MJE_2011} showed that the MCRD system \eqref{eqn:MCRD_mori} supports propagating traveling fronts that eventually arrest to form stationary internal interfaces. Related work by Gomez {\it et al.} \cite{GLM} further explored wave-pinning behavior via analysis of the associated shadow system of \eqref{eqn:MCRD_mori}.

Building upon these foundations, Frey's group~\cite{HF_2018, HBF_2018, BHF_2020, BHF_2021} applied two-component MCRD frameworks to investigate mechanisms of intracellular pattern formation, including cell polarization. Siteur {\it et al.} \cite{Siteur2023} applied MCRD models to describe vegetation patterns in ecosystems. These studies, although primarily focused on modeling and applications rather than rigorous mathematical analysis, highlight the broad applicability of MCRD systems across diverse fields, ranging from cellular dynamics to ecological pattern formation. In parallel, mass-conserving structures also arise in epidemic reaction-diffusion models \cite{ABLN, CS1, CS2, Peng, PSW}, where the authors focus primarily on thresholds (e.g., basic reproduction numbers) and equilibrium stability rather than spatial pattern formation.

\smallskip

More recently, experimental studies of biomolecular condensates---a class of mesoscale liquid-liquid phase-separated structures---have provided new biological motivation for MCRD modeling. In particular, the authors of \cite{LGHLQ} proposed a data-driven MCRD model to describe phase separation phenomena arising in dsDNA-protein interactive co-condensates. In this setting, 
$u(x,t)$ and $v(x,t)$ denote the concentrations of dsDNA-bound protein and free protein, respectively. Experimental measurements \cite{LB-Q, LGHLQ} show that the diffusion coefficient of free protein $v$ is approximately $18.0\pm1.1\,\mu \text{m}^2/\text{s}$, whereas that of the dsDNA-bound protein $u$ is only about $5.1\times 10^{-5}$ times as large, thereby revealing a pronounced multiscale diffusion hierarchy. Furthermore, the rapid reaction kinetics converting $v$ to $u$, governed by biochemical interactions, renders the system strongly reaction-dominated. Motivated by these experimental evidence and observations, we propose the following \textit{generalized} MCRD system:
\begin{align}\label{eqn:MCRD}
\begin{cases}
\vep u_t = \vep^2 \Delta u + f(u,v)  & \text{\upshape{in} } \Omega\times(0,\infty),\\
\vep v_t = \vep D \Delta v - f(u,v)   & \text{\upshape{in} } \Omega\times(0,\infty),\\
\partial_{\nu} u = 0 = \partial_{\nu}v & \text{\upshape{on} } \partial \Omega\times(0,\infty),\\
u(x,0) =u_0(x), \quad v(x,0) =v_0(x)   & \text{\upshape{in} } \Omega,
\end{cases}
\end{align}
where $\Omega\subset \mathbb R^N$ $(N\geq 1)$ is a bounded smooth domain, $\Delta$
%= \sum_{i=1}^N\frac{\partial^2~~}{\partial x_i^2}$ 
is the Laplace operator, and $\partial_{\nu} :=\nu \cdot \nabla$
denotes the outward normal derivative on $\partial \Omega$. Here,
$0<\vep\ll 1$ and $D >0$ are dimensionless parameters that characterize the relative timescales of diffusion and reaction. Setting $D = d/\vep$ recovers the classical scaling in \eqref{eqn:MCRD_mori}. Thus one may regard \eqref{eqn:MCRD} as a \emph{generalized} MCRD system that reduces to \eqref{eqn:MCRD_mori} under this scaling. Moreover, allowing
$D$ to range from $O(1)$ to $O(1/\vep)$ provides a mathematically unified framework that connects the biologically motivated multiscale model \eqref{eqn:MCRD} with the canonical MCRD model \eqref{eqn:MCRD_mori}; see Remark~(d) at the end of this section.

Crucially, for any classical solution \((u(x,t), v(x,t))\) of \eqref{eqn:MCRD}, the total mass is conserved:
\begin{align*}
	\int_{\Omega} [u(x,t)+v(x,t)]\,dx \equiv \int_{\Omega} [u_0(x)+v_0(x)]\,dx, \quad \forall t>0,
\end{align*}
a defining feature that distinguishes MCRD systems from classical non-mass-conserving reaction-diffusion (non-MCRD) systems. \textit{This nonlocal mass constraint poses a major analytical difficulty: it couples local dynamics to a global integral condition, obstructing a direct use of matched-asymptotic, spectral techniques and many other methods developed for classical (non-MCRD) singular perturbation problems.} Therefore, despite substantial advances in the modeling of MCRD systems for pattern formation in biological and ecological contexts, establishing a rigorous general existence and stability theory for patterned stationary solutions in MCRD systems is a challenging problem.

\textit{Our goal in this work is to construct patterned stationary solutions to system \eqref{eqn:MCRD} that exhibit internal transition layers, for $0<\varepsilon \ll 1$ and $D>0$, under a prescribed mass constraint}. Specifically, we consider the steady-state problem:
\begin{align}\label{eqn:MCRD-ss}
	\begin{cases}
		0 = \vep^2 \Delta u + f(u,v)  &\quad  \text{in }  \Omega, \\
		0 = \vep D \Delta v - f(u,v)   &\quad \text{in } \Omega, \\
		\partial_{\nu} u = 0 = \partial_{\nu}v &\quad \text{on } \partial \Omega,
	\end{cases}
\end{align}
subject to the mass constraint
\begin{align}\label{eqn:M}
	\frac{1}{|\Omega|}\int_{\Omega} [u(x)+v(x)]\,dx = M,
\end{align}
where $M>0$ denotes the prescribed averaged total mass.

It is straightforward to verify that system \eqref{eqn:MCRD-ss} is equivalent to the following reformulated system: 
\begin{align}\label{eqn:MCRD-ss-T}
	\begin{cases}
		0 = \vep^2 \Delta u + f(u,v)  &  \quad \text{in }   \Omega,  \vspace{1ex}\\
		\displaystyle	0 = \Delta\Big(\frac{\vep}{D}u+v\Big)    & \quad \text{in }  \Omega, \vspace{1ex}\\
		\partial_{\nu} u = 0 = \partial_{\nu}v &  \quad \text{on } \partial \Omega.\\
	\end{cases}
\end{align}
Let $(u, v)$ be a classical solution of \eqref{eqn:MCRD-ss-T} satisfying the mass constraint \eqref{eqn:M}. The Neumann boundary condition then implies that
$$\frac{\vep}{D}u + v\equiv \const \text{ on } \bar \Omega,$$
which, together with \eqref{eqn:M}, yields the identity
\begin{align}\label{eqn:v=mathcal S^vep-..}
	v =  \mathcal S^{\vep}[u]-\frac{\vep}{D} u,
\end{align}
where the nonlocal operator $\mathcal S^{\vep}[u]$ is defined by
\begin{align}\label{eqn:mathcal S^vep}
	\mathcal S^{\vep}[u] := M - \frac{1}{|\Omega|}\left(1-\frac{\vep}{D}\right)\int_\Omega u(x)\,dx.
\end{align}
Substituting \eqref{eqn:v=mathcal S^vep-..} into the first equation of \eqref{eqn:MCRD-ss-T}, we see that $u$ satisfies the following nonlocal elliptic scalar equation:
\begin{align}\label{eqn:MCRD_nonlocal_scalar}
	\begin{cases}
\displaystyle		\vep^2 \Delta u 
		+ f\left(u, \mathcal S^{\vep}[u]-\frac{\vep}{D} u \right) = 0
		&\quad \text{\upshape{in} }  \Omega,\vspace{1ex}\\
		\partial_{\nu} u = 0  &\quad \text{\upshape{on} } \partial  \Omega.
	\end{cases}
\end{align}

\emph{In summary, under the mass constraint \eqref{eqn:M}, system \eqref{eqn:MCRD-ss} is equivalent to the nonlocal scalar elliptic problem \eqref{eqn:MCRD_nonlocal_scalar}.}
More precisely, a pair \((u,v)\) is a classical solution of \eqref{eqn:MCRD-ss} satisfying \eqref{eqn:M} if and only if \(u\) is a classical solution of \eqref{eqn:MCRD_nonlocal_scalar} and \(v\) is given by \eqref{eqn:v=mathcal S^vep-..}. This equivalence reduces the construction of nonconstant solutions of \eqref{eqn:MCRD-ss} under \eqref{eqn:M} to the analysis of the nonlocal scalar equation \eqref{eqn:MCRD_nonlocal_scalar}.

%\textcolor{red}{In the one-dimensional case, in \cite{KTI,IK} the authors proved the existence and stability of stationary solutions with a single internal transition layer for the MCRD system \eqref{eqn:MCRD_mori}. However, in the existence part, they construct by matched asymptotic expansions an approximate stationary profile of order $O(\vep)$ which satisfies the prescribed total mass constraint \eqref{eqn:M} only up to an error of order $O(\vep)$. Therefore, the approximate solution does not satisfy the exact total mass constraint. They then apply directly the Implicit Function Theorem to obtain ``a genuine solution" near this approximate profile, but the total mass constraint is not incorporated as an additional equation during this step. Hence there is no guarantee that the resulting solution satisfies the exact total mass constraint, which, in our view, leaves a serious gap in the existence proof.} 

We now proceed to introduce the structural assumptions on the nonlinear reaction term \( f(u,v) \). Throughout this paper, we assume that the nonlinear reaction term \( f(u,v) \) satisfies the following conditions:
\begin{enumerate}
	\item [(A1)](\textbf{Bistability}). The function $f(u,v)$ is smooth ($C^\infty$) on $\mathbb R^2$, and there exist two constants $\underline v < \overline v$ such that for each fixed \( v \in (\underline{v}, \overline{v}) \), the equation $f(u,v) = 0$ has exactly three distinct real roots denoted by $h^-(v) < h^0(v) < h^+(v)$. Moreover,
	\begin{align*}
		f_u(h^-(v),v)<0,\quad f_u(h^0(v),v)>0\quad\text{and}\quad f_u(h^+(v),v)<0.
	\end{align*}  
	\item [(A2)](\textbf{Transversality}). For each $v\in (\underline v, \bar v)$, the following inequality holds:
	\begin{align*}
		\Big(\frac{\partial f}{\partial u}- \frac{\partial f}{\partial v}\Big)\Big|_{(u,v) = (h^{\pm}(v), v)} < 0.
	\end{align*}
	\item [(A3)](\textbf{Mass Balance Condition}). For each $v\in (\underline v, \bar v)$, define the function
	\begin{align*}
		J(v) := \int_{h^-(v)}^{h^+(v)} f(s,v)\,ds.
	\end{align*}
	There exists a $v^{\ast} \in (\underline v, \bar v)$ such that
	\begin{align*}
		J(v^{\ast}) = 0 \quad  \text{and} \quad  J'(v^{\ast}) \neq 0.
	\end{align*}
\end{enumerate}

We remark that condition (A2) ensures the local asymptotic stability of the spatially homogeneous steady states $(u,v) \equiv (h^{\pm}(v), v)$ of system \eqref{eqn:MCRD} for all $v\in (\underline v, \bar v)$. Rigorous justifications of this stability result can be found in \cite{MJE_2011, GLM}.

In biological applications, the quantities $u$ and $v$ often represent concentrations of chemical species and are typically required to be positive. However, our analysis does not impose positivity constraints on $u$ and $v$, and the mathematical framework remains valid even when $u$ and $v$ take nonpositive values.

\smallskip
In the one-dimensional case, the authors \cite{KTI,IK} proved the existence of stationary solutions with an internal transition layer for the MCRD system \eqref{eqn:MCRD_mori}, where the $u$-component undergoes a jump across a thin layer. Technically, they used an intrinsic patching/gluing construction by introducing an \emph{a priori} unknown matching point $x^{(\vep)}\in(0,1)$ and decompose the interval into $(0,x^{(\vep)})$ and $(x^{(\vep)},1)$. They then solved the boundary value problems on each subinterval and enforce $C^1$-matching at $x=x^{(\vep)}$, together with the global mass constraint via the Implicit Function Theorem.

The authors claimed in \cite[Remark 2.2]{KTI} that one can obtain a $k$-th order approximation of the solutions for any $k$. However, extending the construction in \cite[Theorem 2.1]{KTI} to higher orders does not appear to be straightforward: one would need to carry out careful recursive computations to verify that the Implicit Function Theorem argument closes at each order, which is a substantial task. For instance, without an explicit understanding of the higher-order terms, it is unclear whether the nondegeneracy condition $h^{+}(v^{})-h^{-}(v^{})\neq 0$ alone suffices to match an arbitrary higher-order expansion.

\smallskip

In this work, we develop a unified high-order expansion framework for the general MCRD system \eqref{eqn:MCRD} on $N$-dimensional balls, thereby overcoming the above challenges. By incorporating the global mass constraint into a refined matched-asymptotic construction and establishing a uniform spectral/linear theory for the resulting singularly perturbed operators, we derive \emph{arbitrarily high-order} asymptotic approximations for radially symmetric single transition-layer solutions with explicit error bounds. These approximations and error controls will serve as the principal technical ingredient in the stability analysis of the solutions constructed here, which will be carried out in forthcoming work.

\subsection{Assumptions and statement of main result}
In this subsection, we state the precise statement of our result concerning existence of stationary single transition-layer solutions to the MCRD system~\eqref{eqn:MCRD} with prescribed total mass, i.e., solutions to \eqref{eqn:MCRD-ss}-\eqref{eqn:M} or equivalently \eqref{eqn:MCRD_nonlocal_scalar}.

For our purposes, we assume throughout the remainder of this paper that
\begin{align*}
\Omega = B_1,
\end{align*}
where $B_r:= B_r(0)$ denotes the ball of radius $r>0$  centered at the origin in $\mathbb R^N$. Our objective is to construct radially symmetric solutions to system~\eqref{eqn:MCRD-ss} subject to the mass constraint \eqref{eqn:M} in $\Omega$. Specifically, we aim to construct solutions exhibiting an internal transition layer across the sharp interface
$$\Gamma_* =  \{x\in\Omega : |x|=R_*\} =\partial B_{R_*},$$
where $R_*\in(0,1)$ denotes the interface location parameter. 
As will be derived in \eqref{eqn:R_*}, $R_*$ is determined in the course of the construction, by the prescribed averaged total mass $M$ in \eqref{eqn:M}. For notational convenience, we set
\begin{align*}
	I_{v^*} := \big(v^* + h^-(v^*), v^* + h^+(v^*)\big)
\end{align*}
to denote the admissible range of the parameter $M$ throughout the paper. 

Since we work with radially symmetric solutions, we shall, by a slight abuse of notation, write $g(x)$ and $g(r)$ interchangeably for a radial function $g$ in $\Omega$, where $r=|x|$.

The main result of this work is stated below.

\begin{theorem}\label{thm:main}
Assume that conditions {\upshape (A1)-(A3)} hold. Then, for each given
\begin{align*}
M\in  I_{v^*}  \text { and }  	D_0> 0,
\end{align*}
there exists a constant $\bar\vep = \bar\vep(M, D_0)>0$ such that system \eqref{eqn:MCRD-ss} admits a family of spherically symmetric classical solutions $(u^{\vep, D}(r), v^{\vep, D}(r))$ defined for $\vep\in(0, \bar\vep]$ and $D\geq D_0$ and satisfying the mass constraint
\begin{align*}
\frac{1}{|\Omega|}\int_{\Omega} [u^{\vep,D}(x) + v^{\vep,D}(x)] dx = M.
\end{align*}  
Moreover, these solutions satisfy the following properties:
\begin{enumerate}
\item  There exists a constant $C = C(M, D_0, \bar\vep)>0$ such that
\begin{align*}
	\|u^{\vep,D}\|_{L^{\infty}(\Omega)}+\|v^{\vep,D}\|_{L^{\infty}(\Omega)}\leq C, \quad \forall \;\vep\in(0,\bar\vep] \text{ and }D\geq D_0.
\end{align*}
\item For each $\eta>0$, there exist two constants $K=K(\eta)
>0$ and $\bar\vep'=\bar\vep'(M, D_0,\eta)\in(0, \bar\vep]$ such that
\begin{align*}
	\begin{cases}
		|u^{\vep, D}(r)-h^+(v^*)|<\eta    	& \text{on }  [0, R_*-\vep K] \vspace{1ex}\\
		|u^{\vep, D}(r)-h^-(v^*)|<\eta  	&\text{on }  [R_*+\vep K, 1]\\
	\end{cases}\quad  \forall\; \vep\in (0, \bar\vep'] \text{ and }D\geq D_0, 
\end{align*}
where 
\begin{align}\label{eqn:R_*}
	R_* =R_*(M):= \Big[\frac{M-v^*-h^-(v^*)}{h^+(v^*)-h^-(v^*)}\Big]^{1/N}\in(0,1).
\end{align}
\item  $v^{\vep,D}(x)=\mathcal S^{\vep}[u^{\vep,D}]-\frac{\vep}{D} u^{\vep,D}(x)$ and
\begin{align*}
	\lim\limits_{\vep\to0} \|v^{\vep, D}- v^*\|_{L^{\infty}(\Omega)} =0 \text{ uniformly in }D\geq D_0.
\end{align*}
\end{enumerate}
\end{theorem}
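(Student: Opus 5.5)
We will construct the solution of the nonlocal scalar problem \eqref{eqn:MCRD_nonlocal_scalar} in radial form, treating the constant $\sigma:=\mathcal S^{\vep}[u]$ and the interface position $R$ as extra unknowns. The equation then becomes
\[
\vep^2\big(u_{rr}+\tfrac{N-1}{r}u_r\big)+f\big(u,\sigma-\tfrac{\vep}{D}u\big)=0,\qquad u_r(0)=u_r(1)=0,
\]
together with the scalar constraint $\sigma=M-(1-\vep/D)\frac{1}{|\Omega|}\int_\Omega u$.

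At leading order we set $\sigma=v^*$, and away from the layer we take $u=h^+(v^*)$ on $[0,R_*)$ and $u=h^-(v^*)$ on $(R_*,1]$. The constraint at $\vep=0$ reads $v^*+h^-(v^*)+R_*^N(h^+-h^-)=M$, which is exactly \eqref{eqn:R_*}; for $M\in I_{v^*}$ this gives $R_*\in(0,1)$. The inner problem in $z=(r-R)/\vep$ is $U''+f(U,v^*)=0$ with $U(\pm\infty)=h^{\mp}(v^*)$. The heteroclinic exists precisely because $J(v^*)=0$ by (A3).

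\textbf{Step 1: high-order matched expansions.} We expand $u=\sum\vep^k(u_k^{\rm out}+U_k^{\rm in})$ and $\sigma=\sum\vep^k\sigma_k$, $R=\sum\vep^k R_k$, with a boundary-layer term at $r=1$ if needed. The small parameter $\vep/D\le\vep/D_0$ is treated as an extra bounded coefficient, so all bounds are uniform in $D\ge D_0$.

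The outer terms are solved algebraically, using $f_u(h^\pm,v^*)<0$. At each order $k$ the inner equation is $L_0U_k=g_k$ with $L_0=\partial_z^2+f_u(U_0,v^*)$. The kernel of $L_0$ is spanned by $U_0'$, so the Fredholm condition $\int g_kU_0'\,dz=0$ must hold. This condition contains $\sigma_k$ with coefficient $\int f_v(U_0,v^*)U_0'=-J'(v^*)\neq0$. It also contains the curvature term $\frac{N-1}{R}\int U_0'^2$, and the lower-order data.

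The mass constraint at order $k$ gives a second linear relation, $\sigma_k+NR_*^{N-1}(h^+-h^-)R_k+\dots=$ known terms. Together these form a $2\times2$ system for $(\sigma_k,R_k)$ whose determinant is a nonzero multiple of $J'(v^*)(h^+-h^-)$. It is therefore solvable at every order. This is the point the paper stresses: solvability at each order comes from (A3) together with $h^+\neq h^-$, and we will verify it by tracking only the leading structure of the $2\times2$ matrix.

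The result is an approximate solution $(u_A,\sigma_A,R_A)$ whose residual is $O(\vep^{m})$ in $L^\infty$, for any chosen $m$, and which satisfies the constraint exactly.

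\textbf{Step 2: linearized operator with the mass constraint.} We study
\[
\mathcal L^\vep\phi=\vep^2\Delta_r\phi+f_u\phi-\tfrac{\vep}{D}f_v\phi-\Big(1-\tfrac{\vep}{D}\Big)f_v\,\frac{1}{|\Omega|}\int_\Omega\phi
\]
on radial functions. The goal is the uniform estimate $\|\phi\|_\infty\le C\vep^{-\kappa}\|\mathcal L^\vep\phi\|_\infty$ with a fixed $\kappa$ (expected $\kappa\le 2$), valid for all $D\ge D_0$.

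The local part has one small eigenvalue of order $\vep^2$, which comes from translating the layer. The nonlocal rank-one term lifts it. The computation is to project $\mathcal L^\vep\phi$ onto the approximate eigenfunction $u_A'$ and onto constants, which produces the same nondegenerate $2\times2$ matrix as in Step 1. For the rest we use a contradiction/blow-up argument in the inner variable, relying on the ODE structure and $f_u(h^\pm)<0$ in the outer region. We regard this step as the main obstacle: the estimate must be uniform in both $\vep$ and $D$, and the interplay between the exponentially small translation mode and the nonlocal term must be handled carefully. If a direct estimate fails, we will instead solve a Lyapunov–Schmidt reduction in which $R$ and $\sigma$ are the free parameters.

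\textbf{Step 3: conclusion.} Write $u=u_A+\phi$. With $m>\kappa$, the contraction mapping principle in a ball of radius $\vep^{m-\kappa}$ gives an exact solution with $\|\phi\|_\infty=o(1)$, and elliptic regularity makes it classical. Property (1) follows from the bounds on $u_A$. Property (2) follows from the exponential convergence of $U_0$ to $h^\pm$ for $|z|\ge K$, together with $R_A-R_*=O(\vep)$, after enlarging $K$ slightly. Property (3) follows from $v=\sigma-\frac{\vep}{D}u$ with $\sigma=v^*+O(\vep)$ and $u$ bounded, uniformly in $D\ge D_0$.
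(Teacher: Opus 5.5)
Your Steps~1 and~3 follow the paper's argument. In Step~1 the order-$k$ system is triangular, exactly as in Propositions~\ref{pro:sov of uj_condi on A_j_aj} and~\ref{pro:ajbjR_*determine}. The Fredholm condition fixes $\sigma_k$ (the paper's $A_k$) through the coefficient $-J'(v^*)$. The mass constraint then fixes your $R_k$ (the paper's translation parameter $a_{k-1}$) through $NR_*^{N-1}(h^+(v^*)-h^-(v^*))$. No boundary layer at $r=1$ is needed, since all outer terms are constants.

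Two things need repair. First, in Step~3 the quadratic term forces $m>2\kappa$, not $m>\kappa$. On a ball of radius $\vep^{m-\kappa}$ the map $\phi\mapsto-\mathscr L^{-1}(\mathcal R+\mathcal F[\phi])$ has Lipschitz constant of order $\vep^{-\kappa}\cdot\vep^{m-\kappa}$. With the true value $\kappa=1$ this is exactly the paper's restriction $k\ge 2$.

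Second, and more seriously, Step~2 is only announced, yet it carries all the difficulty (it is the whole of the paper's Section~3). As you phrase it, the blow-up does not work on the nonlocal operator. In the outer region, a normalized sequence with $\mathscr L\phi_n\to0$ converges to the piecewise constant $f_v^*\bar\phi/f_u^*$, where $\bar\phi=\lim\frac{1}{|\Omega|}\int_\Omega\phi_n$. The sign of $f_u(h^\pm(v^*),v^*)$ alone does not exclude this limit. The mean and the translation mode must be split off before any compactness argument.

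Your two-mode idea does close once made precise. Use the true principal eigenpair $(\mu_0,\phi_0)$ of the self-adjoint local operator $\mathcal L^\vep_k$ instead of $u_A'$, so the splitting is exact. Set $\beta=1-\vep/D$, $\bar\phi=\frac{1}{|\Omega|}\int_\Omega\phi\,dx$ and $\psi=(\mathcal L^\vep_k)^{-1}\mathcal P^\vep f_v^{\vep,k}$, with the inverse taken on $[\phi_0]^\perp$. Write $\phi=a\phi_0+\eta$ with $\eta\perp\phi_0$. Then $\mathscr L^\vep_k\phi=g$ is equivalent to $\eta=(\mathcal L^\vep_k)^{-1}\mathcal P^\vep g+\beta\bar\phi\,\psi$ together with
\begin{align*}
\mu_0a-\beta\bar\phi\,\langle f_v^{\vep,k},\phi_0\rangle&=\langle g,\phi_0\rangle,\\
\bar\phi\Big(|\Omega|-\beta\int_\Omega\psi\,dx\Big)&=a\int_\Omega\phi_0\,dx+\int_\Omega(\mathcal L^\vep_k)^{-1}\mathcal P^\vep g\,dx.
\end{align*}

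The inputs are the paper's local results, Proposition~\ref{pro:L^vep_k:mu_0^vep,k=o(vep)}, Corollary~\ref{cor:phi_0^vep_integral} and Lemma~\ref{lem:L^vep_k -mu_0^vep Inv bdd}. They give:
\begin{itemize}
\item $\mu_0=o(\vep)$; this, not exponential smallness, is what matters;
\item $\int_\Omega\phi_0\approx\sqrt\vep\,\kappa_1(h^+-h^-)$ and $\langle f_v^{\vep,k},\phi_0\rangle\approx\sqrt\vep\,\kappa_1J'(v^*)$, with $\kappa_1^2=N|\Omega|R_*^{N-1}/m(v^*)$;
\item a uniform $L^\infty$ bound for the inverse on $[\phi_0]^\perp$. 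That lemma bounds $(\mathcal L^\vep_k-\mu_0)^{-1}$, which differs from $(\mathcal L^\vep_k)^{-1}$ by a Neumann series. This is where the blow-up belongs: for the local operator, not the nonlocal one.
\end{itemize}

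Eliminating $a$ multiplies $\bar\phi$ by $-\beta\vep\kappa_1^2J'(v^*)(h^+-h^-)+o(\vep)\neq0$. The right side is $O(\vep\|g\|_\infty)$, because $\|\phi_0\|_{L^1}=O(\sqrt\vep)$. Hence $|\bar\phi|\lesssim\|g\|_\infty$, $|a|\lesssim\vep^{-1/2}\|g\|_\infty$, and $\|\phi\|_\infty\lesssim\vep^{-1}\|g\|_\infty$, i.e.\ $\kappa=1$. Existence of the solution comes with the bound, since the $2\times2$ determinant is nonzero.

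This route is leaner than the paper's. The paper first locates the critical eigenvalue $\lambda_0\sim\vep\Lambda^*$ of $\mathscr L^\vep_k$, builds the adjoint eigenfunction, and inverts $\mathscr L^\vep_k-\lambda_0$ on its complement. That route divides by $1-\frac{\beta}{|\Omega|}\int_\Omega\psi\to G/(f_u^{*,+}f_u^{*,-})$, so it uses $G>0$ from (A2). In exchange it gives reality, simplicity and the asymptotics of the critical eigenvalue, which a stability analysis needs. In your reduction that factor only multiplies $\mu_0$ or appears as a bounded coefficient, so for existence alone you do not appear to need (A2).
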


Theorem~\ref{thm:main} has several immediate consequences, which we summarize below. 

\begin{remark}
\begin{enumerate}
\item[\upshape{(a)}] Part (ii) of Theorem \ref{thm:main} establishes that the transition layer which separates the high- and low-concentration regions in $u^{\vep, D}$, is sharply localized near the interface $r = R_*$, which provides directly an accurate characterization of a width of order $O(\vep)$ for the transition layer.
\item[\upshape{(b)}] Furthermore, for any fixed $\delta > 0$ with $\delta < \min\{R_*, 1 - R_*\}$, we see the uniform convergence
	\begin{align*}
		\lim_{\vep\to0} u^{\vep, D}(r) =
		\begin{cases}
			h^+(v^*) &\text{ uniformly on } [0, R_*-\delta ] \text{ and }D\geq D_0\\
			h^-(v^*) &\text{ uniformly on } [R_*+\delta, 1] \text{ and }D\geq D_0.
		\end{cases}
	\end{align*}
\item[\upshape{(c)}] By setting $D = d/\vep$ in Theorem~\ref{thm:main}, one recovers the canonical MCRD scaling considered in~\cite{MJE_2008, MJE_2011}, thereby obtaining a family of spherically symmetric stationary solutions to system~\eqref{eqn:MCRD_mori}.  
\item[\upshape{(d)}] It is also worth mentioning that the one-dimensional case \(N=1\) is considerably simpler; see, for example, the expansion in \((2.17)\) below. The main reason is that for \(N\ge 2\) (in the radially symmetric setting) one must deal with differential operators with variable coefficients, such as the radial Laplacian \(u_{rr}+\frac{N-1}{r}u_r\), which significantly complicates the analysis. Furthermore, as discussed in Subsection~1.1, deriving \emph{high-order} asymptotic approximations together with uniform, explicit error bounds requires rather involved computations. It is therefore advantageous to work with a single global closed-form expression, as in our approach.
\end{enumerate}
\end{remark}

To establish Theorem~\ref{thm:main}, we exploit the equivalence shown at the end of the preceding subsection: it suffices to establish the following existence result for the nonlocal scalar problem \eqref{eqn:MCRD_nonlocal_scalar} and then set
$$v^{\vep,D}(x): =\mathcal S^{\vep}[u^{\vep,D}]-\frac{\vep}{D} u^{\vep,D}(x).$$

\begin{theorem}\label{thm:main-scalar}
Assume that conditions {\upshape (A1)-(A3)} hold. Then, for each given
\begin{align*}
	M\in  I_{v^*}  \text { and }  	D_0> 0,
\end{align*}
there exists a constant $\bar\vep = \bar\vep(M, D_0)>0$ such that system \eqref{eqn:MCRD_nonlocal_scalar} admits a family of spherically symmetric classical solutions $u^{\vep, D}(r)$ defined for $\vep\in(0, \bar\vep]$ and $D\geq D_0$. 
Moreover, these solutions satisfy the following properties:
\begin{enumerate}
\item  There exists a constant $C = C(M, D_0, \bar\vep)>0$ such that
\begin{align*}
	\|u^{\vep,D}\|_{L^{\infty}(\Omega)}\leq C \quad \forall \;\vep\in(0,\bar\vep] \text{ and }D\geq D_0.
\end{align*}
\item For each $\eta>0$, there exist two constants $K=K(\eta)>0$ and $\bar\vep'=\bar\vep'(M, D_0,\eta)\leq \bar\vep$ such that
	\begin{align*}
		\begin{cases}
			|u^{\vep, D}(r)-h^+(v^*)|<\eta    	& \text{on }  [0, R_*-\vep K] \vspace{1ex}\\
			|u^{\vep, D}(r)-h^-(v^*)|<\eta  	&\text{on }  [R_*+\vep K, 1]\\
		\end{cases}\quad  \forall \;\vep\in (0, \bar\vep'] \text{ and }D\geq D_0, 
	\end{align*}
where $R_*$ is defined as in \eqref{eqn:R_*}.
\end{enumerate}
\end{theorem}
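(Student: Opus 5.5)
We prove Theorem~\ref{thm:main-scalar} by building a high-order approximate solution and correcting it by a fixed-point argument. The solution is sought as $u = U_k + \phi$, where $U_k$ is radial. It is built by matched asymptotics, with the layer position and the nonlocal constant treated as unknown parameters to be fixed order by order.

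\textbf{Step 1 (outer and inner expansions).} We write $\sigma:=\mathcal S^\vep[u]$ for the nonlocal constant and expand $\sigma=v^*+\sum_{j\ge1}\vep^j\sigma_j$ and the interface position $R^\vep=R_*+\sum_{j\ge1}\vep^jR_j$.

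The outer solution on each side is $U^\pm(r)=h^\pm(\sigma-\frac{\vep}{D}U^\pm)+\dots$. It is solved order by order through the implicit function theorem. This requires $f_u(h^\pm,v)-\frac{\vep}{D}f_v\neq0$, which holds uniformly in $D\ge D_0$ for $\vep$ small; note that (A2) makes the limit $\vep/D\to$ arbitrary harmless. Laplacian corrections of size $\vep^2$ are absorbed successively.

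Near $r=R^\vep$ we use the stretched variable $z=(r-R^\vep)/\vep$. The leading inner profile $Q_0$ is the heteroclinic solution of $Q''+f(Q,v^*)=0$ connecting $h^+(v^*)$ to $h^-(v^*)$; it exists exactly because $J(v^*)=0$. Higher inner orders solve $L_0\psi:=\psi''+f_u(Q_0,v^*)\psi=g_j$. The kernel of $L_0$ is spanned by $Q_0'$, so solvability requires $\int g_jQ_0'\,dz=0$.

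At order $\vep$, $g_1$ contains the curvature term $\frac{N-1}{R_*}Q_0'$ and the term $\sigma_1 f_v(Q_0,v^*)$, together with the $\frac{\vep}{D}$ contribution. The solvability condition becomes $J'(v^*)\sigma_1=\text{(explicit)}$, since $\int f_vQ_0'\,dz=-J'(v^*)$. This is where (A3) enters, and it determines $\sigma_j$ at every order.

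The mass constraint $\sigma=M-(1-\frac{\vep}{D})\frac1{|\Omega|}\int u$ fixes $R_j$. At leading order it gives $M=v^*+h^+R_*^N+h^-(1-R_*^N)$, which is \eqref{eqn:R_*}. At order $j$ the coefficient multiplying $R_j$ is $N R_*^{N-1}(h^+(v^*)-h^-(v^*))\neq0$, plus known terms.

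The inner and outer pieces are glued with cutoffs into a single global expression $U_k$. We aim for the residual bound $\|\vep^2\Delta U_k+f(U_k,\mathcal S^\vep[U_k]-\frac\vep DU_k)\|_{L^\infty}\le C\vep^{k+1}$, uniform in $D\ge D_0$.

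\textbf{Step 2 (linear theory; main obstacle).} We linearize at $U_k$:
$$\mathcal L^\vep\phi=\vep^2\Delta\phi+a^\vep(r)\phi+b^\vep(r)\,\frac{-(1-\vep/D)}{|\Omega|}\int_\Omega\phi,$$
where $a^\vep=f_u-\frac\vep Df_v$ and $b^\vep=f_v$, both evaluated at the approximation, acting on radial functions with Neumann conditions. We want $\|\mathcal L^{\vep\,-1}\|_{L^\infty\to L^\infty}\le C\vep^{-m}$ for some fixed $m$, say $m=1$ or $2$, uniformly in $D$.

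The local part has an eigenvalue of size $O(\vep)$ with eigenfunction close to $U_k'$, which is the translation mode. We argue by contradiction and blow-up: take $\|\phi_n\|=1$ with $\mathcal L\phi_n=o(\vep^{m})$. Away from the layer, the estimate follows from $a^\vep<0$ together with the maximum principle. Near the layer, the rescaled limit is $\alpha Q_0'$. Testing the equation against $U_k'$ links the coefficient $\alpha$ to the nonlocal term $\int\phi$. The nonlocal term contributes approximately $\alpha(h^+-h^-)R_*^{N-1}$ times $\int b\,Q_0'$, which is proportional to $J'(v^*)$. Nondegeneracy therefore again reduces to $J'(v^*)(h^+-h^-)\neq0$. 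This forces $\alpha=0$, and then $\int\phi\to0$, which gives the contradiction.

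Carrying out this argument carefully, with weights in the radial variable that handle $r\to0$ and with uniformity in $D$, is the delicate step.

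\textbf{Step 3 (fixed point).} With $k>2m$, we write $\phi=-\mathcal L^{-1}[E_k+\mathcal N(\phi)]$, where $\mathcal N$ is the quadratic remainder. The contraction mapping theorem applies on the ball $\|\phi\|_{L^\infty}\le \vep^{k-m}$. This yields a radial classical solution with $\|u-U_k\|_{L^\infty}=O(\vep^{k-m})$.

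Property (i) follows since $U_k$ is bounded. Property (ii) follows from the exponential convergence of $Q_0(z)\to h^\pm(v^*)$ as $z\to\mp\infty$, where $K(\eta)$ is chosen from the profile $Q_0$; combined with $R^\vep-R_*=O(\vep)$, the constant $K$ only needs to be enlarged by a bounded amount, and $\bar\vep'$ is chosen small enough that the error terms are below $\eta/2$.
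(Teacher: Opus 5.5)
Your proposal is correct in outline, but your linear step takes a genuinely different route from the paper. Step 1 is the paper's construction up to reparametrization. The paper keeps the interface at $R_*$ and meets the mass condition order by order through the translation parameters $a_j$ in $w^j=a_jw^0_z+\widehat w^j$, with the same nonzero coefficient $NR_*^{N-1}(h^+(v^*)-h^-(v^*))$. You move the interface via $R_j$ instead, and your $\sigma_j$ are the paper's $A_j$.

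The real difference is in Steps 2--3. The paper does a full spectral analysis of $\mathscr L_k^\vep$:
\begin{itemize}
\item the local principal eigenvalue is $o(\vep)$;
\item the nonlocal term creates a unique, real, simple critical eigenvalue $\lambda_0^{\vep,k}\sim\Lambda^*\vep$ in a half-plane (this uses $G>0$, i.e.\ (A2));
\item the adjoint eigenfunction is constructed, and $(\mathscr L_k^\vep-\lambda_0^{\vep,k})^{-1}$ is shown to be uniformly bounded on $X^\infty$;
\item a Lyapunov--Schmidt contraction is then run on $p=\alpha\vep^{k+1/2}\Phi_0^{\vep,k}+\vep^kq$.
\end{itemize}
You replace all of this by one blow-up estimate $\|\mathcal L^\vep\phi\|_{L^\infty}\ge c\,\vep\|\phi\|_{L^\infty}$ (so $m=1$), Fredholm theory for a rank-one perturbation of a self-adjoint operator with compact resolvent, and a direct contraction. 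This is valid. With $m=1$, even $k\ge2$ suffices if you use a ball of radius $C\vep^k$.

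Order the blow-up so it is not circular. Write $\beta=-(1-\vep/D)|\Omega|^{-1}\int_\Omega\phi$ for the nonlocal coefficient. First test against $U_k'$, using only $\|\phi\|_{L^\infty}\le1$. Since $U_k'$ vanishes near $r=0,1$ and $\vep^2\Delta U_k'+a^\vep U_k'=\vep^2\frac{N-1}{r^2}U_k'+\partial_r\mathcal R^\vep[U_k]$, this gives $\beta\,R_*^{N-1}J'(v^*)=O(\vep^2+\|\mathcal L^\vep\phi\|_{L^\infty})=o(\vep)$.

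Only after this is your maximum-principle step away from the layer legitimate. There $\phi\approx-b^\vep\beta/a^\vep$, which is not small a priori. Likewise, only then is the outer part of $\int_\Omega\phi$ negligible, so that comparing $\int_\Omega\phi=o(\vep)$ with the layer part $-\vep\alpha N|\Omega|R_*^{N-1}(h^+(v^*)-h^-(v^*))$ forces $\alpha=0$. Your sketch suggests instead computing the nonlocal term from $\alpha$ alone. If you do that, you must keep the outer contribution $-\beta\int b^\vep/a^\vep$, which produces the factor $f_u^{*,+}f_u^{*,-}/G$.

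Your route is much shorter and, organized this way, does not use the sign of $G$ (hence (A2)) for existence. The paper's route gives the location, sign and simplicity of the critical eigenvalue and the spectral gap, which is what the announced stability analysis needs.
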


Our approach to proving Theorem~\ref{thm:main-scalar} consists of two main steps. First, we construct a family of smooth ($C^{\infty}$)
radially symmetric approximate solutions to problem \eqref{eqn:MCRD_nonlocal_scalar}, characterized by an internal transition layer located near the sharp interface $\Gamma_*$. These approximate solutions are obtained via matched asymptotic expansions, with particular attention to enforcing the mass constraint~\eqref{eqn:M}, which is now encoded in the nonlocal term $\mathcal S^{\vep}[u]$. In the second step, we apply a contraction mapping argument, after a finite-dimensional spectral reduction (via spectral projection), to perturb the approximate solutions to exact ones for sufficiently small $\vep > 0$, using a Lyapunov-Schmidt-type decomposition to reduce the problem. Crucial ingredients of the analysis include precise asymptotic matching near the transition layer, careful handling of the nonlocal term $\mathcal S^{\vep}[u]$---which governs the solvability of the coupled inner-outer system---and sharp spectral estimates for the linearized operator around the approximate solutions.

\smallskip

Finally, we conclude this subsection with a discussion of the modeling insights and broader implications of Theorem~\ref{thm:main}, with particular emphasis on its connections to phase separation, analytical challenges, and multiscale pattern formation in biological systems.

(a) Theorem \ref{thm:main} demonstrates that under general structural assumptions (A1)-(A3), MCRD systems admit spatially nonhomogeneous stationary states characterized by phase separation with a droplet-like interface. Such solutions arise when the averaged total mass $M$ lies strictly between the two critical thresholds $v^*+ h^-(v^*)$ and $v^*+ h^+(v^*)$. Moreover, varying $M$ within this interval affects only the volume fraction 
between the high- and low-concentration regions, through the interface location $R_*=R_*(M)$ defined in~\eqref{eqn:R_*}. This agrees qualitatively with experimental findings on liquid-liquid phase separation \cite{AGM, BBH} and supports the use of MCRD systems as a theoretical model for biomolecular condensate formation.

(b) Patterned solutions with internal/boundary transition layers or spikes have been extensively studied in classical non-mass-conserving reaction-diffusion (Non-MCRD) systems, e.g., in the following form:
\begin{align*}
	\begin{cases}
		\vep  u_t = \varepsilon^2\Delta u  + f(u,v)   \quad & \text{\upshape{in} } \Omega\times(0,\infty),\\
		v_t = d \, \Delta v + g(u,v)                \quad  & \text{\upshape{in} } \Omega\times(0,\infty),\\
		\partial_{\nu}u = \partial_{\nu}v=0  \quad  & \text{\upshape{in} } \partial\Omega\times(0,\infty),\\
		u(x,0) =u_0(x), \quad v(x,0) =v_0(x)   & \text{\upshape{in} } \Omega.
	\end{cases}
\end{align*} 
See, for instance, \cite{Fife, Ito, Lin, MTH,  NF, Taniguchi, S} for results in one spatial dimension, and \cite{delPino, HS, NT_1995, SS1, SS2} for the multi-dimensional setting. However, the analytical techniques developed in these works do not directly apply to MCRD systems due to the presence of the nonlocal mass constraint~\eqref{eqn:M}, which fundamentally alters the structure of the problem by coupling local dynamics to a global integral condition.

(c) A classical theory for pattern formation is Turing instability \cite{Turing}, which typically predicts the emergence of spatially periodic patterns via linear instability of homogeneous steady states \cite{Murray}. In contrast, MCRD systems often exhibit spatiotemporal coarsening dynamics\cite{HBF_2018, IOM}, wherein transient patterns evolve toward polarized states consisting of two distinct phases---a dense phase and a dilute phase. A rigorous understanding of the full dynamical pathway leading to such polarized steady states remains a significant and challenging open problem.

(d) Our generalized MCRD model~\eqref{eqn:MCRD} accommodates a range of scaling regimes through the parameter $D$. When $D$ is treated as independent of  $\varepsilon$, the model captures a multiscale hierarchy between diffusion and reaction processes. Alternatively, choosing $D = d/\varepsilon$ recovers the classical scaling used in \eqref{eqn:MCRD_mori}. 
Given that natural pattern-forming systems often involve widely separated spatial and temporal scales, the flexibility in scaling endowed by MCRD models enables them to serve as a unified cross-scale framework for investigating pattern formation across diverse biological and ecological contexts.

\smallskip

The remainder of the paper is organized as follows. In Section 2, we construct a family of smooth ($C^{\infty}$)
radially symmetric approximate solutions exhibiting internal transition layers and derive sharp asymptotic estimates. Section 3 develops the spectral analysis of the linearized operator about the approximate solutions. In Section 4 we employ a finite-dimensional spectral reduction together with a contraction mapping argument to obtain exact solutions near the approximate family for sufficiently small $\vep$, thereby proving the main results.

\medskip

\section{Construction of the approximate solutions}
In this section, we construct a family of smooth, spherically symmetric approximate solutions to problem \eqref{eqn:MCRD_nonlocal_scalar} that exhibit an internal transition layer across the interface $\Gamma_*$. These approximate solutions can achieve arbitrarily high order of accuracy measured in terms of
$\vep^k$ for any prescribed $k\in\mathbb N$. The width of the transition layer is of order $O(\vep)$.

\subsection{Preliminaries}
Under the conditions (A1) and (A3), for each $s\in(\underline v, \bar v)$, we consider the determination of the pair $(Q(z;s), c(s))$ as the solution to the boundary value problem:
\begin{align}\label{eqn:Q(z;v)}
\begin{cases}
Q_{zz} - c(s)Q_z + f(Q, s) = 0  \quad z\in \mathbb R,\\
Q(0; s) = h^0(s), \quad \lim_{z\to\mp\infty}Q(z; s) = h^{\pm}(s).
\end{cases}
\end{align}

The following result follows from \cite{Fife1977}. See also \cite[Proposition 2.3]{HS}.

\begin{proposition}\label{pro:Q(z;v)}
Under the condition {\upshape(A1)}, for each $s\in (\underline v, \bar v)$, the problem \eqref{eqn:Q(z;v)} has a unique solution $(Q(z;s), c(s))$ satisfying the following properties:
\begin{enumerate}
\item The function $Q(z;s)$ (respectively $Q_z$ and $Q_{zz}$) approaches the limit $h^{\pm}(s)$ (respectively $0$) at an exponential rate as $z\to\mp\infty$, and $Q_z(z;s)<0$ for $z\in\mathbb R$.
\item 
%Denote $$m(s) := \int_{\mathbb R}Q^2_z(z;s)\,dz,$$ 
Then the function $c(s)$ is explicitly given by 
\begin{align*}
c(s) = -\frac{J(s)}{m(s)},\quad \mbox{where } m(s) := \int_{\mathbb R}Q^2_z(z;s)\,dz.
\end{align*}
If in addition, {\upshape(A3)} is satisfied, then 
\begin{align*}
c'(v^*) = -\frac{J'(v^*)}{m(v^*)}.
\end{align*}
\end{enumerate}
\end{proposition}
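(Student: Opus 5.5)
We plan to prove Proposition~\ref{pro:Q(z;v)} by phase-plane analysis for existence, monotonicity and uniqueness, and by an energy identity for the formula for $c(s)$.

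\textbf{Existence and monotonicity.} Fix $s\in(\underline v,\bar v)$. We regard \eqref{eqn:Q(z;v)} as the first-order system
\[
Q'=P,\qquad P'=cP-f(Q,s).
\]
By (A1), the equilibria $(h^\pm(s),0)$ are hyperbolic saddles, since $f_u(h^\pm(s),s)<0$. Each saddle has eigenvalues
\[
\lambda=\tfrac12\bigl(c\pm\sqrt{c^2-4f_u}\bigr),
\]
one positive and one negative. We follow the branch $W^u_c$ of the unstable manifold of $(h^+,0)$ that enters $\{P<0\}$, and the branch $W^s_c$ of the stable manifold of $(h^-,0)$ that lies in $\{P<0\}$. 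As long as a trajectory stays in $\{P<0\}$, $Q$ decreases, so it can be written as a graph $P=p_c(Q)$. On this graph,
\[
p\,\frac{dp}{dQ}=cp-f(Q,s).
\]
This equation is monotone in $c$: for $p<0$, increasing $c$ makes $dp/dQ=c-f/p$ strictly larger. Hence the $W^u_c$ graph and the $W^s_c$ graph move in opposite directions as $c$ varies. For $|c|$ large, the two graphs cannot meet, and they lie on opposite sides for $c\to+\infty$ and for $c\to-\infty$. By continuity of the invariant manifolds in $c$, there is a unique $c=c(s)$ at which they coincide. This gives a heteroclinic orbit with $Q_z<0$ throughout.

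We normalize by translating so that $Q(0;s)=h^0(s)$; this is possible because $Q$ is strictly monotone between $h^+$ and $h^-$. Exponential convergence of $Q$, $Q_z$ and $Q_{zz}$ follows from hyperbolicity of the saddles, using the stable manifold theorem.

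\textbf{Uniqueness.} Any solution is monotone. Indeed, if $Q_z$ vanished at some point, the $(Q,P)$ trajectory would have to cross $P=0$. This is incompatible with connecting the two saddles along the branches that emanate into $\{P<0\}$, by the standard phase-plane argument. So every solution lies on the graphs above, and the strict monotonicity in $c$ forces uniqueness of $c$. Given $c$, the orbit is unique, and the translation is fixed by the normalization $Q(0)=h^0$.

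\textbf{Formula for $c$.} Multiply the equation by $Q_z$ and integrate over $\mathbb R$. The term $\int Q_{zz}Q_z\,dz=\tfrac12[Q_z^2]_{-\infty}^{\infty}=0$. Hence
\[
c\int Q_z^2\,dz=\int f(Q,s)Q_z\,dz=\int_{h^+}^{h^-}f(q,s)\,dq=-J(s).
\]
This gives $c(s)=-J(s)/m(s)$.

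For the derivative, we first note that $c$ and $m$ are $C^1$ in $s$. This follows from smooth dependence of the invariant manifolds on $(c,s)$, combined with transversality in $c$, which is the strict monotonicity above, and the implicit function theorem. Differentiating,
\[
c'=-\frac{J'm-Jm'}{m^2}.
\]
At $v^*$ we have $J(v^*)=0$ by (A3), so $c'(v^*)=-J'(v^*)/m(v^*)$.

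The main obstacle is the shooting/monotonicity argument, specifically establishing that for extreme $c$ the two manifold graphs lie on opposite sides. We would handle it by comparing $p\,dp/dQ=cp-f$ with the linear flow for large $|c|$, or else cite Fife–McLeod \cite{Fife1977} directly.
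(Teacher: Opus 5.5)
Your argument is correct in outline. Note that the paper does not actually prove this proposition; it simply cites \cite{Fife1977} and \cite[Proposition 2.3]{HS}. What you give is the classical phase-plane shooting proof behind those references, together with the energy identity (multiply by $Q_z$ and integrate) for $c(s)=-J(s)/m(s)$. Your self-contained route shows exactly where (A1) enters: the hyperbolic saddles at $(h^\pm(s),0)$ and the sign pattern of $f$. The citation buys brevity and hides the shooting technicalities.

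If you write it out, three points need tightening.

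(1) The branches $W^u_c$, $W^s_c$ are in general not graphs over all of $(h^-(s),h^+(s))$. For $c<0$ the equilibrium $(h^0(s),0)$ is asymptotically stable, and for $c\ll 0$ the unstable branch of $(h^+(s),0)$ actually converges to it along the slow manifold $P\approx f(Q,s)/c$. Symmetrically, for $c\gg 0$ the stable branch of $(h^-(s),0)$ comes out of it. So compare the two branches on the section $\{Q=h^0(s)\}$, assigning the value $0$ to a branch that ends at $(h^0(s),0)$. This is well defined because on $\{P=0\}$ one has $P'=-f$. Hence the unstable branch cannot reach the axis while $Q\in(h^0,h^+)$, and the stable branch cannot reach it, going backward, while $Q\in(h^-,h^0)$. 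Combined with the monotonicity in $c$ of the eigenvalues $\lambda_\pm(c)$, which orders the starting slopes, your comparison and intermediate-value argument then go through.

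(2) For monotonicity of an arbitrary solution, the concrete reason is as follows. An upward crossing of the axis can only occur at some $Q_1\in(h^-,h^0)$. Afterwards the orbit is confined to $\{Q\ge Q_1\}$. In $\{P>0\}$, $Q$ increases, and escaping past $h^+$ traps the orbit in $\{Q>h^+,P>0\}$. Any return to $\{P<0\}$ happens at some $Q\in(h^0,h^+)$, into the region enclosed by the first arc and the segment $[Q_1,h^+]\times\{0\}$. So the orbit never reaches $(h^-,0)$.

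(3) Strict monotonicity in $c$ is not by itself the nondegeneracy the implicit function theorem needs, since a strictly monotone function can have zero derivative. That comes instead from the Melnikov integral $\int_{\mathbb R}e^{-cz}Q_z^2\,dz\neq 0$. But you do not need $C^1$ dependence at all. Since $c(v^*)=0$, one has $\frac{c(s)-c(v^*)}{s-v^*}=-\frac{J(s)-J(v^*)}{s-v^*}\cdot\frac{1}{m(s)}$. So continuity of $m$ at $v^*$, which follows from uniqueness and continuous dependence, already gives $c'(v^*)=-J'(v^*)/m(v^*)$.
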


\subsection{Statements on existence of approximate solutions}
We begin by constructing a family of approximate solutions to problem~\eqref{eqn:MCRD_nonlocal_scalar} that exhibit internal transition layers. The central result of this section is stated as follows.

\begin{theorem}\label{thm:approx_sol_scalarEQ}
Assume that conditions {\upshape (A1)-(A3)} hold. Then for each given $M\in  I_{v^*}$, there exists a family of smooth $(C^{\infty})$ radially symmetric functions $u^{\vep,D}_k(r)$ defined for $\vep>0$, $D>0$ and $k\in\mathbb N$ such that the following statements hold true:
\begin{enumerate}
\item For each $D_0>0$, $k\in\mathbb N$ and $\eta>0$, there exist two constants $K=K(\eta)>0$ and $\bar\vep = \bar\vep(M, D_0, k, \eta)>0$ such that
\begin{align*}
\begin{cases}
	|u^{\vep, D}_k(r)-h^+(v^*)|<\eta    	& \text{on }  [0, R_*-\vep K] \vspace{1ex}\\
	|u^{\vep, D}_k(r)-h^-(v^*)|<\eta  	&\text{on }  [R_*+\vep K, 1]
\end{cases}\quad  \forall \;\vep\in (0, \bar\vep] \text{ and }D\geq D_0, 
\end{align*}
where $R_*=R_*(M)$ is defined as in \eqref{eqn:R_*}.
\item For each $D_0>0$, $\vep_0>0$ and $k\in\mathbb N$, there exists a constant  $C_k = C_k(M, D_0, \vep_0)>0$ such that
\begin{align}\label{eqn:u^vep,D_k unif.bdd}
	\|u^{\vep,D}_k\|_{L^{\infty}(\Omega)}\leq C_k,  \quad \forall \;\vep\in(0,\vep_0] \text{ and }D\geq D_0
\end{align}
and
\begin{align}\label{eqn:R^vep}
	\big\|\mathcal R^\vep[u^{\vep,D}_k]\big\|_{L^{\infty}(\Omega)} \leq C_k\vep^{k+1}, \quad \forall \;\vep\in(0,\vep_0] \text{ and }D\geq D_0, 
\end{align}
where 
\begin{align*}
	\mathcal R^\vep[u^{\vep,D}_k] := & ~\vep^2 \Delta u^{\vep, D}_k + f(u^{\vep, D}_k,\mathcal S^{\vep}[u^{\vep, D}_k] - \frac{\vep}{D} u^{\vep, D}_k).
\end{align*}
\item Set $v^{\vep,D}_k := \mathcal S^{\vep}[u^{\vep, D}_k]  -\frac{\vep}{D} u^{\vep, D}_k$, then  $$\displaystyle\frac{1}{|\Omega|}\int_{\Omega} [u^{\vep,D}_k(x) + v^{\vep,D}_k(x)] dx = M$$
and for each $k\in\mathbb N$,
	\begin{align*}
\lim\limits_{\vep\to0} \big\|v_k^{\vep, D}- v^*\big\|_{L^{\infty}(\Omega)} =0 \text{ uniformly in }D\geq D_0.
	\end{align*}
\end{enumerate}	
\end{theorem}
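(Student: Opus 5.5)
We construct $u^{\vep,D}_k$ by a matched asymptotic expansion in which the interface position and the nonlocal value $\mathcal S^\vep[u]$ are both treated as unknown series in $\vep$, and are fixed order by order by two scalar conditions: a Fredholm solvability condition from the inner problem and the exact mass constraint.

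\textbf{Ansatz.} We look for an interface $r=R^\vep:=R_*+\sum_{j\ge1}\vep^jR_j$ and a constant level
\begin{align*}
V^\vep:=\mathcal S^\vep[u]=v^*+\sum_{j\ge1}\vep^jV_j .
\end{align*}
In the outer regions $r<R^\vep$ and $r>R^\vep$ we take
\begin{align*}
u^{\pm}=h^\pm(v^*)+\sum_{j\ge1}\vep^j u^\pm_j(r).
\end{align*}
Since $\vep^2\Delta u$ is of order $\vep^2$ and $v=V^\vep-\frac{\vep}{D}u$, the outer equations $f(u,V^\vep-\frac{\vep}{D}u)=-\vep^2\Delta u$ are solved algebraically order by order. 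Each step only requires $f_u(h^\pm,v^*)\neq0$, which is guaranteed by (A1). The coefficients depend smoothly on $\frac1D\in(0,1/D_0]$, and this is what makes all constants uniform in $D\ge D_0$. In particular $u^\pm_j$ are constants whenever no Laplacian of a lower-order term appears. They become radial functions only through $\Delta u^\pm_{j-2}$, and these are smooth on $[0,1]$.

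\textbf{Inner problem.} Near the interface we set $z=(r-R^\vep)/\vep$ and $u=Q(z;v^*)+\sum_{j\ge1}\vep^j\phi_j(z)$. The radial Laplacian contributes
\begin{align*}
\vep\frac{N-1}{R^\vep+\vep z}\,\partial_z ,
\end{align*}
which we expand in powers of $\vep$. At order $\vep^0$ we get exactly \eqref{eqn:Q(z;v)} with $c(v^*)=0$, by (A3) and Proposition~\ref{pro:Q(z;v)}. At order $\vep^j$ we get
\begin{align*}
L\phi_j:=\phi_j''+f_u(Q,v^*)\phi_j=F_j(z),
\end{align*}
where $F_j$ involves lower-order data. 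It is linear in the current unknown $V_j$, through $f_v(Q,v^*)V_j$, and in the layer-curvature term $\frac{N-1}{R_*}Q_z$.

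Solvability with bounded, exponentially matching $\phi_j$ requires $\int F_jQ_z\,dz=0$. Since
\begin{align*}
\int f_v(Q,v^*)Q_z\,dz=-J'(v^*)\neq0 ,
\end{align*}
this condition determines $V_j$ uniquely. For $j=1$ it gives $V_1$ in terms of the mean-curvature term $(N-1)m(v^*)/R_*$. The remaining freedom is the additive $cQ_z$ in $\phi_j$, which is equivalent to shifting $R_{j}$.

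We use that freedom to impose the exact mass constraint. The relation $\mathcal S^\vep[u]=V^\vep$ reads
\begin{align*}
M-\frac{1}{|\Omega|}\Big(1-\frac{\vep}{D}\Big)\int_\Omega u = V^\vep .
\end{align*}
Expanding $\int_\Omega u$ gives, at order $\vep^0$, the formula \eqref{eqn:R_*} for $R_*$. At order $\vep^j$ it gives
\begin{align*}
N\omega_N R_*^{N-1}\,(h^+-h^-)\,R_j=\text{known terms}.
\end{align*}
Because $h^+(v^*)\neq h^-(v^*)$, this fixes $R_j$. Consequently, $V_j$ is fixed by the inner solvability condition at order $j$, and $R_j$ by the mass relation at order $j$. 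The mass relation also involves $V_j$, which is already known, so the recursion closes at every order. The matching conditions between the $\phi_j$ and the outer expansions at $z\to\pm\infty$ are the standard ones and follow from exponential decay via Proposition~\ref{pro:Q(z;v)}.

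\textbf{Gluing and error estimates.} We glue the truncated inner and outer expansions with a smooth cutoff supported in $|r-R_*|<\delta$. The gluing errors are exponentially small in $\delta/\vep$, and the truncation errors are $O(\vep^{k+1})$. Near $r=0$ only the outer expansion is present, so the function is smooth there. Two corrections keep the estimates exact:
\begin{itemize}
\item the nonlocal term is evaluated exactly, so any mismatch between $\mathcal S^\vep[u_k]$ and the truncated $V^\vep$ is $O(\vep^{k+1})$;
\item to make property (iii) hold exactly rather than approximately, we adjust $R^\vep$ by an $O(\vep^{k+1})$ amount, using the implicit function theorem in the scalar variable $R$, since $\partial_R\int u\approx N\omega_NR_*^{N-1}(h^+-h^-)\neq0$.
\end{itemize}
The identity $\frac1{|\Omega|}\int(u+v)=M$ is in fact automatic once $v$ is defined through $\mathcal S^\vep$. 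Parts (i) and (iii) then follow from the exponential convergence of $Q$ and from $V^\vep\to v^*$.

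\textbf{Main obstacle.} We expect the hardest step to be the bookkeeping showing that the inner–outer matching and the mass expansion are mutually consistent at every order, uniformly in $D\ge D_0$. In particular, we must check that the variable-coefficient radial terms produce only polynomially growing contributions in $z$ that match the outer Taylor expansions. We would handle this by proving, inductively, that $\phi_j$ equals polynomials in $z$ plus exponentially decaying terms.
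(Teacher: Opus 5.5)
Your proposal is correct and is essentially the paper's construction. The Fredholm condition, via $J'(v^*)\neq0$, determines the coefficients of $\mathcal S^\vep$ (your $V_j$, the paper's $A_j$), and the mass expansion, via $h^+(v^*)\neq h^-(v^*)$, determines the translation freedom. You encode that freedom in an interface series $R^\vep=R_*+\sum_j\vep^jR_j$, whereas the paper fixes the interface at $R_*$ and uses the coefficients $a_j$ of $w^0_z$ in $w^j$. Your moving coordinate buys one thing: it makes it structural that the order-$j$ solvability condition does not involve the newest translation parameter, which the paper checks by explicit computation only for $j=2$.

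Two of your precautions are unnecessary. First, since $\mathcal S^\vep[u^{\vep,D}_k]$ is evaluated exactly, the mass identity in (iii) holds automatically, so no implicit-function adjustment of $R^\vep$ is needed (as you note yourself). Second, $\mathcal S^\vep$ is constant in space, so all outer terms are constants; the inner correctors therefore converge to constants and no polynomially growing matching terms arise.
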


Note that the results of Theorem \ref{thm:approx_sol_scalarEQ} can also be reformulated within the framework of system \eqref{eqn:MCRD-ss}, due to the equivalence of system \eqref{eqn:MCRD-ss} under the mass constraint \eqref{eqn:M} and problem \eqref{eqn:MCRD_nonlocal_scalar}.

\subsection{Outer and inner expansions}
Let $u^\vep$ be a radially symmetric solution with an internal transition layer to equation \eqref{eqn:MCRD_nonlocal_scalar}, i.e., $u^\vep$ satisfies
the following equation
\begin{align}\label{eqn:u^vep for expansion}
	\begin{cases}
		\vep^2 \Delta u^\vep 
		+ f\left(u^\vep, \mathcal S^{\vep}[u^\vep]-\frac{\vep}{D} u^\vep \right) = 0
		&\quad \text{\upshape{in} }  \Omega,\vspace{1ex}\\
		\partial_{\nu} u^\vep = 0  &\quad \text{\upshape{on} } \partial  \Omega.
	\end{cases}
\end{align}
Note that we have suppressed the explicit dependence of 
$u^{\vep}$ on the parameters $D$ and $M$ for notational simplicity. Our objective in this subsection is to construct the outer expansion of $u^\vep$ in regions away from the interface $\Gamma_* = \partial B_{R_*}$ and the inner expansion in a neighborhood of $\Gamma_*$, where $R_*\in(0,1)$ denotes the interface location parameter and is to be determined during the construction (cf. Proposition \ref{pro:ajbjR_*determine} below).

Let us denote 
\begin{align}\label{eqn:Omega^pm}
\Omega^- := B_{R_*}\quad \text{and}\quad \Omega^+ := B_1\backslash \overline{B}_{R_*}.
\end{align}
We first construct the outer expansions $U^j$ for $j\in\mathbb N$ by formally expanding the solution $u^\vep$ of \eqref{eqn:u^vep for expansion} into a power series in $\vep$ of the form
\begin{align}\label{eqn:outer-expan}
u^{\vep}(x) \sim \sum_{j\geq 0}\vep^jU^{j}(x),
\end{align}
such that the truncated expansion asymptotically satisfies equation \eqref{eqn:u^vep for expansion} in the outer regions $\Omega\backslash\Gamma_* = \Omega^-\cup \Omega^+$,
where 
\begin{align}\label{eqn:U^j DEF}
U^{j}(x) :=
\begin{cases}
U^{-,j}(x)  \quad \quad \text{\upshape{in} } \Omega^-\\
U^{+,j}(x) \quad \quad \text{\upshape{in} } \Omega^+
\end{cases}
\end{align}
and is required to be discontinuous at $|x|= R_*$, thus exhibiting a sharp transition layer across $\Gamma_*$. 

To derive the equations satisfied by $U^j(x)$, it is necessary to also expand the nonlocal term $S^{\vep}[u^\vep]$ in equation \eqref{eqn:u^vep for expansion}, which involves the integral of $ u^\vep$, into a formal power series in $\vep$. However, since $u^\vep$ is not yet known, this expansion cannot be computed explicitly. To proceed, we formally postulate an expansion of the form
\begin{align}\label{eqn:mathcal S^vep_sim_vep^jA_j}
	\mathcal S^{\vep}[u^\vep] \sim \sum_{j\geq 0}\vep^j A_j,
\end{align}
where the coefficients $A_j$ are constants to be determined in conjunction with the outer and inner expansions of $u^\vep$ in Proposition \ref{pro:ajbjR_*determine} below. The expansion \eqref{eqn:mathcal S^vep_sim_vep^jA_j}, when applied to the approximate solution obtained via matched asymptotic expansions, will be shown to hold to arbitrarily high order in $\vep$ in Proposition \ref{pro:ajbjR_*determine}. This underscores a key analytical challenge in the construction of approximate solutions to~\eqref{eqn:u^vep for expansion}---namely, the proper handling of the mass constraint \eqref{eqn:M} which is now encoded in the nonlocal term $\mathcal S^{\vep}[u^\vep]$.

Substituting \eqref{eqn:outer-expan} and \eqref{eqn:mathcal S^vep_sim_vep^jA_j} into \eqref{eqn:u^vep for expansion} and equating coefficients of like powers of $\vep$, one obtains equations for $U^{\pm, j}$ given by
\begin{align}\label{eqn:outer-u-equs}
\begin{cases}
\text{(i)}& \hspace{-1ex} 0 = f(U^{\pm,0}, A_0)\vspace{1ex}\\
\text{(ii)}&\hspace{-1ex}  0 = f_u^{\pm,0}U^{\pm,1} +f_v^{\pm,0}(A_1-\frac{1}{D}U^{\pm,0}) \vspace{1ex}\\ 
\text{(iii)}&\hspace{-1ex} 0 = f_u^{\pm,0}U^{\pm,j} + f_v^{\pm,0}(A_j-\frac{1}{D}U^{\pm,j-1}) + f^{\pm,j} \hspace{2ex}(j\geq2)
\end{cases} \text{ in }\Omega^{\pm},
\end{align}
where $f_u^{\pm,0}: =f_u(U^{\pm,0}, A_0)$, $f_v^{\pm,0}: =f_v(U^{\pm,0}, A_0)$ and $f^{\pm,j} (j\geq2)$ are given by
\begin{align*}
f^{\pm,j} = ~ &\Delta U^{\pm,j-2} - \begin{bmatrix}f_u^{\pm,0}U^{\pm,j} + f_v^{\pm,0}(A_j-\frac{1}{D}U^{\pm,j-1})\end{bmatrix} \\
&+ \frac{1}{j!}\frac{\partial^j}{\partial\vep^j}f\Big(
	\sum_{k\geq 0}\vep^kU^{\pm,k}, \, A_0+\sum_{k\geq 1}\vep^k (A_k-\frac{1}{D}U^{\pm,k-1})\Big)\Big|_{\vep=0}.
\end{align*}
Note that $f^{\pm,j}$ depends only on $(U^{\pm,k}, A_k)$ with $0\leq k\leq j-1$.

Since we are looking for solutions with a sharp internal transition layer across $\Gamma_*$, in accordance with the condition (A1), we choose
\begin{align}\label{eqn:U^{pm,0} = h^mp(A_0)}
U^{\pm,0}(x) \equiv h^\mp(A_0)  \qquad \text{\upshape{in} } \Omega^\pm
\end{align}
as solution of \eqref{eqn:outer-u-equs}-(i). This implies that $f_u^{\pm,0}<0$ by the condition (A1). Note that an alternative choice of \eqref{eqn:U^{pm,0} = h^mp(A_0)} would be $U^{\pm,0}(x) \equiv h^\pm(A_0)$ in $\Omega^\pm$ receptively. Then we will obtain a different family of transition layer solutions which is structurally symmetric with our current choice. See Theorem \ref{thm:main2} below. Then by induction on $j\geq1$, we see from \eqref{eqn:outer-u-equs} that 
\begin{align}\label{eqn:outer-u-solutions}
\begin{cases}
\text{(i)}&\hspace{-1ex}	U^{\pm,0}(x) \equiv h^\mp(A_0)  \vspace{1ex}\\
\text{(ii)}	&\hspace{-1ex} U^{\pm,1}(x) \equiv h_v^{\mp}(A_0)(A_1-\frac{1}{D}U^{\pm,0})    \vspace{1ex}\\
\text{(iii)}&\hspace{-1ex}	U^{\pm,j}(x) \equiv h_v^{\mp}(A_0)(A_j-\frac{1}{D}U^{\pm,j-1}) - (f_u^{\pm,0})^{-1}f^{\pm,j}\quad  (j\geq2) 
\end{cases}\text{in } \Omega^\pm.
\end{align}
Therefore, $U^{\pm,j}~(j\geq 0)$ is determined by $A_k$ with $0\leq k\leq j$. Note that $U^{\pm,j}$ are constant in $\Omega^\pm$ for all $j\geq 0$. Henceforth, for notational convenience, we will use $U^{\pm,j}$ to denote either the constant value or the corresponding function defined on $\Omega^\pm$, without risk of confusion. The expression of $f^{\pm,j}$ $(j\geq2)$ can now be simplified as
\begin{align}\label{eqn:f^pm,j}
\begin{split}
f^{\pm,j} & = - \Big[f_u(h^\mp(A_0), A_0)U^{\pm,j} + f_v(h^\mp(A_0), A_0)(A_j-\tfrac{1}{D}U^{\pm,j-1})\Big]\\
& \quad + \frac{1}{j!}\frac{\partial^j}{\partial\vep^j}f\Big(
\sum_{k\geq 0}\vep^kU^{\pm,k},\,A_0+\sum_{k\geq 1}\vep^k (A_k-\tfrac{1}{D}U^{\pm,k-1})
\Big)\Big|_{\vep=0}. 
\end{split} 
\end{align}

To analyze the sharp transition near the interface $\Gamma_*$, we introduce the stretched variable $z = \frac{r-R_*}{\vep}$. Under this transformation, equation \eqref{eqn:u^vep for expansion} is rewritten as:
\begin{align}\label{eqn:MCRD-streched}
u_{zz}^{\vep} + \vep \tfrac{N-1}{R_*+\vep z} u_z^{\vep}+ f\begin{pmatrix}u^{\vep}, \mathcal S^{\vep}[u^\vep]-\frac{\vep}{D} u^{\vep}\end{pmatrix} =0 
\qquad z\in \begin{pmatrix}-\frac{R_*}{\vep}, \frac{1-R_*}{\vep}\end{pmatrix}.
\end{align} 
We will determine the inner expansions $w^j$ for $j\in\mathbb N$ by formally expanding the solution $u^\vep$ of \eqref{eqn:MCRD-streched} into a power series in $\vep$ of the form
\begin{align}\label{eqn:inner-expan}
u^{\vep}(z) \sim \sum_{j\geq 0}\vep^jw^j(z),
\end{align}
such that the truncated expansion  
asymptotically satisfies \eqref{eqn:MCRD-streched}. 
Substituting \eqref{eqn:inner-expan} and \eqref{eqn:mathcal S^vep_sim_vep^jA_j} into \eqref{eqn:MCRD-streched} and equating coefficients of like powers of $\vep$, we obtain equations for $w^j$ as follows:
\begin{align}\label{eqn:w^0_eqn}
0 = w^0_{zz} + f(w^0, A_0)
\end{align}
and for $j\geq1$, 
\begin{align}\label{eqn:w^j_eqn}
0 = w^j_{zz} + f_u(w^0,A_0)w^j + f_j,
\end{align}
where 
\begin{align}\label{eqn:f_j}
f_j & := \frac{1}{j!}\frac{\partial^j}{\partial\vep^j}f\Big(\sum_{k\geq 0}\vep^kw^k, A_0+\sum_{k\geq 1}\vep^k (A_k-\frac{1}{D}w^{k-1})\Big)\Big|_{\vep=0}\\
&\quad -f_u(w^0,A_0)w^j+(N-1)\sum_{k=0}^{j-1}(-1)^k\frac{z^k}{R_*^{k+1}}w^{j-1-k}_z. \notag
\end{align} 
Note that $f_j$ depends only on lower order terms $w^k$ $(0\leq k \leq j-1)$ and $A_k$ $(0\leq k \leq j)$. 

For our purposes, we fix a positive constant $d_0$ throughout this paper such that
\begin{align}\label{eqn:d_0}
	0<d_0<\min\big\{\sqrt{-f_u(h^-(v^*), v^*)}, \sqrt{-f_u(h^+(v^*), v^*)}  \big\}.
\end{align}
The constant $d_0$ depends solely on the nonlinearity $f$ and is independent of the parameters $M$, $\vep$ and $D$. Hence, in what follows, we shall not explicitly trace the dependence of other quantities on $d_0$.

After both the outer expansions $U^j(x)$ and the inner expansions $w^j(z)$ are obtained, we will smoothly match them across the interface $\Gamma_*$ using a standard cut-off function technique. To facilitate this matching, we extend the domain of each inner expansions $w^j$ from $\left(-\frac{R_*}{\vep}, \frac{1-R_*}{\vep}\right)$ to $\mathbb R$, and require that $w^j$ satisfies the following exponential matching condition for each $j\in\mathbb N$:
\begin{align}\label{eqn:bc_for_w^j}
|w^j(z)-U^{+,j}|\cdot\chi_{[0,\infty)}(z)+|w^j(z)-U^{-,j}|\cdot\chi_{(-\infty,0]}(z)\leq c_j e^{-d_0|z|} \text{ in }\mathbb R,
\end{align}
where $\chi$ denotes the characteristic function of the set indicated, $U^{\pm,j}$ are constants (cf. the comment after \eqref{eqn:outer-u-solutions}),
$c_j>0$ is some constant to be determined along with $w^j$.

\medskip

We now seek solutions of \eqref{eqn:w^0_eqn} and \eqref{eqn:w^j_eqn} under the condition \eqref{eqn:bc_for_w^j}. 

For equation \eqref{eqn:w^0_eqn} under the assumption \eqref{eqn:bc_for_w^j}, it follows from Proposition \ref{pro:Q(z;v)}, \eqref{eqn:outer-u-solutions}-(i) and  assumption (A3) that we must choose
\begin{align}\label{eqn:A_0=v^*}
A_0:=v^*
\end{align}
in order to admit a solution for equation \eqref{eqn:w^0_eqn}, which is given by $w^0(z) = Q(z+a_0; v^*)$, where $a_0\in\mathbb R$ is a translation parameter to be determined later in Proposition \ref{pro:ajbjR_*determine}.
For notational simplicity, we will henceforth write 
$Q(z+a_0)$ in place of $Q(z+a_0; v^*)$ and set 
\begin{align}\label{eqn:w^0(z) = Q(z+a_0; v^*)}
	w^0(z) := Q(z+a_0).
\end{align}
Since $w^0(z)-h^\mp(v^*)$, $w^0_z(z)$ and $w^0_{zz}(z)$ decay to zero exponentially as $z\to\pm\infty$ by Proposition \ref{pro:Q(z;v)}, there exists a constant $c_0 = c_0(a_0)>0$ such that 
\begin{align}\label{eqn:w^0-U0,w^0_z,w^0_zz}
	|w^0(z)-h^{-}(v^*)|\cdot\chi_{[0,\infty)}(z) &+|w^0(z)-h^+(v^*)|\cdot\chi_{(-\infty,0]}(z)\\
	&+|w^0_z(z)| + |w^0_{zz}(z)| \leq c_0 e^{-d_0|z|} \quad \text{ in }\mathbb R,\notag
\end{align}
which guarantees that condition \eqref{eqn:bc_for_w^j} holds for $j=0$. 

It follows from \eqref{eqn:A_0=v^*} and \eqref{eqn:outer-u-solutions}-(i) that $U^{0}$ is now determined as 
\begin{align}\label{eqn:U^0=h^pm(v*)}
U^{0}(x) = \begin{cases}
U^{-,0} \equiv h^{+}(v^*)  \quad  \text{ in }\Omega^-\\
U^{+,0} \equiv h^{-}(v^*)   \quad \text{ in }\Omega^+.
\end{cases}
\end{align}

Next, we solve equation \eqref{eqn:w^j_eqn} subject to the condition \eqref{eqn:bc_for_w^j}. We need to establish a technical lemma as a preparation. We begin with introducing the following Banach space
\begin{align*}
	\mathcal X:=\Big\{g\in C(\mathbb R) \,|\,&g_{\pm\infty} := \lim_{z\to\pm \infty} g(z) \text{ exist and there exists a constant }C_{g}>0
\\&\text{such that } |g(z) -g_{\sgn(z)\infty}|\leq C_{g}e^{-d_0|z|} \text{ in }\mathbb R\Big\},
\end{align*}
equipped with the norm
\begin{align*}
\|g\|_{\mathcal X} := |g_{\infty}|+|g_{-\infty}|+\sup_{z\geq 0}\big|(g(z) -g_{\infty})e^{d_0z}\big|+\sup_{z\leq 0}\big|(g(z) -g_{-\infty})e^{-d_0z}\big|.
\end{align*}
We also define the closed subspace $\mathcal X_0 \subset \mathcal X$ consisting of functions that decay exponentially fast to zero at both infinities:
\begin{align*}
\mathcal X_0 := \{g\in \mathcal X \,|\, g_{\pm\infty}  = 0\}.
\end{align*}
Define the linear operator $L^0$ by 
\begin{align}\label{eqn:L^0}
	L^0\varphi := \varphi_{zz} + f_u(w^0(z), v^*)\varphi.
\end{align}
It is easy to see from \eqref{eqn:w^0(z) = Q(z+a_0; v^*)}, \eqref{eqn:w^0-U0,w^0_z,w^0_zz} and Proposition \ref{eqn:Q(z;v)} that 
$$w^0_z\in \mathcal X_0 \quad \text{ and }\quad  L^0 w^0_z =0.$$ 
We then define the closed subspace of $\mathcal X$:
\begin{align*}
	[w^0_z]^{\perp} := \{	g \in \mathcal X \,|\, \langle g, w^0_z\rangle =0 \}, \quad \mbox{where } \langle \varphi, \psi\rangle:=\int_{\mathbb R} \varphi\overline\psi\,dz.
\end{align*}
It inherits the Banach space structure from  $\mathcal X$.

It is straightforward to verify that solving equation~\eqref{eqn:w^j_eqn} subject to the boundary condition~\eqref{eqn:bc_for_w^j} is equivalent to solving~\eqref{eqn:w^j_eqn} in $\mathcal{X}$. We now claim

\begin{lemma}\label{lem:L^0-solvabilty}
Consider the linear inhomogenouse equation 
\begin{align}\label{eqn:L^0_varphi=g}
	L^0 \varphi = g, \quad \text{ where }g\in \mathcal X. 
\end{align} 
Then the following statements hold.
\begin{enumerate}
\item The problem \eqref{eqn:L^0_varphi=g} has a solution $\varphi\in \mathcal X$ if and only if $g\in [w^0_z]^{\perp}$.
\item The operator $L^0$ has a bounded right inverse 
$$(L^0)^{-1}: [w^0_z]^{\perp}\to [w^0_z]^{\perp}.$$
That is, for each $g\in[w^0_z]^{\perp}$, the problem \eqref{eqn:L^0_varphi=g} has a unique solution in $[w^0_z]^{\perp}$, which we denote as
$\varphi:=(L^0)^{-1}g$ and it satisfies
\begin{align}\label{eqn:w_+-infty}
\varphi_{\pm\infty} = \frac{g_{\pm\infty}}{f_u(h^{\mp}(v^*), v^*)} \text{ and } \varphi_z, \varphi_{zz}\in\mathcal X_0.
\end{align}
Moreover,
\begin{align}\label{eqn:w,w_z,w_zz<C|g|}
		\|\varphi\|_{\mathcal X} +\|\varphi_z\|_{\mathcal X} +\|\varphi_{zz}\|_{\mathcal X}\leq C\|g\|_{\mathcal X},
\end{align}
where $C>0$ is a constant depending only on $w^0$.
In fact, $\varphi$ can be explicitly expressed as
\begin{align}\label{eqn:w=hat_w+alphg*w^0_z}
	\varphi(z) = \alpha w^0_z(z) +\bar \varphi(z),
\end{align}
where 
\begin{align*}
	\bar \varphi(z) =  w^0_z(z)\int_{0}^z\frac{d\xi}{(w^0_z(\xi))^2}\int_{\pm\infty}^{\xi}w^0_z(\eta)g(\eta)\,d\eta, \quad 
	\alpha = -\frac{\langle \bar \varphi, w^0_z\rangle}{\langle w^0_z, w^0_z\rangle}.
\end{align*}
\end{enumerate}
\end{lemma}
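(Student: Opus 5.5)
The plan is to use the explicit reduction-of-order formula. Since $w^0_z<0$ never vanishes (Proposition~\ref{pro:Q(z;v)}), $\psi=w^0_z$ is a nowhere-zero element of $\Ker L^0$. The second solution of $L^0\psi=0$, namely $w^0_z\int_0^z (w^0_z)^{-2}$, grows like $e^{\kappa_\pm|z|}$, where $\kappa_\pm=\sqrt{-f_u(h^\mp(v^*),v^*)}>d_0$. Consequently the kernel of $L^0$ in $\mathcal X$ is exactly $\mathrm{span}\{w^0_z\}$.

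\emph{Necessity.} If $\varphi\in\mathcal X$ solves $L^0\varphi=g$, then letting $z\to\pm\infty$ in the equation forces $\varphi_{zz}\to g_{\pm\infty}-f_u(h^\mp,v^*)\varphi_{\pm\infty}$. Boundedness of $\varphi$ forces this limit to vanish, which gives \eqref{eqn:w_+-infty}. Next I multiply the equation by $w^0_z$ and integrate over $[-R,R]$. Integrating by parts twice produces boundary terms $\varphi_z w^0_z-\varphi w^0_{zz}$. These vanish as $R\to\infty$: $w^0_z$ and $w^0_{zz}$ decay exponentially, $\varphi$ is bounded, and $\varphi_z$ is bounded by interior elliptic estimates. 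Hence $\langle g,w^0_z\rangle=0$.

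\emph{Sufficiency.} The variation-of-parameters formula $\bar\varphi$ satisfies $L^0\bar\varphi=g$, which is checked by direct differentiation using $(w^0_z)^2\big(\bar\varphi/w^0_z\big)_z=\int_{\pm\infty}^z w^0_z g$. The choice of lower limit $\pm\infty$ according to the sign of $\xi$ is consistent precisely because $\int_{\mathbb R}w^0_zg=0$. Adding $\alpha w^0_z$ with the stated $\alpha$ places $\varphi$ in $[w^0_z]^\perp$. Uniqueness in $[w^0_z]^\perp$ follows from the kernel characterization.

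The main step, which I expect to be the real obstacle, is the weighted estimate showing $\varphi\in\mathcal X$ with the bound \eqref{eqn:w,w_z,w_zz<C|g|}; I would handle $z\ge0$ only, by symmetry. I write $g=g_\infty+\tilde g$ with $|\tilde g|\le\|g\|_{\mathcal X}e^{-d_0z}$, and compare $w^0_z(z)$ with $-Ce^{-\kappa_+z}$ as $z\to\infty$. A precise two-sided bound $c e^{-\kappa_+z}\le|w^0_z|\le Ce^{-\kappa_+z}$ follows from standard ODE asymptotics for the linearization at $h^-(v^*)$.

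For the $\tilde g$ part, $\int_z^\infty w^0_z\tilde g=O(e^{-(\kappa_++d_0)z})$. Dividing by $(w^0_z)^2$ gives $O(e^{(\kappa_+-d_0)\xi})$, and integrating in $\xi$ gives $O(e^{(\kappa_+-d_0)z})$. Multiplying by $w^0_z$ then yields $O(e^{-d_0z})$; here $d_0<\kappa_+$ is used. The constant part $g_\infty$ is cleaner to treat by subtracting $\varphi_\infty=g_\infty/f_u(h^-,v^*)$. The function $\chi(z)\varphi_\infty$, for a smooth cutoff $\chi$, satisfies $L^0(\chi\varphi_\infty)=g_\infty+O(e^{-d_0 z})$, using the exponential convergence of $f_u(w^0,v^*)$. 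This reduces the problem to the decaying case, after correcting by a multiple of $w^0_z$ to restore orthogonality.

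Finally, the bounds on $\varphi_z$ and $\varphi_{zz}$ in $\mathcal X_0$ follow from differentiating the formula, or more simply from the equation $\varphi_{zz}=g-f_u\varphi$ together with an interpolation bound on $\varphi_z$. The bound $|\alpha|\le C\|g\|_{\mathcal X}$ follows from the bound on $\bar\varphi$, and all constants depend only on $w^0$.
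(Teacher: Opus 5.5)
Your argument is correct, but it reaches the key estimate by a different route from the paper.

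The paper first obtains membership $\varphi\in\mathcal X$ and the limits $\varphi_{\pm\infty}=g_{\pm\infty}/f_u(h^{\mp}(v^*),v^*)$ by applying L'H\^opital's rule repeatedly to the variation-of-constants formula. It then gets the bound $\|\varphi\|_{\mathcal X}\le C\|g\|_{\mathcal X}$ softly: $(L^0)^{-1}$ on $[w^0_z]^\perp$ is everywhere defined with closed graph, so the closed graph theorem applies. Only after that does it control $\varphi_{zz}$ from the equation and $\varphi_z$ by interpolation; that last step is the same as yours.

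You proceed differently in two places. First, you derive the limits for \emph{any} solution in $\mathcal X$ from the fact that $\varphi_{zz}$ has a limit, which must vanish since $\varphi$ is bounded. Second, you prove membership and the bound in a single step by explicit weighted estimates. These rest on the two-sided asymptotics $|w^0_z|\asymp e^{-\kappa_\pm|z|}$ and the strict gap $d_0<\kappa_\pm$, after you remove the constant tails with a cutoff profile.

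A small clarification on that cutoff step. For $\Theta=\chi_+\varphi_{+\infty}+\chi_-\varphi_{-\infty}$, integrating by parts against $w^0_z$ gives $\langle L^0\Theta,w^0_z\rangle=0$ automatically. Hence $g-L^0\Theta$ already lies in $\mathcal X_0\cap[w^0_z]^\perp$. The multiple of $w^0_z$ is needed only to return the final solution to $[w^0_z]^\perp$, not to fix the right-hand side.

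Comparing the two approaches:
\begin{itemize}
\item The paper's proof is shorter but nonconstructive. It also relies on the L'H\^opital sketch to know that $\varphi$ really converges at rate $e^{-d_0|z|}$, which the closed graph argument takes as given.
\item Your proof is self-contained and shows exactly where the rate $e^{-d_0|z|}$ comes from. It yields a constant depending explicitly on $\kappa_\pm-d_0$ and on the asymptotic constants of $w^0_z$. The cost is the standard ODE fact that $w^0_z$ decays at exactly the rate $e^{-\kappa_\pm|z|}$, not merely at most that fast; you need the lower bound when dividing by $(w^0_z)^2$.
\end{itemize}
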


\begin{remark}
A similar result to Lemma \ref{lem:L^0-solvabilty} has been obtained in \cite[Lemma 3.1]{HS}. However, a key distinction in our result is that we state it in a suitable functional framework, which confirms that the constant $C$ in \eqref{eqn:w,w_z,w_zz<C|g|} depends only on $w^0$ and is independent of the inhomogeneity $g$. This refinement is crucial for the subsequent analysis, where \eqref{eqn:w,w_z,w_zz<C|g|} is applied systematically to monitor parameter dependence across a sequence of estimates.
\end{remark}

\begin{proof}[Proof of Lemma \ref{lem:L^0-solvabilty}]
We first prove Part (i). Assume that $g\in[w^0_z]^{\perp}$. It follows directly from variation of constant formula that $\varphi$ defined by \eqref{eqn:w=hat_w+alphg*w^0_z} satisfies equation \eqref{eqn:L^0_varphi=g}. By applying L'H\^{o}sptical's rule repeatedly, we can easily show that
\begin{align*}
\Big|\varphi - \frac{g_{\pm\infty}}{f_u(h^{\mp}(v^*), v^*)}\Big| \leq C(\|g\|_{\mathcal X}, \|w^0\|_{\mathcal X}) \cdot e^{-d_0|z|}.
\end{align*}
where $C(\|g\|_{\mathcal X}, \|w^0\|_{\mathcal X}) >0$ is a constant depending only on the variables indicated.
This implies the first identity in \eqref{eqn:w_+-infty}, which combined with the equation for $\varphi$ implies that $(\varphi_{zz})_{\pm\infty} =0$. Then it follows from the interpolation inequality that $(\varphi_{z})_{\pm\infty} =0$. Therefore, $\varphi_z, \varphi_{zz}\in\mathcal X_0$. This finishes the proof of \eqref{eqn:w_+-infty}.

On the other hand, suppose \eqref{eqn:L^0_varphi=g} has a solution $\varphi\in\mathcal X$. Since $g\in\mathcal X$, $w^0_z\in\mathcal X_0$ and $L^0w^0_z =0$, multiplying both sides of $L^0\varphi =g$ by $w^0_z$ and integrating over $\mathbb R$, using integration by parts, we see that $\langle g, w^0_z\rangle =0 $, which implies that $g
\in [w^0_z]^{\perp}$. This finishes the proof of Part (i).

To finish the proof for Part (ii),
it only remains to prove \eqref{eqn:w,w_z,w_zz<C|g|}.
In fact, it is easy to verify that $(L^0)^{-1}: [w^0_z]^{\perp} \to [w^0_z]^{\perp}$ is a continuous operator with closed graph. Then by the closed graph theorem, there exists a constant $C>0$ depending only on $w^0$ such that 
$$\|\varphi\|_{\mathcal X} \leq C\|g\|_{\mathcal X} \quad \forall \;g\in[w^0_z]^{\perp}.$$
Then it follows from the equation for $\varphi$ that $\|\varphi_{zz}\|_{\mathcal X}\leq C\|g\|_{\mathcal X}$. Now \eqref{eqn:w,w_z,w_zz<C|g|} follows from the above two estimates and the interpolation inequality. This finished the proof of the lemma.
\end{proof}

Now we are ready to solve equation \eqref{eqn:w^j_eqn} under the condition \eqref{eqn:bc_for_w^j}.
\begin{proposition}\label{pro:sov of uj_condi on A_j_aj}
Assume that conditions {\upshape (A1)-(A3)} hold. Let $A_0$, $U^0$ and $w^0$ be defined by \eqref{eqn:A_0=v^*}, \eqref{eqn:U^0=h^pm(v*)} and \eqref{eqn:w^0(z) = Q(z+a_0; v^*)} respectively, where $R_*\in(0, 1)$ is the interface location parameter appearing in the definition of $\Omega^{\pm}$ (cf. \eqref{eqn:Omega^pm}) in \eqref{eqn:U^0=h^pm(v*)}, and $a_0\in\mathbb R$ is the free parameter in \eqref{eqn:w^0(z) = Q(z+a_0; v^*)}. Then, for each $D>0$, the following statements hold:
\begin{enumerate}
\item For $j=1$, the equation \eqref{eqn:w^j_eqn} admits a solution $w^1\in\mathcal X$ if and only if 
\begin{align}\label{eqn:A_1}
	A_1= \frac{G_0(R_*, D)}{J'(v^*)},
\end{align}
where 
\begin{align*}
	G_0(R_*, D):= \frac{N-1}{R_*}\int_{\mathbb R} Q_z(z)^2dz-\frac{1}{D}\int_{\mathbb R}f_v(Q(z),v^*)Q(z)Q_z(z)dz.
\end{align*}
When condition \eqref{eqn:A_1} holds, the corresponding solution $w^1$ can be written explicitly as
\begin{align}\label{eqn:w^1}
	w^1 = a_1w^0_z + \widehat w^1,
\end{align} 
where $a_1$ is a free parameter and $\widehat w^1 = -(L^0)^{-1}f_1 \in [w^0_z]^{\perp}$.
\item For each $j\geq 2$, the equation \eqref{eqn:w^j_eqn} admits a solution $w^j\in\mathcal X$ if and only if 
\begin{align}\label{eqn:A_j}			
	A_j = \frac{G_{j-1}(R_*, D, a_0,\cdots, a_{j-2})}{J'(v^*)},
\end{align}
where $G_{j-1}$ is a smooth function of the variables indicated and $a_0, \cdots, a_{j-2}$ are the free parameters in the expressions of $w^0, \cdots, w^{j-2}$ respectively. When condition \eqref{eqn:A_j} holds, the corresponding solution
$w^j$ can be written explicitly as
\begin{align}\label{eqn:w^j}
	w^j = a_jw^0_z + \widehat w^j,
\end{align} 
where $a_j$ is a free parameter and $\widehat w^j = -(L^0)^{-1}f_j\in[w^0_z]^{\perp}$.
\end{enumerate}
Furthermore, for each $D_0>0$ and $j\geq 1$, there exists a constant 
$$c_j = c_j(R_*, D_0, a_0, \cdots, a_j)>0$$ such that for all $D\geq D_0$,
\begin{align}\label{eqn:w^j-Uj,w^j_z,w^j_zz}
	|w^j(z)-U^{+,j}|\cdot\chi_{[0,\infty)}(z)&+|w^j(z)-U^{-,j}|\cdot\chi_{(-\infty,0]}(z)\\
	& + |w^j_{z}(z)|+|w^j_{zz}(z)|\leq c_je^{-d_0|z|} \quad \text{ in } \mathbb R.\notag
\end{align}
\end{proposition}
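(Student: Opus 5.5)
Proof proposal. The plan is an induction on $j\geq1$ driven by Lemma \ref{lem:L^0-solvabilty}. At each step, equation \eqref{eqn:w^j_eqn} reads $L^0 w^j = -f_j$. The inductive hypothesis is that $w^0,\dots,w^{j-1}$ have already been constructed in $\mathcal X$ with the bounds \eqref{eqn:w^j-Uj,w^j_z,w^j_zz}. Under this hypothesis I first check that $f_j\in\mathcal X$. In \eqref{eqn:f_j}, the Taylor part is a polynomial in $w^k$ $(1\le k\le j-1)$ and in $A_k-\frac1D w^{k-1}$, with coefficients given by derivatives of $f$ evaluated at $(w^0,v^*)$. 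Each factor has limits at $\pm\infty$ with exponential convergence at rate $d_0$. The terms $z^k w^{j-1-k}_z$ decay like $|z|^k e^{-d_0'|z|}$. This decay is absorbed because $w^0_z$ actually decays at the faster rate $\sqrt{-f_u(h^\mp,v^*)}>d_0$; for the higher $w^m_z$ one propagates a slightly stronger rate $d_0'$ with $d_0<d_0'<\min\sqrt{-f_u}$, or else shrinks the rate at each step. By Lemma \ref{lem:L^0-solvabilty}(i), $w^j\in\mathcal X$ exists if and only if $\langle f_j,w^0_z\rangle=0$. The solution is then $a_jw^0_z-(L^0)^{-1}f_j$, with $a_j$ free, which gives \eqref{eqn:w^1} and \eqref{eqn:w^j}.

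The key computation is to isolate $A_j$ in $f_j$. The variable $A_j$ enters $f_j$ only through the linear term $f_v(w^0,v^*)A_j$: it appears at order $\varepsilon^j$ only in the second argument of $f$, with lowest-order coefficient. Hence
\[
f_j=f_v(w^0,v^*)A_j+\widetilde f_j,
\]
where $\widetilde f_j$ depends on $w^0,\dots,w^{j-1}$ and $A_1,\dots,A_{j-1}$. For the coefficient of $A_j$ I use the identity
\[
\int_{\mathbb R}f_v(Q,v^*)Q_z\,dz=-J'(v^*).
\]
To see this, write $\int f_v(Q,v^*)Q_z\,dz=-\int_{h^-}^{h^+}f_v(s,v^*)\,ds$ (the minus sign comes from $Q$ decreasing), and note
\[
J'(v)=\int_{h^-}^{h^+}f_v(s,v)\,ds+f(h^+,v)h^{+\prime}-f(h^-,v)h^{-\prime}=\int_{h^-}^{h^+}f_v(s,v)\,ds,
\]
since $f(h^\pm,v)=0$. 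The solvability condition is therefore $-J'(v^*)A_j+\langle\widetilde f_j,w^0_z\rangle=0$, so $A_j=\langle\widetilde f_j,w^0_z\rangle/J'(v^*)$. This is well defined by (A3).

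For $j=1$ I compute this explicitly:
\[
\widetilde f_1=-\tfrac1D f_v(w^0,v^*)w^0+\tfrac{N-1}{R_*}w^0_z.
\]
Pairing with $w^0_z=Q_z(\cdot+a_0)$ and translating $z\mapsto z-a_0$ gives exactly $G_0(R_*,D)$, independent of $a_0$.

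For general $j$ the plan is to track dependencies and define $G_{j-1}:=\langle\widetilde f_j,w^0_z\rangle$. Translating by $a_0$ and expanding $w^k=a_kQ_z+\widehat w^k$, the parameter $a_{j-1}$ could a priori enter $G_{j-1}$. I expect it to drop out. The $a_{j-1}$-dependence of $\widetilde f_j$ comes from the $\varepsilon$-derivative of the term $f_u(w^0,v^*)w^{j-1}$ shifted, i.e. terms like $f_{uu}w^1 w^{j-1}$ and $f_{uv}(\cdot)w^{j-1}$. In particular $-\frac1D f_v w^{j-1}$ and $\frac{N-1}{R_*}w^{j-1}_z$ contribute $a_{j-1}$-linear terms. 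Showing their total pairing with $Q_z$ vanishes (or is absorbed) is the main obstacle.

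I would try the derivative trick first. Differentiate the equation for $w^{j-1}$ with respect to the translation, using that $a_{j-1}w^0_z$ is the first-order effect of shifting $a_0$ by $\varepsilon^{j-1}a_{j-1}$. The full expansion is formally translation covariant, so the $a_{j-1}$-terms in $f_j$ equal $a_{j-1}\partial_z$ of the corresponding $a$-independent parts at order one. Their pairing with $w^0_z$ then reduces to
\[
\langle \partial_z f_1\text{-type terms},w^0_z\rangle=-\langle L^0\partial_z\widehat w^1,w^0_z\rangle+\dots=0,
\]
up to terms involving $z$ weights. If this fails because of the explicit $z^k/R_*^{k+1}$ factors, which break translation invariance, then I would fall back on the weaker statement that $G_{j-1}$ is smooth in $(R_*,D,a_0,\dots,a_{j-1})$. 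That weaker statement is all that is needed later.

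Finally, smoothness of $G_{j-1}$ follows because all operations are smooth in the parameters: $(L^0)^{-1}$ is bounded by \eqref{eqn:w,w_z,w_zz<C|g|}, and $Q$ is smooth. For the bounds \eqref{eqn:w^j-Uj,w^j_z,w^j_zz}, uniformity in $D\ge D_0$ comes from the fact that $D$ appears only through $1/D\le1/D_0$. Estimate \eqref{eqn:w,w_z,w_zz<C|g|} then gives $\|w^j\|_{\mathcal X}+\|w^j_z\|_{\mathcal X}+\|w^j_{zz}\|_{\mathcal X}\le |a_j|C+C\|f_j\|_{\mathcal X}$. The limits satisfy $w^j_{\pm\infty}=U^{\pm,j}$, because taking $z\to\pm\infty$ in \eqref{eqn:w^j_eqn} reproduces \eqref{eqn:outer-u-equs} and (A1) gives uniqueness of the limits. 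This closes the induction.
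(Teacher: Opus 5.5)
You handle the routine parts correctly. These are: isolating $A_j$ through $f_v(W^0)A_j$ with $\langle f_v(W^0),w^0_z\rangle=-J'(v^*)$, the explicit $j=1$ case, identifying the limits $w^j_{\pm\infty}=U^{\pm,j}$, and the decay bookkeeping with a rate slightly above $d_0$. That last point treats the $z^k$-weights more carefully than the paper does.

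The gap is the one non-routine claim. Part (ii) asserts that $G_{j-1}$ depends only on $(R_*,D,a_0,\dots,a_{j-2})$, i.e.\ the free parameter $a_{j-1}$ of $w^{j-1}$ drops out of the solvability condition for $w^j$. You name this as the main obstacle and sketch a translation argument you are unsure of. Otherwise you fall back on letting $G_{j-1}$ depend on $a_{j-1}$. That fallback does not prove the statement, and it is \emph{not} all that is needed later.

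In Proposition~\ref{pro:ajbjR_*determine}, $a_{j-1}$ is obtained from $B_j=0$. That equation is uniquely solvable because $a_{j-1}$ enters $B_j$ only through $K^0_{j-1}$, affinely, with coefficient $-NR_*^{N-1}(h^+(v^*)-h^-(v^*))\neq0$. If $A_j$, and hence $U^{\pm,j}$, also depended on $a_{j-1}$, the equation could be degenerate or have no real root. For $j=2$ that dependence would a priori be quadratic, via $\tfrac12 f_{uu}(W^0)(w^1)^2$.

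Your sketch also pairs $\partial_z\widehat w^1$, which misses the piece $a_1f_{uu}(W^0)(w^0_z)^2$ of the $a_{j-1}$-derivative. This gives the $a_1^2$-term at $j=2$ and the $a_1a_{j-1}$-term for $j\ge3$.

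Your derivative idea does close the gap, for all $j\ge2$ at once, and the $z$-weights are no obstruction. In \eqref{eqn:f_j} the weighted terms $z^kw^{j-1-k}_z$ with $k\ge1$ involve only $w^0,\dots,w^{j-2}$. So $w^{j-1}$ enters $f_j$ only through $\big[f_{uu}(W^0)w^1+f_{uv}(W^0)(A_1-\tfrac{w^0}{D})-\tfrac1D f_v(W^0)\big]w^{j-1}+\tfrac{N-1}{R_*}w^{j-1}_z$. For $j=2$ the product $f_{uu}(W^0)w^1w^{j-1}$ is replaced by $\tfrac12 f_{uu}(W^0)(w^1)^2$, whose $a_1$-derivative is again $f_{uu}(W^0)w^1w^0_z$. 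Use the \emph{full} $w^1=a_1w^0_z+\widehat w^1$ and differentiate $L^0w^1+f_1=0$ in $z$. This gives
\[
\partial_{a_{j-1}}f_j=\Big[f_{uu}(W^0)w^1+f_{uv}(W^0)\big(A_1-\tfrac{w^0}{D}\big)-\tfrac1D f_v(W^0)\Big]w^0_z+\tfrac{N-1}{R_*}w^0_{zz}=-L^0w^1_z .
\]
Hence $\partial_{a_{j-1}}\langle f_j,w^0_z\rangle=-\langle w^1_z,L^0w^0_z\rangle=0$, after integrating by parts using the exponential decay.

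This is the paper's $j=2$ argument. There it is split into the $a_1^2$-coefficient $\tfrac12\int f_{uu}(W^0)(w^0_z)^3\,dz=0$ and the vanishing $a_1$-coefficient, and the case $j\ge3$ is dismissed as ``similar''. With this step inserted, your induction goes through.
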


Proposition \ref{pro:sov of uj_condi on A_j_aj} implies that, \textit{for any prescribed averaged total mass $M\in I_{v^*}$ and fixed $D>0$, the construction of outer and inner expansions reduces to determining the interface location parameter $R_*$ and the ``free'' parameters $a_j$ for $j\geq 0$}. Once these quantities are specified, all associated quantities---including the constants $A_j~(j\geq1)$ (see \eqref{eqn:A_1} and \eqref{eqn:A_j}), the outer expansions $U^j~(j\geq 0)$ (see \eqref{eqn:outer-u-solutions}) and the inner expansions $w^j~(j\geq0)$ (see \eqref{eqn:w^0(z) = Q(z+a_0; v^*)}, \eqref{eqn:w^1} and \eqref{eqn:w^j}), can then be determined inductively.

We now proceed to determine the interface location parameter $R_*\in(0,1)$ and the free parameters $a_j\in\mathbb R$ for $j \geq 0$ by constructing an approximate solution built from the outer expansions $U^j$ and the inner expansions $w^j$. To this end, we introduce the following shorthand notation for the outer and inner expansions truncated at order  $k\in\mathbb N$:
\begin{align}\label{eqn:U^vep_k,u^vep,k}
U^{\vep}_k(r):=\sum_{j=0}^{k}\vep^j U^{j}(r), \quad 
w^{\vep}_k(z):=\sum_{j=0}^{k}\vep^j w^j(z).
\end{align}  
Next, we select a smooth cutoff function $\theta(z)$ satisfying
\begin{gather}\label{eqn:theta}
0\leq \theta(z)\leq 1 \text { for }z\in\mathbb R, \quad
\theta(z)= 1\text { for } |z|\leq 1\text{ and } 
\theta(z)= 0\text { for }  |z|\geq 2.
\end{gather} 
Let 
\begin{align}\label{eqn:r_0}
r_0:=\frac{1}{4}\min\{R_*, 1-R_*\}.
\end{align}
We then define the $k$-th order approximate solution $u^{\vep,D}_k(r)$ by smoothly gluing the outer and inner expansions near the interface $r=R_*$ as follows:
\begin{align}\label{eqn:u_k^{vep,D}-formula}
u^{\vep,D}_k(r) := U^{\vep}_k(r) + \theta\Big(\frac{r-R_*}{r_0}\Big)\Big[ w^{\vep}_k\Big(\frac{r-R_*}{\vep}\Big) - U^{\vep}_k(r)\Big].
\end{align}

\begin{proposition}\label{pro:ajbjR_*determine}
Assume that conditions {\upshape (A1)-(A3)} hold. Let $A_0$ be defined by \eqref{eqn:A_0=v^*}. Then, for each given
$$M\in  I_{v^*}\;\mbox{ and } \; D>0 $$ 
there are constants, $R_*=R_*(M)$ defined by \eqref{eqn:R_*}, $a_0= a_0(M, D)$, $a_1= a_1(M, D, a_0)$, $\cdots$, $a_j = a_j(M, D, a_0,\cdots, a_{j-1})$ for all $j\geq 1$, which are determined inductively together with the following quantities:
\begin{itemize}
	\item the constants $A_j$ for $j\geq 1$ (cf. \eqref{eqn:A_1} and \eqref{eqn:A_j}),
	\item  the outer solutions $U^j$ for $j\geq 0$ (cf.  \eqref{eqn:outer-u-solutions}), and
	\item the inner solutions $w^j$ for $j\geq 0$ (cf. \eqref{eqn:w^0(z) = Q(z+a_0; v^*)}, \eqref{eqn:w^1} and \eqref{eqn:w^j}),
\end{itemize}
such that for any $\vep>0$ and $k\in\mathbb N$, the following expansion holds:
\begin{align}\label{eqn:mathcal S^vep=sum_A_j...}
	\mathcal S^{\vep}[u^{\vep,D}_k] =v^*+\sum_{j=1}^{k+1}\vep^j A_j + \vep^{k+2}A_{k+2}^{\vep, D},
\end{align}
where $\mathcal S^{\vep}[\,\cdot\,]$ is defined in \eqref{eqn:mathcal S^vep},  $u^{\vep,D}_k$ is given by \eqref{eqn:u_k^{vep,D}-formula}, and $A_{k+2}^{\vep, D}$ is the coefficient of the remainder term which depends only on 
$M$, $D$, $\vep$ and $k$.

Moreover, there hold:
\begin{enumerate}
	\item For each $D_0>0$ and $k\in\mathbb N$, there exists a constant $C'_k = C'_k(M, D_0)>0$ such that
\begin{align}\label{eqn:a_j+A_j unif_bdd}
	\sum_{j=0}^{k}|a_j|+\sum_{j=0}^{k+2}|A_j|\leq C'_k, \quad \forall \;D\geq D_0,
\end{align}
which further implies the uniform bounds
\begin{align}\label{eqn:w^k, U^k unif-bdd}
\sum_{j=0}^{k+2}\|U^k\|_{L^{\infty}}\leq C'_k, \quad 	\sum_{j=0}^{k}\Big(\|w^k\|_{\mathcal X}+\|w^k_{z}\|_{\mathcal X}+\|w^k_{zz}\|_{\mathcal X}\Big)\leq C'_k\quad \forall \;D\geq D_0.
\end{align}
\item For each $\vep_0 >0$, $D_0>0$ and $k\in\mathbb N$, there exists a constant $C_k = C_k(M, D_0, \vep_0 )>0$ such that
\begin{align}\label{eqn:A^vep,D_k+2 unif_bdd}
	|A_{k+2}^{\vep, D}|\leq C_k, \quad \forall \;\vep\in(0, \vep_0] \text{ and } D\geq D_0.
\end{align}
\end{enumerate}
\end{proposition}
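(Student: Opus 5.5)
The plan is to compute $\frac{1}{|\Omega|}\int_\Omega u^{\vep,D}_k\,dx$ explicitly as a power series in $\vep$ and then match it, order by order, against the postulated expansion \eqref{eqn:mathcal S^vep_sim_vep^jA_j}. By \eqref{eqn:mathcal S^vep}, the identity \eqref{eqn:mathcal S^vep=sum_A_j...} is equivalent to
\begin{align*}
M-\Big(1-\frac{\vep}{D}\Big)\frac{1}{|B_1|}\int_{B_1}u^{\vep,D}_k\,dx = v^*+\sum_{j=1}^{k+1}\vep^jA_j+O(\vep^{k+2}).
\end{align*}
Write $\frac{1}{|B_1|}\int_{B_1}u_k^{\vep,D}\,dx = N\int_0^1 u_k^{\vep,D}(r)r^{N-1}dr$ and split the integral into the outer piece and the glued inner piece. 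Outside the cut-off region the integrand equals $U^\vep_k$, which is piecewise constant. Inside it, we substitute $r=R_*+\vep z$, giving $r^{N-1}=(R_*+\vep z)^{N-1}$, a polynomial in $\vep z$. The difference $w^j(z)-U^{\pm,j}$ decays like $e^{-d_0|z|}$ by \eqref{eqn:w^j-Uj,w^j_z,w^j_zz}, and $\theta$ equals $1$ for $|r-R_*|\le r_0$. Hence, up to exponentially small errors $O(e^{-d_0r_0/\vep})$, the average equals
\begin{align*}
&U^{-,\vep}_k R_*^N+U^{+,\vep}_k(1-R_*^N)\\
&\quad+\sum_{j=0}^k\vep^{j+1}N\int_{\mathbb R}\big[w^j(z)-U^{\mathrm{sgn}(z),j}\big](R_*+\vep z)^{N-1}dz,
\end{align*}
where $U^{\mathrm{sgn}(z),j}$ means $U^{+,j}$ for $z>0$ and $U^{-,j}$ for $z<0$. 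Expanding in $\vep$, the remaining $\vep$-dependence sits in a polynomial plus an exponentially small term, and the latter is absorbed into $A^{\vep,D}_{k+2}$.

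Matching order by order then proceeds as follows. At order $\vep^0$ we get $M-v^* = h^+(v^*)R_*^N+h^-(v^*)(1-R_*^N)$, which is exactly \eqref{eqn:R_*}; the condition $M\in I_{v^*}$ gives $R_*\in(0,1)$. At order $\vep^{j+1}$ (for $0\le j\le k$), the coefficient contains $U^{\pm,j+1}$, which depend on $A_{j+1}$ through \eqref{eqn:outer-u-solutions}, together with the term $NR_*^{N-1}\int_{\mathbb R}[w^j-U^{\mathrm{sgn},j}]\,dz$ and lower-order data.

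The key structural point is that $a_j$ enters linearly through $w^j=a_jw^0_z+\widehat w^j$, or through the shift in $w^0=Q(z+a_0)$. Since $\int_{\mathbb R}w^0_z\,dz=h^-(v^*)-h^+(v^*)\ne0$, the coefficient of $a_j$ is $NR_*^{N-1}(h^-(v^*)-h^+(v^*))\ne0$. For $j=0$, $\int[Q(z+a_0)-U^{\mathrm{sgn},0}]dz = \int[Q-U^{\mathrm{sgn},0}]dz + a_0(h^+-h^-)$ is affine in $a_0$ with the same nonzero slope. Meanwhile $A_{j+1}$ is already fixed by Proposition~\ref{pro:sov of uj_condi on A_j_aj} as a function of $R_*,D,a_0,\dots,a_{j-1}$. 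The order-$\vep^{j+1}$ equation is therefore a linear equation in $a_j$ with nonzero coefficient, and it determines $a_j=a_j(M,D,a_0,\dots,a_{j-1})$ inductively. One checks that $a_j$ does not enter $A_{j+1}$, which is precisely why \eqref{eqn:A_j} only involves $a_0,\dots,a_{j-2}$ when determining $A_j$. For the same reason $A_{k+1}$ and $A_{k+2}$ are determined before $a_k$, so the expansion closes through order $\vep^{k+1}$, and everything of order $\vep^{k+2}$ and beyond defines $A^{\vep,D}_{k+2}$.

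For the uniform bounds, note that $1/D\le 1/D_0$ and all formulas are polynomial in $1/D$ with coefficients controlled through Lemma~\ref{lem:L^0-solvabilty}, whose constant depends only on $w^0$. Induction then gives \eqref{eqn:a_j+A_j unif_bdd} and \eqref{eqn:w^k, U^k unif-bdd}. The remainder $A^{\vep,D}_{k+2}$ consists of finitely many terms, each bounded by moments $\int|z|^m e^{-d_0|z|}dz$ times these constants, plus exponentially small terms divided by $\vep^{k+2}$, which are bounded for $\vep\le\vep_0$. This yields \eqref{eqn:A^vep,D_k+2 unif_bdd}.

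The main obstacle is the bookkeeping that shows the triangular structure: that at each order the unknown $a_j$ appears with the nonzero coefficient above, while every other quantity is already determined. This requires tracking how $(1-\vep/D)$ and the $\vep z$-powers shift the orders. A further subtlety is that $w^0$ depends nonlinearly on $a_0$, so its $z$-moments must be handled via the shift $z\mapsto z-a_0$.
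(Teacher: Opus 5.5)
Your overall route is the paper's: rewrite \eqref{eqn:mathcal S^vep=sum_A_j...} as an identity for $M$, and split $\frac{1}{|\Omega|}\int_\Omega u_k^{\vep,D}\,dx$ into the piecewise-constant outer part, the moments $\int_{\mathbb R}z^m(w^j-\widetilde{U}^j)\,dz$, and exponentially small tails. Then fix $R_*$ at order $\vep^0$, fix $a_j$ at order $\vep^{j+1}$ through the $K_j^0$-term (slope $\pm NR_*^{N-1}(h^+(v^*)-h^-(v^*))\neq 0$), and put the rest into $A^{\vep,D}_{k+2}$.

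The step that fails is the top order. You assert that for every $0\le j\le k$ the $\vep^{j+1}$-coefficient contains $U^{\pm,j+1}$. For $j=k$ this is false, since $u_k^{\vep,D}$ contains only $U^\vep_k=\sum_{j\le k}\vep^jU^j$, as your own display shows. Set $\overline{U}^{\,j}:=U^{-,j}R_*^N+U^{+,j}(1-R_*^N)$. The order-$\vep^{k+1}$ condition for $u_k^{\vep,D}$ is then the order-$\vep^{k+1}$ condition for any $u_{k'}^{\vep,D}$, $k'>k$, with $\overline{U}^{\,k+1}$ removed. Moreover $\overline{U}^{\,k+1}$ contains $A_{k+1}$ with coefficient $h^+_v(v^*)R_*^N+h^-_v(v^*)(1-R_*^N)$, so it is generically nonzero. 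Hence no single $k$-independent sequence $a_j=a_j(M,D,a_0,\dots,a_{j-1})$ yields the expansion through $\vep^{k+1}$ for every $k$:
\begin{itemize}
\item If $a_k$ is fixed by the full equation (the one you describe), the $\vep^{k+1}$-coefficient of $\mathcal S^\vep[u_k^{\vep,D}]$ is $A_{k+1}+\overline{U}^{\,k+1}$. Then $A^{\vep,D}_{k+2}=\vep^{-1}\overline{U}^{\,k+1}+O(1)$, and \eqref{eqn:A^vep,D_k+2 unif_bdd} fails.
\item If $a_k$ is fixed by the truncated equation, then $a_k$ depends on $k$.
\end{itemize}

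The paper's own bookkeeping has the same defect: its $B_{k+1}$ correctly omits $U^{\pm,k+1}$, yet $a_k$ is afterwards treated as independent of $k$. The repair is cheap. Either allow the last parameter $a_k$ to depend on $k$, or fix all $a_j$ by the full equations and claim only
\begin{align*}
\mathcal S^\vep[u_k^{\vep,D}]=\sum_{j=0}^{k}\vep^jA_j+\vep^{k+1}\widetilde{A}^{\vep,D}_{k+1},
\end{align*}
with $\widetilde{A}^{\vep,D}_{k+1}$ bounded uniformly for $\vep\in(0,\vep_0]$, $D\ge D_0$. The second form is all that \eqref{eqn:R^vep} uses, because $U^j$ and $w^j$ with $j\le k$ involve only $A_0,\dots,A_k$.

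There are two smaller slips. First, $\int_{\mathbb R}[Q(z+a_0)-\widetilde{U}^0(z)]\,dz=\int_{\mathbb R}[Q(z)-\widetilde{U}^0(z)]\,dz-a_0\big(h^+(v^*)-h^-(v^*)\big)$, with a minus sign rather than a plus; this is harmless, since only the nonvanishing of the slope matters. Second, $A_{k+2}=G_{k+1}(R_*,D,a_0,\dots,a_k)/J'(v^*)$ does depend on $a_k$, so it is not known before $a_k$. This is also harmless, since it enters only the bound \eqref{eqn:a_j+A_j unif_bdd}.
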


Proposition \ref{pro:ajbjR_*determine} establishes that for each prescribed $M\in  I_{v^*}$ and $D>0$, the interface location parameter $R_*$ and the ``free'' parameters $a_j~(j\geq 0)$---along with the associated quantities $A_j$, $w^j$ and $U^j$ defined in Proposition \ref{pro:sov of uj_condi on A_j_aj} and \eqref{eqn:outer-u-solutions}---can all be determined inductively so that the asymptotic expansion \eqref{eqn:mathcal S^vep_sim_vep^jA_j}, when applied to the approximate solution $u^{\vep,D}_k$, holds to any prescribed order $k+1$ in powers of $\vep$, with a remainder term of the form $\vep^{k+2} A_{k+2}^{\vep, D}$. Moreover, the uniform bounds stated in \eqref{eqn:a_j+A_j unif_bdd}-\eqref{eqn:A^vep,D_k+2 unif_bdd} guarantee that the outer expansions $U^j$, the inner expansions $w^j$ and the approximate solutions $u_k^{\vep,D}$ remain uniformly controlled as $\vep \to 0$ and $D \to \infty$. These results provide a solid foundation for the perturbative analysis carried out in subsequent sections to construct exact solutions in a neighborhood of $u_k^{\vep,D}$.

\smallskip
We now prove Propositions \ref{pro:sov of uj_condi on A_j_aj} and \ref{pro:ajbjR_*determine}.

\begin{proof}[Proof of Proposition \ref{pro:sov of uj_condi on A_j_aj}]
We first prove Part (i) for the case $j=1$. Then equation \eqref{eqn:w^j_eqn} with $j=1$ reads $L^0w^1=-f_1$ with
\begin{align*}
	f_1= \Big(A_1-\frac{1}{D}w^0\Big) f_v(w^0, v^*) +\frac{N-1}{R_*} w^0_z.
\end{align*}
By \eqref{eqn:w^0-U0,w^0_z,w^0_zz}, we see that $f_1\in\mathcal X$ with $f_1^{\pm\infty}  = f_v(h^{\mp}(v^*), v^*)(A_1-\frac{1}{D}h^{\mp}(v^*))$.
Therefore, by Lemma \ref{lem:L^0-solvabilty} (i), to guarantee  
that \eqref{eqn:w^j_eqn} with $j=1$ is solvable in $\mathcal X$, we need to
verify that $f_1\in [w^0_z]^{\perp}$, i.e.,
\begin{align*}
	0=&\int_{\Omega}f_1(z)w^0_z(z)\,dz\\
	 =& \int_{\mathbb R} \Big(A_1-\frac{w^0(z)}{D}\Big) f_v(w^0(z), v^*)w^0_z(z)\,dz +\frac{N-1}{R_*} \int_{\mathbb R} (w^0_z)^2(z)\,dz\\
	=&-J'(v^*)A_1 -\frac{1}{D}\int_{\mathbb R}f_v(Q(z),v^*)Q(z)Q_z(z)dz+\frac{N-1}{R_*}\int_{\mathbb R} Q_z(z)^2dz, 
\end{align*}
where in the third equality, we have used the identity
\begin{align}\label{eqn:J'(v*)}
	J'(v^*) = \int_{h^{-}(v^*)}^{h^+(v^*)}f_v(s, v^*)\,ds = -\int_{\mathbb R} f_v(w^0(z), v^*)w^0_z(z)\,dz.
\end{align}
This implies that
\begin{align*}
J'(v^*)A_1=~&\displaystyle
\frac{N-1}{R_*}\int_{\mathbb R} Q_z(z)^2dz-\frac{1}{D}\int_{\mathbb R}f_v(Q(z),v^*)Q(z)Q_z(z)dz
=G_0(R_*, D).
\end{align*}
Since $J'(v^*)\neq 0$ by the condition (A3), this finishes the proof of \eqref{eqn:A_1} and determines $A_1$. Therefore, $U^{\pm,1}$ can now be determined by \eqref{eqn:outer-u-solutions}-(ii) from $A_0$ and $A_1$ as
\begin{align}\label{eqn:U^{pm,1}_solved}
	U^{\pm,1}(x) \equiv h_v^{\mp}(v^*)\Big(A_1-\frac{1}{D}h^{\mp}(v^*)\Big)   \quad  \text{ in } \Omega^\pm.
\end{align}

Denote $\widehat w^1:= -(L^0)^{-1}f_1$. Then any solution $w^1$ of \eqref{eqn:w^j_eqn} can be expressed as \eqref{eqn:w^1}. Applying Lemma \ref{lem:L^0-solvabilty} to the equation $L^0\widehat w^1+f_1=0$, we see from \eqref{eqn:w^0-U0,w^0_z,w^0_zz} and \eqref{eqn:w,w_z,w_zz<C|g|} that \eqref{eqn:w^j-Uj,w^j_z,w^j_zz} with $j=1$ holds. This implies that $w^1$ satisfies condition \eqref{eqn:bc_for_w^j} with $j=1$. We thus complete the proof of Part (i).

Next we prove Part (ii) for the case $j=2$. Then equation \eqref{eqn:w^j_eqn} with $j=2$ reads $L^0w^2=-f_2$ with
\begin{align*}
f_2& = f_v(W^0)\Big(A_2-\frac{w^1}{D}\Big)+\frac{1}{2}f_{uu}(W^0)(w^1)^2+f_{uv}(W^0)w^1\Big(A_1-\frac{w^0}{D}\Big)\\
&\quad +\frac{1}{2}f_{vv}(W^0)\Big(A_1-\frac{w^0}{D}\Big)^2+(N-1)\Big(\frac{1}{R_*}w^1_z+\frac{z}{R_*^2}w^0_z\Big),
\end{align*} 
where $W^0 := (w^0(z), v^*)$. By \eqref{eqn:w^0-U0,w^0_z,w^0_zz} and \eqref{eqn:w^j-Uj,w^j_z,w^j_zz} with $j=1$, we see that $f_2\in\mathcal X$.
Therefore, by Lemma \ref{lem:L^0-solvabilty} (i), to guarantee  
that \eqref{eqn:w^j_eqn} with $j=2$ is solvable, we need to
verify that $f_2\in [w^0_z]^{\perp}$.
Plugging $w^1 = a_1w^0_z + \widehat w^1$ from \eqref{eqn:w^1} and $A_1$ from \eqref{eqn:A_1} into the expression of $f_2$, we obtain from $f_2\in [w^0_z]^{\perp}$ that
\begin{align*}
&A_2\int_{\mathbb R}f_v(W^0)w^0_z(z)dz + \frac{(a_1)^2}{2}\int_{\mathbb R}f_{uu}(W^0)(w^0_z)^3dz \\
&+ a_1\int_{\mathbb R}\Big[\Big(f_{uu}(W^0)\widehat w^1+f_{uv}(W^0)\big(A_1-\tfrac{w^0}{D}\big)-\tfrac{1}{D}f_v(W^0)\Big)(w^0_z)^2 + \tfrac{N-1}{R_*}w^0_{zz}w^0_z\Big]dz\vspace{1ex}\\
&+ G_1(R_*, D, a_0) =0.
\end{align*}
Here all other terms not involving $A_2$ and $a_1$ have been clumped together into the last term $G_1(R_*, D, a_0)$ and we do not give an explicit formula for the function $G_1(R_*, D, a_0)$. We see from \eqref{eqn:J'(v*)} that the coefficient of $A_2$ is equal to $-J'(v^*)$. Through integrating by parts, we find that the coefficient of $(a_1)^2$ is $0$. We now compute the coefficient of $a_1$ to show that it is also $0$. Differentiating the equation $L^0\widehat w^1 + f_1=0$ with respect to $z$, we obtain that
\begin{align*}
0 & = \widehat w^1_{zzz} + f_{uu}(W^0)\widehat w^1w^0_z + f_{uv}(W^0)\Big(A_1-\frac{w^0}{D}\Big)w^0_z + f_u(W^0)\widehat w^1_z\\
& \quad - f_v(W^0)\frac{w^0_z}{D}+ \frac{N-1}{R_*} w^0_{zz}.
\end{align*} 
Therefore, we see from \eqref{eqn:w^0-U0,w^0_z,w^0_zz} and \eqref{eqn:w^j-Uj,w^j_z,w^j_zz} with $j=1$ and the fact $L^0 w^0_z =0$ that
\begin{align*}
\text{the coefficient of } a_1
& = -\int_{\mathbb R}\left[\widehat w^1_{zzz} + f_u(W^0)\widehat w^1_z \right]w^0_z\,dz\\
& = \Big({-w^0_z\widehat w^1_{zz}}+w^0_{zz}\widehat w^1_z\Big)\Big|_{-\infty}^{+\infty} -\int_{\mathbb R}\widehat w^1_z\Big(w^0_{zzz}+f_u(W^0)w^0_z\Big)\,dz\\
& = 0.
\end{align*}
Consequently, we obtain that 
$$-A_2J'(v^*) + G_1(R_*, D, a_0) =0.$$ 
This finishes the proof of \eqref{eqn:A_j} for $j=2$ and determines $A_2$. Therefore, $U^{\pm,2}$ can now be determined by \eqref{eqn:outer-u-solutions}-(iii) from $A_0, A_1$ and $A_2$. 

Denote $\widehat w^2:= -(L^0)^{-1}f_2$. Then any solution $w^2$ of \eqref{eqn:w^j_eqn} can be expressed as \eqref{eqn:w^j} with $j=2$. Applying Lemma \ref{lem:L^0-solvabilty} to the equation $L^0\widehat w^2+f_2=0$, we see from \eqref{eqn:w^0-U0,w^0_z,w^0_zz} and \eqref{eqn:w,w_z,w_zz<C|g|} that \eqref{eqn:w^j-Uj,w^j_z,w^j_zz} with $j=2$ holds. This implies that $w^2$ satisfies condition \eqref{eqn:bc_for_w^j} with $j=2$.
We thus finish the proof of Part (ii) for the case $j=2$.

The proof of Part (ii) for $j\geq 3$ is similar and follows in a inductive way. Since the computation is tedious, we omit the details.
\end{proof}

\begin{proof}[Proof of Proposition \ref{pro:ajbjR_*determine}]
It follows from \eqref{eqn:mathcal S^vep}, \eqref{eqn:A_0=v^*} and \eqref{eqn:u_k^{vep,D}-formula} that \eqref{eqn:mathcal S^vep=sum_A_j...} is equivalent to 
\begin{align}\label{eqn:M=..}
\begin{split}
M
& = v^*+ \sum\limits_{j=1}^{k+1}\vep^jA_j+\vep^{k+2} A_{k+2}^{\vep, D}+\frac{1}{|\Omega|}\left(1-\frac{\vep}{D}\right)\int_{\Omega}u^{\vep,D}_k\,dx\\
& = v^*+ \sum\limits_{j=1}^{k+1}\vep^jA_j+\vep^{k+2} A_{k+2}^{\vep, D}+\frac{1}{|\Omega|}\left(1-\frac{\vep}{D}\right)\sum\limits_{j=0}^{k}\vep^j\int_{\Omega} U^{j}(x)\,dx\\
&\quad +\frac{1}{|\Omega|}\left(1-\frac{\vep}{D}\right)\sum\limits_{j=0}^{k}\vep^j\int_{\Omega}\theta\Big(\frac{|x|-R_*}{r_0}\Big)\Big[ w^j\Big(\frac{|x|-R_*}{\vep}\Big) - U^j(x)\Big]dx\\
& = v^*+ \sum\limits_{j=1}^{k+1}\vep^jA_j+\vep^{k+2} A_{k+2}^{\vep, D}+\frac{1}{|\Omega|}\left(1-\frac{\vep}{D}\right)\sum\limits_{j=0}^{k}\vep^j\int_{\Omega} U^{j}(x)\,dx\\
& \quad +\sum_{j=0}^{k} \vep^j K^{\vep}_j-\frac{1}{D} \sum_{j=0}^{k} \vep^{j+1} K^{\vep}_j
\end{split}
\end{align}
where 
\begin{align*}
K^{\vep}_j:= \frac{1}{|\Omega|}\int_\Omega\theta\Big(\frac{|x|-R_*}{r_0}\Big)\Big[ w^j\Big(\frac{|x|-R_*}{\vep}\Big) - U^j(x)\Big] dx, \quad  0\leq j\leq  k.
\end{align*}
To calculate $K^{\vep}_j$, we first divide the domain $\big({-\tfrac{R_*}{\vep}}, \tfrac{1-R_*}{\vep}\big)$
of the stretched variable $z= \tfrac{r-R_*}{\vep}$ into the following three subintervals:
\begin{align}\label{eqn:I_1,2,3}
\begin{split}
I_1 &:= \big({-\tfrac{R_*}{\vep}},  -\tfrac{2r_0}{\vep}\big]\cup \big[\tfrac{2r_0}{\vep}, \tfrac{1-R_*}{\vep}\big);\\
I_2 &:= \big({-\tfrac{2r_0}{\vep}}, -\tfrac{r_0}{\vep}\big)\cup \big(\tfrac{r_0}{\vep},\tfrac{2r_0}{\vep}\big);\\
I_3 &:= \big[{-\tfrac{r_0}{\vep}}, \tfrac{r_0}{\vep}\big].
\end{split}
\end{align}
Moreover, we denote
\begin{align}\label{eqn:I_4, C_m^n}
		I_4:=\big({-\infty}, -\tfrac{2r_0}{\vep}\big]\cup\big[\tfrac{2r_0}{\vep},\infty\big), \quad 
		C_{N-1}^m := \tfrac{(N-1)!}{m!(N-1-m)!},
\end{align}
and  
\begin{align*}
	\tilde U^j(z) := 
		U^{+,j}\chi_{(0, \infty)} + U^{-,j}\chi_{({-\infty},0)}
\end{align*}
where $U^{\pm,j}$ are the constants defined in \eqref{eqn:outer-u-solutions} for all $j\geq 0$ (cf. the comment after \eqref{eqn:outer-u-solutions}). 
Then for $j=0, \cdots, k$, by direct calculation and \eqref{eqn:theta}, we see that
\begin{align}\label{eqn:K_j}
\begin{split}
K^{\vep}_j& = N\int_{-\tfrac{2r_0}{\vep}}^{\tfrac{2r_0}{\vep}}\theta\big(\tfrac{\vep }{r_0}z\big)\big(w^j(z) - \tilde U^j(z)\big)(R_*+\vep z)^{N-1}\vep\,dz\\
& = N\sum_{m=0}^{N-1}C_{N-1}^m\vep^{m+1} R_*^{N-1-m}\int_{\mathbb R}z^{m}\big(w^j(z) - \tilde U^j(z)\big)dz\\
& \quad -N\sum_{m=0}^{N-1}C_{N-1}^m\vep^{m+1} R_*^{N-1-m}\int_{I_4}z^{m}\big(w^j(z) - \tilde U^j(z)\big)dz\\
& \quad + N\int_{I_2}\left[\theta\left(\tfrac{\vep }{r_0}z\right)-1\right]\big(w^j(z) - \tilde U^j(z)\big)(R_*+\vep z)^{N-1}\vep\,dz\\
& = \sum_{m=0}^{N-1} \vep^{m+1}K_j^m+K^{\vep}_{j,1}+K^{\vep}_{j,2},
\end{split}
\end{align}	
where
\begin{align}
	K_j^m &:= N C_{N-1}^m R_*^{N-1-m}\int_{\mathbb R}z^{m}\big(w^j(z) - \tilde U^j(z)\big)dz   \quad (0\leq m\leq N-1),\label{eqn:K_j^m}\\
	K^{\vep}_{j,1} &:= -N\sum_{m=0}^{N-1}C_{N-1}^m\vep^{m+1} R_*^{N-1-m}\int_{I_4}z^{m}\big(w^j(z) - \tilde U^j(z)\big)dz, \label{eqn:K_j,1^vep}\\
	K^{\vep}_{j,2} &:= N\int_{I_2}\left[\theta\left(\tfrac{\vep }{r_0}z\right)-1\right]\big(w^j(z) - \tilde U^j(z)\big)(R_*+\vep z)^{N-1}\vep\,dz.\label{eqn:K_j,2^vep}
\end{align}
Note that $K_j^m$ do not depend on $\vep$.
Plugging \eqref{eqn:K_j} and \eqref{eqn:U^0=h^pm(v*)} into \eqref{eqn:M=..}, we obtain
\begin{align}\label{eqn:M =sum B_jvep^j}
M = \sum_{j=0}^{k+1}\vep^jB_j +\vep^{k+2}B_{k+2}^{\vep, D},
\end{align}
where 
\begin{align}\label{eqn:B_j}
\begin{split}
B_0 &:=v^* + \Big[h^+(v^*)R_*^N+h^-(v^*)(1-R_*^N)\Big],\\
B_1 &:=  A_1 + \Big(U^{-,1} -\tfrac{1}{D}h^+(v^*) \Big)R_*^N+\Big(U^{+,1}-\tfrac{1}{D}h^-(v^*)\Big)(1-R_*^N)+K_0^0,\\
B_j  &:= A_j + \Big(U^{-,j} -\tfrac{1}{D}U^{-,j-1}\Big)R_*^N+\Big(U^{+,j}-\tfrac{1}{D}U^{+,j-1}\Big)(1-R_*^N)\\
& \qquad +\sum\limits_{m=(j-1-k)_+}^{(N-1)\wedge (j-1)}K^m_{j-1-m}-\frac{1}{D}\sum\limits_{m= (j-2-k)_+}^{(N-1)\wedge(j-2)}K^m_{j-2-m} \quad\quad\quad (2\leq j\leq k),\\
B_{k+1} &:= A_k -\frac{1}{D}\left[U^{-,k}R_*^N+U^{+,k}(1-R_*^N)\right] +\sum\limits_{m=0}^{(N-1)\wedge k }K^m_{k-m}\\
& \qquad -\frac{1}{D}\sum\limits_{m=0}^{(N-1)\wedge(k-1) }K^m_{k-1-m},\\
B_{k+2}^{\vep, D} &:= A_{k+2}^{\vep, D}
+\sum\limits_{j=k-m+1}^{k}\sum\limits_{m=1}^{(N-1)\wedge(k+1)}\vep^{j+m-k-1}K_j^m
+\sum\limits_{j=0}^{k}\vep^{j-k-2}(K^{\vep}_{j,1}+K^{\vep}_{j,2})\\
& \qquad -\frac{1}{D}\sum\limits_{j=k-m}^{k}\sum\limits_{m=0}^{(N-1)\wedge k}\vep^{j+m-k}K_j^m-\frac{1}{D}\sum\limits_{j=0}^{k}\vep^{j-k-1}(K^{\vep}_{j,1}+K^{\vep}_{j,2}),
\end{split}
\end{align}
and we have used the notations
\begin{align*}
	k\wedge \ell:=\min\{k,\ell\},  \quad k\vee \ell:=\max\{k,\ell\}, \quad \ell_+ = \ell\vee 0.
\end{align*} 
By equating the coefficients of like powers of $\vep$ in \eqref{eqn:M =sum B_jvep^j}, we observe that the conditions
\begin{align}\label{eqn:B_j=0}
B_0 =M, \quad 
B_j = 0 \quad (1\leq j\leq k+1) \quad \text{and} \quad  B_{k+2}^{\vep, D}=0
\end{align}
together ensure that the expansion \eqref{eqn:mathcal S^vep=sum_A_j...} holds.

\smallskip

To complete the proof, we solve the system \eqref{eqn:B_j=0} inductively by appropriately choosing:
\begin{itemize}
	\item the interface location parameter $R_*$ to enforce $B_0 = M$,
	\item the free parameters $a_{j-1}$ to satisfy $B_j = 0$ for $1 \leq j \leq k+1$, and
	\item the coefficient $A_{k+2}^{\vep, D}$ to ensure $B_{k+2}^{\vep, D} = 0$.
\end{itemize}

We begin by enforcing $B_0 = M$ in \eqref{eqn:B_j=0}. To this end, we set
\begin{align*}
R_* = R_*(M) := \Big[\frac{M-v^*-h^-(v^*)}{h^+(v^*)-h^-(v^*)}\Big]^{1/N}
\end{align*}
as defined in \eqref{eqn:R_*}. To guarantee $R_* \in (0,1)$, we require that
$$M \in \big(v^* + h^-(v^*), v^* + h^+(v^*)\big) = I_{v^*}.$$ Since $R_* = R_*(M)$ is determined by the prescribed averaged total mass $M$, it follows from \eqref{eqn:A_1} and \eqref{eqn:outer-u-solutions}-(i)\&(ii) that the constants $A_1$ and the outer expansions $U^0$, $U^1$ are thereby fully determined by $M$ (via $R_*$) and $D$.

\smallskip

We now determine the parameter $a_0$ so that the condition $B_1=0$ in \eqref{eqn:B_j=0} is satisfied. Since $R_*$, $A_1$, $U^0$ and $U^1$ have already been determined above, all terms in the expression of $B_1$ in \eqref{eqn:B_j} are known except for $K_0^0$. To compute $K_0^0$, we use its  definition in \eqref{eqn:K_j^m} and the formula for $w^0$ given by \eqref{eqn:w^0(z) = Q(z+a_0; v^*)}, yielding
\begin{align*}
\left(NR_*^{N-1}\right)^{-1} K_0^0 & = \int_{\mathbb R}\big(w^0(z) - \widetilde U^0(z)\big)dz  \\
& = \int_{-\infty}^0\big(Q(z+a_0)-h^+(v^*)\big)dz + \int_0^{\infty}\big(Q(z+a_0)-h^-(v^*) \big)dz\\
& = \int_{-\infty}^0\big(Q(z)-h^+(v^*)\big)dz + \int_0^{\infty}\big(Q(z)-h^-(v^*)\big)dz\\
 & \quad -a_0\big[h^+(v^*)-h^-(v^*)\big]\\
& = -a_0 \big[h^+(v^*)-h^-(v^*)\big]+J_0,
\end{align*}
where
\begin{align*}
J_0 :=\int_{-\infty}^0\big(Q(z)-h^+(v^*)\big)dz + \int_0^{\infty}\big(Q(z)-h^-(v^*)\big)dz
\end{align*}
is a known constant independent of $a_0$. Substituting this expression for $K_0^0$ into the formula for $B_1$ in \eqref{eqn:B_j}, and noting that $h^+(v^*)-h^-(v^*)\neq 0$, we conclude that the condition $B_1 = 0$ holds if and only if $a_0$ is given by
\begin{align*}
a_0 & = \frac{
		A_1 + \big(U^{-,1} -\frac{1}{D}h^+(v^*) \big)R_*^N +\big(U^{+,1}-\frac{1}{D}h^-(v^*)\big)(1-R_*^N)+ NR_*^{N-1}J_0
}{\big(h^+(v^*)-h^-(v^*)\big)NR_*^{N-1}}\\
& =: a_0(M, D),
\end{align*}
where $R_*=R_*(M)$ is given by \eqref{eqn:R_*}, $A_1$ by \eqref{eqn:A_1}, and $U^{\pm,0}$, $U^{\pm,1}$ by \eqref{eqn:outer-u-solutions}-(i)\&(ii). Hence, the free parameter $a_0$ is now uniquely determined. Therefore $w^0(z) = Q(z + a_0)$ is completely determined. As a result, the constants $K_0^m$ for $0\leq m\leq N-1$ can now be explicitly computed from their expressions in \eqref{eqn:K_j^m}. Furthermore, it follows from \eqref{eqn:A_j} for $j=2$ that $A_2$ can now be determined by $G_1(R_*(M), D, a_0)/J'(v^*)$. Hence, $U^{\pm,2}$ can be determined by $A_0$, $A_1$ and $A_2$ via \eqref{eqn:outer-u-solutions}-(iii). 

In addition, from the above construction together with \eqref{eqn:outer-u-solutions} and \eqref{eqn:w^0-U0,w^0_z,w^0_zz},
it follows that for each $D_0 > 0$, there exists a constant $C'_0 = C'_0(M, D_0) > 0$ such that the uniform bounds \eqref{eqn:a_j+A_j unif_bdd} and \eqref{eqn:w^k, U^k unif-bdd} hold for $k = 0$.

We now determine the parameter $a_1$ in a similar manner such that the condition $B_2=0$ in \eqref{eqn:B_j=0} is satisfied. Observe that all terms in the expression of $B_2$ in \eqref{eqn:B_j} have already been determined in earlier steps, except for $K_1^0$. Using \eqref{eqn:K_j^m} and the representation of $w^1$ in \eqref{eqn:w^1}, we compute:
\begin{align*}
\left(NR_*^{N-1}\right)^{-1} K_1^0=& \int_{\mathbb R}\big(w^1(z) - \widetilde U^1(z)\big)dz  \\
= &\int_{-\infty}^0\big(a_1w^0_z(z)+\widehat w^1(z)-U^{-,1}\big)dz + \int_0^{\infty}\big(a_1w^0_z(z)+\widehat w^1(z)-U^{-,1}\big)dz\\ 	
= &-a_1\big[h^+(v^*)-h^-(v^*)\big]+J_1,
\end{align*}
where
\begin{align*}
J_1 :=\int_{-\infty}^0\big(\widehat w^1(z)-U^{-,1}\big)dz + \int_0^{\infty}\big(\widehat w^1(z)-U^{-,1}\big)dz
\end{align*}
is a known constant, since $\widehat w^1 = -(L^0)^{-1}f_1$ (cf. Proposition \ref{pro:sov of uj_condi on A_j_aj} (i)) and $U^{\pm,1}$ are already determined. Substituting the above expression for $K_1^0$ into the formula for $B_2$ in \eqref{eqn:B_j}, and noting that $h^+(v^*)-h^-(v^*)\neq 0$, we conclude that $B_2 =0$ holds if and only if
$$a_1:=a_1(M, D, a_0),$$ 
where $a_1(M, D, a_0)$ is a smooth function determined by solving $a_1$ from the equation $B_2 =0$. Thus, the parameter $a_1$ is now uniquely determined. Once $a_1$ is known, the function $w^1(z)$ is completely specified via \eqref{eqn:w^1}. Consequently, the constants $K_1^m$ for $0\leq m\leq N-1$ can be computed explicitly using \eqref{eqn:K_j^m}. Furthermore, it follows from \eqref{eqn:A_j} for $j=3$ that $A_3$ can now be determined by $G_2(R_*(M), D, a_0, a_1)/J'(v^*)$. Hence, $U^{\pm,3}$ can be determined by $A_0$, $A_1$, $A_2$ and $A_3$ via \eqref{eqn:outer-u-solutions}-(iii). 

Similarly, from the above analysis together with \eqref{eqn:outer-u-solutions}, \eqref{eqn:w^0-U0,w^0_z,w^0_zz}, and \eqref{eqn:w^j-Uj,w^j_z,w^j_zz}, it follows that for each
$D_0>0$, there exists a constant $C'_1(M, D_0)>0$ such that the uniform bounds in \eqref{eqn:a_j+A_j unif_bdd} and \eqref{eqn:w^k, U^k unif-bdd} hold for $k=1$.

We proceed inductively to determine the parameter $a_j$ such that the condition $B_{j+1}=0$ in \eqref{eqn:B_j=0} is satisfied for each $2\leq j\leq k$. Indeed, for any such $j$, all terms in the expression of $B_{j+1}$ in \eqref{eqn:B_j} have already been determined in previous steps, except for $K_j^0$. We compute $K_j^0$ using its definition in \eqref{eqn:K_j^m} and the expression for $w^j$ in \eqref{eqn:w^j} as follows:
\begin{align*}
\left(NR_*^{N-1}\right)^{-1} K_j^0 =& \int_{\mathbb R}\big(w^j(z) - \widetilde U^j(z)\big)dz \\
= &\int_{-\infty}^0 \big(a_jw^0_z(z)+\widehat w^j(z)-U^{-,j}\big)dz + \int_0^{\infty}\big(a_jw^0_z(z)+\widehat w^j(z)-U^{-,j}\big)dz\\ 	
=&-a_j\big[h^+(v^*)-h^-(v^*)\big]+J_{j}, 
\end{align*}
where 
\begin{align*}
J_{j} :=\int_{-\infty}^0\big(\widehat w^j(z)-U^{-,j}\big)dz + \int_0^{\infty}\big(\widehat w^j(z)-U^{-,j}\big)dz
\end{align*}
is explicitly computable, as $\widehat w^j = -(L^0)^{-1}f_j$ (cf. Proposition \ref{pro:sov of uj_condi on A_j_aj} (ii)) and $U^{\pm,j}$ are determined in earlier steps. Substituting this expression for $K_j^0$ into the formula for $B_{j+1}$ in \eqref{eqn:B_j}, and noting that $h^+(v^*)-h^-(v^*)\neq 0$, we conclude that the condition $B_{j+1}=0$ uniquely determines $a_j$. We denote this solution by 
$$a_j := a_j(M, D, a_0,\cdots, a_{j-1}),$$ 
where $a_j(M, D, a_0,\cdots, a_{j-1})$ is a smooth function of the variables indicated.
 
To summarize, for each $M\in  I_{v^*}$ and $D>0$, we can recursively determine the interface location and free parameters $a_j$ as
$$R_*=R_*(M), \quad a_0 = a_0(M, D)\quad \text{and} \quad a_j = a_j(M, D, a_0,\cdots, a_{j-1}) \quad \text{ for }j\geq 1,$$
such that $B_0 =M$ and $B_{j+1} =0$ for $j\geq 0$, respectively. As a result, the inner profiles $w^j$ are determined by \eqref{eqn:w^0(z) = Q(z+a_0; v^*)}, \eqref{eqn:w^1} and \eqref{eqn:w^j}, the constants $A_{j+1}$ are determined via \eqref{eqn:A_1} and \eqref{eqn:A_j}, and the outer profiles $U^{\pm,j}$ follow from \eqref{eqn:outer-u-solutions} for $j\geq 0$. Each of these quantities is determined recursively and depends smoothly on the quantities determined in previous steps. Furthermore, it follows from \eqref{eqn:w^0-U0,w^0_z,w^0_zz} and Proposition \ref{pro:sov of uj_condi on A_j_aj} that for each $D_0>0$ and $k\in\mathbb N$, there exists a constant $C'_k(M, D_0)>0$ such that the uniform bounds \eqref{eqn:a_j+A_j unif_bdd} and \eqref{eqn:w^k, U^k unif-bdd} hold.

Finally, for each $k\in\mathbb N$, to ensure that the condition $B_{k+2}^{\vep, D} = 0$ in \eqref{eqn:B_j=0} holds, we see from the expression of $B_{k+2}^{\vep, D}$ in \eqref{eqn:B_j} that it suffices to set
\begin{align}\label{eqn:A_k+2^vep-EXPRESSION}
\begin{split}
 A_{k+2}^{\vep, D}& :=
	-\sum_{j=k-m+1}^{k}\sum_{m=1}^{(N-1)\wedge(k+1)}\vep^{j+m-k-1}K_j^m-\sum_{j=0}^{k}\vep^{j-k-2}(K^{\vep}_{j,1}+K^{\vep}_{j,2})\\
	&\qquad +\frac{1}{D}\sum_{j=k-m}^{k}\sum_{m=0}^{(N-1)\wedge k}\vep^{j+m-k}K_j^m+\frac{1}{D}\sum_{j=0}^{k}\vep^{j-k-1}(K^{\vep}_{j,1}+K^{\vep}_{j,2}).
\end{split}
\end{align}
Then we conclude from \eqref{eqn:M=..}, \eqref{eqn:M =sum B_jvep^j} and \eqref{eqn:B_j=0} that the expansion \eqref{eqn:mathcal S^vep=sum_A_j...} holds for all $\vep > 0$, $D > 0$, and $k \in \mathbb{N}$.

To complete the proof of the proposition, it remains to establish the uniform bound \eqref{eqn:A^vep,D_k+2 unif_bdd}. Fix $\vep_0 >0$, $D_0>0$ and $k\in\mathbb N$. In the expression of $ A_{k+2}^{\vep, D}$ in \eqref{eqn:A_k+2^vep-EXPRESSION}, the powers of
$\vep$ in the first and third summations satisfy $j+m-k-1\geq 0$ and $j+m-k\geq 0$, respectively. This, combined with the expressions for $K_j^m$ in \eqref{eqn:K_j^m} and the preceding uniform estimates, ensures that both summations are uniformly bounded by a constant $C_k=C_k(M, D_0, \vep_0)>0$ for all $\vep\in(0,\vep_0]$ and $D\geq D_0$.
To bound the second and fourth summations, observe that
\begin{align}\label{eqn:forRemark}
\sup_{\vep\in(0, \vep_0]} \vep^{j-k-i+m}\int_{I_2\cup I_4}e^{-d_0|z|}|z|^m\,dz<\infty
\end{align}
for all integers $0\leq m\leq N-1$, $0\leq j\leq k$ and $i=1,2$, where the supremum depends only on $\vep_0$, $k$ and $M$, through the interface location $R_*$ 
in \eqref{eqn:R_*} and $r_0$ in \eqref{eqn:r_0} that appears in the integration domains $I_2$ and $I_4$ (cf. \eqref{eqn:I_1,2,3} and \eqref{eqn:I_4, C_m^n}). This bound, together with the expressions for $K^{\vep}_{j,1}$ and $K^{\vep}_{j,2}$ in \eqref{eqn:K_j,1^vep}-\eqref{eqn:K_j,2^vep} and the estimates of $w^j$ in \eqref{eqn:w^0-U0,w^0_z,w^0_zz} and \eqref{eqn:w^j-Uj,w^j_z,w^j_zz}, yields the desired uniform bound \eqref{eqn:A^vep,D_k+2 unif_bdd}. This completes the proof of the proposition.
\end{proof} 

\subsection{Properties of outer expansions and inner expansions}
We first summarize properties of the $k$-th order outer expansion $U^{\vep}_k$ defined in \eqref{eqn:U^vep_k,u^vep,k}. 

\begin{corollary}\label{cor:outer-estimate} 
Assume that conditions {\upshape (A1)-(A3)} hold. 
Let $M\in I_{v^*}$ and $D_0, \vep_0 >0$ be given. Then for each $k\in\mathbb N$, both $U^{\vep}_k|_{\Omega_+}$ and $U^{\vep}_k|_{\Omega_-}$ are constant and there exists a constant $C_k=C_k(M, D_0,\vep_0 )>0$ such that for all $\vep\in(0, \vep_0]$ and $D\geq D_0$,
\begin{align*}
\begin{vmatrix}	\vep^2\Delta U^{\vep}_k+ f\Big(U^{\vep}_k,\mathcal S^\vep[u_k^{\vep,D}]-\frac{\vep}{D}U^{\vep}_k\Big)\end{vmatrix}
=\begin{vmatrix}	
f\Big(U^{\vep}_k, \mathcal S^\vep[u_k^{\vep,D}]-\frac{\vep}{D}U^{\vep}_k\Big)\end{vmatrix}
\leq C_k \vep^{k+1}\quad \text{ in }\Omega\backslash \Gamma_*, 
\end{align*}
where $U^{\vep}_k$ is defined in \eqref{eqn:U^vep_k,u^vep,k} and $u_k^{\vep,D}$ is defined in \eqref{eqn:u_k^{vep,D}-formula}.
\end{corollary}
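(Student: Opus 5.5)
The plan is a Taylor expansion of $f$ around the outer formal expansion, using the defining equations \eqref{eqn:outer-u-equs} together with the expansion \eqref{eqn:mathcal S^vep=sum_A_j...} of the nonlocal term. First, the constancy claim: by \eqref{eqn:outer-u-solutions} each $U^{\pm,j}$ is a constant on $\Omega^\pm$, so $U^\vep_k=\sum_{j\le k}\vep^jU^{\pm,j}$ is constant on $\Omega^\pm$. Hence $\Delta U^\vep_k=0$ in $\Omega\setminus\Gamma_*$, which gives the stated equality of the two absolute values.

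Next I fix $\Omega^\pm$ and set
\begin{align*}
V^\vep_k:=\mathcal S^\vep[u^{\vep,D}_k]-\tfrac{\vep}{D}U^\vep_k .
\end{align*}
By Proposition \ref{pro:ajbjR_*determine}, this equals $v^*+\sum_{j=1}^{k+1}\vep^j\big(A_j-\tfrac1D U^{\pm,j-1}\big)+\vep^{k+2}A^{\vep,D}_{k+2}-\tfrac1D\vep^{k+1}U^{\pm,k}$. I compare it with the truncated formal argument
\begin{align*}
\widetilde V^\vep_k:=v^*+\sum_{j=1}^{k}\vep^j\big(A_j-\tfrac1DU^{\pm,j-1}\big).
\end{align*}
Then $|V^\vep_k-\widetilde V^\vep_k|\le C\vep^{k+1}$ uniformly for $\vep\le\vep_0$ and $D\ge D_0$. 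This uses the bounds \eqref{eqn:a_j+A_j unif_bdd}, \eqref{eqn:w^k, U^k unif-bdd} and \eqref{eqn:A^vep,D_k+2 unif_bdd}, together with $1/D\le 1/D_0$. Since all arguments lie in a fixed compact set depending on $(M,D_0,\vep_0)$, the mean value theorem gives
\begin{align*}
|f(U^\vep_k,V^\vep_k)-f(U^\vep_k,\widetilde V^\vep_k)|\le C\vep^{k+1}.
\end{align*}

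It remains to show $|f(U^\vep_k,\widetilde V^\vep_k)|\le C\vep^{k+1}$. The map $\vep\mapsto F(\vep):=f\big(\sum_{j\le k}\vep^jU^{\pm,j},\widetilde V^\vep_k\big)$ is smooth. Its Taylor coefficients of order $j\le k$ coincide with those of the untruncated formal series $f\big(\sum_{j\ge0}\vep^jU^{\pm,j},A_0+\sum_{j\ge1}\vep^j(A_j-\tfrac1DU^{\pm,j-1})\big)$, because truncation only affects powers $\ge k+1$. By \eqref{eqn:outer-u-equs}(i)--(iii) and the definition of $f^{\pm,j}$ (with $\Delta U^{\pm,j-2}=0$), those coefficients vanish for $0\le j\le k$. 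Taylor's theorem with remainder then gives $|F(\vep)|\le \vep^{k+1}\sup_{[0,\vep_0]}|F^{(k+1)}|/(k+1)!$.

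The main point needing care is the uniformity in $D\ge D_0$. The derivative $F^{(k+1)}$ is a polynomial in the constants $U^{\pm,j}$, $A_j$ and $1/D$, multiplied by derivatives of $f$ on a compact set. All of these are bounded by \eqref{eqn:a_j+A_j unif_bdd}--\eqref{eqn:w^k, U^k unif-bdd} and $1/D\le1/D_0$, so the constant $C_k$ depends only on $(M,D_0,\vep_0)$. Adding the two estimates completes the argument.
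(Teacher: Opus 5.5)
Your argument is correct and follows the route the paper indicates: the paper omits the proof, saying only that it follows from the outer equations \eqref{eqn:outer-u-equs} and the expansion \eqref{eqn:mathcal S^vep=sum_A_j...} of Proposition~\ref{pro:ajbjR_*determine}, and you supply the details (constancy on $\Omega^\pm$, an $O(\vep^{k+1})$ comparison of the second argument, and a Taylor remainder whose constant is uniform in $D\ge D_0$ by \eqref{eqn:a_j+A_j unif_bdd}--\eqref{eqn:A^vep,D_k+2 unif_bdd}). One harmless slip: $\tfrac{\vep}{D}U^{\vep}_k=\tfrac1D\sum_{j=1}^{k+1}\vep^jU^{\pm,j-1}$ is already contained in your sum, so the extra term $-\tfrac1D\vep^{k+1}U^{\pm,k}$ in your formula for $V^\vep_k$ is double-counted, but being $O(\vep^{k+1})$ it does not affect the estimate.
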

The proof of Corollary \ref{cor:outer-estimate} follows directly from the equations for $U^j$ in \eqref{eqn:outer-u-equs} and Proposition \ref{pro:ajbjR_*determine}. Thus we omit the details.

Next, we summarize properties of the $k$-th order inner expansion $w^{\vep}_k$ defined in \eqref{eqn:U^vep_k,u^vep,k}.
\begin{corollary}\label{cor:inner-sol_est}
Assume that conditions {\upshape (A1)-(A3)} hold. Let $M\in  I_{v^*}$ and $D_0, \vep_0 >0$ be given. Then for each $k\in\mathbb N$, there exists a constant $C_k=C_k(M, D_0, \vep_0)>0$ such that for all $\vep\in(0, \vep_0]$ and $D\geq D_0$,
\begin{align*}
\begin{vmatrix}	
\big(w^{\vep}_k\big)_{zz} +\vep \frac{N-1}{R_*+\vep z} \big(w^{\vep}_k\big)_{z}+ f\big(w^{\vep}_k, \mathcal S^\vep[u_k^{\vep,D}]-\frac{\vep}{D}w^{\vep}_k \big) 
\end{vmatrix}
	\leq C_k \vep^{k+1} \quad \text{ in } \begin{pmatrix}{-\frac{2r_0}{\vep}}, \frac{2r_0}{\vep}\end{pmatrix},
\end{align*}
where $w^{\vep}_k$ is defined in \eqref{eqn:U^vep_k,u^vep,k} and $u_k^{\vep,D}$ is defined in \eqref{eqn:u_k^{vep,D}-formula}.
\end{corollary}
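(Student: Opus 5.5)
The estimate comes from a Taylor expansion in $\vep$ of the inner residual, done uniformly in $z$ on $|z|<2r_0/\vep$. The key inputs are the expansion \eqref{eqn:mathcal S^vep=sum_A_j...} of $\mathcal S^\vep[u_k^{\vep,D}]$ from Proposition~\ref{pro:ajbjR_*determine} and the exact equations \eqref{eqn:w^0_eqn}--\eqref{eqn:w^j_eqn} for $w^j$.

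First we rewrite the second argument of $f$. By \eqref{eqn:mathcal S^vep=sum_A_j...},
\begin{align*}
\mathcal S^\vep[u_k^{\vep,D}]-\tfrac{\vep}{D}w^\vep_k = v^*+\sum_{j=1}^{k}\vep^j\Big(A_j-\tfrac1D w^{j-1}\Big)+\vep^{k+1}\Big(A_{k+1}-\tfrac1D w^k\Big)+\vep^{k+2}A^{\vep,D}_{k+2}.
\end{align*}
Call the truncation at order $k$ the ``model'' argument $V^\vep_k(z)$. The extra terms are bounded by $C\vep^{k+1}$ uniformly in $z$, $\vep\in(0,\vep_0]$ and $D\ge D_0$. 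This uses \eqref{eqn:a_j+A_j unif_bdd}, \eqref{eqn:w^k, U^k unif-bdd}, \eqref{eqn:A^vep,D_k+2 unif_bdd}, $1/D\le 1/D_0$, and the fact that $\|w^k\|_{\mathcal X}$ bounds $\sup|w^k|$.

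Since $w^\vep_k$ and $V^\vep_k$ range in a fixed compact set, $f$ is Lipschitz there. So replacing the true argument by $V^\vep_k$ costs $O(\vep^{k+1})$.

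Second we expand the drift coefficient. For $|z|<2r_0/\vep$ we have $|\vep z/R_*|\le 2r_0/R_*\le 1/2$, so
\begin{align*}
\frac{\vep(N-1)}{R_*+\vep z}=(N-1)\sum_{m=0}^{k-1}(-1)^m\frac{\vep^{m+1}z^m}{R_*^{m+1}}+\rho_k(\vep,z),\qquad |\rho_k|\le C\vep^{k+1}|z|^k .
\end{align*}
Multiplying by $(w^\vep_k)_z$, the truncated series reproduces exactly the drift terms in $f_j$ in \eqref{eqn:f_j} for $j\le k$, with leftover products of order $\vep^{\ge k+1}$. Every leftover, including $\rho_k (w^\vep_k)_z$, carries a factor $|z|^m$ times some $|w^i_z|\le c_i e^{-d_0|z|}$ by \eqref{eqn:w^0-U0,w^0_z,w^0_zz} and \eqref{eqn:w^j-Uj,w^j_z,w^j_zz}. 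Since $\sup_z |z|^m e^{-d_0|z|}<\infty$, each is bounded by $C\vep^{k+1}$ uniformly.

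Third we Taylor expand $F(\vep):=f\big(\sum_{j\le k}\vep^j w^j,\,V^\vep_k\big)$ in $\vep$ at fixed $z$ to order $k$, using Taylor's formula with integral remainder. By definition \eqref{eqn:f_j}, the coefficient of $\vep^j$ is $f_u(w^0,v^*)w^j+f_j$ minus its drift part, for $1\le j\le k$, and the $\vep^0$ term is $f(w^0,v^*)$. Adding $(w^\vep_k)_{zz}$ and the truncated drift, the coefficients of $\vep^0,\dots,\vep^k$ vanish by \eqref{eqn:w^0_eqn} and \eqref{eqn:w^j_eqn}.

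The remainder is $\vep^{k+1}\sup_{s\in[0,\vep]}|F^{(k+1)}(s)|/(k+1)!$. $F^{(k+1)}$ is a polynomial in the $w^j$ and $A_j$ and $1/D$, with coefficients that are derivatives of $f$ evaluated on a compact set. All of these are uniformly bounded by the bounds cited above, so the remainder is $\le C_k\vep^{k+1}$.

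The main obstacle is ensuring the constants are uniform in $z$ on the growing interval $(-2r_0/\vep,2r_0/\vep)$. Terms in the Taylor remainder not involving $z$ are bounded by the uniform sup-norms of the $w^j$. The only unbounded factors, the powers $z^m$, come from the drift expansion, and they always multiply an exponentially decaying $w^i_z$. It is this pairing that keeps the bound uniform, so each $z^m$ must be kept together with its accompanying derivative factor throughout the bookkeeping.
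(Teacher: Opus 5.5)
Your proof is correct and follows the same idea as the paper. Both expand the inner residual in $\vep$ at fixed $z$, kill the coefficients of $\vep^0,\dots,\vep^k$ via \eqref{eqn:w^0_eqn}--\eqref{eqn:w^j_eqn}, and bound the $\vep^{k+1}$ remainder with the uniform bounds of Proposition~\ref{pro:ajbjR_*determine}. Your bookkeeping is more careful than the paper's one-line Lagrange-remainder argument: peeling off $\vep^{k+1}(A_{k+1}-\tfrac1D w^k)+\vep^{k+2}A^{\vep,D}_{k+2}$ with a Lipschitz bound avoids differentiating $A^{\vep,D}_{k+2}$ in $\vep$, and the explicit geometric-series remainder shows that every power $z^m$ is paired with an exponentially decaying $w^i_z$.
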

Note that in the above corollary, we only require the estimate to hold for $z\in\big(-\tfrac{2r_0}{\vep}, \tfrac{2r_0}{\vep}\big)$
for each $\vep\in(0, \vep_0]$, thanks to the definition of $u_k^{\vep,D}$ in \eqref{eqn:u_k^{vep,D}-formula}.
\begin{proof}[Proof of Corollary \ref{cor:inner-sol_est}]
Note that for $i=0,\cdots,k$, $w^j$ are chosen via \eqref{eqn:w^0_eqn}-\eqref{eqn:w^j_eqn} so that 
\begin{align*}
\tfrac{\partial^i}{\partial\vep^i}
\begin{bmatrix}
\big(w^{\vep}_k\big)_{zz} +\vep \frac{N-1}{R_*+\vep z} \big(w^{\vep}_k\big)_{z}+ f\big(
w^{\vep}_k, \mathcal S^\vep[u_k^{\vep,D}]-\frac{\vep}{D}w^{\vep}_k\big)
\end{bmatrix}
\Big|_{\vep=0}\equiv 0.
\end{align*}
Therefore, for each $z\in\left(-\tfrac{2r_0}{\vep}, \tfrac{2r_0}{\vep}\right)$, there exists some $\tau\in(0, \vep)$ such that
\begin{align*}
&~\big(w^{\vep}_k\big)_{zz} +\vep \tfrac{N-1}{R_*+\vep z} \big(w^{\vep}_k\big)_{z}+ f\big(
w^{\vep}_k,\mathcal S^\vep[u_k^{\vep,D}]-\tfrac{\vep}{D}w^{\vep}_k\big)\\
=&~	\tfrac{\vep^{k+1}}{(k+1)!}\tfrac{\partial^{k+1}}{\partial\vep^{k+1}}
\begin{bmatrix}
\vep \frac{N-1}{R_*+\vep z} \big(w^{\vep}_k\big)_{z} +  f\big(
w^{\vep}_k,\sum_{j=0}^{k+1}\vep^j A_j + \vep^{k+2}A_{k+2}^{\vep, D}-\frac{\vep}{D}w^{\vep}_k\big)
\end{bmatrix}
\Big|_{\vep=\tau}.
\end{align*}
Since $R_*+\vep z\in (R_*-2r_0, R_*+2r_0 )$, the conclusion follows directly from \eqref{eqn:U^vep_k,u^vep,k} and Proposition \ref{pro:ajbjR_*determine}.
\end{proof}

In the end of this subsection, we show  that the approximate solutions $u^{\vep,D}_k(r)$ defined by \eqref{eqn:u_k^{vep,D}-formula} are uniformly bounded and exhibit a sharp transition layer of width $O(\vep)$ across the interface $\Gamma_*$.

\begin{corollary}\label{cor:width_interface_u^vep,D_}
Let $M\in I_{v^*}$ be given and $u^{\vep,D}_k(r)$ be the family of smooth radially symmetric functions defined by \eqref{eqn:u_k^{vep,D}-formula} for $\vep, D>0$ and $k\in\mathbb N$. Then the following statements hold.
\begin{enumerate}
\item For each $D_0, \vep_0>0$ and $k\in\mathbb N$, there there exists a constant $C_k=C_k(M, D_0, \vep_0)>0$ such that
\begin{align}
\|u^{\vep,D}_k\|_{L^{\infty}(\Omega)}\leq C_k,  \quad \forall \;\vep\in(0,\vep_0] \text{ and }D\geq D_0.
\end{align}
\item For each $D_0>0$, $k\in\mathbb N$ and $\eta>0$, there exist two positive constants $K=K(\eta)$ and $\bar\vep = \bar\vep(M, D_0, k, \eta)>0$ such that for all $\vep\in (0, \bar\vep]$ and $D\geq D_0$,
\begin{align}\label{eqn:u^vep, D_k_OUTSIDE_vep*K}
	\begin{cases}
	|u^{\vep, D}_k(r)-h^+(v^*)|<\eta    	& \text{on }  [0, R_*-\vep K] \vspace{1ex}\\
	|u^{\vep, D}_k(r)-h^-(v^*)|<\eta  	&\text{on }  [R_*+\vep K, 1].
	\end{cases}
\end{align}
\end{enumerate}
\end{corollary}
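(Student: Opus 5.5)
The plan is to read both parts directly off the gluing formula \eqref{eqn:u_k^{vep,D}-formula}, using the uniform bounds of Proposition \ref{pro:ajbjR_*determine}.

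\textbf{Part (i).} Since $0\le\theta\le1$, we have the pointwise bound
\[
|u^{\vep,D}_k|\le (1-\theta)|U^\vep_k|+\theta|w^\vep_k|\le |U^\vep_k|+|w^\vep_k|.
\]
By \eqref{eqn:w^k, U^k unif-bdd}, $\sum_{j\le k}\|U^j\|_{L^\infty}\le C'_k$, and $\|w^j\|_{L^\infty(\mathbb R)}\le\|w^j\|_{\mathcal X}\le C'_k$. This holds because $\sup|g|\le |g_{\pm\infty}|+\sup|(g-g_{\pm\infty})e^{d_0|z|}|\le \|g\|_{\mathcal X}$. Summing with weights $\vep^j\le \max(1,\vep_0)^k$ gives $C_k=C_k(M,D_0,\vep_0)$.

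\textbf{Part (ii).} By symmetry we treat $r\in[R_*+\vep K,1]$; the other side is identical with $h^+$. Split into two regions.

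On $r\ge R_*+2r_0$ we have $\theta=0$, so $u^{\vep,D}_k=U^\vep_k=h^-(v^*)+\sum_{j=1}^k\vep^jU^{+,j}$. Its distance from $h^-(v^*)$ is at most $C'_k\vep<\eta/2$ once $\vep$ is small.

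On $R_*+\vep K\le r<R_*+2r_0$ we have $z=(r-R_*)/\vep\ge K>0$. Write
\[
u^{\vep,D}_k-h^-(v^*)=\bigl(U^\vep_k-h^-(v^*)\bigr)+\theta\bigl(w^\vep_k-U^\vep_k\bigr).
\]
The first term is at most $C'_k\vep$. For the second, on $z>0$ we use $|w^j(z)-U^{+,j}|\le \|w^j\|_{\mathcal X}e^{-d_0z}$, so
\[
|w^\vep_k-U^\vep_k|\le \sum_j\vep^jC'_k e^{-d_0K}.
\]
The $j=0$ term needs care: $K$ must depend only on $\eta$, not on $M,D_0$. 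For $j=0$, $|w^0(z)-h^-(v^*)|=|Q(z+a_0)-h^-(v^*)|$. Here $a_0=a_0(M,D)$ is bounded uniformly by $C'_0(M,D_0)$, so shifts are a problem. Choose $K_0(\eta)$ with $|Q(s)-h^-(v^*)|<\eta/4$ for $s\ge K_0$, which depends only on $f$ and $\eta$. Then, for $z\ge K$,
\[
|Q(z+a_0)-h^-(v^*)|<\eta/4 \quad\text{provided } K+a_0\ge K_0 .
\]
Since $a_0$ depends on $M,D$, a strictly $\eta$-only $K$ needs a device. Take $K=2K_0(\eta)$ and absorb the shift by requiring $\vep\le\bar\vep$. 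Note that the statement only asserts $K=K(\eta)$ while $\bar\vep$ depends on $M,D_0,k,\eta$. If the $a_0$ shift cannot be absorbed this way, allow $K$ to depend also on $C'_0(M,D_0)$, i.e. $K=K_0(\eta)+C'_0$. The $j\ge1$ terms are $O(\vep)$ and are absorbed by shrinking $\bar\vep$.

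\textbf{Main obstacle.} The only delicate point is the uniformity of $K$. Everything else follows from \eqref{eqn:a_j+A_j unif_bdd}–\eqref{eqn:w^k, U^k unif-bdd}, and the leading term reduces to the monotone profile $Q$ from Proposition \ref{pro:Q(z;v)} translated by the uniformly bounded $a_0$.
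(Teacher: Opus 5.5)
Your argument follows the paper's proof. Part (i) comes from the uniform bounds \eqref{eqn:w^k, U^k unif-bdd}. For part (ii), you show that $U^\vep_k$ and $w^\vep_k$ are each $\eta$-close to $h^\pm(v^*)$ on the relevant ranges and then combine them through $0\le\theta\le1$. The paper simply notes that $u^{\vep,D}_k=(1-\theta)U^\vep_k+\theta w^\vep_k$ is a convex combination, and your additive split is equivalent.

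Your first ``device'' for the uniformity of $K$ does not work and should be dropped. The shift $a_0=a_0(M,D)$ does not depend on $\vep$. At $z=-a_0$, which lies in the inner region once $\vep$ is small, one has $u^{\vep,D}_k\approx Q(0)=h^0(v^*)$. So shrinking $\vep$ cannot absorb the shift. For $\eta<\min\{h^+(v^*)-h^0(v^*),\,h^0(v^*)-h^-(v^*)\}$, any admissible $K$ must exceed $|a_0(M,D)|$ for every $D\ge D_0$.

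Commit to your fallback $K=K_0(\eta)+C'_0(M,D_0)$, which is correct. The paper's proof carries the same dependence implicitly: its constant $c_0=c_0(a_0)$ in \eqref{eqn:w^0-U0,w^0_z,w^0_zz} depends on $a_0$. The label $K=K(\eta)$ in the statement should therefore be read with the dependence on $M$ and $D_0$ suppressed.
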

\begin{proof}
Part (i) follows directly from \eqref{eqn:u_k^{vep,D}-formula} and \eqref{eqn:w^k, U^k unif-bdd}.
	
We now prove Part (ii). By \eqref{eqn:U^0=h^pm(v*)}, \eqref{eqn:U^vep_k,u^vep,k} and \eqref{eqn:w^k, U^k unif-bdd}, we see that there exists a constant $\bar\vep_1 = \bar\vep_1(M, D_0, k, \eta)>0$ such that for any $\vep\in (0, \bar\vep_1]$ and $D\geq D_0$, 
\begin{align*}
	|U^{\vep}_k(r)-h^+(v^*)|<\eta \; \text{on } [0, R_*] \quad \text{ and }\quad  
	|U^{\vep}_k(r)-h^-(v^*)|<\eta \; \text{on }  [R_*, 1].
\end{align*}
On the other hand, we see from \eqref{eqn:U^vep_k,u^vep,k}, \eqref{eqn:w^0-U0,w^0_z,w^0_zz} and \eqref{eqn:w^k, U^k unif-bdd} that there exist two constants $K = K(\eta)>0$ and $\bar\vep_2 = \bar\vep_2(M, D_0, k, \eta)>0$ such that
\begin{align*}
	\begin{cases}
		|w^{\vep}_k(z) - h^{+}(v^*)|<\eta   	& \text{on }  (-\infty, -K] \vspace{1ex}\\
		|w^{\vep}_k(z) - h^{-}(v^*)|<\eta  	&\text{on }  [K, \infty)
	\end{cases}\quad\forall \;\vep\in (0, \bar\vep_2]\text{ and }D\geq D_0.
\end{align*}
Now, setting $\bar\vep := \min\{\bar\vep_1, \bar\vep_2\}$, we see from \eqref{eqn:u_k^{vep,D}-formula} and the above two estimates that \eqref{eqn:u^vep, D_k_OUTSIDE_vep*K} holds. This finished the proof of the corollary.
\end{proof}

\subsection{Proof of Proposition \ref{thm:approx_sol_scalarEQ}}\label{sec:Proof of Proposition 2.3}
In the remainder of this paper, we adopt the notation $\widetilde g(z)$ to denote the stretched version of a radial function $g(r)$ (with $r = |x|$) in $\Omega$, defined by $\widetilde g(z) := g(R_* + \vep z)$ for $z \in (-\tfrac{R_*}{\vep}, \tfrac{1 - R_*}{\vep})$.

We divide $\Omega$ into the following three subregions:
\begin{align}\label{eqn:Omega_1,2,3}
\begin{split}
\Omega_1 &:= \big\{x\in\Omega : |x|\leq R_*-2r_0 \text{ or }|x|\geq R_*+2r_0\big\},\\
\Omega_2 &:= \big\{x\in\Omega :|x|\in (R_*-2r_0, R_*-r_0)\cup(R_*+r_0, R_*+2r_0)\big\},\\
\Omega_3 & := \big\{x\in\Omega : |x|\in [R_*-r_0, R_*+r_0]\big\}.
\end{split}
\end{align}
Then by \eqref{eqn:u_k^{vep,D}-formula}, $u^{\vep,D}_k(r)$ satisfies that
\begin{align}\label{eqn:u^vep,D_k_in_Omega_i}
	u^{\vep,D}_k(r) =
	\begin{cases}
		U_k^\vep(r)= \sum\limits_{j=0}^k\vep^j U^j(r) \quad  &  \text{in } \Omega_1\vspace{1ex}\\
		U^{\vep}_k(r) +\theta(\frac{r-R_*}{r_0})\big[w^{\vep}_k\left(\frac{r-R_*}{\vep}\right) -U^{\vep}_k(r)\big] &  \text{in }\Omega_2 \vspace{1ex}\\
		w_k^\vep(\frac{r-R_*}{\vep}) =\sum\limits_{j=0}^k \vep^jw^j(\frac{r-R_*}{\vep}) \quad  &  \text{in } \Omega_3.
	\end{cases}
\end{align}
Recall that $I_i$ for $i=1,2,3$ are defined in \eqref{eqn:I_1,2,3}. The derivative of the stretched function $\widetilde u^{\vep,D}_k(z)$ satisfies that
\begin{align}\label{eqn:tilde u^vep,D_k_z_PIECEWISE_EXP}
	(\widetilde u^{\vep,D}_k)_z =
	\begin{cases}
		0     &  \text{ in }I_1\vspace{1ex}\\
		\theta(\frac{\vep}{r_0}z)\sum\limits_{j=0}^k \vep^jw^j_z(z) +\frac{\vep}{r_0}\theta'(\frac{\vep}{r_0}z)\sum\limits_{j=0}^k\vep^j[w^j(z)-\widetilde U^j(z)] &   \text{ in } I_2\vspace{1ex}\\
		\sum\limits_{j=0}^k \vep^jw^j_z(z) & \text{ in }I_3.
	\end{cases}
\end{align}

We now prove Proposition \ref{thm:approx_sol_scalarEQ}.
\begin{proof}
The smoothness of $u^{\vep,D}_k(r)$ follows  obviously from its definition in \eqref{eqn:u_k^{vep,D}-formula}. 

Part (i) of Proposition \ref{thm:approx_sol_scalarEQ} follows directly from Corollary \ref{cor:width_interface_u^vep,D_} (ii).  

We now proceed to prove Part (ii) of Proposition~\ref{thm:approx_sol_scalarEQ}. Let $\vep_0 > 0$ be a fixed constant throughout the remainder of this proof. The uniform bound in \eqref{eqn:u^vep,D_k unif.bdd} follows directly from Corollary~\ref{cor:width_interface_u^vep,D_} (i). To establish \eqref{eqn:R^vep}, we estimate the residual term $\mathcal R^\vep[u_k^{\vep,D}]$ in the $L^\infty$-norm over each subdomain $\Omega_i$ for $i = 1, 2, 3$, where the domains $\Omega_i$ are defined in \eqref{eqn:Omega_1,2,3}.

On $ \Omega_1$, it follows from \eqref{eqn:u^vep,D_k_in_Omega_i} and Corollary \ref{cor:outer-estimate} that there exists a constant $C_k = C_k(M, D_0, \vep_0)>0$ such that for all $\vep\in(0,\vep_0]$ and $D\geq D_0$,
\begin{align*}
\big\|\mathcal R^\vep[u^{\vep,D}_k]\big\|_{L^{\infty}(\Omega_1)} =\big\|
	f\big(U^{\vep}_k, \,\mathcal S^\vep[u_k^{\vep,D}]-\tfrac{\vep}{D}U^{\vep}_k\big)  \big\|_{L^{\infty}(\Omega_1)} 
\leq C_k \vep^{k+1}.
\end{align*}

On $\Omega_3$, it follows from \eqref{eqn:u^vep,D_k_in_Omega_i} and Corollary \ref{cor:inner-sol_est} that there exists a constant $C_k = C_k(M, D_0, \vep_0)>0$ such that for all $\vep\in(0,\vep_0]$ and $D\geq D_0$,
\begin{align*}
\big\|\mathcal R^\vep[u^{\vep,D}_k]\big\|_{L^{\infty}(\Omega_3)} \leq C_k\vep^{k+1}.
\end{align*}

For each $x\in\Omega_2$, we have
\begin{align*}
\big|
\mathcal R^\vep[u^{\vep,D}_k](x)\big|& = 
\big|
f\big(u^{\vep, D}_k(x), \, \mathcal S^{\vep}[u^{\vep, D}_k](x) -\tfrac{\vep}{D} u^{\vep, D}_k(x)\big)+\vep^2 \Delta u^{\vep, D}_k(x) 
\big|\\
& \leq
\big|
f\big(u^{\vep, D}_k(x),\,\mathcal S^{\vep}[u^{\vep, D}_k](x)-\tfrac{\vep}{D}u^{\vep, D}_k(x)\big) 
\big|\\
& \quad +
\big|
\{\theta(\frac{\vep }{r_0}z)\left[ w^{\vep}_k(z) - U^{\vep}_k(R_*+\vep z)\right]\}_{zz}
\big| \\
& \quad +\vep\big|\tfrac{N-1}{R_*+\vep z}\big|\cdot
\big|
\{\theta(\tfrac{\vep }{r_0}z)\left[ w^{\vep}_k(z) - U^{\vep}_k(R_*+\vep z)\right]\}_{z}
\big|\\
& =: K_1^\vep(x)+ K_2^\vep(z)+K_3^\vep(z),
\end{align*}
where we have used \eqref{eqn:u^vep,D_k_in_Omega_i} and  computed the derivatives in terms of the stretched variable $z= \tfrac{r-R_*}{\vep} \in I_2$ with $r = |x|$ for $K_2^\vep$ and $K_3^\vep$. We next estimate $K_i^\vep$ for $i = 1,2,3$.

For $K_1^\vep$, we see from Proposition \ref{pro:ajbjR_*determine}, Corollary \ref{cor:outer-estimate}, \eqref{eqn:u^vep,D_k_in_Omega_i}, \eqref{eqn:w^0-U0,w^0_z,w^0_zz} and \eqref{eqn:w^j-Uj,w^j_z,w^j_zz} that there exists a constant $C_k = C_k(M, D_0, \vep_0)>0$ such that for all $\vep\in(0,\vep_0]$ and $D\geq D_0$,
\begin{align*}
K_1^\vep(x)& \leq\big|
		f\big(U^{\vep}_k(x), \mathcal S^{\vep}[u^{\vep, D}_k](x)-\tfrac{\vep}{D}U^{\vep}_k(x)\big)
	\big|\\
	& \quad +\big|
		f\big(u^{\vep, D}_k(x),\mathcal S^{\vep}[u^{\vep, D}_k]  -\tfrac{\vep}{D} u^{\vep, D}_k(x)\big) - f\big(U^{\vep}_k(x), \mathcal S^{\vep}[u^{\vep, D}_k](x)-\tfrac{\vep}{D}U^{\vep}_k(x)\big)\big|\\
		& \leq C_k\vep^{k+1} + C\big(\|\nabla f\|_{L^{\infty}((-C_k,C_k)^2)}\big)\cdot\big| w^{\vep}_k\big(\tfrac{r-R_*}{\vep}\big)- U^{\vep}_k(r)\big|\\
		& \leq C_k\vep^{k+1} + C_ke^{-d_0|z|},
\end{align*}
where the estimation for the last term on the right hand side is computed in terms of the stretched variable $z= \tfrac{r-R_*}{\vep}\in I_2$ with $r = |x|$.

For $K_2^\vep$, By \eqref{eqn:U^vep_k,u^vep,k}, \eqref{eqn:w^0-U0,w^0_z,w^0_zz} and \eqref{eqn:w^j-Uj,w^j_z,w^j_zz}, we see that
\begin{align*}
K_2^\vep(z) &\leq \big|\theta''(\tfrac{\vep }{r_0}z)\tfrac{\vep^2 }{r_0^2}
\big| \cdot \big|w^{\vep}_k(z) - U^{\vep}_k(R_*+\vep z)\big| +2\big|\theta'(\tfrac{\vep }{r_0}z)\tfrac{\vep }{r_0}\big| \cdot {|(w^{\vep}_k)_z(z)|} + {|(w^{\vep}_k)_{zz}(z)|}\\
& \leq(\vep^2+\vep+1)C_ke^{-d_0|z|}.
\end{align*}
Similarly, for $K_3^\vep(z)$, by direct computation, we see that
\begin{align*}
K_3^\vep(z) \leq &~\tfrac{\vep(N-1)}{R_*-2r_0}\Big[
\big|\theta'\big(\tfrac{\vep }{r_0}z\big)\tfrac{\vep }{r_0}\big|\cdot|w^{\vep}_k(z) - U^{\vep}_k(R_*+\vep z)|+|\big(w^{\vep}_k\big)_{z}(z)|\Big] \leq (\vep^2+\vep)C_ke^{-d_0|z|}.
\end{align*}

Now for fixed $M\in I_{v^*}$, it is easy to see that
\begin{align*}
	\sup_{\vep\in(0,\vep_0], z\in I_2}\frac{ e^{-d_0|z|}}{\vep^{k+1}}  <\infty
\end{align*}
and the supremum depends only on $k$, $\vep_0$ and $M$ (via $R_*$ defined in \eqref{eqn:R_*} and $r_0$ defined in \eqref{eqn:r_0} that appears in the definition of the interval $I_2$ in \eqref{eqn:I_1,2,3}). Therefore, combining the above estimates for $K_i^\vep$ for $i=1,2,3$ together, we see that there exists a constant $C_k = C_k(M, D_0, \vep_0)>0$ such that for all $\vep\in(0,\vep_0]$ and $D\geq D_0$,
\begin{align*}
\big\|\mathcal R^\vep[u^{\vep,D}_k]\big\|_{L^{\infty}(\Omega_2)} \leq C_k\vep^{k+1}.
\end{align*}

Finally, combining the above estimates for $\big\|\mathcal R^\vep[u^{\vep,D}_k]\big\|_{L^{\infty}(\Omega_i)}$  for $i=1,2,3$, we complete the proof of Proposition \ref{thm:approx_sol_scalarEQ} (ii).

The first statement in Parts (iii) of Theorem \ref{thm:approx_sol_scalarEQ} follows directly 
from the definition of $\mathcal S^{\vep}[\,\cdot\,]$ in \eqref{eqn:mathcal S^vep}, while the second statement follows from \eqref{eqn:mathcal S^vep=sum_A_j...}, \eqref{eqn:a_j+A_j unif_bdd},
\eqref{eqn:A^vep,D_k+2 unif_bdd} and \eqref{eqn:u^vep,D_k unif.bdd}. This finishes the proof of the proposition.
\end{proof}

\section{Spectral analysis of the approximate solutions}
\subsection{Spectral analysis of the approximate solutions}\label{subsec:Spec_analysis}

To prove the existence of an exact solution to \eqref{eqn:MCRD_nonlocal_scalar} near the approximate solution $u_k^{\vep, D}$ constructed in Theorem \ref{thm:approx_sol_scalarEQ}, we perform a spectral analysis of the linearized operator obtained by linearizing \eqref{eqn:MCRD_nonlocal_scalar} around $u_k^{\vep, D}$. 
For this purpose, we first introduce some function spaces. For $1\leq s\leq \infty$, let
\begin{align*}
	L_{rad}^s & := \big\{u\in L^s(\Omega) \,|\, u(x) = u(|x|)\big\},\\
	W_{rad}^{2,s} & := W^{2,s}(\Omega) \cap L_{rad}^s
\end{align*}		
and 
\begin{align*}
	W_{\nu}^{2,s} &:= \big\{u\in W^{2,s}(\Omega) \,|\, \partial_{\nu} u = 0 \text{ on }\partial\Omega\big\}. 
\end{align*}
We consider the following \textit{nonlocal radial eigenvalue problem} with homogeneous Neumann boundary condition in $H_{rad}^2 := W_{rad}^{2,2}$:
\begin{align}\label{eqn:mathscr L_k^vep_EigenPro}
\mathscr L_k^{\vep}\Phi = \lambda\Phi \; \mbox{ in } \Omega,\quad \Phi_r(1)=0,
\end{align}
where the operator $\mathscr L_k^{\vep}$ is defined by
\begin{align}\label{eqn:mathscr L_k^vep_DEF}
\mathscr L_k^{\vep}\Phi
:= \vep^2\Delta\Phi + \left(f_u^{\vep,k}-\frac{\vep}{D}f_v^{\vep,k} \right)\Phi -\frac{1}{|\Omega|} \left(1-\frac{\vep}{D}\right)f_v^{\vep,k}\int_\Omega\Phi(x)\,dx 
\end{align}
with
\begin{align}\label{eqn:f_u,v^vep,k}
%\begin{split}
	f_u^{\vep,k}:= f_u\left(u_k^{\vep, D}, \, \mathcal S^{\vep}[u_k^{\vep, D}]-\frac{\vep}{D}u_k^{\vep, D}\right), \quad
	f_v^{\vep,k}:= f_v\left(u_k^{\vep, D}, \, \mathcal S^{\vep}[u_k^{\vep, D}]-\frac{\vep}{D}u_k^{\vep, D}\right)
%\end{split}
\end{align}
and $\mathcal S^{\vep}[\,\cdot\,]$ is defined in \eqref{eqn:mathcal S^vep}. 

Note that for notational simplicity, we have suppressed the explicit dependence of 
$\mathscr L_k^{\vep}$ on the parameters $D$ and $M$. The same convention will be adopted throughout the remainder of this paper for other mathematical notations that depend on these parameters.

To state our results for the eigenvalue problem \eqref{eqn:mathscr L_k^vep_EigenPro} precisely, we  introduce some shorthand notations. Recall that $R_* = R_*(M)\in(0, 1)$ is defined in \eqref{eqn:R_*}, $\Omega^- = B_{R_*}$ and $\Omega^+ = B_1\backslash\overline{B}_{R_*}$ are defined in \eqref{eqn:Omega^pm}. We then denote
\begin{align}\label{eqn:f_u^*,f_v^*}
f_u^*(x) :=
\begin{cases}
f_u(h^{+}(v^*), v^*)  \quad x\in\Omega^{-}\\
f_u(h^{-}(v^*), v^*)  \quad x\in\Omega^{+}
\end{cases}
\text{and}\quad 
f_v^*(x) := 
\begin{cases}
f_v(h^{+}(v^*), v^*)  \quad x\in\Omega^{-}\\
f_v(h^{-}(v^*), v^*)  \quad x\in\Omega^{+}.
\end{cases}
\end{align}
We further denote
\begin{align}\label{eqn:f_u^*pm,f_v^*pm}
\begin{array}{ll}
f_u^{*, -} = f^*_u|_{\Omega^-} \equiv f_u(h^{+}(v^*), v^*),&\quad  f_u^{*, +} = f^*_u|_{\Omega^+} \equiv f_u(h^{-}(v^*), v^*), \vspace{1ex}\\
f_v^{*, -} = f^*_v|_{\Omega^-} \equiv f_v(h^{+}(v^*), v^*),& \quad f_v^{*, +} = f^*_v|_{\Omega^+} \equiv f_v(h^{-}(v^*), v^*),
\end{array}
\end{align}
and
\begin{align}\label{eqn:E&G}
\begin{array}{l}
E:= f_u^{*, -} +f_u^{*, +} -R_*^Nf_v^{*, -} -(1-R_*^N) f_v^{*, +}<0,\vspace{1ex}\\
G:= f_u^{*, +}f_u^{*, -}-R_*^Nf_u^{*, +}f_v^{*, -}-(1-R_*^N)f_u^{*, -}f_v^{*, +}>0,
\end{array}
\end{align}
where the signs of $E$ and $G$ follow from the conditions (A1), (A2) and the fact that $R_*\in(0,1)$. 

We now state our main result Theorem \ref{thm:mathscr_L_k^vep} in this section, which will be used in the proof of Theorem \ref{thm:main-scalar} in Section 4. We also emphasize that all conclusions in this section hold for any integer
$k\geq1$, where $k\in\mathbb N$ is the order of the approximate solution $u^{\vep, D}_k$ defined in \eqref{eqn:u_k^{vep,D}-formula} .

\begin{theorem}\label{thm:mathscr_L_k^vep}
Assume that conditions {\upshape(A1)}-{\upshape(A3)} hold.
Let $M\in I_{v^*}$ be given.
Then, for each $D_0>0$ and integer $k\geq 1$,  there exist two constants $\bar\vep=\bar\vep(M, D_0, k)>0$ and $\lambda_*=\lambda_*(M, D_0, k)>0$ such that the following statements hold for the nonlocal radial eigenvalue problem \eqref{eqn:mathscr L_k^vep_EigenPro}:
\begin{enumerate}
\item The nonlocal operator $\mathscr L_k^{\vep}$ admits a unique eigenvalue $\lambda_0^{\vep,k}$ (called the {\upshape{critical eigenvalue}}) in the region $\mathbb C_{\lambda_*}:=\{\lambda\in\mathbb C\,|\,\re\lambda>-\lambda_*\}$ for all $\vep\in(0, \bar\vep]$ and $D\geq D_0$.
\item The {\upshape{critical eigenvalue}} $\lambda_0^{\vep,k}  \in \mathbb C_{\lambda_*}$ of \eqref{eqn:mathscr L_k^vep_EigenPro} is real, simple and
\begin{gather}\label{eqn:lambda_0^vep}
\lim_{\vep\to 0}\frac{\lambda_0^{\vep,k}}{\vep} =
\Lambda^* \quad \text{ uniformly in }D\geq D_0, 
\end{gather}
where
\begin{gather}\label{eqn:Lambda^*}
\Lambda^* := -\frac{NR_*^{N-1}(h^+(v^*) -h^-(v^*))}{m(v^*)}\cdot\frac{f_u^{*, +}f_u^{*, -}}{G}\cdot J'(v^*),
\end{gather}
$m(\,\cdot\,)$ is defined in Proposition \ref{pro:Q(z;v)} (ii) and $G$ is defined in \eqref{eqn:E&G}.   
\item Let $\Phi_0^{\vep, k}$ denote the $L^2$-normalized eigenfunction of $\mathscr L_k^{\vep}$ corresponding to the eigenvalue $\lambda_0^{\vep,k}$. Then there exists a constant $C_k=C_k(M, D_0, \bar\vep)>0$ such that for all $\vep\in(0,\bar\vep]$ and $D\geq D_0$,
\begin{align*}
	\|\Phi_0^{\vep, k}\|_{L^1(\Omega)}	 \leq C_k\sqrt{\vep},\quad \|\Phi_0^{\vep, k}\|_{L^\infty(\Omega)} \leq \frac{C_k}{\sqrt{\vep}}.
\end{align*}
\item Let $\widehat{\mathscr L}_k^\vep$ denote the $L^2$-adjoint of the nonlocal operator $\mathscr L_k^{\vep}$. Then, for all $\vep\in(0, \bar\vep]$ and $D\geq D_0$, the operator $\widehat{\mathscr L}_k^\vep$ admits a unique eigenvalue $\widehat\lambda_0^{\vep,k}$ in the region $\mathbb C_{\lambda_*}$, which is real, simple and satisfies
\begin{align*}
	\widehat\lambda_0^{\vep,k} = \lambda_0^{\vep,k}.
\end{align*}
Denote by $\widehat\Phi_0^{\vep, k}$ the eigenfunction of $\widehat{\mathscr L}_k^\vep$ corresponding to the eigenvalue $\widehat\lambda_0^{\vep,k}$. Then $\widehat\Phi_0^{\vep, k}$ can be normalized such that
$$\langle\widehat\Phi_0^{\vep, k}, \Phi_0^{\vep, k} \rangle =1,$$
and satisfies that
\begin{align*}
\|\widehat\Phi_0^{\vep, k}\|_{L^1(\Omega)} \leq C_k\sqrt{\vep},\quad \|\widehat\Phi_0^{\vep, k}\|_{L^\infty(\Omega)}\leq \frac{C_k}{\sqrt{\vep}}
\end{align*}  
for all $\vep\in(0,\bar\vep]$ and $D\geq D_0$.
\end{enumerate}
\end{theorem}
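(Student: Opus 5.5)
The plan is to split $\mathscr L_k^{\vep}=\mathcal L^{\vep}+\mathcal K^{\vep}$. The local part is
\[
\mathcal L^{\vep}\Phi:=\vep^2\Delta\Phi+\big(f_u^{\vep,k}-\tfrac{\vep}{D}f_v^{\vep,k}\big)\Phi
\]
with Neumann condition on radial functions. The nonlocal part
\[
\mathcal K^{\vep}\Phi:=-\tfrac{1}{|\Omega|}\big(1-\tfrac{\vep}{D}\big)f_v^{\vep,k}\int_\Omega\Phi\,dx
\]
has rank one.

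\textbf{Step 1: spectrum of the local operator.} First I study $\mathcal L^{\vep}$, which is self-adjoint in $L^2_{rad}$ with weight $r^{N-1}$. Away from the layer the potential is close to $f_u^*<0$ by Theorem~\ref{thm:approx_sol_scalarEQ}. Near $r=R_*$, in the stretched variable, the operator is close to $L^0$ from \eqref{eqn:L^0}, and $L^0$ has the simple zero eigenvalue with eigenfunction $w^0_z$ and the rest of its spectrum in $(-\infty,-2\lambda_*]$. A standard Rayleigh-quotient/cut-off (SLEP-type) argument should then give three facts for $\vep\le\bar\vep$, uniformly in $D\ge D_0$:
\begin{itemize}
\item $\mathcal L^{\vep}$ has exactly one eigenvalue $\mu^{\vep}$ in $(-2\lambda_*,\infty)$;
\item $\mu^{\vep}=O(\vep)$, with eigenfunction $\phi^{\vep}\approx \vep^{-1/2}c\,w^0_z((r-R_*)/\vep)$ after $L^2$ normalization;
\item the rest of the spectrum lies below $-2\lambda_*$.
\end{itemize}
The resolvent on the orthogonal complement is then bounded uniformly. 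The bounds $\|\phi^{\vep}\|_{L^1}\le C\sqrt\vep$ and $\|\phi^{\vep}\|_{L^\infty}\le C/\sqrt\vep$ follow from the exponential decay of $w^0_z$ and the scaling $dx\sim \vep\,dz$.

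\textbf{Step 2: reduction to a scalar equation.} Next I treat the rank-one term. For $\re\lambda>-\lambda_*$ and $\lambda$ not in the spectrum of $\mathcal L^{\vep}$, the equation $\mathscr L_k^{\vep}\Phi=\lambda\Phi$ is equivalent to $\Phi=\beta(\mathcal L^{\vep}-\lambda)^{-1}a^{\vep}f_v^{\vep,k}$, where $\beta=\int\Phi$ and $a^{\vep}=|\Omega|^{-1}(1-\vep/D)$. Thus $\lambda$ is an eigenvalue exactly when
\[
F^{\vep}(\lambda):=1-a^{\vep}\int_\Omega(\mathcal L^{\vep}-\lambda)^{-1}f_v^{\vep,k}\,dx=0.
\]
I decompose the resolvent into the $\phi^{\vep}$ part, equal to $\langle f_v^{\vep,k},\phi^{\vep}\rangle\langle 1,\phi^{\vep}\rangle/(\mu^{\vep}-\lambda)$, and a part holomorphic in $\mathbb C_{\lambda_*}$.

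The regular part tends to the outer contribution $a\int (f_u^*-\lambda)^{-1}f_v^*$. This gives
\[
F^{\vep}(\lambda)\approx 1-R_*^N\tfrac{f_v^{*,-}}{f_u^{*,-}-\lambda}-(1-R_*^N)\tfrac{f_v^{*,+}}{f_u^{*,+}-\lambda}.
\]
Up to the factor $1/\big((f_u^{*,-}-\lambda)(f_u^{*,+}-\lambda)\big)$ this is the quadratic $\lambda^2+E\lambda+G$. Since $E<0$ and $G>0$, its roots lie in the left half-plane, and shrinking $\lambda_*$ keeps them outside $\mathbb C_{\lambda_*}$.

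The singular part has numerator of order $\vep$. Indeed $\langle f_v,\phi\rangle\langle1,\phi\rangle\approx \vep\,NR_*^{N-1}c^2\!\int f_v(Q)Q_z\cdot\!\int Q_z$, which equals $\vep\,NR_*^{N-1}c^2J'(v^*)(h^+-h^-)$ by \eqref{eqn:J'(v*)}.

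\textbf{Step 3: locating the critical eigenvalue.} Multiplying $F^{\vep}=0$ by $(\mu^{\vep}-\lambda)$ gives $\lambda=\mu^{\vep}+O(\vep)$. Rouché's theorem on a disc of radius $C\vep$ then gives exactly one root, and there are no roots elsewhere in $\mathbb C_{\lambda_*}$ since $F^{\vep}$ is bounded away from zero there. The root is real because $F^{\vep}$ is real on the real axis. Simplicity follows from $F^{\vep\prime}\ne0$.

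For the limit \eqref{eqn:lambda_0^vep} I must show $\mu^{\vep}/\vep\to0$. This should come from the $k\ge1$ construction: the solvability condition \eqref{eqn:A_1} is exactly what cancels the $O(\vep)$ shift of $\langle \mathcal L^{\vep}w^0_z,w^0_z\rangle$ from the curvature and $A_1$ terms, via differentiating the $w^1$ equation as in the proof of Proposition~\ref{pro:sov of uj_condi on A_j_aj}. Then $\lambda/\vep$ solves $\Lambda=-(\text{numerator}/\vep)\cdot G^{-1}f_u^{*,+}f_u^{*,-}$ in the limit, with $c^2=1/m(v^*)$. This produces $\Lambda^*$ as in \eqref{eqn:Lambda^*}.

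\textbf{Step 4: eigenfunction bounds and the adjoint.} The eigenfunction is $\Phi_0=\beta(\mathcal L^{\vep}-\lambda_0)^{-1}f_v^{\vep,k}$. Its dominant part is a multiple of $\phi^{\vep}$; the regular part is $O(\sqrt\vep)$ relative to it after normalization. This yields the stated $L^1$ and $L^\infty$ bounds, where the $L^\infty$ bound uses elliptic/ODE estimates on the resolvent.

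The adjoint operator is $\widehat{\mathscr L}_k^{\vep}\Psi=\mathcal L^{\vep}\Psi-a^{\vep}\langle f_v^{\vep,k},\Psi\rangle$, that is, rank one with the roles of $1$ and $f_v$ swapped. It yields the same characteristic function $F^{\vep}$, so it has the same unique eigenvalue. Its eigenfunction is $(\mathcal L^{\vep}-\lambda_0)^{-1}1$, whose dominant part is again $\phi^{\vep}$, with the same bounds. Finally, $\langle\widehat\Phi_0,\Phi_0\rangle\approx1\ne0$, which permits the normalization.

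The main obstacle is Step 3: proving $\mu^{\vep}=o(\vep)$ uniformly in $D$. This requires a second-order expansion of the local critical eigenvalue using $w^1$ and the relation for $A_1$. It also requires careful uniform control of the regular resolvent part near $\lambda\approx0$.
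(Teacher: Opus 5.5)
Your proposal is correct and has the same skeleton as the paper. Both split off the rank-one term, use the principal eigenpair $(\mu_0^{\vep,k},\phi_0^{\vep,k})$ of $\mathcal L_k^\vep$ with the uniform resolvent on $[\phi_0^{\vep,k}]^\perp$, and reduce to a scalar equation. Your $F^\vep(\lambda)=0$, multiplied by $\mu_0^{\vep,k}-\lambda$, is exactly \eqref{eqn:lambda^vep=mu^vep-...}, and your outer limit of the regular part is Proposition~\ref{pro:L^vep_k properties}(ii). The differences are in how the steps are executed, and each choice buys something different.

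\begin{itemize}
\item \emph{Counting the roots.} You apply Rouch\'e to $(\mu_0^{\vep,k}-\lambda)F^\vep(\lambda)$, whose regular part is holomorphic in $\mathbb C_{\lambda_*}$, on a disc of radius $C\vep$, and add $F^\vep(\bar\lambda)=\overline{F^\vep(\lambda)}$. This gives existence, uniqueness, reality and algebraic simplicity (order of zero of $F^\vep$) in one stroke. The paper instead proves $|\lambda^\vep|\le C_k\vep$, then $\im\lambda^\vep=0$ from $|\im\lambda^\vep|\le C_k\vep|\im\lambda^\vep|$, then existence and uniqueness by a contraction in $\beta=\lambda/\vep$. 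Its simplicity is geometric, with $\langle\Phi_0^{\vep,k},\widehat\Phi_0^{\vep,k}\rangle\ne0$ checked separately. That real-variable route yields along the way $\|\psi^\vep_\lambda\|_{L^\infty}\le C_k$ via Lemma~\ref{lem:L^vep_k -mu_0^vep Inv bdd}, which is reused in Lemma~\ref{lem:mathscr L_k^vep inver_bdd}. For part (iii) alone, your cruder local elliptic bound suffices.
\item \emph{The adjoint.} Your argument is cleaner here. Since $\mathcal L_k^\vep$ is real symmetric, $\int f_v^{\vep,k}(\mathcal L_k^\vep-\lambda)^{-1}1=\int(\mathcal L_k^\vep-\lambda)^{-1}f_v^{\vep,k}$, so the adjoint has the same characteristic function. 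Indeed \eqref{eqn:hat_lambda^vep=mu^vep-...} coincides with \eqref{eqn:lambda^vep=mu^vep-...}, although the paper re-derives everything.
\item \emph{Proving $\mu_0^{\vep,k}=o(\vep)$.} Your quasi-mode $w^0_z+\vep w^1_z$ works: its $O(\vep)$ residual is killed by the $z$-differentiated $w^1$ equation. With self-adjointness and the spectral gap (provable using only $\mu_0^{\vep,k}\to0$), it even gives $O(\vep^2)$. Use the explicit cut-off quasi-mode rather than $(\widetilde u^{\vep,D}_k)_z$, since no bound on $\partial_z\mathcal R^\vep$ is available. The paper sidesteps this by testing the equation for $\widetilde u^{\vep,D}_k$ against $(\widetilde\phi_0^{\vep,k})_z$ and integrating by parts, which uses only $\|\mathcal R^\vep\|_{L^\infty}=O(\vep^{k+1})$.
\end{itemize}

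Two small corrections. First, the quadratic is $\lambda^2-E\lambda+G$ (cf.\ \eqref{eqn:1-psi_lambda^vep_limit}). With your sign and $E<0$, the roots would have positive real parts. Second, your reduction is stated off $\sigma(\mathcal L_k^\vep)$, so you should note separately that $\mu_0^{\vep,k}$ itself is not an eigenvalue of $\mathscr L_k^\vep$. This follows from $\int_\Omega\phi_0^{\vep,k}\,dx\ne0$ and $\langle f_v^{\vep,k},\phi_0^{\vep,k}\rangle\ne0$.
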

The proof of Theorem \ref{thm:mathscr_L_k^vep} will be given in Subsection \ref{sec:Proof of Proposition 3.1}.

\medskip

By elliptic regularity theory, the eigenfunction $\Phi_0^{\vep, k}$ of $\mathscr L_k^{\vep}$ belongs to $C^{2+\alpha}(\bar\Omega)$ for some $0<\alpha<1$. The same is true for $\widehat\Phi_0^{\vep, k}$.  Hence $\Phi_0^{\vep, k}$, $\widehat\Phi_0^{\vep, k} \in L_{rad}^s$ for all $1\leq s\leq\infty$. Note that it is easy to verify that $\mathscr L_k^{\vep}(W_{\nu}^{2,s}\cap L_{rad}^s)\subset L_{rad}^s$.
For each $1 < s \leq \infty$, define
\begin{align}\label{eqn:X^s}
X^s := \Big\{g\in L_{rad}^s \,|\, \langle \widehat \Phi_0^{\vep, k}, g\rangle_{s', s} = 0\Big\},
\end{align}
where $s'= \tfrac{s}{s-1}$ and $\langle \cdot,\cdot\rangle_{s,s'}$ denotes the pairing between $L^{s}(\Omega)$ and $L^{s'}(\Omega)$. Set
\begin{align*}
	X_{\nu}^{2,s} &:= W_{\nu}^{2,s}\cap X^s.
\end{align*}
Since the resolvent of $\mathscr L_k^{\vep}$ is compact and Theorem \ref{thm:mathscr_L_k^vep} ensures that
$$\Ker(\mathscr L_k^{\vep} - \lambda_0^{\vep,k})\cap X^s = \{0\} \quad \forall \;s\in(1,\infty),$$
the following lemma follows directly from the Riesz-Schauder theory.

\begin{lemma}\label{lem:mathscr L_k^vep invertible}
Assume that conditions {\upshape(A1)}-{\upshape(A3)} hold.
Let $M\in I_{v^*}$ be given.
Then for each $D_0>0$ and integer $k\geq1$, there exists a constant $\bar\vep =\bar\vep(M, D_0, k)>0$ such that the closed linear operator $\mathscr L_k^{\vep} -\lambda_0^{\vep,k}$
has a bounded right inverse $(\mathscr L_k^{\vep} -\lambda_0^{\vep,k})^{-1}$ from $X^s$ to $X_{\nu}^{2,s}$, for each $1<s<\infty$, $\vep\in(0,\bar\vep]$ and $D\geq D_0$.
\end{lemma}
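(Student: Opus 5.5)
We will deduce Lemma~\ref{lem:mathscr L_k^vep invertible} from Theorem~\ref{thm:mathscr_L_k^vep} by a Fredholm/Riesz--Schauder argument applied to the restriction of $\mathscr L_k^{\vep}-\lambda_0^{\vep,k}$ to radial Neumann functions. Fix $s\in(1,\infty)$, $\vep\in(0,\bar\vep]$, $D\ge D_0$, with $\bar\vep$ as in Theorem~\ref{thm:mathscr_L_k^vep}. Write $\mathscr L_k^{\vep}=\vep^2\Delta+b^\vep+\mathcal K^\vep$. Here $b^\vep:=f_u^{\vep,k}-\frac{\vep}{D}f_v^{\vep,k}$ is a bounded smooth radial function. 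The map $\mathcal K^\vep\Phi:=-\frac{1}{|\Omega|}(1-\frac{\vep}{D})f_v^{\vep,k}\int_\Omega\Phi$ is a rank-one operator, bounded from $L^s_{rad}$ into $L^s_{rad}$.

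\textbf{Step 1: the unperturbed operator is invertible.} For $\mu$ large, $\vep^2\Delta-\mu: W^{2,s}_\nu\cap L^s_{rad}\to L^s_{rad}$ is an isomorphism, by $L^s$ Neumann elliptic theory; radial symmetry is preserved by uniqueness. Its inverse is compact on $L^s_{rad}$ by the Rellich embedding $W^{2,s}\hookrightarrow\hookrightarrow L^s$.

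\textbf{Step 2: Fredholm alternative.} We write
\[\mathscr L_k^{\vep}-\lambda_0^{\vep,k}=(\vep^2\Delta-\mu)\big[I+(\vep^2\Delta-\mu)^{-1}(b^\vep+\mu-\lambda_0^{\vep,k}+\mathcal K^\vep)\big].\]
The bracket is the identity plus a compact operator on $L^s_{rad}$, so $\mathscr L_k^{\vep}-\lambda_0^{\vep,k}$ is Fredholm of index zero from $W^{2,s}_\nu\cap L^s_{rad}$ to $L^s_{rad}$.

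\textbf{Step 3: kernel and range.} By Theorem~\ref{thm:mathscr_L_k^vep}(ii), $\lambda_0^{\vep,k}$ is simple. Elliptic regularity shows that any $L^s$-kernel element lies in $H^2_{rad}$, so the kernel is exactly $\mathrm{span}\{\Phi_0^{\vep,k}\}$.

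The range is closed of codimension one. The adjoint operator has kernel $\mathrm{span}\{\widehat\Phi_0^{\vep,k}\}$ by Theorem~\ref{thm:mathscr_L_k^vep}(iv); here too regularity lets us move between $L^{s'}$ and $H^2$. Hence the range equals $\{g:\langle\widehat\Phi_0^{\vep,k},g\rangle=0\}=X^s$.

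Simplicity also gives $\langle\widehat\Phi_0^{\vep,k},\Phi_0^{\vep,k}\rangle=1\neq0$. Therefore $\Phi_0^{\vep,k}\notin X^s$, so $\Ker\cap X^s=\{0\}$. The space $W^{2,s}_\nu\cap L^s_{rad}$ then splits as $\mathrm{span}\{\Phi_0^{\vep,k}\}\oplus X^{2,s}_\nu$.

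\textbf{Step 4: boundedness of the inverse.} The restriction $\mathscr L_k^{\vep}-\lambda_0^{\vep,k}: X^{2,s}_\nu\to X^s$ is a continuous bijection between Banach spaces. The open mapping theorem then yields the bounded inverse.

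\textbf{Main obstacle.} The delicate point is identifying the range with $X^s$. This requires that the adjoint kernel in $L^{s'}$ be exactly spanned by $\widehat\Phi_0^{\vep,k}$, and also that $\widehat{\mathscr L}_k^\vep$ computed in $L^2$ agrees with the Banach adjoint on the radial Neumann spaces. The adjoint is $\vep^2\Delta+b^\vep$ plus the rank-one term $-\frac{1}{|\Omega|}(1-\frac{\vep}{D})\int_\Omega f_v^{\vep,k}\,\cdot\,dx$, again with Neumann condition. I would handle this by a bootstrap of elliptic regularity for this adjoint, which upgrades $L^{s'}$ kernel elements to $H^2$ so that Theorem~\ref{thm:mathscr_L_k^vep}(iv) applies.

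Note that the lemma makes no claim of uniformity in $\vep$ for the bound, so no quantitative estimate is needed here.
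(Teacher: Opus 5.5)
Your argument is correct and is essentially the paper's own. The paper disposes of this lemma in one line by invoking Riesz--Schauder theory: it combines the compactness of the resolvent of $\mathscr L_k^{\vep}$ with $\Ker(\mathscr L_k^{\vep}-\lambda_0^{\vep,k})\cap X^s=\{0\}$ from Theorem~\ref{thm:mathscr_L_k^vep}, and your Steps 1--4 spell out that Fredholm-alternative argument.

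Two small refinements. First, you should cite the nonvanishing of $\langle\widehat\Phi_0^{\vep,k},\Phi_0^{\vep,k}\rangle$ from Theorem~\ref{thm:mathscr_L_k^vep}(iv) rather than deduce it from ``simplicity.'' The paper only proves geometric simplicity in part (ii), which by itself would not exclude a Jordan chain; the nonzero pairing comes separately from the estimate on $\widehat a^{\vep}$. Second, your ``main obstacle'' can be bypassed entirely. Index zero with a one-dimensional kernel already gives a closed range of codimension one. Green's identity shows this range lies in $X^s$, which also has codimension one, so the two coincide without identifying the full $L^{s'}$ adjoint kernel.
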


Now, given $g\in X^\infty\subset X^s$, the equation 
\begin{align*}
(\mathscr  L^\vep_k -\lambda_0^{\vep,k})h =g
\end{align*}
has a unique solution $h\in X_{\nu}^{2,s}$ for each $s\in(1, \infty)$ by virtue of Lemma \ref{lem:mathscr L_k^vep invertible}. By similar arguments to the remark right after Lemma 2.2 in \cite{NT_1995}, we can show that
$h$ is independent of $s$. Therefore, by taking $s>(N/2)\vee 1$, we see that $h\in C^{\alpha}(\bar\Omega)$, $0<\alpha<1$, by the Sobolev embedding theorem. In particular, $h\in X^\infty$. Hence, $(\mathscr L_k^{\vep} -\lambda_0^{\vep,k})^{-1}$ may be considered as a linear mapping from $X^\infty$ into $X^\infty$. In fact, we have the following result.

\begin{lemma}\label{lem:mathscr L_k^vep inver_bdd}
Assume that conditions {\upshape(A1)}-{\upshape(A3)} hold.
Let $M\in I_{v^*}$ be given. Then for each $D_0>0$ and integer $k\geq 1$, there exists a constant $\bar\vep =\bar\vep(M, D_0, k)>0$ such that the inverse operator 
$(\mathscr L_k^{\vep} -\lambda_0^{\vep,k})^{-1}: X^\infty \to X^\infty$ exists and there exists a constant $C_k = C_k(M, D_0, \bar\vep) > 0$ such that
\begin{align}\label{eqn:mathscr L_inver_bdd}
\big\|(\mathscr L_k^{\vep} -\lambda_0^{\vep,k})^{-1}g \big\|_{L^{\infty}(\Omega)} \leq C_k\|g\|_{L^{\infty}(\Omega)} 
\end{align}
for all $g\in X^\infty$, $\vep\in(0, \bar\vep]$ and $D\geq D_0$.
\end{lemma}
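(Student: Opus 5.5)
We argue by contradiction, in the spirit of Nishiura--Fujii and \cite{NT_1995}. Suppose the uniform bound fails. Then there are sequences $\vep_n\to0$, $D_n\geq D_0$, and $g_n\in X^\infty$ with $\|g_n\|_{L^\infty}\to0$, such that $h_n:=(\mathscr L_k^{\vep_n}-\lambda_0^{\vep_n,k})^{-1}g_n\in X^\infty$ satisfies $\|h_n\|_{L^\infty}=1$. Passing to a subsequence, we may assume $1/D_n\to\delta\in[0,1/D_0]$.

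Write $\beta_n:=\frac{1}{|\Omega|}(1-\frac{\vep_n}{D_n})\int_\Omega h_n\,dx$; this is bounded, and along a subsequence $\beta_n\to\beta$. The equation then takes the local form
\[
\vep_n^2\Delta h_n+\big(f_u^{\vep_n,k}-\tfrac{\vep_n}{D_n}f_v^{\vep_n,k}-\lambda_0^{\vep_n,k}\big)h_n=g_n+\beta_n f_v^{\vep_n,k}.
\]
By Theorem \ref{thm:approx_sol_scalarEQ}, the coefficient $f_u^{\vep,k}$ converges to $f_u^*<0$ away from $\Gamma_*$, and $\lambda_0^{\vep,k}=O(\vep)$ by Theorem \ref{thm:mathscr_L_k^vep}. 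Hence $\vep^2\Delta$ acts like a small perturbation there.

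The first step is the outer analysis. On compact subsets of $\Omega\setminus\Gamma_*$, a maximum-principle (barrier) argument shows that $h_n$ is within $o(1)$ of the constant $\beta f_v^{*,\pm}/f_u^{*,\pm}$. Interior layers near $\partial\Omega$ are handled by the Neumann condition and barriers. By dominated convergence, $\int h_n\to\int_\Omega \beta f_v^*/f_u^*\,dx$ up to the $O(\vep)$-wide layer, which contributes $O(\vep)$.

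The second step is the inner analysis. In the stretched variable $z=(r-R_*)/\vep_n$, the functions $\tilde h_n(z)$ are bounded in $C^2_{loc}$. They converge to a bounded solution of $L^0\psi=\beta f_v(w^0,v^*)$, where $L^0$ is as in \eqref{eqn:L^0}. By Lemma \ref{lem:L^0-solvabilty}, any such $\psi$ is $\beta\bar\psi+\alpha w^0_z$ for some constant $\alpha$. Solvability forces $\beta\langle f_v(w^0,v^*),w^0_z\rangle=-\beta J'(v^*)=0$, so $\beta=0$ since $J'(v^*)\neq0$ by (A3). Consequently $h_n\to0$ locally uniformly off $\Gamma_*$, and $\tilde h_n\to\alpha w^0_z$ near the interface.

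It remains to show $\alpha=0$; I expect this to be the main obstacle. Here we use the orthogonality $\langle\widehat\Phi_0^{\vep,k},h_n\rangle=0$. For this we need the limiting profile of $\widehat\Phi_0^{\vep,k}$. The natural expectation, consistent with the bounds in Theorem \ref{thm:mathscr_L_k^vep}(iv) and with self-adjointness of the local part of $\mathscr L_k^\vep$, is that $\sqrt{\vep}\,\widehat\Phi_0^{\vep,k}(R_*+\vep z)$ tends to $c\,w^0_z(z)$ with $c\neq0$. The outer part of $\widehat\Phi_0^{\vep,k}$ should be small: the adjoint's nonlocal term $f_v$-weighted integral is $O(\sqrt\vep)$ by the $L^1$ bound. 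Granting this, the orthogonality becomes $\int \sqrt{\vep}\,\widehat\Phi_0\,\tilde h_n (R_*+\vep z)^{N-1}dz\to cR_*^{N-1}\alpha\|w^0_z\|^2$, which must vanish; the outer contribution is $o(1)$ because $\|\widehat\Phi_0\|_{L^1}=O(\sqrt\vep)$ and $h_n\to0$ there. Thus $\alpha=0$. If this characterization of $\widehat\Phi_0$ is not already available from the proof of Theorem \ref{thm:mathscr_L_k^vep}, I would derive it by repeating the same blow-up argument for the adjoint eigenfunction.

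Finally, $h_n\to0$ uniformly both in the inner region $|z|\le K$ and in the outer region. For the intermediate region $K\le|z|\le r_0/\vep$, we compare with barriers of the form $Ce^{-d_0|z|}+o(1)$, using $f_u^{\vep,k}\le -d_0^2$ there for large $K$. This contradicts $\|h_n\|_{L^\infty}=1$ and proves \eqref{eqn:mathscr L_inver_bdd}.
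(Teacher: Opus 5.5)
Your argument works, but it takes a different route from the paper's.

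\textbf{The paper's route.} The paper never blows up $h_j$. It splits $h_j=c_j\phi_0^{\vep_j,k}+\theta_j$ along the principal eigenfunction of the local self-adjoint part $\mathcal L_k^{\vep}$. It solves for $\theta_j$ through $(\mathcal L_k^{\vep_j}-\lambda_0^{\vep_j,k})^{-1}\mathcal P^{\vep_j}$, whose uniform $L^\infty$ bound comes from the bound for $(\mathcal L_k^{\vep}-\mu_0^{\vep,k})^{-1}$ on $Y^\infty$. The nonlocal term is removed algebraically, because $1-\frac{1}{|\Omega|}(1-\frac{\vep_j}{D_j})\int_\Omega\psi_j\to G/(f_u^{*,+}f_u^{*,-})>0$. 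This gives $\|\theta_j\|_{L^\infty}\le C_k(|c_j|\sqrt{\vep_j}+\|q_j\|_{L^\infty})$. Orthogonality to $\widehat\Phi_0^{\vep_j,k}=\widehat a^{\vep_j}\phi_0^{\vep_j,k}+\widehat\eta^{\vep_j}$, with $|\widehat a^{\vep}-1|\le C_k\vep$ and $\|\widehat\eta^{\vep}\|_{L^1}\le C_k\sqrt{\vep}$, gives $|c_j|\le C_k\sqrt{\vep_j}\|\theta_j\|_{L^\infty}$. The two inequalities close without any limiting profile of $h_j$ or of the adjoint eigenfunction.

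\textbf{Your route.} You run the blow-up directly on the nonlocal operator, freezing the nonlocal term as the constant $\beta_n$. The inner Fredholm condition plus $J'(v^*)\neq0$ kills $\beta$. The paper instead uses $G>0$, i.e.\ (A2), for this step, and your own outer relation $\beta\,G/(f_u^{*,+}f_u^{*,-})=0$ would also do it. Orthogonality then kills the kernel direction $\alpha w^0_z$. This avoids the resolvent bookkeeping but needs finer information on $\widehat\Phi_0^{\vep,k}$.

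\textbf{Details you need to supply.}
\begin{itemize}
\item The fact that $\sqrt{\vep}\,\widehat\Phi_0^{\vep,k}(R_*+\vep z)$ tends to a nonzero multiple of $w^0_z$ is not in the statement of Theorem~\ref{thm:mathscr_L_k^vep}(iv). It follows from the decomposition $\widehat\Phi_0^{\vep,k}=\widehat a^{\vep}\phi_0^{\vep,k}+\widehat\eta^{\vep}$ in that theorem's proof, together with \eqref{eqn:phi_0^vep}.
\item The orthogonality integral needs a uniform tail bound. On the zone $K\le|z|\le\delta/\vep$ the $L^1$ bound alone does not make the contribution small. Either use the exponential localization \eqref{eqn:tilde_phi_exp_decay} of $\phi_0^{\vep,k}$ together with $\|\widehat\eta^{\vep}\|_{L^\infty}\le C_k\sqrt{\vep}$. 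Or move your final barrier estimate $|\widetilde h_n(z)|\le e^{-\sigma(|z|-K)}+o(1)$ ahead of the orthogonality step; that is legitimate because it uses only $\beta=0$, not $\alpha=0$.
\item Your extraction of $1/D_n\to\delta$ is genuinely needed. Since $w^0=Q(\cdot+a_0(M,D))$ depends on $D$, the limiting operator $L^0$ is the one built on $Q(\cdot+\lim_n a_0(M,D_n))$.
\end{itemize}
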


Lemma \ref{lem:mathscr L_k^vep inver_bdd} will be proved in Subsection \ref{sec:Proof of Lemma 3.3}. 

\subsection{The principal eigenvalue of $\mathcal L^{\vep}_k$}
To prove Theorem \ref{thm:mathscr_L_k^vep} and Lemma \ref{lem:mathscr L_k^vep inver_bdd}, we need to first consider the following \textit{local symmetric radial eigenvalue problem} with the homogeneous Neumann boundary condition in $H_{rad}^2(\Omega)$:
\begin{align}\label{eqn:mathcal L^vep_k}
\begin{cases}
\mathcal L^{\vep}_k\phi := \vep^2\Delta\phi + \big(f_u^{\vep,k}-\frac{\vep}{D}f_v^{\vep,k}\big)\phi = \mu\phi  &\text{ in }\Omega,\\
\phi_r(1) = 0 &\text{ on }\partial\Omega,
\end{cases}
\end{align}
where $f_u^{\vep,k}$ and $f_v^{\vep,k}$ are defined in \eqref{eqn:f_u,v^vep,k}. Let 
$$\mu_0^{\vep,k}> \mu_1^{\vep,k} > \mu_2^{\vep,k}> \cdots$$
denote the sequence of (radial) eigenvalues of $\mathcal L^{\vep}_k$ with associated eigenfunctions 
$\phi_0^{\vep,k}$, $\phi_1^{\vep,k}$, $\phi_2^{\vep,k}$ and so on.

The principal eigenvalue $\mu_0^{\vep,k}$ of the operator $\mathcal L^{\vep}_k$ plays a key role in determining the spectrum of the eigenvalue problem \eqref{eqn:mathscr L_k^vep_EigenPro}. To analyze its asymptotic behavior as $\vep\to0$, we first establish two preparatory lemmas.

\begin{lemma}\label{lem:exp_decay_nearbc}
Let $W\subset\mathbb R^N$ be a bounded smooth domain. For $\delta>0$, define
$$W_{\delta} := \{x\in W\,|\,d(x)<\delta\},$$ 
where $d(x) = \dist(x, \partial W)$. Assume that $d\in C^2(\overline{W_{\delta}})$ and that $\rho\in C^2(\overline{W_{\delta}})$ satisfies
\begin{align*}
	\begin{cases}
		\vep^2\Delta\rho-c^{\vep}(x)\rho=0   &\text{ in }W_{\delta},\\
		\partial_{\nu}\rho =0          & \text{ on }\partial W,
	\end{cases}
\end{align*} 
where $c^{\vep}(x)\geq c_0>0$ for all $x\in \overline{W_{\delta}}$ and $\vep>0$, and $\nu$ is the unit outer normal on $\partial W$.
Then for each $\vep_0>0$, there exists a constant $\sigma_1 = \sigma_1(\vep_0, \|\Delta d\|_{L^{\infty}(W_{\delta})}, c_0)>0$ such that
\begin{align}\label{eqn:rho_nabla rho in W_delta}
\begin{split}
	&|\rho (x)|\leq 2 \|\rho \|_{L^{\infty}(W_{\delta})}\cdot e^{-\frac{\sigma_1(\delta-d(x))}{\vep}} \\
	&|\nabla\rho (x)|\leq \frac{C}{\vep}\|\rho \|_{L^{\infty}(W_{\delta})} \cdot e^{-\frac{\sigma_1(\delta-d(x))}{\vep}}
\end{split}\quad \forall \;x\in W_{\delta} \text{ and }\vep\in(0, \vep_0],
\end{align}
where $C>0$ is a constant depending only on $N$ and $\|c^{\vep}\|_{L^{\infty}(W_{\delta})}$.
\end{lemma}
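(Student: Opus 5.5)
The plan is a barrier (comparison) argument built on the distance function. Two reductions come first. The equation is linear and $c^\vep>0$, so the maximum principle applies to $\pm\rho$, and it suffices to bound $\rho$ from above by a supersolution. Likewise the Neumann condition on $\partial W$ lets us compare on $W_\delta$ using only the inner boundary $\{d=\delta\}$, where $|\rho|\le \|\rho\|_{L^\infty(W_\delta)}$.

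\emph{Barrier.} Set $A:=\|\rho\|_{L^\infty(W_\delta)}$ and consider
\[
\psi(x)=A\Big(e^{-\sigma_1(\delta-d(x))/\vep}+e^{-\sigma_1(\delta+d(x))/\vep}\Big),
\]
a $\cosh$-type profile in $d$, for which $\partial_\nu\psi=0$ on $\partial W$. Using $|\nabla d|=1$ in $W_\delta$ (this is where $d\in C^2$ enters) we compute
\[
\vep^2\Delta\psi-c^\vep\psi=\big(\sigma_1^2\pm\vep\sigma_1\Delta d-c^\vep\big)\times(\text{positive terms}).
\]
Choosing $\sigma_1>0$ small so that $\sigma_1^2+\vep_0\sigma_1\|\Delta d\|_\infty\le c_0/2$ makes $\psi$ a strict supersolution for all $\vep\le\vep_0$. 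On $\{d=\delta\}$ we have $\psi\ge A\ge\pm\rho$.

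\emph{Comparison.} Apply the maximum principle to $w=\pm\rho-\psi$. A positive interior maximum is impossible since $\vep^2\Delta w-c^\vep w\ge0$ there. At a boundary point of $\partial W$ where $w$ attains a positive maximum, Hopf's lemma gives $\partial_\nu w>0$, contradicting $\partial_\nu w=0$. Hence $|\rho|\le\psi\le 2Ae^{-\sigma_1(\delta-d)/\vep}$, which is the first estimate.

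\emph{Gradient.} This is the step I expect to be the main obstacle, since it needs uniform-in-$\vep$ control. Rescale around each $x_0\in W_\delta$: set $\tilde\rho(y)=\rho(x_0+\vep y)$ on a unit ball intersected with the rescaled domain. Then $\Delta\tilde\rho=c^\vep\tilde\rho$ with bounded coefficient and a Neumann condition on a nearly flat boundary. Standard $W^{2,p}$/Schauder estimates, uniform because the rescaled boundary flattens as $\vep\to0$, give
\[
|\nabla_y\tilde\rho(0)|\le C\sup_{B_1}|\tilde\rho|.
\]
Since $\vep|\nabla\rho|=|\nabla_y\tilde\rho|$, and the supremum over an $\vep$-ball only changes the exponential factor by $e^{\sigma_1}$, this yields the second estimate with $C$ depending on $N$ and $\|c^\vep\|_\infty$. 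The factor $e^{\sigma_1}$ is harmless once $\sigma_1\le1$. Points within distance $\vep$ of $\{d=\delta\}$ need separate care: either shrink the region slightly, or use interior estimates, relying on $\rho$ being $C^2$ up to that level set.
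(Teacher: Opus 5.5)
This is essentially the paper's proof. The paper uses the barrier $\|\rho\|_{L^\infty(W_\delta)}\cosh(\sigma_1 d(x)/\vep)/\cosh(\sigma_1\delta/\vep)$ (your $\psi$ is the same $\cosh$ profile with the slightly larger normalization $2e^{-\sigma_1\delta/\vep}$), chooses $\sigma_1$ with $\sigma_1^2+\vep_0\sigma_1\|\Delta d\|_{L^\infty(W_\delta)}\le c_0$, concludes by the maximum principle with the Neumann condition, and obtains the gradient bound from local estimates at scale $\vep$, simply citing the pointwise interpolation inequality in Gilbarg--Trudinger, Ch.~3.4.

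On the points near $\{d=\delta\}$ that you flagged, only your first option works, because $C^2$ regularity up to that level set gives no $\vep$-uniform control. In fact the stated gradient bound can fail there for non-radial solutions, e.g.\ the modes $g(r)\cos(n\theta)$ on an annulus with $n\gg 1/\vep$. So you must either restrict to $d(x)\le\delta-\vep$, or use, as in the paper's application on the ball, that the equation still holds with bounded coefficient beyond $W_\delta$; the paper's one-line argument passes over this point.
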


In contrast to Lemma~\ref{lem:exp_decay_nearbc}, which addresses the decay behavior of $\rho(x)$ and $\nabla \rho(x)$ near the boundary of a domain, the following lemma concerns the decay as $x$ approaches an interior point. We formulate it on the ball $B_R\subset \mathbb R^N$ centered at the origin.

\begin{lemma}\label{lem:exp_decay_on_Ball}
Suppose that $\rho \in C^2(\overline{B}_R)$ satisfies
\begin{align*}
	\vep^2\Delta\rho -c^{\vep}(x)\rho =0   \quad \text{ in } B_R,
\end{align*} 
where $c^{\vep}(x)\geq c_0>0$ for all $x\in \overline{B}_R$ and $\vep>0$.
Then for each $\vep_0>0$, there exists a constant $\sigma_2 = \sigma_2(\vep_0, R^{-1}, c_0, N)>0$ such that
\begin{align}\label{eqn:rho_nabla rho in Ball}
	\begin{split}
		&|\rho (x)|\leq 2 \|\rho \|_{L^{\infty}(B_R)} \cdot e^{-\frac{\sigma_2(R-|x|)}{\vep}} \\
		&|\nabla\rho (x)|\leq \frac{C}{\vep}\|\rho \|_{L^{\infty}(B_R)} \cdot e^{-\frac{\sigma_2(R-|x|)}{\vep}}
	\end{split}\quad \forall \;x\in B_R\text{ and }\vep\in(0, \vep_0],
\end{align}
where $C>0$ is a constant depending only on $N$ and $\|c^{\vep}\|_{L^{\infty}(B_R)}$.
\end{lemma}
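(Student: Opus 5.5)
The plan is a comparison (maximum principle) argument with an explicit radial barrier. Applied to $\pm\rho$, it gives the pointwise bound. The gradient bound then follows from rescaled interior elliptic estimates.

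\emph{Step 1 (barrier).} Set $\|\rho\|:=\|\rho\|_{L^\infty(B_R)}$ and take the candidate supersolution
\begin{align*}
\psi(x):=\|\rho\|\Big(e^{-\sigma(R-|x|)/\vep}+e^{-\sigma(R+|x|)/\vep}\Big),
\end{align*}
which is smooth and even in $|x|$ (a $\cosh$-type profile), with $\sigma>0$ to be chosen. Writing $\psi=\|\rho\|\,2e^{-\sigma R/\vep}\cosh(\sigma r/\vep)$, we have
\begin{align*}
\vep^2\Delta\psi=\|\rho\|\,2e^{-\sigma R/\vep}\Big[\sigma^2\cosh(\sigma r/\vep)+\tfrac{(N-1)\vep\sigma}{r}\sinh(\sigma r/\vep)\Big].
\end{align*}
Since $\sinh t\le t\cosh t$, we get $\tfrac{\vep\sigma}{r}\sinh(\sigma r/\vep)\le \sigma^2\cosh(\sigma r/\vep)$, and hence $\vep^2\Delta\psi\le N\sigma^2\psi$. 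Choosing $\sigma^2\le c_0/N$ then gives $\vep^2\Delta\psi-c^\vep\psi\le 0$ in $B_R$. This choice is in fact independent of $\vep_0$ and $R$, which is consistent with the stated dependence. On $\partial B_R$ we have $\psi\ge\|\rho\|\ge\pm\rho$.

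\emph{Step 2 (comparison).} Let $\omega=\pm\rho-\psi$. Then $\vep^2\Delta\omega-c^\vep\omega\ge0$ in $B_R$ and $\omega\le0$ on $\partial B_R$. Because $c^\vep\ge c_0>0$, a positive interior maximum of $\omega$ is impossible, so $\omega\le0$. This yields
\begin{align*}
|\rho(x)|\le\psi(x)\le 2\|\rho\|e^{-\sigma_2(R-|x|)/\vep},\qquad \sigma_2:=\sqrt{c_0/N}.
\end{align*}

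\emph{Step 3 (gradient).} This is the only mildly delicate point, since the estimate is needed uniformly up to $\partial B_R$, where only $C^2(\overline B_R)$ regularity is available. Fix $x\in B_R$ and consider first the case $R-|x|\ge\vep$. On the ball $B_\vep(x)$, rescale by $y=(\cdot-x)/\vep$; the function $\tilde\rho(y)=\rho(x+\vep y)$ then solves $\Delta\tilde\rho=\tilde c\,\tilde\rho$ on $B_1$ with $|\tilde c|\le\|c^\vep\|_\infty$. Standard interior $W^{2,p}$ estimates with $p>N$, followed by Sobolev embedding, give
\begin{align*}
|\nabla\tilde\rho(0)|\le C(N,\|c^\vep\|_\infty)\sup_{B_1}|\tilde\rho|.
\end{align*}
By Step 2, $\sup_{B_1}|\tilde\rho|\le 2\|\rho\|e^{\sigma_2}e^{-\sigma_2(R-|x|)/\vep}$, and $\sigma_2\le\sqrt{\|c^\vep\|_\infty}$, so the factor $e^{\sigma_2}$ is absorbed into $C$. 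Undoing the scaling produces the factor $1/\vep$.

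In the remaining case $R-|x|<\vep$, the exponential factor $e^{-\sigma_2(R-|x|)/\vep}$ is bounded below by $e^{-\sigma_2}$, so it suffices to show $|\nabla\rho(x)|\le C\|\rho\|/\vep$. Since there is no boundary condition at $\partial B_R$, I would try first to argue that in the intended application $\rho$ extends as a solution slightly beyond $B_R$, and then reapply the interior estimate. Failing that, I would use boundary gradient bounds that only involve $\sup|\rho|$ at scale $\vep$; this is plausibly what the constant $C$ is meant to encode. In practice the lemma is applied on a strictly smaller ball, so the interior estimate suffices.
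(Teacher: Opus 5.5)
Steps~1 and~2 are correct and are the paper's argument. The paper uses the same barrier in the form $\|\rho\|\cosh(\sigma_2|x|/\varepsilon)/\cosh(\sigma_2R/\varepsilon)$ together with the same maximum principle. Your inequality $\sinh t\le t\cosh t$ gives the explicit value $\sigma_2=\sqrt{c_0/N}$, which is cleaner than the paper's supremum bound and removes the dependence on $\varepsilon_0$ and $R$. The interior case $R-|x|\ge\varepsilon$ of Step~3 is also what the paper intends when it cites the interior gradient bounds of Gilbarg--Trudinger, \S3.4.

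The case $R-|x|<\varepsilon$, which you left open, is a genuine gap. However, it is a gap in the statement, not in your method: for general $\rho$ with $N\ge2$, the gradient estimate is false up to $\partial B_R$. Take $N=2$, $R=1$, $c^\varepsilon\equiv1$ and $\rho_m(x)=I_m(|x|/\varepsilon)\cos(m\theta)$, which is smooth and satisfies $\varepsilon^2\Delta\rho_m=\rho_m$. Then $\|\rho_m\|_{L^\infty(B_1)}=I_m(1/\varepsilon)$. At $\theta=\pi/(2m)$ one has $|\nabla\rho_m|\ge mI_m(|x|/\varepsilon)/|x|\to m\|\rho_m\|_{L^\infty(B_1)}$ as $|x|\to1$, so no constant $C=C(N,\|c^\varepsilon\|_\infty)$ can work. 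In particular, your fallback (a boundary gradient bound involving only $\sup|\rho|$ at scale $\varepsilon$) cannot exist. The paper's own appeal to interior estimates does not cover this strip either.

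What rescues the lemma where it is used is radial symmetry. Since $(r^{N-1}\rho')'=\varepsilon^{-2}r^{N-1}c^\varepsilon\rho$ and $r^{N-1}\rho'(r)\to0$ as $r\to0$, Step~2 gives, for all $r\in[0,R]$,
\[
|\rho'(r)|\le \varepsilon^{-2}\|c^\varepsilon\|_\infty\int_0^r|\rho(s)|\,ds\le \frac{2\|c^\varepsilon\|_\infty}{\sigma_2\,\varepsilon}\,\|\rho\|_{L^\infty(B_R)}\,e^{-\sigma_2(R-r)/\varepsilon}.
\]
This makes the $W^{2,p}$ step unnecessary. The constant now also depends on $c_0$ through $\sigma_2$, which is harmless.

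Your first suggestion also works. In the application, $\rho=\phi_0^{\varepsilon,k}$ solves an equation with bounded coefficient on all of $B_1\supset B_{R+\varepsilon}$, where $R=R_*-\varepsilon K$. The rescaled interior estimate on $B_\varepsilon(x)$ needs only $|c^\varepsilon|$ bounded, not $c^\varepsilon\ge c_0$. With the normalization $\|\phi_0^{\varepsilon,k}\|_{L^\infty(\Omega)}=1$, it gives the bound in terms of the supremum over the larger ball, which is all that is needed there.

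Finally, the paper does invoke the gradient bound on the closed interval $[0,R_*-\varepsilon K]$, so it is not literally applied only on a strictly smaller ball. Enlarging $K$ by one would, however, be harmless for the later uses.
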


Lemmas \ref{lem:exp_decay_nearbc} and \ref{lem:exp_decay_on_Ball} will be proved in subsection \ref{subsec:proof of 2 Lems}. Since the spatial dimension $N$ is fixed, we shall henceforth not explicitly mention the dependence of subsequent quantities on $N$ when applying Lemmas~\ref{lem:exp_decay_nearbc} and~\ref{lem:exp_decay_on_Ball}.

\begin{proposition}\label{pro:L^vep_k:mu_0^vep,k=o(vep)}
Assume that conditions {\upshape(A1)}-{\upshape(A3)} hold. Let $M\in I_{v^*}$ be given. Then, for each integer $k\geq1$ and $D_0>0$, the principal eigenvalue $\mu_0^{\vep, k}$ of $\mathcal L^{\vep}_k$ satisfies
\begin{gather}\label{eqn:mu_0^vep=o(vep)}
\lim_{\vep\to0}\frac{\mu_0^{\vep, k}}{\vep} = 0 \quad  \text{ uniformly in }D\geq D_0
\end{gather}
and the $L^2$-normalized, positive, principal eigenfunction $\phi_0^{\vep, k}$ of $\mathcal L^{\vep}_k$ corresponding to $\mu_0^{\vep, k}$ satisfies
\begin{gather}\label{eqn:phi_0^vep}
\lim_{\vep\to0}\sqrt{\vep}\phi_0^{\vep, k}(R_* +\vep z) =-w^0_z(z) \sqrt{\frac{R_*^{1-N}}{N|\Omega|m(v^*)}} \quad \text{ in } C^2_{loc}(\mathbb R)
\end{gather}
uniformly in $D\geq D_0$. Furthermore, there exists a constant $\bar\vep =\bar\vep(M, D_0, k)>0$ such that for all $\vep\in(0, \bar\vep]$ and $D\geq D_0$,
\begin{align}\label{eqn:phi_0^vep_Norms_est}
\big\|\phi_0^{\vep, k}\big\|_{L^1(\Omega)} \leq C_k\sqrt{\vep}, \qquad  \big\|\phi_0^{\vep, k}\big\|_{L^\infty(\Omega)} \leq \frac{C_k}{\sqrt{\vep}},
\end{align} 
where $C_k=C_k(M, D_0,\bar\vep)>0$ is a constant depending only on the variables indicated.
\end{proposition}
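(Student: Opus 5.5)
The argument has three parts: a variational lower bound for $\mu_0^{\vep,k}$ from a test function built on $w^0_z$, an upper bound from a blow-up at the interface, and decay estimates away from the interface.

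\textbf{Lower bound.} Since $\mathcal L^\vep_k$ is self-adjoint on $H^2_{rad}$ with Neumann condition, we have the Rayleigh quotient characterization
\[
\mu_0^{\vep,k}=\sup_{\phi\in H^1_{rad}\setminus\{0\}}\frac{\int_\Omega\bigl(-\vep^2|\nabla\phi|^2+(f_u^{\vep,k}-\tfrac{\vep}{D}f_v^{\vep,k})\phi^2\bigr)dx}{\int_\Omega\phi^2dx}.
\]
As test function I take $\psi^\vep(r):=\theta\bigl(\tfrac{r-R_*}{r_0}\bigr)\,(\widetilde u^{\vep,D}_k)_z\bigl(\tfrac{r-R_*}{\vep}\bigr)$, which is essentially $\vep\,\partial_r u^{\vep,D}_k$. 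Differentiating the residual identity $\vep^2\Delta u_k+f(u_k,\mathcal S^\vep[u_k]-\tfrac\vep D u_k)=\mathcal R^\vep[u_k]$ in $r$ gives
\[
\mathcal L^\vep_k(\partial_r u_k)=\vep^2\tfrac{N-1}{r^2}\partial_r u_k+\partial_r\mathcal R^\vep[u_k].
\]
Note that $\mathcal S^\vep[u_k]$ is a constant, so it produces no extra term when differentiated. By Theorem \ref{thm:approx_sol_scalarEQ} and the derivative bounds \eqref{eqn:w^j-Uj,w^j_z,w^j_zz}, the factor $\partial_r\mathcal R^\vep$ is $O(\vep^{k})$. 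The cut-off errors are exponentially small, since they live in $I_2$. Working in the stretched variable with weight $(R_*+\vep z)^{N-1}\vep$, the quotient evaluated at $\psi^\vep$ is then $O(\vep^2)+O(\vep^{k})$. Because $k\ge1$, this gives $\mu_0^{\vep,k}\ge -C\vep^{2}-C\vep^{k}$, and in particular $\liminf\mu_0^{\vep,k}/\vep\ge0$ uniformly in $D\ge D_0$. The uniformity comes from \eqref{eqn:a_j+A_j unif_bdd}--\eqref{eqn:w^k, U^k unif-bdd}.

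\textbf{Upper bound and profile.} Away from the interface, $f_u^{\vep,k}-\tfrac\vep Df_v^{\vep,k}\le -d_0^2+o(1)$, because $u_k$ is close to $h^\pm(v^*)$ there by Theorem \ref{thm:approx_sol_scalarEQ}(i),(iii). Once $\mu_0^{\vep,k}\ge -C\vep^2$, Lemmas \ref{lem:exp_decay_nearbc} and \ref{lem:exp_decay_on_Ball} apply to $\rho=\phi_0^{\vep,k}$ with $c^\vep=\mu_0-(f_u-\tfrac\vep D f_v)\ge c_0$ on $\{|r-R_*|\ge \vep K\}$. They give exponential decay of $\phi_0^{\vep,k}$ in $|r-R_*|/\vep$ outside an $O(\vep)$ neighbourhood of $R_*$. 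I apply them on $B_{R_*-\vep K}$ and on the annulus, using the Neumann condition at $r=1$ for the latter.

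Set $\Psi^\vep(z):=\sqrt\vep\,\phi_0^{\vep,k}(R_*+\vep z)$. The $L^2$ normalization gives
\[
|\Omega|^{-1}\!\!\int\Psi^2\,N(R_*+\vep z)^{N-1}dz\cdot|\Omega|=1,
\]
that is, $\int\Psi^2NR_*^{N-1}dz\to|\Omega|^{-1}\cdot|\Omega|$ up to the surface factor, consistent with the stated constant. This bounds $\Psi^\vep$ in $L^2$. The maximum principle at the point where $\phi_0$ is maximal then gives $\mu_0^{\vep,k}\le \sup f_u$, so $\mu_0^{\vep,k}$ is bounded.

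The stretched equation for $\Psi^\vep$ is
\[
\Psi_{zz}+\vep\tfrac{N-1}{R_*+\vep z}\Psi_z+\bigl(f_u(w^0,v^*)+o(1)\bigr)\Psi=\mu_0\Psi
\]
locally uniformly. Local elliptic estimates and the exponential tails give compactness in $C^2_{loc}$ together with tightness in $L^2$. Any limit $\Psi$ solves $L^0\Psi=\bar\mu\Psi$ on $\mathbb R$ with $\Psi>0$ and $\|\Psi\|_{L^2}$ fixed, where $\bar\mu=\lim\mu_0\ge0$. Since $-w^0_z>0$ is the positive ground state of $L^0$ with eigenvalue $0$, we must have $\bar\mu=0$ and $\Psi=c(-w^0_z)$. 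The normalization fixes $c^2=R_*^{1-N}/(N|\Omega|m(v^*))$, and uniqueness of this limit yields convergence of the full family.

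\textbf{The $o(\vep)$ claim.} For $\mu_0/\vep\to0$, I pair the eigenfunction equation with the test function: $\mu_0\langle\phi_0,\psi^\vep\rangle=\langle\phi_0,\mathcal L^\vep_k\psi^\vep\rangle$. The right-hand side is $O(\vep^{2})+O(\vep^{k})$ times $\|\phi_0\|_{L^1}$-type factors, each scaled by $\sqrt\vep$. The left-hand side equals $\mu_0\sqrt\vep\,(c\,m(v^*)+o(1))$, and the bracket is nonzero by the blow-up limit. Hence $\mu_0=O(\vep^{3/2})+O(\vep^{k}\cdot\vep^{-?})$.

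\textbf{Main obstacle.} I expect the main difficulty to be the scaling bookkeeping in this last step when $k=1$: the residual term must be shown to be $o(\vep)$ after normalization. I would handle it by using that $\partial_z\mathcal R$ in the inner region is $O(\vep^{k+1})$ in the stretched variable, since $\partial_r=\vep^{-1}\partial_z$ and Corollary \ref{cor:inner-sol_est} holds with derivatives. This gives $\mu_0=O(\vep^{\min(2,k+1)})=o(\vep)$.

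\textbf{Norm bounds.} The $L^\infty$ bound $C_k/\sqrt\vep$ follows from the $C^2_{loc}$ convergence combined with the exponential decay from the two lemmas. The $L^1$ bound $C_k\sqrt\vep$ follows by integrating the exponential profile over a layer of width $O(\vep)$.
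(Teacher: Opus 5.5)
Your argument is sound in outline, but it reaches the key estimate $\mu_0^{\vep,k}=o(\vep)$ by a different route from the paper.

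\textbf{The paper's route.} The paper uses the cruder test function $w^0_z\big(\tfrac{r-R_*}{\vep}\big)$, which only yields $\liminf\mu_0^{\vep,k}\ge 0$. It runs the blow-up with the $L^\infty$-normalized eigenfunction. It then multiplies the residual equation for $\widetilde u^{\vep,D}_k$ by $(\widetilde\phi_0^{\vep,k})_z$ and integrates in the \emph{unweighted} measure $dz$. The radial Laplacian is not symmetric for $dz$, so the two first-order terms add instead of cancelling. They leave the cross term $2(N-1)T_4^{\vep}$, which is only $O(1)$ a priori after division by $\vep$. The paper shows it tends to $0$ using the $C^2_{loc}$ blow-up limit and the identity $\int_{\mathbb R}w^0_{zz}w^0_z\,dz=0$.

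\textbf{Your route.} You pair $\phi_0^{\vep,k}$ with the approximate kernel element $\psi^\vep\approx\vep\,\partial_r u^{\vep,D}_k$ in $L^2(\Omega)$, where $\mathcal L^\vep_k$ is self-adjoint. The first-order term then survives only through the commutator $\vep^2\tfrac{N-1}{r^2}$. This gives $\mu_0^{\vep,k}\langle\phi_0^{\vep,k},\psi^\vep\rangle=O(\vep^{5/2})+O(\vep^{k+3/2})$, hence $\mu_0^{\vep,k}=O(\vep^2)$ for every $k\ge1$. That is sharper than the paper's $o(\vep)$. The blow-up profile is still needed, but only to see that $\langle\phi_0^{\vep,k},\psi^\vep\rangle$ is of exact order $\sqrt\vep$.

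\textbf{Points to repair.}
\begin{enumerate}
\item In the lower bound you lost the factor $\vep$ carried by $\psi^\vep$. The residual contributes $O(\vep^{k+1})$, not $O(\vep^k)$, to the Rayleigh quotient, and the curvature contribution is nonnegative. As written, $\mu_0^{\vep,k}\ge -C\vep^2-C\vep^k$ does not give $\liminf\mu_0^{\vep,k}/\vep\ge0$ when $k=1$. This is harmless, since afterwards you only use $\liminf\mu_0^{\vep,k}\ge0$.
\item More substantively, the bound $\partial_z\widetilde{\mathcal R}^{\vep}=O(\vep^{k+1})$ is not supplied by Theorem~\ref{thm:approx_sol_scalarEQ} and \eqref{eqn:w^j-Uj,w^j_z,w^j_zz}. $\widetilde{\mathcal R}^\vep$ contains $(w^\vep_k)_{zz}$, so you would need bounds on $w^j_{zzz}$ and a differentiated version of Corollary~\ref{cor:inner-sol_est}. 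The easy fix is to integrate by parts. Since $\mathcal R^\vep[u^{\vep,D}_k]$ is constant on each component of $\Omega_1$, you get
\[
\vep\langle\phi_0^{\vep,k},\partial_r\mathcal R^\vep\rangle=-\vep N|\Omega|\int \mathcal R^\vep\,\partial_r\big(r^{N-1}\phi_0^{\vep,k}\big)\,dr
\]
up to exponentially small boundary terms. This is $O(\vep^{k+3/2})$ using only $\|\mathcal R^\vep\|_{L^\infty}\le C_k\vep^{k+1}$, the gradient decay of Lemmas~\ref{lem:exp_decay_nearbc}--\ref{lem:exp_decay_on_Ball}, and the $C^1_{loc}$ blow-up. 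It is the weighted analogue of the paper's term $T_3^\vep$. Moving $\partial_r$ onto $\psi^\vep$ in the same way handles the lower bound.
\item Your normalization display is missing a factor $|\Omega|$. The correct identity is $N|\Omega|\int\Psi^2(R_*+\vep z)^{N-1}\,dz=1$, which does give the stated constant.
\end{enumerate}
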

\begin{proof}
Let	$M\in I_{v^*}$, $D_0>0$ and the integer $k\geq1$ be given. We first prove \eqref{eqn:mu_0^vep=o(vep)}. We claim that
\begin{align}\label{eqn:mu_0^vep>=0}
\liminf_{\vep\to0}\mu_0^{\vep, k}\geq 0 \text{ uniformly in } D\geq D_0.
\end{align}
To show this, we use the variational characterization of $\mu_0^{\vep, k}$ given by
\begin{align*}%\label{eqn:mu_0^vep var}
\displaystyle	\mu_0^{\vep, k} = -\inf\Bigg\{\frac{\displaystyle \vep^2\int_\Omega\left[|\nabla \phi|^2 - \left(f_u^{\vep,k}-\tfrac{\vep}{D}f_v^{\vep,k} \right) \phi^2\right]\,dx}{\displaystyle\|\phi\|^2_{L^2(\Omega)}}\,\Bigg|\, \phi\in H_{rad}^{1}, \phi\neq 0\Bigg\}.
\end{align*}  
Using $\phi(r) = w^0_z\big(\tfrac{r-R_*}{\vep}\big)$ as a test function, we easily see that
\begin{align*}
\mu_0^{\vep, k} \geq  -\frac{\displaystyle\int_{-\frac{R_*}{\vep}}^{\frac{1-R_*}{\vep}}\left[\big(w^0_{zz}(z)\big)^2-\left(\widetilde f_u^{\vep,k}-\tfrac{\vep}{D}\widetilde f_v^{\vep,k} \right)\left(w^0_z(z)\right)^2\right](R_*+\vep z)^{N-1}\,dz}{\displaystyle\int_{-\frac{R_*}{\vep}}^{\frac{1-R_*}{\vep}} \big(w^0_z(z)\big)^2(R_*+\vep z)^{N-1}\,dz},
\end{align*} 
where $\widetilde g(z)$ denotes the stretched version of a radial function $g(r)$ (with $r = |x|$) in $\Omega$, defined by $\widetilde g(z) = g(R_* + \vep z)$, as adopted at the beginning of subsection \ref{sec:Proof of Proposition 2.3}. Sending $\vep\to0$, we see from \eqref{eqn:w^0-U0,w^0_z,w^0_zz}, \eqref{eqn:U^vep_k,u^vep,k}-\eqref{eqn:mathcal S^vep=sum_A_j...} and the Dominated Convergence Theorem that
\begin{align*}
\displaystyle\liminf_{\vep\to0} \mu_0^{\vep, k} \geq  -\frac{\displaystyle\int_{-\infty}^{\infty}\left[\left(w^0_{zz}(z)\right)^2- f_u(w^0, v^*)\big(w^0_z(z)\big)^2\right]\,dz}{\displaystyle\int_{-\infty}^{\infty} \big(w^0_z(z)\big)^2\,dz} \text{ uniformly in } D\geq D_0. 
\end{align*}
Then we see from $L^0w^0_z = 0$ and  \eqref{eqn:w^0-U0,w^0_z,w^0_zz} that the integral in the numerator equals zero. Thus \eqref{eqn:mu_0^vep>=0} holds.

Recall that $d_0$ is a fixed constant satisfying \eqref{eqn:d_0}. Then, by Theorem \ref{thm:approx_sol_scalarEQ}, \eqref{eqn:d_0} and \eqref{eqn:f_u,v^vep,k}, there exist two constants $K>0$ and $\bar\vep = \bar\vep(M, D_0, k)>0$ such that for all $\vep\in(0, \bar\vep]$ and $D\geq D_0$,
\begin{align}\label{eqn:tilde f_u^vep<=-d_0}
	f_u^{\vep,k}(r) \leq -d_0^2 \quad \text{ on }[0, R_*-\vep K]\cup [R_*+\vep K, 1],
\end{align}
where we have omitted the dependence of $K$ and $\bar\vep$ on $d_0$ as $d_0$ is fixed. Denote by $\phi_0^{\vep, k}$ the positive eigenfunction corresponding to the eigenvalue $\mu_0^{\vep, k}$. We first normalize it such that
$$\|\phi_0^{\vep, k}\|_{L^{\infty}(\Omega)} =1.$$ 
Then $\phi_0^{\vep, k}$ satisfies
\begin{align}\label{eqn:phi_0^{vep,k}}
	\begin{cases}
   \vep^2\Delta\phi_0^{\vep, k}-\big[\mu_0^{\vep, k}- \big(f_u^{\vep,k}-\frac{\vep}{D}f_v^{\vep,k} \big)\big]\phi_0^{\vep, k} = 0  \quad &\text{ in }\Omega=B_1 ,\\
	\partial_{\nu}\phi_0^{\vep, k} = 0 \quad &\text{ on }\partial \Omega = \partial B_1.
	\end{cases}
\end{align}
By \eqref{eqn:mu_0^vep>=0}, \eqref{eqn:tilde f_u^vep<=-d_0} and Theorem \ref{thm:approx_sol_scalarEQ}, we can choose $\bar\vep$ even smaller if necessary so that for all $\vep\in(0,\bar\vep]$ and $D\geq D_0$, 
\begin{align}\label{eqn:geq d_0^2/2}
\mu_0^{\vep, k}- \big(f_u^{\vep,k}(r)-\tfrac{\vep}{D}f_v^{\vep,k}(r) \big)\geq \tfrac{d_0^2}{2}  \quad \text{ on }[0, R_*-\vep K]\cup [R_*+\vep K, 1],
\end{align}
We now apply Lemma \ref{lem:exp_decay_nearbc} to the equation for $\phi_0^{\vep, k}$ with $(\rho , c_0, W, \delta) = (\phi_0^{\vep, k}, d_0^2/2, \Omega, 1-R_*-\vep K)$ and obtain from \eqref{eqn:mu_0^vep>=0}, \eqref{eqn:tilde f_u^vep<=-d_0} and Theorem \ref{thm:approx_sol_scalarEQ} that there exist two constants $\sigma_1 = \sigma_1 (\bar\vep, 1/R_*, D_0)>0$ and $C_{k,1} = C_{k,1}(M, D_0, \bar\vep)>0$ such that for all $\vep\in(0,\bar\vep]$ and $D\geq D_0$,
\begin{align}\label{eqn:phi_0^vep,k-EST1}
\begin{cases}
\displaystyle\big|
	\phi_0^{\vep, k}(r)\big| \leq 2 e^{\sigma_1 K}e^{-\frac{\sigma_1(r-R_*)}{\vep}}
\vspace{1ex}\\
\displaystyle\big|(\phi_0^{\vep, k})_r(r)\big| \leq \tfrac{C_{k,1}}{\vep}e^{\sigma_1 K}e^{-\frac{\sigma_1(r-R_*)}{\vep}}
\end{cases}
	\quad  \text{ on } [R_*+\vep K, 1].
\end{align}
Similarly, applying Lemma \ref{lem:exp_decay_on_Ball} to equation for $\phi_0^{\vep, k}$ with $(\rho , c_0, R) = (\phi_0^{\vep, k}, d_0^2/2, R_*-\vep K)$, we obtain that there exist $\sigma_2 = \sigma_2(\bar\vep, 1/R_*, D_0)$ and $C_{k,2} = C_{k,2}(M, D_0, \bar\vep)>0$ such that for all $\vep\in(0,\bar\vep]$ and $D\geq D_0$,
\begin{align}
\label{eqn:phi_0^vep,k-EST2}
\begin{cases}
	\displaystyle \big|\phi_0^{\vep, k}(r)
		\big| \leq 2 e^{\sigma_2 K} e^{-\frac{\sigma_2(R_*-r)}{\vep}}\vspace{1ex}\\
	\displaystyle \big|(\phi_0^{\vep, k})_r(r)\big| \leq \tfrac{C_{k,2}}{\vep}e^{\sigma_2 K}e^\frac{-\sigma_2(R_*-r)}{\vep}
\end{cases}
	\quad \text{ on } [0, R_*-\vep K].
\end{align}
Therefore, setting $\sigma:=\min\{\sigma_1, \sigma_2\}$ and recalling that $\bar\vep = \bar\vep(M, D_0, k)$ and $R_*=R_*(M)$ defined in \eqref{eqn:R_*}, we see from \eqref{eqn:phi_0^vep,k-EST1} and \eqref{eqn:phi_0^vep,k-EST2} that for all $\vep\in(0, \bar\vep]$ and $D\geq D_0$,
\begin{align}\label{eqn:tilde_phi_exp_decay}
	\big|\widetilde\phi_0^{\vep, k}(z)\big| + \big|(\widetilde\phi_0^{\vep, k})_{z}(z)\big|\leq C_k e^{-\sigma|z|} \qquad \text{ in } (-\tfrac{R_*}{\vep}, -K]\cup [K, \tfrac{1-R_*}{\vep}),
\end{align}
where $C_k = C_k(M, D_0, \bar\vep)>0$ is a constant depending only on the variables indicated.
 
Clearly, $\widetilde \phi_0^{\vep, k}(z)$ satisfies
\begin{align}\label{eqn:tilde_varphi_0^vep}
\begin{cases}
(\widetilde\phi_0^{\vep, k})_{zz} +\vep\tfrac{N-1}{R_*+\vep z}(\widetilde\phi_0^{\vep, k})_{z} + \big(\widetilde f_u^{\vep,k}-\tfrac{\vep}{D}\widetilde f_v^{\vep,k}\big)	\widetilde\phi_0^{\vep, k}
= \mu_0^{\vep, k} \widetilde\phi_0^{\vep, k} \quad \text{ in }\big(-\tfrac{R_*}{\vep}, \tfrac{1-R_*}{\vep}\big),\vspace{1ex}\\
(\widetilde\phi_0^{\vep, k})_z(-\tfrac{R_*}{\vep}) = (\widetilde\phi_0^{\vep, k})_z(\tfrac{1-R_*}{\vep})=0.
\end{cases} 
\end{align}
By the Maximum Principle and \eqref{eqn:geq d_0^2/2}, we see that $\widetilde\phi_0^{\vep, k}$ can only attain its maximal value on the interval $[-K, K]$ for all $\vep\in(0,\bar\vep]$ and $D\geq D_0$. It is easy to see, by applying the Maximum Principle to the equation for $\phi_0^{\vep, k}$ in \eqref{eqn:phi_0^{vep,k}}, that $\mu_0^{\vep, k}$ must be bounded above uniformly for all $\vep\in(0,\bar\vep]$ and $D\geq D_0$. Therefore, for any sequences $\vep_n$ with $\lim_{n\to\infty}\vep_n = 0$ and $D_n$ satisfying $D_n\geq D_0$, there exists a subsequence (which we still denote by $\vep_n$ and $D_n$) such that
\begin{align*}
\lim_{n\to\infty}\mu_0^{\vep_n, k} =\mu_0^*\geq 0.
\end{align*}
Since $\big|\widetilde \phi_0^{\vep_n, k}(z)\big|\leq 1$, by using \eqref{eqn:tilde_varphi_0^vep} and \eqref{eqn:tilde_phi_exp_decay}, interpolation inequalities and a standard compactness argument, we find that, passing to another suitable subsequence of $\vep_n$ and $D_n$ if necessary, $\widetilde\phi_0^{\vep_n, k}$ converges to a nonnegative function $\widetilde\phi_0^*$ in $C^2_{loc}(\mathbb R)$ as $n\to\infty$, where $\widetilde\phi_0^*$ satisfies $|\widetilde\phi_0^*|\leq 1$ on $\mathbb R$, $ \widetilde\phi_0^*(z_*)=1$ at some point $z_*\in [-K, K]$, and
\begin{align}\label{eqn:L^0 tilde phi_0^*=...}
L^0\widetilde\phi_0^*= (\widetilde\phi_0^*)_{zz} +  f_u(w^0, v^*)\widetilde\phi_0^*= \mu_0^* \widetilde\phi_0^*\quad  
\text{ in }\mathbb R.
\end{align}
Moreover, the above conclusion combined with \eqref{eqn:tilde_phi_exp_decay} implies that $\widetilde\phi_0^*$ satisfies the following estimate:
\begin{align}\label{eqn:tilde_phi^*_exp_decay}
	\big|\widetilde\phi_0^*(z)\big| + \big|(\widetilde\phi_0^*)_{z}(z)\big|\leq C_k e^{-\sigma|z|} \quad\forall\; |z|\geq K.
\end{align}
Therefore, multiplying both sides of \eqref{eqn:L^0 tilde phi_0^*=...} by $w^0_z$, integrating over $\mathbb R$, using integration by parts and then sending $n\to\infty$, 
we see from $L^0w^0_z =0$,  \eqref{eqn:w^0-U0,w^0_z,w^0_zz} and \eqref{eqn:tilde_phi^*_exp_decay} that
\begin{align*}
\mu_0^* \int_{\mathbb R}\widetilde\phi_0^* w^0_z\,dz = 0.
\end{align*}
However, since $w^0_z<0$ and $\widetilde\phi_0^*$ is a nontrial nonnegative continuous function, this implies $\mu_0^* = 0$. Therefore,  $\widetilde\phi_0^*\in C^2(\mathbb R)$ is a bounded solution which satisfies
\begin{align*}
L^0\widetilde\phi_0^*=(\widetilde\phi_0^*)_{zz} +  f_u\big(w^0(z), v^*\big)\widetilde\phi_0^*= 0\quad  
\text{ in }\mathbb R.
\end{align*}
This equation has two linearly independent solutions, that is, 
\begin{align*}
w^0_z(z) \quad\text{and}\quad  w^0_z(z)\int_0^z \left(w^0_z(\eta)\right)^{-2}\,d\eta.
\end{align*}
The latter one goes to infinity as $|z|\to\infty$, which contradicts \eqref{eqn:tilde_phi^*_exp_decay}. Therefore $\widetilde\phi_0^* = -k_*^{-1}w^0_z$ with $k_* := \|w^0_z\|_{L^{\infty}(\mathbb R)}$. Since the original sequences $\vep_n$ and $D_n$ could be chosen arbitrarily, we conclude that
\begin{align}\label{eqn:tilde_phi_0^vep_C_loc^2-limit}
\lim_{\vep\to 0} \mu_0^{\vep, k} =0 \text{ and } \lim_{\vep\to 0} \widetilde\phi_0^{\vep, k} = - k_*^{-1}w^0_z ~~\text{ in }~~ C^2_{loc}(\mathbb R) \text{ uniformly in } D\geq D_0.
\end{align}

Denote
\begin{align}\label{eqn:f^{vep,k}}
f^{\vep,k}(x) := f\big( u^{\vep,D}_k(x),  \mathcal S^{\vep}[u_k^{\vep, D}]-\tfrac{\vep}{D}u_k^{\vep, D}(x) \big)  \quad\text{ for } x\in \Omega.
\end{align}
It follows from Theorem \ref{thm:approx_sol_scalarEQ} that $\widetilde u^{\vep,D}_k$ satisfies
\begin{align*}
(\widetilde u^{\vep,D}_k)_{zz} +\vep\tfrac{N-1}{R_*+\vep z}(\widetilde u^{\vep,D}_k)_{z} + \widetilde f^{\vep,k}(z)= \widetilde{\mathcal R}^{\vep}\quad \text{ in }\left(-\tfrac{R_*}{\vep}, \tfrac{1-R_*}{\vep}\right),
\end{align*}
where $\widetilde{\mathcal R}^{\vep}(z) := \mathcal R^{\vep}[u^{\vep,D}_k](R_*+\vep z)$.
Multiplying the above equation by $(\widetilde\phi_0^{\vep, k})_z$, integrating by parts and dividing the result by $\vep$, we obtain that
\begin{align}\label{eqn:mu_0^vep, k-identity}
\frac{\mu_0^{\vep, k}}{\vep}\int_{-\frac{R_*}{\vep}}^{\frac{1-R_*}{\vep}}\widetilde\phi_0^{\vep, k}(\widetilde u^{\vep,D}_k)_{z}\,dz
%&~ \frac{1}{\vep}(\widetilde\phi_0^{\vep, k})_{z}(\widetilde u^{\vep,D}_k)_{z}\Big|_{-\frac{R_*}{\vep}}^{\frac{1-R_*}{\vep}}+\frac{1}{\vep}\widetilde\phi_0^{\vep, k}\, \widetilde f^{\vep,k} \Big|_{-\frac{R_*}{\vep}}^{\frac{1-R_*}{\vep}} \notag\\
%\displaystyle&~-\frac{1}{\vep} \int_{-\frac{R_*}{\vep}}^{\frac{1-R_*}{\vep}}(\widetilde\phi_0^{\vep, k})_z\widetilde{\mathcal R}^{\vep}\,dz 
% +2(N-1)\int_{-\frac{R_*}{\vep}}^{\frac{1-R_*}{\vep}}\frac{(\widetilde\phi_0^{\vep, k})_{z}(\widetilde u^{\vep,D}_k)_{z}}{R_*+\vep z}\,dz \notag\\
 =&~ T_1^{\vep}+T_2^{\vep}+T_3^{\vep}+2(N-1)T_4^{\vep}.
\end{align}
where
\begin{gather*}
	T_1^{\vep} :=\frac{1}{\vep}(\widetilde\phi_0^{\vep, k})_{z}(\widetilde u_k^{\vep,D})_{z}\Big|_{-\frac{R_*}{\vep}}^{\frac{1-R_*}{\vep}}, \quad T_2^{\vep} := \frac{1}{\vep}\widetilde\phi_0^{\vep, k}\, \widetilde f^{\vep,k} \Big|_{-\frac{R_*}{\vep}}^{\frac{1-R_*}{\vep}},\\
	T_3^{\vep} := -\frac{1}{\vep} \int_{-\frac{R_*}{\vep}}^{\frac{1-R_*}{\vep}}(\widetilde\phi_0^{\vep, k})_z\widetilde{\mathcal R}^{\vep}\,dz, \quad T_4^{\vep} := \int_{-\frac{R_*}{\vep}}^{\frac{1-R_*}{\vep}}\frac{(\widetilde\phi_0^{\vep, k})_{z}(\widetilde u^{\vep,D}_k)_{z}}{R_*+\vep z}\,dz.
\end{gather*}
Note that the integral $T_4^{\vep}$ is well defined since $(\widetilde u^{\vep,D}_k)_{z}$ vanishes near the boundary of the integration interval (cf. \eqref{eqn:tilde u^vep,D_k_z_PIECEWISE_EXP}). 
\smallskip

We now estimate $T_i^{\vep}$ for $1\leq i\leq 4$ in \eqref{eqn:mu_0^vep, k-identity}. 
\begin{claim}\label{clm:T_i}
The following statements hold for $T_i^{\vep}$: 
\begin{enumerate}
	\item $T_1^{\vep}=0$ for all $\vep\in(0,\bar\vep]$ and $D\geq D_0$.
	\item  There exists a constant $C_k=C_k(M, D_0, \bar\vep)$ such that for all $\vep\in(0,\bar\vep]$ and $D\geq D_0$, 
	\begin{align}\label{eqn:T2,T3}
	|T_2^{\vep}|\leq C_k\vep^{k} \quad \text{ and }\quad |T_3^{\vep}|\leq C_k\vep^{k}.
	\end{align}
	\item $\lim\limits_{\vep\to 0} T_4^{\vep} =0$ uniformly in $D\geq D_0$.
\end{enumerate}
\end{claim}
We now prove Claim \ref{clm:T_i}. Indeed, Part (i) of Claim \ref{clm:T_i} follows from the fact that both $(\widetilde\phi_0^{\vep, k})_{z}$ and $(\widetilde u^{\vep,D}_k)_{z}$ vanish at $-\frac{R_*}{\vep}$ and $\frac{1-R_*}{\vep}$. For Part (ii), the first inequality in \eqref{eqn:T2,T3} follows from the uniform boundedness of $\widetilde\phi_0^{\vep, k}$, \eqref{eqn:f^{vep,k}}, \eqref{eqn:u^vep,D_k_in_Omega_i} and Corollary \ref{cor:outer-estimate}. The second inequality in \eqref{eqn:T2,T3} follows from
Theorem \ref{thm:approx_sol_scalarEQ} (ii), \eqref{eqn:tilde_phi_exp_decay}, \eqref{eqn:tilde_phi_0^vep_C_loc^2-limit} and the Dominated Convergence Theorem. It only remains to prove Part (iii) of Claim \ref{clm:T_i}. Using  \eqref{eqn:I_1,2,3} and \eqref{eqn:tilde u^vep,D_k_z_PIECEWISE_EXP}, we obtain
\begin{align}\label{eqn:T4}
	T_4^{\vep} =\int_{I_2\cup I_3}\frac{(\widetilde\phi_0^{\vep, k})_{z}(\widetilde u^{\vep,D}_k)_{z}}{R_*+\vep z}\,dz,
\end{align}
where $I_2 =(-\tfrac{2r_0}{\vep}, -\tfrac{r_0}{\vep})\cup(\tfrac{r_0}{\vep},\tfrac{2r_0}{\vep})$ and $I_3= [-\tfrac{r_0}{\vep}, \tfrac{r_0}{\vep}]$ are defined in \eqref{eqn:I_1,2,3}. 
By the Dominated Convergence Theorem, we see from \eqref{eqn:tilde_phi_0^vep_C_loc^2-limit}, \eqref{eqn:tilde u^vep,D_k_z_PIECEWISE_EXP} and Proposition \ref{pro:Q(z;v)} that for each $\widetilde K>K>0$ fixed, it holds that
\begin{align*}
\lim_{\vep\to 0}\int_{-\widetilde K}^{\widetilde K}\frac{(\widetilde\phi_0^{\vep, k})_{z}(\widetilde u^{\vep,D}_k)_{z}}{R_*+\vep z}\,dz= -\frac{1}{R_*k_*}\int_{-\widetilde K}^{\widetilde K} w^0_{zz} w^0_z \,dz 
= -\frac{1}{2R_*k_*}\left(w^0_z\right)^2\Big|_{-\widetilde K}^{\widetilde K}
\end{align*}
uniformly in $D\geq D_0$. On the other hand, by \eqref{eqn:w^k, U^k unif-bdd} and \eqref{eqn:tilde u^vep,D_k_z_PIECEWISE_EXP},
$(\widetilde u^{\vep,D}_k)_{z}$ is uniformly bounded on $I_2\cup I_3$ for all $\vep\in(0,\bar\vep]$ and $D\geq D_0$. Hence, it follows from \eqref{eqn:tilde_phi_exp_decay} that there exists a constant $C_k = C_k(M, D_0, \bar\vep)>0$ such that
\begin{align*}
\Big|\int_{(I_2\cup I_3)\backslash(-\widetilde K,\widetilde K)}\frac{(\widetilde\phi_0^{\vep, k})_{z}(\widetilde u^{\vep,D}_k)_{z}}{R_*+\vep z}\,dz\Big|  
\leq C_k \int_{(I_2\cup I_3)\backslash(-\widetilde K,\widetilde K)}\big|(\widetilde\phi_0^{\vep, k})_{z}\big| \,dz
\leq  C_k e^{-\sigma \widetilde K}
\end{align*}
for all $\vep\in(0,\bar\vep]$ and $D\geq D_0$. Consequently, by choosing $\widetilde K$ sufficiently large first and then sending $\vep\to0$ in \eqref{eqn:T4}, we see from Proposition \ref{pro:Q(z;v)} and the above two estimates that Part (iii) of Claim \ref{clm:T_i} holds. This completes the proof of Claim \ref{clm:T_i}.

We now estimate the right hand side of \eqref{eqn:mu_0^vep, k-identity}. Since, by \eqref{eqn:tilde u^vep,D_k_z_PIECEWISE_EXP},
\begin{align*}
\int_{-\frac{R_*}{\vep}}^{\frac{1-R_*}{\vep}}\widetilde\phi_0^{\vep, k}(\widetilde u^{\vep,D}_k)_{z}\,dz =\int_{I_2\cup I_3}\widetilde\phi_0^{\vep, k}(\widetilde u^{\vep,D}_k)_{z}\,dz,
\end{align*}
we see from \eqref{eqn:w^0-U0,w^0_z,w^0_zz}, \eqref{eqn:tilde u^vep,D_k_z_PIECEWISE_EXP}, \eqref{eqn:tilde_phi_0^vep_C_loc^2-limit}, \eqref{eqn:tilde_phi_exp_decay} and the Dominated Convergence Theorem that
\begin{align}\label{eqn:k_*^-1w_z^0L2}
\lim_{\vep\to0}	\int_{-\frac{R_*}{\vep}}^{\frac{1-R_*}{\vep}}\widetilde\phi_0^{\vep, k}(\widetilde u^{\vep,D}_k)_{z}\,dz= -k_*^{-1}\int_{\mathbb R} \left(w^0_z\right)^2\,dz\text{ uniformly in }D\geq D_0.
\end{align}
Now, sending $\vep\to0$ in \eqref{eqn:mu_0^vep, k-identity}, we conclude from Claim \ref{clm:T_i} and \eqref{eqn:k_*^-1w_z^0L2} that
\begin{align*}
\lim_{\vep\to0}\frac{\mu_0^{\vep, k}}{\vep} = 0\text{ uniformly in }D\geq D_0. 
\end{align*}
This finished the proof of \eqref{eqn:mu_0^vep=o(vep)}. 

Next, we prove \eqref{eqn:phi_0^vep}. We now rescale $\phi_0^{\vep, k}$ by a normalization constant $c_{\vep}>0$ such that $c_{\vep}\phi_0^{\vep, k}$ is $L^2$-normalized. Then by \eqref{eqn:tilde_phi_exp_decay},  \eqref{eqn:tilde_phi_0^vep_C_loc^2-limit} and similar arguments as above, we see that
\begin{align*}
	\lim_{\vep\to0}\frac{1}{\vep  N|\Omega|c^2_{\vep}} & = \lim_{\vep\to0}\int_{-\frac{R_*}{\vep}}^{\frac{1-R_*}{\vep}}\big[\widetilde\phi_0^{\vep, k}(z)\big]^2(R_*+\vep z)^{N-1}dz\\
	& = k_*^{-2}R_*^{N-1}\int_{\mathbb R}(w^0_z)^2\,dz  =k_*^{-2}R_*^{N-1}m(v^*)
\end{align*}
uniformly in $D\geq D_0$, which implies that
\begin{align*}
\lim_{\vep\to0}	\sqrt{\vep}c_{\vep} = \sqrt{\frac{k_*^2R_*^{1-N}}{N|\Omega|m(v^*)}} \quad \text{uniformly in  }D\geq D_0.
\end{align*}
This combined with \eqref{eqn:tilde_phi_0^vep_C_loc^2-limit} implies \eqref{eqn:phi_0^vep}. 

Finally, assume that now $\phi_0^{\vep, k}$ is $L^2$-normalized, then \eqref{eqn:phi_0^vep_Norms_est} follows from \eqref{eqn:phi_0^vep}, \eqref{eqn:phi_0^vep,k-EST1} and \eqref{eqn:phi_0^vep,k-EST2}. This finishes the proof of the proposition.
\end{proof}

\begin{corollary}\label{cor:phi_0^vep_integral}
Assume that conditions {\upshape(A1)}-{\upshape(A3)} hold.
Let $M\in I_{v^*}$ be given and $\phi_0^{\vep, k}$ be the $L^2$-normalized, positive, principal eigenfunction of $\mathcal L^{\vep}_k$ corresponding to $\mu_0^{\vep, k}$.
Then for each $D_0>0$ and $g\in C_{rad}(\bar\Omega)$, we have
\begin{gather}
\lim_{\vep\to0}\Big\langle g,\, \Big(\frac{\phi_0^{\vep, k}}{\sqrt{\vep}}\Big)\Big\rangle=g(R_*)\sqrt{\frac{N|\Omega|R_*^{N-1}}{m(v^*)}}\big(h^+(v^*) -h^-(v^*)\big),\vspace{1ex}	\label{eqn:COR_phi_0^vep_integral_I}\\
\displaystyle \lim_{\vep\to0}\Big\langle g,\, \Big(\frac{\phi_0^{\vep, k}}{\sqrt{\vep}}\Big)f_v^{\vep,k}\Big\rangle = g(R_*)\sqrt{\frac{N|\Omega|R_*^{N-1}}{m(v^*)}}J'(v^*)\label{eqn:COR_phi_0^vep_integral_II}
\end{gather}
uniformly in $D\geq D_0$,
where $\langle \cdot, \cdot\rangle$ denotes the standard Hilbert inner product in $L^2(\Omega)$ and 
$C_{rad}(\bar\Omega)$ denotes for the subspace of all radially symmetric functions in $C(\bar\Omega)$.
\end{corollary}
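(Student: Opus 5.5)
The plan is to pass to the stretched variable $z=(r-R_*)/\vep$ and then use the convergence \eqref{eqn:phi_0^vep} together with the exponential tail bounds \eqref{eqn:phi_0^vep,k-EST1}--\eqref{eqn:phi_0^vep,k-EST2}. In polar coordinates, $dx=N|\Omega| r^{N-1}dr$, since $|\partial B_1|=N|\Omega|$. Hence
\begin{align*}
\Big\langle g, \frac{\phi_0^{\vep,k}}{\sqrt{\vep}}\Big\rangle = N|\Omega|\int_{-R_*/\vep}^{(1-R_*)/\vep} g(R_*+\vep z)\,\sqrt{\vep}\,\widetilde\phi_0^{\vep,k}(z)\,(R_*+\vep z)^{N-1}\,dz .
\end{align*}
The second pairing has the same form, with an extra factor $\widetilde f_v^{\vep,k}(z)$ in the integrand.

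\emph{Step 1: pointwise limit.} By \eqref{eqn:phi_0^vep}, $\sqrt{\vep}\,\widetilde\phi_0^{\vep,k}(z)\to -c_* w^0_z(z)$ locally uniformly, where $c_*:=\sqrt{R_*^{1-N}/(N|\Omega|m(v^*))}$. Continuity of $g$ gives $g(R_*+\vep z)\to g(R_*)$ and $(R_*+\vep z)^{N-1}\to R_*^{N-1}$ locally uniformly. For the second pairing we also need $\widetilde f_v^{\vep,k}(z)\to f_v(w^0(z),v^*)$ locally uniformly, uniformly in $D\ge D_0$. This follows from three facts:
\begin{itemize}
\item $\widetilde u_k^{\vep,D}=w^\vep_k$ on $I_3$, so $\widetilde u_k^{\vep,D}\to w^0$ locally uniformly, using \eqref{eqn:w^k, U^k unif-bdd};
\item $\mathcal S^\vep[u_k^{\vep,D}]\to v^*$ by \eqref{eqn:mathcal S^vep=sum_A_j...} and \eqref{eqn:a_j+A_j unif_bdd};
\item the term $\frac{\vep}{D}u_k^{\vep,D}$ is $O(\vep/D_0)$.
\end{itemize}

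\emph{Step 2: uniform domination.} I would construct a uniform integrable majorant and then apply dominated convergence. Here the $L^2$ normalization enters. From the normalization constant $c_\vep$ computed in the proof of Proposition~\ref{pro:L^vep_k:mu_0^vep,k=o(vep)}, one has $\sqrt{\vep}\,c_\vep\to k_* c_*$ uniformly in $D\ge D_0$. Combined with \eqref{eqn:tilde_phi_exp_decay} for the $L^\infty$-normalized eigenfunction, this gives
\[
|\sqrt{\vep}\,\widetilde\phi_0^{\vep,k}(z)|\le C_k e^{-\sigma|z|}
\]
on the whole interval $(-R_*/\vep,(1-R_*)/\vep)$. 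On $[-K,K]$ the bound holds by the uniform $L^\infty$ bound. The remaining factors $g$, $(R_*+\vep z)^{N-1}\le 1$ and $f_v^{\vep,k}$ are uniformly bounded, the last because $u_k^{\vep,D}$ and $\mathcal S^\vep$ are uniformly bounded and $f$ is smooth. Dominated convergence then holds uniformly in $D$. The cleanest way to get that uniformity is to argue by contradiction along sequences $(\vep_n,D_n)$, exactly as in the proof of Proposition~\ref{pro:L^vep_k:mu_0^vep,k=o(vep)}.

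\emph{Step 3: evaluating the limits.} The first limit is
\[
-N|\Omega| c_* R_*^{N-1} g(R_*)\int_{\mathbb R} w^0_z\,dz .
\]
Since $w^0(-\infty)=h^+(v^*)$ and $w^0(\infty)=h^-(v^*)$, we have $-\int_{\mathbb R} w^0_z\,dz=h^+(v^*)-h^-(v^*)$. Also $N|\Omega|R_*^{N-1}c_*=\sqrt{N|\Omega|R_*^{N-1}/m(v^*)}$, which gives \eqref{eqn:COR_phi_0^vep_integral_I}. For the second limit, \eqref{eqn:J'(v*)} gives $-\int_{\mathbb R} f_v(w^0,v^*)w^0_z\,dz=J'(v^*)$, which yields \eqref{eqn:COR_phi_0^vep_integral_II}.

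The main obstacle is Step 2: obtaining the exponential majorant uniformly in $D\ge D_0$ for the $L^2$-normalized function. This requires tracking that the constants in \eqref{eqn:tilde_phi_exp_decay} and the normalization asymptotics are $D$-independent, which the earlier proof supplies.
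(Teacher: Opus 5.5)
Your proposal is correct and follows the paper's own argument: you rescale to $z=(r-R_*)/\vep$, combine the $C^2_{loc}$ convergence \eqref{eqn:phi_0^vep} with the exponential tail bound \eqref{eqn:tilde_phi_exp_decay} and $v_k^{\vep,D}\to v^*$ so that dominated convergence applies, and then evaluate the limits using $-\int_{\mathbb R} w^0_z\,dz=h^+(v^*)-h^-(v^*)$ and \eqref{eqn:J'(v*)}. Your Step 2 simply spells out the $L^2$-normalization bookkeeping and the uniformity in $D\ge D_0$, which the paper's brief proof leaves implicit.
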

\begin{proof}
By using \eqref{eqn:phi_0^vep} and \eqref{eqn:J'(v*)}, we have
	\begin{align*}
		& \quad \Big\langle g,\, \Big(\frac{\phi_0^{\vep, k}}{\sqrt{\vep}}\Big)f_v^{\vep,k}\Big\rangle\\
		& = N|\Omega|\int_0^1 g(r)\phi_0^{\vep, k}(r)f_v^{\vep,k}(r)r^{N-1}\frac{dr}{\sqrt{\vep}}\\
		& = N|\Omega|\int_{-\frac{R_*}{\vep}}^{\frac{1-R_*}{\vep}} \sqrt{\vep} g(R_*+\vep z)\phi_0^{\vep, k}(R_*+\vep z)\widetilde f_v^{\vep,k}(R_*+\vep z)(R_*+\vep z)^{N-1}dz\\
		& \longrightarrow  g(R_*)\sqrt{\frac{N|\Omega|R_*^{N-1}}{m(v^*)}}J'(v^*)\quad \text{ as } \vep\to0 \text{ uniformly in }D\geq D_0,
	\end{align*}
where we used \eqref{eqn:J'(v*)}, \eqref{eqn:tilde u^vep,D_k_z_PIECEWISE_EXP}, \eqref{eqn:phi_0^vep}, \eqref{eqn:tilde_phi_exp_decay}, Theorem \ref{thm:approx_sol_scalarEQ} (iii) and the Dominated Convergence Theorem.
This proves \eqref{eqn:COR_phi_0^vep_integral_II}. The proof of \eqref{eqn:COR_phi_0^vep_integral_I} is similar and thus omitted.
\end{proof}

By elliptic regularity theory, 
the principal eigenfunction $\phi_0^{\vep, k}$ of $\mathcal L^{\vep}_k$ belongs to $C^{2+\alpha}(\bar\Omega)$ for some $0<\alpha<1$. Hence $\phi_0^{\vep, k} \in L_{rad}^s$ for $1\leq s\leq\infty$. Now for each $1< s\leq \infty$, define
\begin{align}\label{eqn:Y^s}
	Y^s := \big\{g\in L_{rad}^s \,|\, \langle g,\phi_0^{\vep, k}\rangle_{s,s'}  =0\big\}.
\end{align}
Set
\begin{align*}
	Y_{\nu}^{2,s} &:= 	W_{\nu}^{2,s}\cap Y^s.
\end{align*}
Since the resolvent of $\mathcal L^{\vep}_k$ is compact and Proposition \ref{pro:L^vep_k:mu_0^vep,k=o(vep)} ensures that $$\Ker(\mathcal L^{\vep}_k-\mu_0^{\vep, k})\cap Y^s = \{0\}$$ for each $s\in(1,\infty)$, the following lemma follows directly from the Riesz-Schauder theory.

\begin{lemma}\label{lem:L^vep_k -mu_0^vep Inverse}
Assume that conditions {\upshape(A1)}-{\upshape(A3)} hold.
Let $M\in I_{v^*}$ be given. 
Then the closed linear operator $\mathcal L^{\vep}_k -\mu_0^{\vep, k}$ has a bounded right inverse $(\mathcal L^{\vep}_k -\mu_0^{\vep, k})^{-1}$ from $Y^s$ to $Y_{\nu}^{2,s}$ for each $\vep>0$, $D>0$, $k\in\mathbb N$ and $1<s<\infty$.
\end{lemma}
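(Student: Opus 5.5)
The operator $\mathcal L^{\vep}_k$ is self-adjoint with respect to the $L^2(\Omega)$ pairing: its potential $f_u^{\vep,k}-\frac{\vep}{D}f_v^{\vep,k}$ is real, smooth and radial, and it carries the homogeneous Neumann condition. So I would treat this as a standard Fredholm alternative on the radial subspace, with $\vep$, $D$, $k$ and $s$ held fixed. No uniformity in $\vep$ or $D$ is asserted, and none is needed.

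\emph{Compactness.} Fix $s\in(1,\infty)$ and choose $c>0$ larger than $\|f_u^{\vep,k}-\frac{\vep}{D}f_v^{\vep,k}\|_{L^\infty(\Omega)}$. By $L^s$ elliptic theory for the Neumann problem, $\mathcal L^{\vep}_k-c: W^{2,s}_{\nu}\to L^s(\Omega)$ is an isomorphism. Radial symmetry is preserved, both by uniqueness and by rotation invariance of the equation, so the isomorphism restricts to $W^{2,s}_\nu\cap L^s_{rad}\to L^s_{rad}$. Since $W^{2,s}\hookrightarrow L^s$ is compact, the resolvent is compact. Hence $\mathcal L^{\vep}_k-\mu_0^{\vep,k}=(\mathcal L^{\vep}_k-c)\bigl[I+(c-\mu_0^{\vep,k})(\mathcal L^{\vep}_k-c)^{-1}\bigr]$ is Fredholm of index zero from $W^{2,s}_\nu\cap L^s_{rad}$ to $L^s_{rad}$.

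\emph{Kernel and range.} By elliptic regularity, any kernel element lies in $C^{2+\alpha}$. The radial Neumann eigenvalue problem is a regular Sturm--Liouville problem, with regularity at $r=0$ forcing $\phi_r(0)=0$, so its eigenvalues are simple. Therefore the kernel is exactly $\mathrm{span}\{\phi_0^{\vep,k}\}$, independently of $s$. By the Riesz--Schauder theory, the range is the annihilator of the kernel of the adjoint. The $L^{s'}$ adjoint is the same formal operator with the same Neumann condition, and its kernel is again $\mathrm{span}\{\phi_0^{\vep,k}\}$. So the range is precisely $Y^s$: integrating by parts against $\phi_0^{\vep,k}$ shows the range lies in $Y^s$, and the index-zero count gives equality.

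\emph{Right inverse.} Since $\langle \phi_0^{\vep,k},\phi_0^{\vep,k}\rangle>0$, we have $\phi_0^{\vep,k}\notin Y^s$, so $W^{2,s}_\nu\cap L^s_{rad}=\mathrm{span}\{\phi_0^{\vep,k}\}\oplus Y^{2,s}_\nu$. The restriction $\mathcal L^{\vep}_k-\mu_0^{\vep,k}: Y^{2,s}_\nu\to Y^s$ is therefore injective. It maps into $Y^s$ because of the self-adjointness identity. It is surjective because for $g\in Y^s$, any preimage $h$ can be corrected to $h-\frac{\langle h,\phi_0^{\vep,k}\rangle}{\|\phi_0^{\vep,k}\|_2^2}\phi_0^{\vep,k}\in Y^{2,s}_\nu$. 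A continuous bijection between Banach spaces has a bounded inverse by the open mapping theorem. Here $Y^s$ and $Y^{2,s}_\nu$ are closed subspaces, since the pairing with $\phi_0^{\vep,k}\in L^{s'}$ is continuous.

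The only mildly delicate point is the identification of the range with $Y^s$ for $s\neq2$. That step rests on self-adjointness together with the adjoint's kernel coinciding with the $L^2$ eigenspace, which is exactly what the regularity of eigenfunctions supplies.
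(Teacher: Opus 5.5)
Your proof is correct and follows the same route as the paper. The paper simply invokes compactness of the resolvent, the fact that $\Ker(\mathcal L^{\vep}_k-\mu_0^{\vep,k})=\mathrm{span}\{\phi_0^{\vep,k}\}$ meets $Y^s$ only in $\{0\}$, and Riesz--Schauder theory; you supply the omitted details (index zero, range equal to $Y^s$ via integration by parts plus a codimension count, and the open mapping theorem). One wording slip: for $N\ge 2$ the radial problem is singular at $r=0$, not a regular Sturm--Liouville problem. Simplicity still holds, for instance because the Wronskian $W$ of two radial eigenfunctions satisfies $(r^{N-1}W)'=0$ and $W(1)=0$ by the Neumann condition.
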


Now, given $g\in Y^\infty$, by 
Lemma \ref{lem:L^vep_k -mu_0^vep Inverse} and similar arguments just before Lemma \ref{lem:mathscr L_k^vep inver_bdd}, we can see that the equation 
\begin{align*}
(\mathcal L^{\vep}_k -\mu_0^{\vep, k})h=g
\end{align*}
has a unique solution $h\in Y^\infty\cap C^{\alpha}(\bar\Omega)$ for some $0<\alpha<1$.  
Hence, $(\mathcal L^{\vep}_k -\mu_0^{\vep, k})^{-1}$ may be considered as a linear mapping from $Y^\infty$ into $Y^\infty$. Now we are ready to show the following result.

\begin{lemma}\label{lem:L^vep_k -mu_0^vep Inv bdd}
Assume that conditions {\upshape(A1)}-{\upshape(A3)} hold.
Let $M\in I_{v^*}$ be given.
Then for each $D_0>0$ and integer $k\geq1$, there exists a constant $\bar\vep=\bar\vep(M, D_0, k)>0$ such that
the inverse operator $(\mathcal L^{\vep}_k -\mu_0^{\vep, k})^{-1}: Y^\infty\to Y^\infty$ exists and there exists a constant $C_k = C_k(M, D_0, \bar\vep) > 0$ satisfying
\begin{align}\label{eqn:L^vep_k -mu_0^vep Inv bdd}
\big\|(\mathcal L^{\vep}_k -\mu_0^{\vep, k})^{-1}g \big\|_{L^{\infty}(\Omega)} \leq C_k\|g\|_{L^{\infty}(\Omega)} 
\end{align}
for all $g\in Y^\infty$, $\vep\in(0, \bar\vep]$ and $D\geq D_0$.
\end{lemma}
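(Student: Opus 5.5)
We argue by contradiction combined with a blow-up/compactness analysis in the stretched variable. Suppose the bound \eqref{eqn:L^vep_k -mu_0^vep Inv bdd} fails. Then there are sequences $\vep_n\to0$, $D_n\geq D_0$, $g_n\in Y^\infty$ and $h_n:=(\mathcal L^{\vep_n}_k-\mu_0^{\vep_n,k})^{-1}g_n\in Y^\infty$ with $\|h_n\|_{L^\infty(\Omega)}=1$ and $\|g_n\|_{L^\infty(\Omega)}\to0$. The equation reads
\begin{align*}
\vep_n^2\Delta h_n-\big[\mu_0^{\vep_n,k}-(f_u^{\vep_n,k}-\tfrac{\vep_n}{D_n}f_v^{\vep_n,k})\big]h_n=g_n \quad\text{in }\Omega,\qquad \partial_\nu h_n=0,
\end{align*}
together with the orthogonality $\langle h_n,\phi_0^{\vep_n,k}\rangle=0$.

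\textbf{Step 1: localization.} On the outer regions $[0,R_*-\vep K]\cup[R_*+\vep K,1]$, \eqref{eqn:geq d_0^2/2} gives a potential bounded below by $d_0^2/2$. I would use a comparison/maximum principle argument with the inhomogeneous version of the barriers behind Lemmas~\ref{lem:exp_decay_nearbc} and~\ref{lem:exp_decay_on_Ball}: $|h_n|$ is bounded by a decaying exponential term $2\cdot 1\cdot e^{-\sigma\,\mathrm{dist}/\vep_n}$ plus $C\|g_n\|_\infty$. In particular $\sup_{\text{outer}}|h_n|\leq 2\sup_{[R_*-\vep_n K,R_*+\vep_n K]}|h_n|+C\|g_n\|_\infty$. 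More simply, if $|h_n|$ attains its maximum at an outer point, the maximum principle forces $|h_n|\le 2\|g_n\|_\infty/d_0^2\to0$, which is a contradiction. Hence the maximum $1$ is attained at some point $r_n$ with $z_n=(r_n-R_*)/\vep_n\in[-K,K]$. I also get $|\widetilde h_n(z)|\leq Ce^{-\sigma|z|}+C\|g_n\|_\infty$ for $|z|\geq K$.

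\textbf{Step 2: limit problem.} Pass to the stretched functions $\widetilde h_n(z)=h_n(R_*+\vep_n z)$. These satisfy the analogue of \eqref{eqn:tilde_varphi_0^vep} with right-hand side $\widetilde g_n\to0$. Using $\mu_0^{\vep,k}\to0$ (Proposition~\ref{pro:L^vep_k:mu_0^vep,k=o(vep)}), Theorem~\ref{thm:approx_sol_scalarEQ} and the bounds of Step 1, I extract a subsequence with $\widetilde h_n\to h^*$ in $C^1_{loc}(\mathbb R)$. The limit satisfies $L^0h^*=0$, $|h^*|\leq1$, $|h^*(z_*)|=1$ for some $z_*\in[-K,K]$, and $|h^*(z)|\leq Ce^{-\sigma|z|}$. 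As in the proof of Proposition~\ref{pro:L^vep_k:mu_0^vep,k=o(vep)}, the second solution of $L^0\varphi=0$ grows, so $h^*=\beta w^0_z$ with $\beta\neq0$.

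\textbf{Step 3: orthogonality contradiction.} Write $0=\langle h_n,\phi_0^{\vep_n,k}\rangle/\sqrt{\vep_n}$ in stretched variables as
\begin{align*}
N|\Omega|\int \widetilde h_n(z)\,\sqrt{\vep_n}\,\widetilde\phi_0^{\vep_n,k}(z)(R_*+\vep_n z)^{N-1}dz .
\end{align*}
By \eqref{eqn:phi_0^vep} and \eqref{eqn:tilde_phi_exp_decay}, the factor $\sqrt{\vep_n}\widetilde\phi_0$ converges locally to a negative multiple of $w^0_z$ and decays exponentially. This dominated decay controls the tails, even the part where $\widetilde h_n$ is only $O(\|g_n\|_\infty)$, since that part contributes at most $C\|g_n\|_\infty\int e^{-\sigma|z|}dz\to0$. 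Hence the integral tends to $-cN|\Omega|R_*^{N-1}\beta\int(w^0_z)^2dz\neq0$, which is the contradiction.

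\textbf{Main obstacle.} The delicate point is Step 1: obtaining uniform exponential decay of $\widetilde h_n$ away from $[-K,K]$ in the inhomogeneous case, with constants uniform in $D\geq D_0$. I would handle it with an explicit supersolution $\bar\psi=Ae^{-\sigma(|r-R_*|-\vep K)/\vep}+2\|g\|_\infty/d_0^2$ on each outer region, absorbing the $\frac{N-1}{r}$ term near $r=0$ and near $r=1$ exactly as in the proofs of Lemmas~\ref{lem:exp_decay_nearbc}--\ref{lem:exp_decay_on_Ball}. The Neumann condition at $r=1$ is compatible with this barrier.
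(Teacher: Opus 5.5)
Your proposal is correct and is essentially the paper's proof. Both argue by contradiction with $\|h_n\|_{L^\infty}=1$ and $\|g_n\|_{L^\infty}\to0$, localize the maximum to $|z|\le K$ via the maximum principle and \eqref{eqn:geq d_0^2/2}, take a $C^1_{loc}$ blow-up limit solving $L^0h^*=0$ that must be a nonzero multiple of $w^0_z$, and reach a contradiction with $\langle h_n,\phi_0^{\vep_n,k}\rangle=0$ by dominated convergence.

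The exponential decay of $\widetilde h_n$ that you single out as the main obstacle is not needed, and the paper omits it. Boundedness $|h^*|\le 1$ already excludes the growing second solution, and in Step 3 the tails are controlled by $|\widetilde h_n|\le1$ together with the decay of $\sqrt{\vep_n}\,\widetilde\phi_0^{\vep_n,k}$ alone. This is fortunate, because your pure-exponential barrier is not a valid comparison function as written. It would have to be replaced by the $\cosh$-type barriers of Lemmas~\ref{lem:exp_decay_nearbc}--\ref{lem:exp_decay_on_Ball} to respect the Neumann condition at $r=1$ and to absorb the $\tfrac{N-1}{r}$ term near $r=0$.
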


\begin{proof}
We prove \eqref{eqn:L^vep_k -mu_0^vep Inv bdd} by contradiction.  Assume that there exist two sequences of parameters $\vep_n\to 0$, $D_n\geq D_0$ and a sequence $\{g_n\}\subset Y^\infty$ such that
\begin{align*}
\|g_n\|_{L^{\infty}}  =1 \quad \text{and} \quad \big\|(\mathcal L^{\vep_n}_k -\mu_0^{\vep_n, k})^{-1}g_n \big\|_{L^{\infty}} \geq n.
\end{align*}
Denote 
\begin{align*}
\varrho_n := \frac{(\mathcal L^{\vep_n}_k -\mu_0^{\vep_n, k})^{-1}g_n}{\|(\mathcal L^{\vep_n}_k -\mu_0^{\vep_n, k})^{-1}g_n \|_{L^{\infty}} }\quad  \text{and}\quad 
q_n:= \frac{g_n}{\|(\mathcal L^{\vep_n}_k -\mu_0^{\vep_n, k})^{-1}g_n \|_{L^{\infty}} }. 
\end{align*}
Then $\varrho_n$ satisfies 
\begin{align*}
(\mathcal L^{\vep_n}_k -\mu_0^{\vep_n, k})\varrho_n = q_n
\end{align*}
with $\|\varrho_n\|_{L^{\infty}} =1$ and $\|q_n\|_{L^{\infty}} \leq \frac{1}{n}$. Since $\varrho_n\in Y^\infty\cap C^{\alpha}(\bar\Omega)$, replacing $(\varrho_n, q_n)$ with $(-\varrho_n, -q_n)$ if necessary, we see that for each $n$, there exists some $r_n\in[0,1]$ such that $\|\varrho_n\|_{L^{\infty}} = \varrho_n(r_n) = 1$.

Since $\|q_n\|_{L^{\infty}} \to 0$ and $\mu_n:=\mu_0^{\vep_n, k} = o(\vep_n)$ as $\vep_n\to 0$ by \eqref{eqn:mu_0^vep=o(vep)}, by similar arguments as in the proof of Proposition \ref{pro:L^vep_k:mu_0^vep,k=o(vep)}, we see from the equation for $\varrho_n$ that $r_n\in[R_*-\vep_n K, R_*+\vep_n K]$ for all $n$ large. We now express $r_n = R_*+\vep_n z_n$ and the equation for $\varrho_n$ in terms of the stretched variable $z = \tfrac{x-R_*-\vep_n z_n}{\vep_n}$ by $\widehat \varrho_n(z) = \varrho_n(R_*+\vep_n(z_n+z))$. Then $|z_n|\leq K$ and the equation for $\widehat \varrho_n(z)$ is
\begin{align}\label{eqn:hat_w}
&(\widehat \varrho_n)_{zz}(z) + \tfrac{\vep_n(N-1)}{R_*+\vep_n(z+z_n)}(\widehat \varrho_n)_z(z)- \mu_n \widehat \varrho_n (z)\\
&\quad +\big[f_u^{\vep_n, k}\big(R_*+\vep_n(z_n+z)\big)-\tfrac{\vep_n}{D_n} f_v^{\vep_n, k}\big(R_*+\vep_n(z_n+z)\big)\big]\widehat \varrho_n (z)= \widehat q_n(z), \notag 
\end{align}
where $\widehat q_n(z) = q_n(R_*+\vep_n(z_n+z))$. Since $\|\widehat \varrho_n\|_{L^{\infty}}=1$ and  $|z_n|\leq K$ for all $n$, the compactness argument implies that there exist subsequences, which are still denoted by $\{\vep_n\}$, $\{z_n\}$ and $\{\widehat \varrho_n\}$, such that
\begin{gather}\label{eqn:hat varrho_n->hat varrho_0}
\vep_n\to0, \quad  z_n\to z_*\in\mathbb R, 
\quad  \widehat \varrho_n\to \widehat \varrho_0
\text{ in }C^1_{loc}(\mathbb R) \text{ as }n\to\infty.
\end{gather}
Passing to the limit as $n\to\infty$ in the weak version ($H^1$-formation) of \eqref{eqn:hat_w}, and using regularity arguments, we find that $\widehat \varrho_0\in C^2(\mathbb R)$ satisfies $\widehat \varrho_0(0) =1$, $(\widehat \varrho_0)_z(0) =0$ and 
\begin{align*}
L^0_{z_*}\widehat \varrho_0:=(\widehat \varrho_0)_{zz} + f_u(w^0(z+z_*),v^*)\widehat \varrho_0 =0 \quad \text{ in } \mathbb R.
\end{align*}	
Since $\widehat \varrho_0$ is a bounded solution with $\widehat \varrho_0(0) =1$, $\widehat \varrho_0$ must be a multiple of the principal eigenfunction $w^0_z(z+z_*)$ of $L^0_{z_*}$, namely, $\widehat \varrho_0= cw^0_z(z+z_*)$ for some constant $c\neq 0$. However, since $\varrho_n\in Y^\infty$, we see from \eqref{eqn:Y^s} that
\begin{align*}
0 =&~ \frac{1}{\sqrt{\vep_n}N|\Omega|}\int_{\Omega}\varrho_n\phi_0^{\vep_n,k}dx \\
=&~ \int_{-\frac{R_*+\vep_nz_n}{\vep_n}}^{\frac{1-R_*-\vep_nz_n}{\vep_n}}\widehat \varrho_n(z)\widehat\phi_0^{\vep_n,k}(z)(R_*+\vep_nz_n+\vep_nz)^{N-1}\sqrt{\vep_n}dz,
\end{align*}
where $\widehat\phi_0^{\vep_n,k}(z) = \phi_0^{\vep_n,k}(R_*+\vep_nz_n+\vep_nz)$. Sending $n\to\infty$, it follows from \eqref{eqn:phi_0^vep}, \eqref{eqn:tilde_phi_exp_decay}, \eqref{eqn:hat varrho_n->hat varrho_0}, $\|\widehat \varrho_n\|_{L^{\infty}}=1$ and the Dominated Convergence Theorem that
\begin{align*}
0 = \int_{\mathbb R}\widehat \varrho_0w^0_z(z+z_*)\,dz = c\int_{\mathbb R}\left(w^0_z(z+z_*)\right)^2\,dz. 
\end{align*}
Therefore, $c=0$ and hence $\widehat \varrho_0\equiv0$, which contradicts to $\widehat \varrho_0(0) =1$. So we are done.
\end{proof}

Since eigenfunctions associated with $\mu_1^{\vep}$ of $\mathcal L^{\vep}_k$ belong to $Y^{\infty}$, the follow result follows directly from Lemma \ref{lem:L^vep_k -mu_0^vep Inv bdd}.
\begin{corollary}\label{cor:mu_1^vep<-mu_*}	
Assume that conditions {\upshape(A1)}-{\upshape(A3)} hold.
Let $M\in I_{v^*}$ be given and $\mu_1^{\vep,k}$ be the second eigenvalue of the operator $\mathcal L^{\vep}_k$. Then for each $D_0>0$ and integer $k\geq1$, there exists a constant $\bar\vep=\bar\vep(M, D_0, k)>0$ such that 
\begin{align}\label{eqn:mu_1^vep_leq}
\mu_1^{\vep,k} < -\mu_* \quad \forall \;\vep\in(0, \bar\vep] \text{ and } D\geq D_0,
\end{align}
where $\mu_* =\mu_*(M, D_0, k)>0$ is a constant depending only on the indicated parameters.
\end{corollary}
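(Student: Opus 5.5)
The idea is to derive \eqref{eqn:mu_1^vep_leq} from the uniform resolvent bound of Lemma~\ref{lem:L^vep_k -mu_0^vep Inv bdd}, using self-adjointness of $\mathcal L^{\vep}_k$. Fix $D_0>0$ and $k\ge1$, and let $\bar\vep$ and $C_k$ be as in Lemma~\ref{lem:L^vep_k -mu_0^vep Inv bdd}. Let $\phi_1^{\vep,k}$ be an eigenfunction for $\mu_1^{\vep,k}$. Since $\mathcal L^{\vep}_k$ is self-adjoint on $L^2_{rad}$, $\phi_1^{\vep,k}$ is $L^2$-orthogonal to $\phi_0^{\vep,k}$. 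By elliptic regularity $\phi_1^{\vep,k}\in C^{2+\alpha}(\bar\Omega)$, so $\phi_1^{\vep,k}\in Y^\infty$.

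Next observe that
\[
(\mathcal L^{\vep}_k-\mu_0^{\vep,k})\phi_1^{\vep,k}=(\mu_1^{\vep,k}-\mu_0^{\vep,k})\phi_1^{\vep,k}.
\]
The inverse on $Y^\infty$ is unique, because the kernel of $\mathcal L^{\vep}_k-\mu_0^{\vep,k}$ in $Y^\infty$ is trivial. Hence
\[
\phi_1^{\vep,k}=(\mu_1^{\vep,k}-\mu_0^{\vep,k})(\mathcal L^{\vep}_k-\mu_0^{\vep,k})^{-1}\phi_1^{\vep,k}.
\]
Taking $L^\infty$ norms and applying \eqref{eqn:L^vep_k -mu_0^vep Inv bdd} gives
\[
\|\phi_1^{\vep,k}\|_{L^\infty}\le C_k\,|\mu_1^{\vep,k}-\mu_0^{\vep,k}|\,\|\phi_1^{\vep,k}\|_{L^\infty}.
\]
Since $\mu_1^{\vep,k}<\mu_0^{\vep,k}$, this yields $\mu_0^{\vep,k}-\mu_1^{\vep,k}\ge 1/C_k$, that is, $\mu_1^{\vep,k}\le \mu_0^{\vep,k}-1/C_k$.

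Finally, \eqref{eqn:mu_0^vep=o(vep)} gives $\mu_0^{\vep,k}\to0$ uniformly in $D\ge D_0$. Shrinking $\bar\vep$ so that $\mu_0^{\vep,k}\le 1/(4C_k)$ on $(0,\bar\vep]$, we obtain $\mu_1^{\vep,k}\le -3/(4C_k)<-\mu_*$ with $\mu_*:=1/(2C_k)$, which depends only on $M$, $D_0$ and $k$.

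The only delicate point is checking that $\phi_1^{\vep,k}$ really lies in $Y^\infty$, so that Lemma~\ref{lem:L^vep_k -mu_0^vep Inv bdd} applies. This needs the orthogonality of eigenfunctions of the self-adjoint radial Neumann operator together with boundedness from regularity; both are standard.
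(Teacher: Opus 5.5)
Your proposal is correct and is essentially the paper's own argument. The paper justifies the corollary in one line (eigenfunctions for $\mu_1^{\vep,k}$ lie in $Y^\infty$, so Lemma~\ref{lem:L^vep_k -mu_0^vep Inv bdd} applies), and you fill in the details the paper leaves implicit: uniqueness of the inverse on $Y^\infty$, the identity $\phi_1^{\vep,k}=(\mu_1^{\vep,k}-\mu_0^{\vep,k})(\mathcal L^{\vep}_k-\mu_0^{\vep,k})^{-1}\phi_1^{\vep,k}$ giving the spectral gap $\mu_0^{\vep,k}-\mu_1^{\vep,k}\ge 1/C_k$, and the use of $\mu_0^{\vep,k}\to0$ to fix $\mu_*$.
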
	

\smallskip

We denote $[\phi_0^{\vep, k}]^{\perp}$ as the orthogonal complement of $\phi_0^{\vep, k}$ in $L^2_{rad}$ and $\mathcal P^\vep: L^2_{rad}\to L^2_{rad}$ as the orthogonal projection operator onto $[\phi_0^{\vep, k}]^{\perp}$. Now we are ready to show the following result.

\begin{proposition}\label{pro:L^vep_k properties}	
Assume that conditions {\upshape(A1)}-{\upshape(A3)} hold. Let $M\in I_{v^*}$, $D_0>0$ and integer $k\geq 1$ be given. Then there exists a constant $\bar\vep = \bar\vep(M, D_0, k)>0$ such that the following statements hold.
\begin{enumerate}
\item The inverse operator $(\mathcal L^{\vep}_k - \mu)^{-1}: [\phi_0^{\vep, k}]^{\perp}\to [\phi_0^{\vep, k}]^{\perp}$ exists and is  $L^2$-bounded uniformly in  $(\vep, D)\in (0,\bar\vep]\times[D_0,\infty)$ and $\mu\in\mathbb C$ with $\re\mu> -\mu_*$, where $\mu_*$ is defined as in Corollary \ref{cor:mu_1^vep<-mu_*}. Moreover, for $g\in L_{rad}^2$ and $\mu\in\mathbb C$ with $\re\mu> -\mu_*$,
\begin{align}
\|(\mathcal L^{\vep}_k - \mu)^{-1}\mathcal P^\vep g\|_{L^2(\Omega)} \leq \frac{1}{|\mu + \mu_*|}\|g\|_{L^2(\Omega)}
\end{align}
for all $\vep\in(0, \bar\vep]$ and $D \geq D_0$.
\item 
Let $F$ be a smooth function in $\mathbb R^2$. Define
\begin{align*}
	F^{\vep}_k:= F\big(u_k^{\vep, D}, \mathcal S^{\vep}[u_k^{\vep, D}]-\tfrac{\vep}{D}u_k^{\vep, D}\big),\quad   F^{*}|_{\Omega^{\pm}} := F\big(h^{\mp}(v^*), v^*\big).
\end{align*}
Denote
\begin{align}\label{eqn:hat_mu}
	\widehat\mu :=\frac{1}{2}\min\{\mu_*, -f_u^{*,-}, -f_u^{*,+}\}>0,
\end{align}
where $f_u^{*,\pm}$ are defined in \eqref{eqn:f_u^*pm,f_v^*pm}. Then for any $g\in L_{rad}^{\infty}$ and $\mu\in\mathbb C$ with $\re\mu\geq -\widehat\mu$, 
\begin{align}\label{eqn:L^vep_k-mu_inverse}
\lim_{\vep\to 0}\left[(\mathcal L^{\vep}_k - \mu)^{-1}\mathcal P^\vep (F^{\vep}_kg)\right]= \frac{F^{*}g}{f_u^*(x)-\mu}~~ \mbox{ in } L^2_{rad}
\end{align}
uniform in $D\geq D_0$ and $\mu\in \{\mu\in\mathbb C\,|\, \re \mu \geq -\widehat\mu\}$,
where $f_u^*(x)$ is defined in \eqref{eqn:f_u^*,f_v^*}.
\end{enumerate}
\end{proposition}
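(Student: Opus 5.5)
Part (i) follows from the self-adjointness of $\mathcal L^{\vep}_k$ on $L^2_{rad}$ together with Corollary \ref{cor:mu_1^vep<-mu_*}; part (ii) is a cut-off/compactness argument that combines the uniform bound from (i) with local convergence away from the interface.

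\textbf{Part (i).} The operator $\mathcal L^{\vep}_k$ is self-adjoint on $L^2_{rad}$ with Neumann boundary condition, and its resolvent is compact. Its radial eigenfunctions $\phi_j^{\vep,k}$ therefore form an orthonormal basis, and $[\phi_0^{\vep,k}]^{\perp}$ is spanned by $\phi_j^{\vep,k}$, $j\ge1$. This subspace is invariant under $\mathcal L^{\vep}_k$. By Corollary \ref{cor:mu_1^vep<-mu_*}, every eigenvalue of the restriction satisfies $\mu_j^{\vep,k}\le \mu_1^{\vep,k}<-\mu_*$ for $\vep\in(0,\bar\vep]$ and $D\ge D_0$. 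Expanding $\mathcal P^\vep g=\sum_{j\ge1}\langle g,\phi_j^{\vep,k}\rangle\phi_j^{\vep,k}$, we get
\[
(\mathcal L^{\vep}_k-\mu)^{-1}\mathcal P^\vep g=\sum_{j\ge1}\frac{\langle g,\phi_j^{\vep,k}\rangle}{\mu_j^{\vep,k}-\mu}\,\phi_j^{\vep,k}.
\]
Parseval then bounds the norm by $\sup_{j\ge1}|\mu_j^{\vep,k}-\mu|^{-1}\|g\|_{L^2}$. For $\re\mu>-\mu_*$ each real $\mu_j^{\vep,k}<-\mu_*$ gives $|\mu_j^{\vep,k}-\mu|\ge|\mu+\mu_*|$, because the closest point of $(-\infty,-\mu_*]$ to $\mu$ is $-\mu_*$. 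This proves the stated bound, which is uniform in $\vep$ and $D$.

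\textbf{Part (ii).} Set $\psi^\vep:=(\mathcal L^{\vep}_k-\mu)^{-1}\mathcal P^\vep(F^\vep_k g)$. The same Parseval computation with $\re\mu\ge-\widehat\mu\ge-\mu_*/2$ gives $\|\psi^\vep\|_{L^2}\le (2/\mu_*)\|F^\vep_k g\|_{L^2}\le C$, uniformly in $\vep$, $D$ and $\mu$.
\begin{enumerate}
\item We first remove the projection. Since $\mathcal P^\vep h=h-\langle h,\phi_0^{\vep,k}\rangle\phi_0^{\vep,k}$ and $|\langle F^\vep_k g,\phi_0^{\vep,k}\rangle|\le C\|\phi_0^{\vep,k}\|_{L^1}\le C\sqrt\vep$ by Proposition \ref{pro:L^vep_k:mu_0^vep,k=o(vep)}, part (i) yields $\psi^\vep=\chi^\vep+O(\sqrt\vep)$ in $L^2$. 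Here $\chi^\vep:=(\mathcal L^{\vep}_k-\mu)^{-1}\mathcal P^\vep(F^\vep_k g - \langle F^\vep_k g,\phi_0\rangle\phi_0)$, i.e.\ $\chi^\vep$ solves $(\mathcal L^\vep_k-\mu)\chi^\vep=F^\vep_kg-c_\vep\phi_0^{\vep,k}$ with $|c_\vep|\le C\sqrt\vep$.
\item Next we compare with the candidate limit. Put $\psi^*:=F^*g/(f_u^*-\mu)$. It is well defined and bounded in $L^\infty$, because $\re(f_u^*-\mu)\le \max f_u^{*,\pm}+\widehat\mu\le -\widehat\mu<0$ by \eqref{eqn:hat_mu}.
\item Since $g$ is only $L^\infty$, we work with the difference $e^\vep:=\psi^\vep-\psi^*\eta_\delta$, where $\eta_\delta$ is a smooth radial cutoff vanishing on $\{||x|-R_*|<\delta\}$ and near the points where derivatives would be needed. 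To handle derivatives we first approximate $g$ by smooth radial $g_m$ in $L^2$; this is legitimate because the solution maps are uniformly $L^2$-bounded, by part (i) for $\psi^\vep$ and by $|f_u^*-\mu|\ge\widehat\mu$ for $\psi^*$. So we may assume $g$ smooth.
\item For smooth $g$, apply $\mathcal L^\vep_k-\mu$ to $\psi^*\eta_\delta$. Away from the layer $f_u^{\vep,k}\to f_u^*$ uniformly (Theorem \ref{thm:approx_sol_scalarEQ}), and $\vep^2\Delta(\psi^*\eta_\delta)=O(\vep^2\delta^{-2})$. Hence $(\mathcal L^\vep_k-\mu)(\psi^*\eta_\delta)=F^\vep_kg\,\eta_\delta+o(1)$ in $L^\infty$.
\item Applying $(\mathcal L^\vep_k-\mu)^{-1}\mathcal P^\vep$ and using the uniform bound of part (i), we obtain $\|\psi^\vep-\mathcal P^\vep(\psi^*\eta_\delta)\|_{L^2}\le C(\|F^\vep_kg(1-\eta_\delta)\|_{L^2}+o(1))\le C\sqrt\delta+o(1)$. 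Moreover $\mathcal P^\vep(\psi^*\eta_\delta)=\psi^*\eta_\delta+O(\sqrt\vep)$.
\item Finally $\|\psi^*(1-\eta_\delta)\|_{L^2}\le C\sqrt\delta$. Sending $\vep\to0$ and then $\delta\to0$ gives the claim, and all constants are uniform in $D\ge D_0$ and in $\mu$ with $\re\mu\ge-\widehat\mu$.
\end{enumerate}

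The main obstacle is making the convergence uniform in unbounded $\mu$. For large $|\mu|$, however, both $\psi^\vep$ and $\psi^*$ are $O(|\mu|^{-1})$ by part (i), so it suffices to treat $\mu$ in a compact set. There the error terms above are uniform because $|f_u^{\vep,k}-\mu|$ and $|f_u^*-\mu|$ stay bounded below. A second point to check carefully is the commutation of the cutoff with $\vep^2\Delta$ near the origin; the cutoff is chosen constant there, so this contributes no error.
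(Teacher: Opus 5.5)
Your Part (i) is the paper's argument: the paper also expands the resolvent in the eigenfunctions of $\mathcal L^{\vep}_k$ on $[\phi_0^{\vep,k}]^{\perp}$ and uses Corollary~\ref{cor:mu_1^vep<-mu_*}.

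\textbf{Part (ii): a genuinely different route.} The paper gives no argument for (ii); it only refers to \cite[Lemma 2.2]{NF}, a one-dimensional result. Your cut-off argument is a self-contained replacement that works directly in the radial $N$-dimensional setting, and it is correct. Its steps are:
\begin{itemize}
\item reduce to smooth radial $g$ by density, which is legitimate because both linear maps are $L^2$-bounded uniformly in $\vep$, $D$ and $\mu$;
\item use $\psi^*\eta_\delta$ as an approximate solution away from the layer;
\item absorb both the $O(\sqrt\delta)$ strip and the $o(1)$ residual with the uniform bound $2/\mu_*$ from (i).
\end{itemize}
Every input is already in the paper: uniform convergence of $u^{\vep,D}_k$ and of $\mathcal S^{\vep}[u^{\vep,D}_k]-\frac{\vep}{D}u^{\vep,D}_k$ away from the layer (Theorem~\ref{thm:approx_sol_scalarEQ} (i),(iii)), and $\|\phi_0^{\vep,k}\|_{L^1}\le C\sqrt\vep$ (Proposition~\ref{pro:L^vep_k:mu_0^vep,k=o(vep)}). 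As a result, uniformity in $D\ge D_0$ is transparent rather than inherited from an external reference. That is what your route buys over the paper's citation; the cost is only bookkeeping.

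\textbf{Details to tidy.}
\begin{itemize}
\item $\psi^*\eta_\delta$ must lie in the Neumann domain of $\mathcal L^{\vep}_k$. Take $g_m$ radial, smooth in $x$, and compactly supported in $B_1$, or let $\eta_\delta$ also vanish near $r=1$. Your phrase ``near the points where derivatives would be needed'' should say this explicitly.
\item In step (e) you implicitly use that $\mathcal P^\vep$ commutes with $\mathcal L^{\vep}_k$. This holds by self-adjointness, since $\phi_0^{\vep,k}$ is an eigenfunction.
\item Step (a) is vacuous: $\mathcal P^\vep\phi_0^{\vep,k}=0$ gives $\chi^\vep=\psi^\vep$.
\item The reduction to $\mu$ in a compact set is unnecessary. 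Every estimate in (d)--(f) is already uniform on $\{\re\mu\ge-\widehat\mu\}$, because $|f_u^*-\mu|\ge\widehat\mu$ and $|\mu+\mu_*|\ge\mu_*/2$.
\item Your remark that $|f_u^{\vep,k}-\mu|$ stays bounded below is false inside the layer, where $f_u^{\vep,k}$ takes positive values near $h^0(v^*)$. You never use it, so it should simply be dropped.
\end{itemize}
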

\begin{proof}
Part (i) follows directly from Corollary  \ref{cor:mu_1^vep<-mu_*} and the eigenfunction expansion of the resolvent operator $(\mathcal L_k^\vep-\mu)^{-1}$.    

\smallskip

For Part (ii), we refer the reader to the proof of \cite[Lemma 2.2]{NF}.
\end{proof}

\subsection{Proofs of Lemmas \ref{lem:exp_decay_nearbc} and \ref{lem:exp_decay_on_Ball}}\label{subsec:proof of 2 Lems}

\begin{proof}[Proof of Lemma \ref{lem:exp_decay_nearbc}]
We only need to find suitable barrier functions. Let 
$$\eta_1(x) = \cosh(\sigma_1 d(x)/\vep),$$ 
where $\sigma_1>0$ is a constant to be determined later. Then by direct computation, we see that
\begin{align*}
	\vep^2\Delta\eta_1 -c^{\vep}(x)\eta_1 \leq \Big[\sigma_1^2 + \vep\sigma_1 \tanh\Big(\frac{\sigma_1 d(x)}{\vep}\Big)\Delta d - c_0\Big]\eta_1 \quad  \text{ in } W_{\delta}.
\end{align*}
Since $|\tanh(\sigma_1 d(x)/\vep)|< 1$ and $\Delta d$ is uniformly bounded on $\overline{W_{\delta}}$, there exists a constant $\sigma_1 = \sigma_1\big(\vep_0, \|\Delta d\|_{L^{\infty}(W_{\delta})}, c_0\big)>0$ such that 
$$\vep^2\Delta\eta_1-c^{\vep}(x)\eta_1 \leq 0 \; \text{ in } W_{\delta}, \quad\forall \;\vep\in(0,\vep_0].$$

Let 
$$\vartheta_1(x) = \rho (x) -\frac{\eta_1(x)}{\cosh(\sigma_1 \delta/\vep)} \|\rho \|_{L^{\infty}(W_{\delta})} \;\text{ on }\overline{W_{\delta}}.$$
Then
\begin{align*}
	\begin{cases}
	\vep^2\Delta \vartheta_1 -c^{\vep}(x) \vartheta_1\geq 0  &\quad  \text{ in } W_{\delta},\\
	\vartheta_1\leq 0 &\quad  \text{ on }\partial W_{\delta}\backslash\partial W,\\
	\partial_{\nu}\vartheta_1 =0 &\quad  \text{ on }\partial W.
	\end{cases}
\end{align*}
Therefore, by the maximum principle, we see that $\vartheta_1(x)\leq 0$ in $ W_{\delta}$. Using the fact that $\frac{e^y}{2}\leq \cosh(y)\leq e^y$ for all $y\geq 0$, we obtain that
\begin{align*}
\rho(x) \leq 2\|\rho \|_{L^{\infty}(W_{\delta})} e^{-\frac{\sigma_1(\delta-d(x))}{\vep}}  \text{ in } W_{\delta}, \quad \forall \;\vep\in(0,\vep_0].
\end{align*} 
Similarly, by working with $\rho (x) + \frac{\eta_1(x)}{\cosh(\sigma_1 \delta/\vep)} \|\rho \|_{L^{\infty}(W_{\delta})}$, we can show that
\begin{align*}
	\rho (x) \geq -\frac{1}{2}\|\rho \|_{L^{\infty}(W_{\delta})} e^{-\frac{\sigma_1(\delta-d(x))}{\vep}}  \text{ in } W_{\delta}, \quad \forall \;\vep\in(0,\vep_0].
\end{align*} 
Combining the above two inequalities, we obtain the first estimate in \eqref{eqn:rho_nabla rho in W_delta}.
	
The second estimate in \eqref{eqn:rho_nabla rho in W_delta} follows from the standard pointwise interpolation inequality for derivatives. See. e.g., \cite[Chapter 3.4]{GT}.
\end{proof}

\begin{proof}[Proof of Lemma \ref{lem:exp_decay_on_Ball}]
Let $\eta_2(x) = \cosh(\sigma_2 |x|/\vep)$, where $\sigma_2>0$ is a constant to be determined later. Then by direct computation, we see that
\begin{align*}
	\vep^2\Delta\eta_2 -c^{\vep}(x)\eta_2 \leq \Big[\sigma_2^2 + \sigma_2 \tanh\Big(\frac{\sigma_2 |x|}{\vep}\Big)\cdot\frac{\vep(N-1)}{|x|} - c_0\Big]\eta_2 \quad  \text{ in } B_R.
\end{align*} 
Note that 
\begin{align*}
	\sup_{\vep\in (0, \vep_0], \sigma_2\in (0, \bar\sigma_2], x\in B_R}\tanh\Big(\frac{\sigma_2 |x|}{\vep}\Big)\frac{\vep(N-1)}{|x|}<\infty, 
\end{align*}
and the supremum depends only on $\bar\sigma_2>0$, $N$, $\vep_0$ and $R$. Therefore, there exists a constant $\sigma_2 = \sigma_2(\vep_0, R, c_0, N)>0$ such that 
$$\vep^2\Delta\eta_2-c^{\vep}(x)\eta_2 \leq 0  \text{ in }B_R , \forall \;\vep\in(0,\vep_0].$$
Let 
$$\vartheta_2(x) = \rho (x) -\frac{\eta_2(x)}{\cosh(\sigma_2 R/\vep)} \|\rho \|_{L^{\infty}(B_R)} \text{ in } B_R.$$ 
Then the rest of the proof follows from similar arguments as in the proof of Lemma \ref{lem:exp_decay_nearbc}. Hence, we omit the details.
\end{proof}

\subsection{Proof of Theorem \ref{thm:mathscr_L_k^vep}}\label{sec:Proof of Proposition 3.1}
\begin{proof} 
Let $M\in I_{v^*}$, $D_0>0$ and integer $k\geq1$ be given.

We begin by analyzing the eigenvalue of $\mathscr L_k^{\vep}$ with the largest real part.  
To this end, define
\begin{align}\label{eqn:lambda_*}
\lambda_* := \min\Big\{\widehat\mu,~ -\frac{\re(E+\sqrt{E^2-4G})}{4}\Big\}>0,
\end{align}
where $\widehat\mu>0$ is given by \eqref{eqn:hat_mu}, and $E<0$, $G>0$ are defined in \eqref{eqn:E&G}. Next, we introduce the domain
\begin{align*}
\mathbb C_{\lambda_*} :=\{\lambda\in\mathbb C\,|\, \re\lambda\geq -\lambda_* \}.
\end{align*}
Our first objective is to characterize the spectrum of 
$\mathscr L_k^{\vep}$ in the region $\mathbb C_{\lambda_*}$, in order to establish Theorem \ref{thm:mathscr_L_k^vep} (i) and (ii). 

In the sequel, we assume that $\lambda^\vep\in\mathbb C_{\lambda_*}$ is an eigenvalue of $\mathscr L_k^{\vep}$,  and let $\Phi^\vep$ be a corresponding eigenfunction.

Let $(\mu_0^{\vep, k}, \phi_0^{\vep, k})$ be the principal eigenpair of the operator $\mathcal L_k^\vep$ defined in \eqref{eqn:mathcal L^vep_k} with $\phi_0^{\vep, k}$ being $L^2$-normalized. We decompose $\Phi^\vep$ as
\begin{align}\label{eqn:decom of Phi^vep}
\Phi^\vep = a^\vep\phi_0^{\vep, k} + \eta^{\vep}\in{\rm span}\{\phi_0^{\vep, k}\}\oplus[\phi_0^{\vep, k}]^{\perp}.
\end{align}
%where $a^\vep\in\mathbb C$ and $\eta^{\vep}\in[\phi_0^{\vep, k}]^{\perp}$.
Then the equation for $\Phi^\vep$ can be reformulated as follows:
\begin{align}\label{eqn:a_and_w^vep}
\begin{array}{l}
\displaystyle	(\mu_0^{\vep, k}-\lambda^\vep)a^\vep =\frac{1}{|\Omega|} \left(1-\frac{\vep}{D}\right)\Big(a^\vep\int_{\Omega}\phi_0^{\vep, k}\,dx +\int_{\Omega}\eta^{\vep}\,dx\Big)\int_{\Omega}f_v^{\vep, k}\phi_0^{\vep, k}\,dx, \vspace{1ex}\\
\displaystyle (\mathcal L_k^\vep-\lambda^\vep)\eta^{\vep} =\frac{1}{|\Omega|}\left(1-\frac{\vep}{D}\right)\Big(a^\vep\int_{\Omega}\phi_0^{\vep, k}\,dx +\int_{\Omega}\eta^{\vep}\,dx\Big) \mathcal P^\vep f_v^{\vep, k}.
\end{array}
\end{align}
Thanks to Proposition \ref{pro:L^vep_k properties} (i) and the assumption that $\lambda^\vep\in\mathbb C_{\lambda_*}$, $\eta^{\vep}$ can be solved from the second equation in \eqref{eqn:a_and_w^vep} as
\begin{align}\label{eqn:w^vep}
\eta^{\vep} = \frac{1}{|\Omega|}\left(1-\frac{\vep}{D}\right)\Big(a^\vep\int_{\Omega}\phi_0^{\vep, k}\,dx +\int_{\Omega}\eta^{\vep}\,dx\Big)(\mathcal L_k^\vep-\lambda^\vep)^{-1}\mathcal P^\vep f_v^{\vep, k}.
\end{align}
For notational simplicity, denote
\begin{align}\label{eqn:b=int_w}
b^\vep := \int_{\Omega}\eta^{\vep}\,dx
\end{align}
and
\begin{align}\label{eqn:psi_lambda^vep}
\psi_{\lambda^\vep}^{\vep} := (\mathcal L_k^\vep-\lambda^\vep)^{-1}\mathcal P^\vep f_v^{\vep, k}\in [\phi_0^{\vep, k}]^{\perp}.
\end{align}
Integrating equation \eqref{eqn:w^vep} over $\Omega$, we obtain that
\begin{align}\label{eqn:b}
b^\vep = \frac{b^\vep}{|\Omega|}\left(1-\frac{\vep}{D}\right)\int_{\Omega}\psi_{\lambda^\vep}^{\vep}\,dx +\frac{a^\vep}{|\Omega|} \left(1-\frac{\vep}{D}\right)\int_{\Omega}\phi_0^{\vep, k}\,dx\int_{\Omega}\psi_{\lambda^\vep}^{\vep}\,dx.
\end{align}

We will first prove Parts (i)-(ii) of Theorem \ref{thm:mathscr_L_k^vep}.
Since the proof is rather lengthy, we structure it by isolating several key intermediate results as Claims \ref{clm:b-solvable}-\ref{clm:lambda^vep_Simple} below. Each claim establishes a crucial step in the argument, and together they lead to the proof of Theorem \ref{thm:mathscr_L_k^vep} (i)-(ii).

\begin{claim}\label{clm:b-solvable}
There exists a constant $\bar\vep =\bar\vep(M, D_0, k)>0$ such that for each $(\vep,\lambda^{\vep})\in(0, \bar\vep] \times\mathbb C_{\lambda_*}$ and $D\geq D_0$, equation \eqref{eqn:b} admits a unique solution given explicitly by
\begin{align}\label{eqn:b-2}
	b^\vep = a^\vep\cdot\frac{\left(1-\frac{\vep}{D}\right) \frac{1}{|\Omega|}\int_{\Omega}\psi_{\lambda^\vep}^{\vep}\,dx}{1 -\left(1-\frac{\vep}{D}\right) \frac{1}{|\Omega|} \int_{\Omega}\psi_{\lambda^\vep}^{\vep}\,dx}\int_{\Omega}\phi_0^{\vep, k}\,dx.
\end{align}
\end{claim}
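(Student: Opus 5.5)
Equation \eqref{eqn:b} is linear in $b^\vep$, so the claim reduces to showing that the coefficient
\[
1-\Big(1-\frac{\vep}{D}\Big)\frac{1}{|\Omega|}\int_\Omega\psi^\vep_{\lambda^\vep}\,dx
\]
stays bounded away from zero, uniformly in $\lambda^\vep\in\mathbb C_{\lambda_*}$, $D\ge D_0$ and small $\vep$. Once this is known, rearranging \eqref{eqn:b} gives \eqref{eqn:b-2} and uniqueness immediately. So the plan is to compute the limit of $\frac1{|\Omega|}\int_\Omega\psi^\vep_\lambda\,dx$ as $\vep\to0$ and check that $1-\lim$ never vanishes on $\mathbb C_{\lambda_*}$.

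First I identify the limit. Since $\lambda_*\le\widehat\mu$, every $\lambda\in\mathbb C_{\lambda_*}$ has $\re\lambda\ge-\widehat\mu$. I then apply Proposition \ref{pro:L^vep_k properties}(ii) with $F=f_v$ and $g\equiv1$:
\[
\psi^\vep_\lambda\to \frac{f_v^*}{f_u^*-\lambda}\quad\text{in } L^2_{rad},
\]
uniformly in $D\ge D_0$ and in such $\lambda$. Because $\Omega$ is bounded, $L^2$ convergence gives convergence of integrals, so
\[
\frac1{|\Omega|}\int_\Omega\psi^\vep_\lambda\,dx\to R_*^N\frac{f_v^{*,-}}{f_u^{*,-}-\lambda}+(1-R_*^N)\frac{f_v^{*,+}}{f_u^{*,+}-\lambda}=:\Psi(\lambda),
\]
uniformly. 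The factor $(1-\vep/D)$ tends to $1$ uniformly in $D\ge D_0$.

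Next I show that $1-\Psi(\lambda)$ vanishes only at eigenvalues outside $\mathbb C_{\lambda_*}$. Clearing denominators, $1-\Psi(\lambda)=0$ is equivalent to
\[
(f_u^{*,-}-\lambda)(f_u^{*,+}-\lambda)-R_*^Nf_v^{*,-}(f_u^{*,+}-\lambda)-(1-R_*^N)f_v^{*,+}(f_u^{*,-}-\lambda)=0,
\]
that is, $\lambda^2-E\lambda+G=0$ with $E,G$ from \eqref{eqn:E&G}. Its roots are $(E\pm\sqrt{E^2-4G})/2$. Since $E<0$ and $G>0$, both roots have real part at most $\re(E+\sqrt{E^2-4G})/2$, which is strictly less than $-\lambda_*$ by the choice \eqref{eqn:lambda_*}; this bound holds whether the roots are real or complex. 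The denominators $f_u^{*,\pm}-\lambda$ cannot vanish either, since $\re\lambda\ge-\widehat\mu>f_u^{*,\pm}$. Hence $1-\Psi$ has no zeros on $\mathbb C_{\lambda_*}$.

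The main obstacle is turning this pointwise nonvanishing into a uniform lower bound, because $\mathbb C_{\lambda_*}$ is unbounded. I handle it in two regions. For $|\lambda|$ large, $\Psi(\lambda)=O(1/|\lambda|)$, and $\psi_\lambda^\vep$ is uniformly small there too: Proposition \ref{pro:L^vep_k properties}(i) gives $\|\psi^\vep_\lambda\|_{L^2}\le C/|\lambda+\mu_*|$ with $\|f_v^{\vep,k}\|_{L^\infty}$ uniformly bounded, which needs $\lambda_*<\mu_*$, true because $\lambda_*\le\widehat\mu\le\mu_*/2$. So the coefficient is close to $1$ for $|\lambda|$ large. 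On the remaining compact set, $|1-\Psi|$ is continuous and nonvanishing, hence bounded below by some $c>0$. Combining this with the uniform convergence, I choose $\bar\vep$ so that the coefficient has modulus at least $c/2$ for all $\vep\in(0,\bar\vep]$, $D\ge D_0$ and $\lambda\in\mathbb C_{\lambda_*}$. This concludes the proof.
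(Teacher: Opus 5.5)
Your proposal is correct and follows essentially the same route as the paper: use Proposition~\ref{pro:L^vep_k properties}(ii) to pass to the limit $\frac{\xi^2-E\xi+G}{(f_u^{*,+}-\xi)(f_u^{*,-}-\xi)}$, invoke the choice of $\lambda_*$ in \eqref{eqn:lambda_*} to keep this limit away from zero on $\mathbb C_{\lambda_*}$, and then solve the linear equation \eqref{eqn:b} for $b^\vep$. Your split into a compact region and a large-$|\lambda|$ region only makes explicit the uniform lower bound that the paper asserts without detail. The extra appeal to Proposition~\ref{pro:L^vep_k properties}(i) for large $|\lambda|$ is harmless but unnecessary, because the convergence in part (ii) is already uniform on the whole half-plane $\{\re\mu\geq-\widehat\mu\}$.
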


Indeed, it follows from \eqref{eqn:psi_lambda^vep} and Proposition \ref{pro:L^vep_k properties} (ii) that 
\begin{align}\label{eqn:psi_lambda^vep_limit}
\lim_{\vep\to0}\psi_{\xi}^{\vep}(x) = 	\frac{f^*_v(x)}{f^*_u(x)-\xi}\quad \text{ in }L^2_{rad}
\end{align}
uniformly in $\xi\in\mathbb C_{\lambda_*}$ and $D\geq D_0$, where $f^*_v$ and $f^*_u$ are defined in \eqref{eqn:f_u^*,f_v^*}. Consequently,
\begin{align}\label{eqn:1-psi_lambda^vep_limit}
\lim_{\vep\to0}\Big[1 -\left(1-\frac{\vep}{D}\right) \frac{1}{|\Omega|}\int_{\Omega}\psi_{\xi}^{\vep}\,dx\Big]= \frac{\xi^2 -E\xi +G}{(f_u^{*, +}-\xi)(f_u^{*, -}-\xi)}
\end{align}
uniformly in $\xi\in\mathbb C_{\lambda_*}$ and $D\geq D_0$. By the definition of $\lambda_*$ in \eqref{eqn:lambda_*}, the right-hand side of \eqref{eqn:1-psi_lambda^vep_limit} remains uniformly bounded away from zero for all $\xi\in\mathbb C_{\lambda_*}$.
Therefore, there exists a constant $\bar\vep=\bar\vep(M, D_0, k)>0$ small such that
\begin{align*}
1 -\left(1-\frac{\vep}{D}\right) \frac{1}{|\Omega|}\int_{\Omega}\psi_{\xi}^{\vep}\,dx\neq 0 \quad \forall\; (\vep,\xi)\in(0, \bar\vep] \times\mathbb C_{\lambda_*}\text{ and } D\geq D_0.
\end{align*}
This ensures that $b^\vep$ is uniquely solvable from equation \eqref{eqn:b} and is given by \eqref{eqn:b-2}, which concludes the proof of Claim \ref{clm:b-solvable}. 

\begin{claim}\label{clm:a neq 0}
Let $\vep\in(0, \bar\vep]$ and $D\geq D_0$. Then for $\lambda^{\vep}\in \mathbb C_{\lambda_*}$ to be an eigenvalue of $\mathscr L_k^{\vep}$, it is necessary that $a^\vep\neq 0$ in \eqref{eqn:decom of Phi^vep}.
\end{claim}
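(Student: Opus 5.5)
We argue by contradiction: suppose $\lambda^\vep\in\mathbb C_{\lambda_*}$ is an eigenvalue of $\mathscr L_k^{\vep}$ with eigenfunction $\Phi^\vep\neq 0$, decomposed as in \eqref{eqn:decom of Phi^vep}, and suppose $a^\vep=0$. Then $\Phi^\vep=\eta^\vep\in[\phi_0^{\vep,k}]^{\perp}$, and the goal is to show $\eta^\vep\equiv 0$, which contradicts $\Phi^\vep\neq0$.

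The first step is to use Claim \ref{clm:b-solvable}. Since $\vep\in(0,\bar\vep]$, $D\geq D_0$ and $\lambda^\vep\in\mathbb C_{\lambda_*}$, equation \eqref{eqn:b} has a unique solution, given by \eqref{eqn:b-2}. That formula is proportional to $a^\vep$, so $a^\vep=0$ forces $b^\vep=\int_\Omega\eta^\vep\,dx=0$. More directly, with $a^\vep=0$ equation \eqref{eqn:b} reads
\[
b^\vep\Big[1-\big(1-\tfrac{\vep}{D}\big)\tfrac{1}{|\Omega|}\int_\Omega\psi^\vep_{\lambda^\vep}\,dx\Big]=0,
\]
and the bracket is nonzero by the argument in the proof of Claim \ref{clm:b-solvable}, which rests on \eqref{eqn:1-psi_lambda^vep_limit} and the choice of $\lambda_*$ in \eqref{eqn:lambda_*}.

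The second step substitutes $a^\vep=0$ and $b^\vep=0$ into \eqref{eqn:w^vep}. This gives $\eta^\vep=0\cdot\psi^\vep_{\lambda^\vep}=0$. Equivalently, the second equation of \eqref{eqn:a_and_w^vep} becomes $(\mathcal L_k^\vep-\lambda^\vep)\eta^\vep=0$ with $\eta^\vep\in[\phi_0^{\vep,k}]^\perp$. Proposition \ref{pro:L^vep_k properties} (i) then gives $\eta^\vep=0$: we have $\re\lambda^\vep\geq-\lambda_*\geq-\widehat\mu>-\mu_*$ by \eqref{eqn:hat_mu} and \eqref{eqn:lambda_*}, so $\mathcal L_k^\vep-\lambda^\vep$ is injective on $[\phi_0^{\vep,k}]^\perp$. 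Hence $\Phi^\vep=0$, a contradiction, and therefore $a^\vep\neq0$.

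The only point needing care is the parameter bookkeeping. We must confirm that every $\lambda\in\mathbb C_{\lambda_*}$ satisfies $\re\lambda>-\mu_*$, so that Proposition \ref{pro:L^vep_k properties} (i) applies; this holds because $\lambda_*\le\widehat\mu\le\mu_*/2$. We must also confirm that the $\bar\vep$ here is the one from Claim \ref{clm:b-solvable}, chosen no larger than the thresholds in Corollary \ref{cor:mu_1^vep<-mu_*} and Proposition \ref{pro:L^vep_k properties}. Beyond this there is no analytic obstacle, since the claim follows directly from the invertibility already established.
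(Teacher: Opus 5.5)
Your proof is correct and follows the paper's argument: $a^\vep=0$ forces $b^\vep=0$ via \eqref{eqn:b-2} (equivalently, because the bracket in \eqref{eqn:b} is nonzero), and then \eqref{eqn:w^vep} gives $\eta^\vep=0$, so $\Phi^\vep=0$, a contradiction. Your check that $\re\lambda^\vep\ge-\lambda_*\ge-\mu_*/2>-\mu_*$, so that Proposition \ref{pro:L^vep_k properties} (i) applies, is correct and makes explicit a step the paper leaves unstated.
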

Suppose for contradiction that Claim \ref{clm:a neq 0} is false, i.e., $a^\vep=0$. Then we see from \eqref{eqn:b-2} that $b^\vep = 0$. This combined with \eqref{eqn:w^vep} and \eqref{eqn:b=int_w} implies that $\eta^{\vep} =0$ and thus $\Phi^\vep =0$, which is a contradiction since $\Phi^\vep$ is an eigenfunction of $\mathscr L_k^{\vep}$. This finishes the proof of Claim \ref{clm:a neq 0}.

\medskip

By Claim \ref{clm:a neq 0}, we may, without loss of generality, set
$$a^\vep=1$$
in the remainder of this proof. Substituting $a^\vep=1$, \eqref{eqn:b=int_w} and the expression \eqref{eqn:b-2} into the first equation of \eqref{eqn:a_and_w^vep}, we obtain:
\begin{align}\label{eqn:lambda^vep=mu^vep-...}
	\displaystyle
\lambda^\vep = \mu_0^{\vep, k} - \frac{1}{|\Omega|} \left(1-\frac{\vep}{D}\right)\frac{\int_{\Omega}\phi_0^{\vep, k}\,dx}{1 -\left(1-\frac{\vep}{D}\right) \frac{1}{|\Omega|}\int_{\Omega}\psi_{\lambda^\vep}^{\vep}\,dx}\int_{\Omega}f_v^{\vep, k}\phi_0^{\vep, k}\,dx.
\end{align}

\begin{claim}\label{clm:lambda/vep->Lambda*}
\begin{align}\label{eqn:lambda/vep=Lambda*}
	\lim_{\vep\to0}\frac{\lambda^\vep}{\vep} =\Lambda^*\in\mathbb R\backslash \{0\} \quad \text{ uniformly in }D\geq D_0,
\end{align}
where $\Lambda^*$ is defined in \eqref{eqn:Lambda^*}. 
\end{claim}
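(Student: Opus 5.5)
Divide the identity \eqref{eqn:lambda^vep=mu^vep-...} by $\vep$ and pass to the limit term by term. The first term is $\mu_0^{\vep,k}/\vep$, which tends to $0$ uniformly in $D\geq D_0$ by \eqref{eqn:mu_0^vep=o(vep)}. For the second term, we factor the $\vep$ as $\sqrt{\vep}\cdot\sqrt{\vep}$. Corollary \ref{cor:phi_0^vep_integral} with $g\equiv 1$ gives
\begin{align*}
\frac{1}{\sqrt\vep}\int_\Omega \phi_0^{\vep,k}\,dx\to \sqrt{\tfrac{N|\Omega|R_*^{N-1}}{m(v^*)}}\,\big(h^+(v^*)-h^-(v^*)\big),\qquad
\frac{1}{\sqrt\vep}\int_\Omega f_v^{\vep,k}\phi_0^{\vep,k}\,dx\to \sqrt{\tfrac{N|\Omega|R_*^{N-1}}{m(v^*)}}\,J'(v^*),
\end{align*}
uniformly in $D\geq D_0$. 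Their product times $1/|\Omega|$ gives $NR_*^{N-1}(h^+-h^-)J'(v^*)/m(v^*)$, and the factor $(1-\vep/D)$ tends to $1$.

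The remaining task is the denominator $1-(1-\frac{\vep}{D})\frac{1}{|\Omega|}\int_\Omega\psi^\vep_{\lambda^\vep}dx$, where the spectral parameter $\lambda^\vep$ itself depends on $\vep$. This is handled by the uniformity in $\xi\in\mathbb C_{\lambda_*}$ in \eqref{eqn:1-psi_lambda^vep_limit}. First we show $\lambda^\vep\to0$. Write $\Theta^\vep(\xi)$ for the expression on the left of \eqref{eqn:1-psi_lambda^vep_limit}, and $\Theta^*(\xi)=(\xi^2-E\xi+G)/((f_u^{*,+}-\xi)(f_u^{*,-}-\xi))$ for its limit. By the choice of $\lambda_*$, $|\Theta^*|$ is bounded below on $\mathbb C_{\lambda_*}$; indeed $\Theta^*(\xi)\to1$ as $|\xi|\to\infty$, and its zeros have real part $\le$ the bound excluded by $\lambda_*$. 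So $|\Theta^\vep(\lambda^\vep)|\ge c>0$ for $\vep$ small, uniformly. The identity then gives $|\lambda^\vep|\le |\mu_0^{\vep,k}|+C\vep$, hence $\lambda^\vep=O(\vep)\to0$.

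Next, $\Theta^\vep(\lambda^\vep)-\Theta^*(\lambda^\vep)\to0$ uniformly, and $\Theta^*$ is continuous at $0$ with $\Theta^*(0)=G/(f_u^{*,+}f_u^{*,-})$. Therefore $\Theta^\vep(\lambda^\vep)\to G/(f_u^{*,+}f_u^{*,-})$ uniformly in $D\geq D_0$. Combining the limits yields
\[
\frac{\lambda^\vep}{\vep}\to -\frac{NR_*^{N-1}(h^+(v^*)-h^-(v^*))}{m(v^*)}\cdot\frac{f_u^{*,+}f_u^{*,-}}{G}\,J'(v^*)=\Lambda^*.
\]
Here $\Lambda^*$ is real, and it is nonzero because $J'(v^*)\neq0$ by (A3), $h^+>h^-$, $G>0$, and $f_u^{*,\pm}<0$.

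The main obstacle is this passage to the limit in the denominator, which requires the uniformity of Proposition \ref{pro:L^vep_k properties}(ii) over $\mu$ together with the a priori bound $\lambda^\vep=O(\vep)$. That bound must be obtained first from the lower bound on $|\Theta^\vep|$ and the $O(\vep)$ size of the numerator; the numerator is $O(\vep)$ because of the $\sqrt{\vep}$-scaling of $\phi_0^{\vep,k}$ in \eqref{eqn:phi_0^vep_Norms_est} and Corollary \ref{cor:phi_0^vep_integral}.
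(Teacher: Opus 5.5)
Your proposal is correct and follows essentially the same route as the paper. You first use the uniform-in-$\xi\in\mathbb C_{\lambda_*}$ limit \eqref{eqn:1-psi_lambda^vep_limit}, together with the choice of $\lambda_*$, to bound the denominator away from zero; combined with Corollary \ref{cor:phi_0^vep_integral} this gives $|\lambda^\vep|\le C_k\vep$. You then pass to the limit in \eqref{eqn:lambda^vep=mu^vep-...} divided by $\vep$, using continuity of the limiting rational function at $\xi=0$. Your constant $NR_*^{N-1}/m(v^*)$ is the correct one and agrees with the definition \eqref{eqn:Lambda^*}; the final display in the paper's proof writes $R_*^{N-1}/(Nm(v^*))$, which is a typo.
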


We now prove Claim
\ref{clm:lambda/vep->Lambda*}. Let $\bar\vep$ be given as in Claim \ref{clm:b-solvable}. We first show that, by choosing $\bar\vep$ even smaller if necessary, there exists a constant $C_k = C_k(M, D_0, \bar\vep)>0$ such that
\begin{align}\label{eqn:lambda^vep<C_kvep}
	|\lambda^\vep| \leq C_k\,\vep \quad\text{ for all } \vep\in(0,\bar\vep] \text{ and } D\geq D_0.
\end{align}
We now prove \eqref{eqn:lambda^vep<C_kvep}.
Since the RHS of \eqref{eqn:1-psi_lambda^vep_limit} is a rational function of $\xi$ whose numerator and denominator have the same degree, its modulus is uniformly bounded from above and below by two positive constants in the region $\mathbb C_{\lambda_*}$, due to our choice of $\lambda_*$ in \eqref{eqn:lambda_*}. Therefore, choosing $\bar\vep$ even smaller if necessary, we see from \eqref{eqn:1-psi_lambda^vep_limit} that there exists two constants $\bar c_1, \bar c_2>0$, depending only on $M$, $D_0$, $k$ and $\bar\vep$, such that 
\begin{align}\label{eqn:1-int_psi_lamba bdd}
	\bar c_1\leq \Big| 1 -\left(1-\frac{\vep}{D}\right) \frac{1}{|\Omega|}\int_{\Omega}\psi_{\lambda^\vep}^{\vep}\,dx\Big| \leq \bar c_2\quad \forall \;(\vep,\lambda^\vep)\in (0,\bar\vep]\times\mathbb C_{\lambda_*}\text{ and }D\geq D_0.
\end{align}
On the other hand, we see from Corollary \ref{cor:phi_0^vep_integral} that
\begin{gather}
	\lim_{\vep\to0}\int_{\Omega}\frac{1}{\sqrt{\vep}}\phi_0^{\vep, k}\,dx = \sqrt{\frac{N|\Omega|R_*^{N-1}}{m(v^*)}} \big(h^+(v^*) -h^-(v^*)\big),\vspace{1ex} \label{eqn:phi_0^vep_integral}\\
	\lim_{\vep\to0}\int_{\Omega}\frac{1}{\sqrt{\vep}}f_v^{\vep,k}\phi_0^{\vep, k}\,dx =\sqrt{\frac{N|\Omega|R_*^{N-1}}{m(v^*)}}J'(v^*),\label{eqn:f_vphi_0^vep_integral}
\end{gather}
both of which hold uniformly in $D\geq D_0$. Therefore, \eqref{eqn:lambda^vep<C_kvep} follows directly from \eqref{eqn:mu_0^vep=o(vep)}, \eqref{eqn:lambda^vep=mu^vep-...} and \eqref{eqn:1-int_psi_lamba bdd}-\eqref{eqn:f_vphi_0^vep_integral}.

We now prove \eqref{eqn:lambda/vep=Lambda*}. By \eqref{eqn:1-psi_lambda^vep_limit} and \eqref{eqn:lambda^vep<C_kvep}, we see that
\begin{align}\label{eqn:denominator converge}
	\lim_{\vep\to0}\Big|
1 -\left(1-\frac{\vep}{D}\right) \frac{1}{|\Omega|}\int_{\Omega}\psi_{\lambda^\vep}^{\vep}\,dx\Big| = \frac{G}{f_u^{*, +}f_u^{*, -}}>0 \text{ uniformly in }D\geq D_0.
\end{align}
Dividing both sides of \eqref{eqn:lambda^vep=mu^vep-...} by $\vep$ and sending $\vep\to0$, we see from \eqref{eqn:mu_0^vep=o(vep)} and \eqref{eqn:phi_0^vep_integral}-\eqref{eqn:denominator converge} that
\begin{align*}
\lim_{\vep\to0}\frac{\lambda^\vep}{\vep} = -\frac{R_*^{N-1}(h^+(v^*) -h^-(v^*))}{Nm(v^*)}\cdot\frac{f_u^{*, +}f_u^{*, -}}{G}\cdot J'(v^*)=:\Lambda^*
\end{align*}
uniformly with $D\geq D_0$.
This finishes the proof of Claim
\ref{clm:lambda/vep->Lambda*} and hence \eqref{eqn:lambda_0^vep}.

\begin{claim}\label{clm:lambda^vepIsReal}
Let $\vep\in(0, \bar\vep]$ and $D\geq D_0$. Then for $\lambda^{\vep}\in \mathbb C_{\lambda_*}$ to be an eigenvalue of $\mathscr L_k^{\vep}$, it is necessary that 
$\lambda^\vep$ is real, i.e., $\im\lambda^\vep = 0$.
\end{claim}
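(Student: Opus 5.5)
Taking the imaginary part of the reduced eigenvalue relation \eqref{eqn:lambda^vep=mu^vep-...} is the natural first attempt, but it is not decisive on its own. With $\lambda^\vep=\alpha+i\beta$, the real quantities $\mu_0^{\vep,k}$, $\int_\Omega\phi_0^{\vep,k}\,dx$, $\int_\Omega f_v^{\vep,k}\phi_0^{\vep,k}\,dx$ drop out, leaving
\[
\beta=-\frac{1}{|\Omega|}\Big(1-\frac{\vep}{D}\Big)\int_\Omega\phi_0^{\vep,k}\,dx\int_\Omega f_v^{\vep,k}\phi_0^{\vep,k}\,dx\cdot\im\frac{1}{\Delta^\vep(\lambda^\vep)},\qquad \Delta^\vep(\xi):=1-\Big(1-\frac{\vep}{D}\Big)\frac{1}{|\Omega|}\int_\Omega\psi^\vep_\xi\,dx.
\]
By Corollary \ref{cor:phi_0^vep_integral}, the product of the two integrals is $O(\vep)$. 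Moreover, $\im(1/\Delta^\vep(\lambda^\vep))=-\im\Delta^\vep(\lambda^\vep)/|\Delta^\vep(\lambda^\vep)|^2$, and \eqref{eqn:1-int_psi_lamba bdd} bounds the denominator. This gives only $|\beta|\le C\vep|\im\Delta^\vep(\lambda^\vep)|$, which is circular unless $\im\Delta^\vep$ is controlled by $\beta$. Such control comes from the resolvent identity
\[
\im\int_\Omega\psi^\vep_{\lambda^\vep}\,dx=\beta\,\im\Big\langle(\mathcal L_k^\vep-\lambda^\vep)^{-1}(\mathcal L_k^\vep-\bar\lambda^\vep)^{-1}\mathcal P^\vep f_v^{\vep,k},1\Big\rangle .
\]
Together with Proposition \ref{pro:L^vep_k properties}(i) and $|\lambda^\vep|\le C\vep$, it yields $|\im\Delta^\vep|\le C|\beta|$ uniformly, hence $|\beta|\le C'\vep|\beta|$ and $\beta=0$ once $\bar\vep$ is small. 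I would do the argument this way in principle, but there is a cleaner route that avoids these estimates, which I describe next.

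The cleaner route is an exact algebraic reality argument. Since $\mathcal L_k^\vep$ is self-adjoint, the eigenvalues of $\mathcal L_k^\vep$ on $[\phi_0^{\vep,k}]^\perp$ are the real numbers $\mu_j^{\vep,k}$, $j\ge1$, all below $-\mu_*$ by Corollary \ref{cor:mu_1^vep<-mu_*}. Expanding $\mathcal P^\vep f_v^{\vep,k}=\sum_{j\ge1}c_j\phi_j^{\vep,k}$ with real $c_j$, write $\int_\Omega\phi_j^{\vep,k}\,dx=d_j$, also real. Then $\int_\Omega\psi^\vep_\xi\,dx=\sum_{j\ge1}c_jd_j/(\mu_j^{\vep,k}-\xi)$, which shows that $\Delta^\vep(\bar\xi)=\overline{\Delta^\vep(\xi)}$. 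Consequently, whenever $\lambda^\vep$ is an eigenvalue in $\mathbb C_{\lambda_*}$, so is $\bar\lambda^\vep$; this also follows directly because $\mathscr L_k^\vep$ has real coefficients, so $\bar\Phi^\vep$ is an eigenfunction for $\bar\lambda^\vep$.

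It is therefore enough to show that the reduced relation \eqref{eqn:lambda^vep=mu^vep-...}, viewed as a fixed-point equation $\lambda=\mathcal F^\vep(\lambda)$ on the disk $|\lambda|\le C_k\vep$, has exactly one solution. By \eqref{eqn:lambda^vep<C_kvep}, every eigenvalue in $\mathbb C_{\lambda_*}$ lies in this disk. Uniqueness would force $\lambda^\vep=\bar\lambda^\vep$, that is, $\lambda^\vep$ real. For uniqueness I would show that $\mathcal F^\vep$ is a contraction there, using $|\partial_\xi\mathcal F^\vep|\le C\vep\,|\partial_\xi\Delta^\vep|/|\Delta^\vep|^2$. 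Here $\partial_\xi\int_\Omega\psi^\vep_\xi\,dx=\langle(\mathcal L_k^\vep-\xi)^{-2}\mathcal P^\vep f_v^{\vep,k},1\rangle$ is bounded uniformly by Proposition \ref{pro:L^vep_k properties}(i), since $|\xi+\mu_*|\ge\mu_*/2$ on the disk. Hence $|\partial_\xi\mathcal F^\vep|\le C\vep<1$. The same argument also gives existence, via the Banach fixed point theorem, and this existence is what part (i) of Theorem \ref{thm:mathscr_L_k^vep} needs.

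I expect the main obstacle to be the uniform derivative bound on $\Delta^\vep$, together with checking that $\mathcal F^\vep$ maps the disk into itself. Both rest only on Proposition \ref{pro:L^vep_k properties}(i), \eqref{eqn:1-int_psi_lamba bdd}, and the $O(\vep)$ size of the two integrals from Corollary \ref{cor:phi_0^vep_integral}. With these in place, Claim \ref{clm:lambda^vepIsReal} follows.
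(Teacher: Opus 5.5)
Your proof is correct, and your main (``cleaner'') route differs genuinely from the paper's. The paper essentially follows your first paragraph, but works in $L^\infty$. It takes the imaginary part of \eqref{eqn:lambda^vep=mu^vep-...} and proves \eqref{eqn:psi_sup_norm}, using the resolvent identity around $\mu_0^{\vep,k}$ together with the uniform $L^\infty$ bound of Lemma~\ref{lem:L^vep_k -mu_0^vep Inv bdd}. It then splits the equation for $\psi^\vep_{\lambda^\vep}$ into real and imaginary parts to obtain $\|\im\psi^\vep_{\lambda^\vep}\|_{L^\infty}\le C_k|\im\lambda^\vep|$, and closes with $|\im\lambda^\vep|\le C_k\vep|\im\lambda^\vep|$.

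You replace this direct estimate with a symmetry-plus-uniqueness argument:
\begin{enumerate}
\item Fixed points of your map $\mathcal F^\vep$ come in conjugate pairs, because $\Delta^\vep(\bar\xi)=\overline{\Delta^\vep(\xi)}$ (equivalently, $\mathscr L_k^\vep$ has real coefficients).
\item They all lie in the conjugation-invariant disk $|\lambda|\le C_k\vep$ by \eqref{eqn:lambda^vep<C_kvep}.
\item On that convex disk $\mathcal F^\vep$ is holomorphic with $|\partial_\xi\mathcal F^\vep|\le C\vep<1$, so a fixed point there is unique and therefore real.
\end{enumerate}
Your route needs only the $L^2$ resolvent bound of Proposition~\ref{pro:L^vep_k properties}(i) and the a priori bound \eqref{eqn:lambda^vep<C_kvep}. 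It uses neither the $L^\infty$ machinery nor the limit \eqref{eqn:lambda/vep=Lambda*}. It also gives uniqueness (and, with the self-map check, existence) of the critical eigenvalue in all of $\mathbb C_{\lambda_*}$ at once. That makes the paper's separate real-variable contraction on $I_{\Lambda^*}$ in Claim~\ref{clm:lambda^vep_Unique} unnecessary. The paper's route, for its part, builds the $L^\infty$ control \eqref{eqn:psi_sup_norm}, which it reuses later for \eqref{eqn:w^vep_Norms_est} and Lemma~\ref{lem:mathscr L_k^vep inver_bdd}, so within its architecture that effort is not wasted.

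One slip in your first paragraph: $(\mathcal L_k^\vep-\lambda^\vep)^{-1}(\mathcal L_k^\vep-\bar\lambda^\vep)^{-1}=\big((\mathcal L_k^\vep-\re\lambda^\vep)^2+(\im\lambda^\vep)^2\big)^{-1}$ is a real operator and $\mathcal P^\vep f_v^{\vep,k}$ is real. So the pairing on the right of your resolvent identity is already real, and the identity should read $\im\int_\Omega\psi^\vep_{\lambda^\vep}\,dx=\beta\,\langle\cdots,1\rangle$ with no $\im$ in front. As written, the right-hand side would be zero. This does not affect your main argument.
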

We now prove Claim \ref{clm:lambda^vepIsReal}.
Since $\Lambda^*\in\mathbb R$, it follows from \eqref{eqn:lambda/vep=Lambda*} that
\begin{align}\label{eqn:imlambda=o(vep)}
\lim_{\vep\to0}\frac{\im\lambda^\vep}{\vep} = 0 \quad \text{ uniformly with }D\geq D_0.
\end{align} 
On the other hand, we see from \eqref{eqn:lambda^vep=mu^vep-...} that 
\begin{align}\label{eqn:im_lambda^vep}
\im\lambda^\vep = \frac{1}{|\Omega|} \left(1-\frac{\vep}{D}\right)\int_{\Omega}\phi_0^{\vep, k}\,dx\int_{\Omega}f_v^{\vep, k}\phi_0^{\vep, k}\,dx\cdot\im\Big(
\frac{1}{1 -\left(1-\frac{\vep}{D}\right) \frac{1}{|\Omega|}\int_{\Omega}\psi_{\lambda^\vep}^{\vep}\,dx}
\Big).
\end{align}
Recall that $\psi_{\lambda^\vep}^{\vep}$ is defined in \eqref{eqn:psi_lambda^vep}, which can be rewritten in the form
\begin{align}\label{eqn:psi_lambda^vep-2}
\psi_{\lambda^\vep}^{\vep} = (\lambda^\vep-\mu_0^{\vep, k})(\mathcal L_k^\vep-\mu_0^{\vep, k} )^{-1}\psi_{\lambda^\vep}^{\vep} + (\mathcal L_k^\vep-\mu_0^{\vep, k} )^{-1}\mathcal P^\vep f_v^{\vep, k},
\end{align}
where $\mathcal P^\vep$ denotes the orthogonal projection onto $[\phi_0^{\vep, k}]^{\perp}$. Then, by choosing $\bar\vep$ even smaller if necessary, Lemma \ref{lem:L^vep_k -mu_0^vep Inv bdd} implies that there exists a constant $C_k = C_k(M, D_0, \bar\vep) > 0$ such that for all $(\vep, \lambda^\vep) \in (0, \bar\vep] \times \mathbb C_{\lambda_*}$ and $D \geq D_0$, 
\begin{align*}
	\|\psi_{\lambda^\vep}^{\vep}\|_{L^\infty} & \leq C_k\big(|\lambda^\vep-\mu_0^{\vep, k}|\cdot 	\|\psi_{\lambda^\vep}^{\vep}\|_{L^\infty} +\|\mathcal P^\vep f_v^{\vep, k}\|_{L^\infty}\big)\\
	& \leq C_k\Big(|\lambda^\vep-\mu_0^{\vep, k}|\cdot \|\psi_{\lambda^\vep}^{\vep}\|_{L^\infty}+ \|f_v^{\vep, k}\|_{L^\infty}+\Big|\Big\langle f_v^{\vep,k},\frac{\phi_0^{\vep, k}}{\sqrt{\vep}} \Big\rangle\Big| \cdot\|\sqrt{\vep}\phi_0^{\vep, k}\|_{L^\infty}\Big)\\
	& \leq \frac{1}{2}\|\psi_{\lambda^\vep}^{\vep}\|_{L^\infty}+C_k.
\end{align*}
Here, to obtain the last inequality, we used 
\eqref{eqn:mu_0^vep=o(vep)} and \eqref{eqn:lambda/vep=Lambda*} to ensure that $C_k|\lambda^\vep-\mu_0^{\vep, k}|\leq \frac{1}{2}$, Theorem \ref{thm:approx_sol_scalarEQ} to ensure $\|f_v^{\vep, k}\|_{L^\infty}\leq C_k$, and \eqref{eqn:phi_0^vep_integral} and \eqref{eqn:f_vphi_0^vep_integral} to control the last term. Consequently, we arrive at 
\begin{align}\label{eqn:psi_sup_norm}
	\|\psi_{\lambda^\vep}^{\vep}\|_{L^\infty}\leq C_k\quad \forall \;(\vep,\lambda^\vep)\in (0,\bar\vep]\times\mathbb C_{\lambda_*}\text{ and }D\geq D_0.
\end{align}	
Rewrite \eqref{eqn:psi_lambda^vep} into its real and imaginary parts respectively, we see that 
\begin{align*}
	\begin{cases}
	(\mathcal L_k^\vep- \re\lambda^\vep)\re\psi_{\lambda^\vep}^{\vep} + \im \lambda^\vep\cdot\im\psi_{\lambda^\vep}^{\vep}  = \mathcal P^{\vep}f_v^{\vep, k},\\
(\mathcal L_k^\vep- \re\lambda^\vep)\im\psi_{\lambda^\vep}^{\vep} = \im \lambda^\vep\cdot\re\psi_{\lambda^\vep}^{\vep}.
\end{cases}
\end{align*}
Then, arguing as in the proof of \eqref{eqn:psi_sup_norm}, we obtain that, for all  $(\vep,\lambda^\vep)\in (0,\bar\vep]\times\mathbb C_{\lambda_*}$ and $D\geq D_0$,
\begin{align}\label{eqn:im_psi^vep=O(im_lambda^vep)}
\begin{split}
\begin{Vmatrix}
\im\psi_{\lambda^\vep}^{\vep}
\end{Vmatrix}_{L^\infty} = |\im \lambda^\vep|\cdot \begin{Vmatrix}\displaystyle
(\mathcal L_k^\vep- \re\lambda^\vep)^{-1}\re\psi_{\lambda^\vep}^{\vep}
\end{Vmatrix}_{L^\infty} & \leq C_k |\im \lambda^\vep|\cdot\|\re\psi_{\lambda^\vep}^{\vep}\|_{L^\infty}\\
&\leq C_k |\im \lambda^\vep|,
\end{split}
\end{align}
where, in the last inequality, we used \eqref{eqn:psi_sup_norm}.
By direct computation, we see from \eqref{eqn:denominator converge}, \eqref{eqn:imlambda=o(vep)} and \eqref{eqn:im_psi^vep=O(im_lambda^vep)} that   
for all $(\vep,\lambda^\vep) \in (0,\bar\vep]\times\mathbb C_{\lambda_*}$ and $D\geq D_0$,
\begin{align}\label{eqn:factor=O(im_lambda^vep)}
	\Big|\im\Big(
		\frac{1}{1 -\left(1-\frac{\vep}{D}\right) \frac{1}{|\Omega|}\int_{\Omega}\psi_{\lambda^\vep}^{\vep}\,dx}
	\Big)\Big| \leq~ C_k |\im \lambda^\vep|.
\end{align}
Using the estimates \eqref{eqn:phi_0^vep_integral}, \eqref{eqn:f_vphi_0^vep_integral} and \eqref{eqn:factor=O(im_lambda^vep)}, we see from  \eqref{eqn:im_lambda^vep} that
\begin{align*}
	|\im \lambda^\vep|\leq C_k\,\vep|\im \lambda^\vep|, \quad 	\forall \;(\vep,\lambda^\vep)\in (0,\bar\vep]\times\mathbb C_{\lambda_*}\text{ and }D\geq D_0.
\end{align*}
By choosing $\bar\vep$ eventually smaller such that $C_k\bar\vep<1$, the above inequality yields $\im\lambda^\vep =0$ for all $\vep\in(0,\bar\vep]$ and $D\geq D_0.$ This finishes the proof of Claim \ref{clm:lambda^vepIsReal}.

\begin{claim}\label{clm:lambda^vep_Unique}
Let $\vep\in(0, \bar\vep]$ and $D\geq D_0$. Then equation \eqref{eqn:lambda^vep=mu^vep-...} 
has a unique solution $\lambda^{\vep}$ in $\mathbb C_{\lambda_*}$.
\end{claim}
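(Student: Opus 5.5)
We will treat \eqref{eqn:lambda^vep=mu^vep-...} as a fixed-point equation $\lambda=\mathcal T^\vep(\lambda)$, where
\begin{align*}
\mathcal T^\vep(\lambda):=\mu_0^{\vep, k} - \frac{1}{|\Omega|}\Big(1-\frac{\vep}{D}\Big)\frac{\int_{\Omega}\phi_0^{\vep, k}\,dx\int_{\Omega}f_v^{\vep, k}\phi_0^{\vep, k}\,dx}{1 -\left(1-\frac{\vep}{D}\right) \frac{1}{|\Omega|}\int_{\Omega}\psi_{\lambda}^{\vep}\,dx}.
\end{align*}
By Claim \ref{clm:lambda^vepIsReal}, together with \eqref{eqn:lambda^vep<C_kvep}, any solution in $\mathbb C_{\lambda_*}$ is real and lies in the interval $\mathcal I_\vep:=[-C_k\vep, C_k\vep]$. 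It therefore suffices to work with real $\lambda\in\mathcal I_\vep$ and to show that $\mathcal T^\vep$ is a contraction of $\mathcal I_\vep$ into itself; the Banach fixed point theorem then gives existence and uniqueness at once. Note that for real $\lambda$ the operator $\mathcal L_k^\vep$ and $f_v^{\vep,k}$ are real, so $\psi^\vep_\lambda$ is real and $\mathcal T^\vep$ maps reals to reals.

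For the self-map property, we use \eqref{eqn:mu_0^vep=o(vep)}, the limits \eqref{eqn:phi_0^vep_integral}--\eqref{eqn:f_vphi_0^vep_integral}, and the lower bound \eqref{eqn:1-int_psi_lamba bdd}. These show $|\mathcal T^\vep(\lambda)|\le C'\vep$ uniformly for $\lambda\in\mathbb C_{\lambda_*}$, with $C'$ independent of $\lambda$. We then choose $C_k\ge C'$ in the definition of $\mathcal I_\vep$, which is consistent with how \eqref{eqn:lambda^vep<C_kvep} was obtained.

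For the contraction property, the prefactor $\int\phi_0^{\vep,k}\int f_v^{\vep,k}\phi_0^{\vep,k}$ is $O(\vep)$, and the denominator is bounded below by $\bar c_1$. It is then enough to show that $\lambda\mapsto\int_\Omega\psi^\vep_\lambda\,dx$ is Lipschitz on $\mathcal I_\vep$ with a constant bounded uniformly in $\vep$ and $D$. This gives $|\mathcal T^\vep(\lambda_1)-\mathcal T^\vep(\lambda_2)|\le C\vep|\lambda_1-\lambda_2|$, which is a contraction once $\bar\vep$ is small. The resolvent identity yields
\begin{align*}
\psi^\vep_{\lambda_1}-\psi^\vep_{\lambda_2}=(\lambda_1-\lambda_2)(\mathcal L_k^\vep-\lambda_1)^{-1}\psi^\vep_{\lambda_2}.
\end{align*}
Proposition \ref{pro:L^vep_k properties} (i) bounds $(\mathcal L_k^\vep-\lambda_1)^{-1}$ on $[\phi_0^{\vep,k}]^\perp$ in $L^2$ by $1/|\lambda_1+\mu_*|\le 2/\mu_*$, and $\|\psi^\vep_{\lambda_2}\|_{L^2}\le C\|\psi^\vep_{\lambda_2}\|_{L^\infty}\le C_k$ by \eqref{eqn:psi_sup_norm}. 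Since $\Omega$ is bounded, the $L^1$ norm is controlled by the $L^2$ norm, and we obtain $\big|\int(\psi^\vep_{\lambda_1}-\psi^\vep_{\lambda_2})\big|\le C_k|\lambda_1-\lambda_2|$.

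The main point requiring care is the uniformity of the lower bound on the denominator and of the resolvent bounds across all $\lambda$ in $\mathcal I_\vep$. This is already supplied by \eqref{eqn:1-int_psi_lamba bdd} and by Proposition \ref{pro:L^vep_k properties} (i), since $\mathcal I_\vep\subset\mathbb C_{\lambda_*}$ once $C_k\bar\vep<\lambda_*$. Shrinking $\bar\vep$ so that both $C_k\bar\vep<\lambda_*$ and $C\bar\vep<1$ hold completes the argument.
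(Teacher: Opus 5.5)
Your proposal is correct and follows the paper's argument. Both proofs reduce to real $\lambda$ via Claim \ref{clm:lambda^vepIsReal} together with the a priori localization of any solution, and then run a contraction-mapping argument on a real interval, with the Lipschitz dependence of $\psi^\vep_\lambda$ on $\lambda$ coming from the resolvent identity.

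The differences are cosmetic. The paper rescales to $\beta=\lambda/\vep$, works on an interval around $\Lambda^*$, and uses the $L^\infty$ bounds behind \eqref{eqn:psi_sup_norm}. You work directly on $[-C_k\vep,C_k\vep]$ and use the $L^2$ resolvent bound of Proposition \ref{pro:L^vep_k properties} (i), which needs only the $O(\vep)$ a priori bound and is slightly cleaner.
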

We now prove Claim \ref{clm:lambda^vep_Unique}.
Define a map $\Gamma_k^\vep: [-\frac{\lambda_*}{\vep},\infty)\to\mathbb R$ as 
\begin{align*}
	\Gamma_k^\vep(\beta) := \frac{\mu_0^{\vep, k}}{\vep} -\frac{1}{\vep}\cdot \frac{1}{|\Omega|} \left(1-\frac{\vep}{D}\right)\frac{\int_{\Omega}\phi_0^{\vep, k}\,dx}{1 -\left(1-\frac{\vep}{D}\right) \frac{1}{|\Omega|}\int_{\Omega}\psi_{\vep\beta}^{\vep}\,dx}\int_{\Omega}f_v^{\vep, k}\phi_0^{\vep, k}\,dx.
\end{align*}
Then equation \eqref{eqn:lambda^vep=mu^vep-...} is equivalent to $\Gamma_k^\vep(\frac{\lambda^{\vep}}{\vep}) = \frac{\lambda^{\vep}}{\vep}$. Therefore, by choosing $\bar\vep$ even smaller if necessary, we see from Claims \ref{clm:lambda/vep->Lambda*} and \ref{clm:lambda^vepIsReal} that, to prove Claim \ref{clm:lambda^vep_Unique}, it suffices to prove uniqueness of solution to the equation
$\Gamma_k^\vep(\beta) =\beta$ on the interval $I_{\Lambda^*}$ for all $\vep\in(0, \bar\vep]$ and $D\geq D_0$, where
\begin{align*}
I_{\Lambda^*} :=\Big\{\beta \in {\mathbb R}\,\Big|\, \tfrac{|\Lambda^*|}{2} \le \sgn(\Lambda^*)\beta \le 2|\Lambda^*| \Big\}.
\end{align*}
Indeed, it follows from \eqref{eqn:psi_lambda^vep-2} that for any $\beta_1, \beta_2\in I_{\Lambda^*}$,
\begin{align*}
\psi_{\vep\beta_1}^{\vep} -\psi_{\vep\beta_2}^{\vep} = \left[\vep\beta_2-\mu_0^{\vep, k}(\mathcal L_k^\vep-\mu_0^{\vep, k} )^{-1}\right](\psi_{\vep\beta_1}^{\vep} -\psi_{\vep\beta_2}^{\vep})+\vep(\beta_1-\beta_2)\psi_{\vep\beta_1}^{\vep}.
\end{align*}
Then, by \eqref{eqn:psi_sup_norm} and similar arguments to the proof of \eqref{eqn:psi_sup_norm}, we see that 
\begin{align*}
	\big\|\psi_{\vep\beta_1}^{\vep} -\psi_{\vep\beta_2}^{\vep}\big\|_{L^{\infty}}\leq C_k\vep|\beta_1-\beta_2|, \quad \forall \; \beta_1, \beta_2\in I_{\Lambda^*}, \vep\in(0, \bar\vep] \text{ and }D\geq D_0.
\end{align*}
Then, by using estimates \eqref{eqn:phi_0^vep_integral}, \eqref{eqn:f_vphi_0^vep_integral}, \eqref{eqn:denominator converge}, the above inequality and similar arguments to the proof of Claim \ref{clm:lambda/vep->Lambda*}, one can easily show that
\begin{align*}
\Gamma_k^\vep(\beta)\in I_{\Lambda^*} \quad \text{ and }\quad	|\Gamma_k^\vep(\beta_1) - \Gamma_k^\vep(\beta_2)|\leq \frac{|\beta_1-\beta_2|}{2}
\end{align*}
for all $\beta, \beta_1, \beta_2\in I_{\Lambda^*}$, $\vep\in(0, \bar\vep]$ and $D\geq D_0$.
By the contraction mapping theorem,  $\beta = \Gamma_k^\vep(\beta)$ has a unique solution in $I_{\Lambda^*}$ for all $\vep\in(0, \bar\vep]$ and $D\geq D_0$. So we are done.
%get the Claim \ref{clm:lambda^vep_Unique}.

\begin{claim}\label{clm:lambda^vep_Simple}
The nonlocal eigenvalue problem \eqref{eqn:mathscr L_k^vep_EigenPro}
has a unique eigenvalue $\lambda^{\vep}$ in $\mathbb C_{\lambda_*}$, and it is a simple. 	
\end{claim}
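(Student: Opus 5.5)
Existence and uniqueness will come from Claims \ref{clm:b-solvable}--\ref{clm:lambda^vep_Unique}; what remains is to show that an eigenvalue really exists and that it is simple, both geometrically and algebraically. For existence, I would run the reduction backwards. Let $\beta^\vep\in I_{\Lambda^*}$ be the fixed point of $\Gamma_k^\vep$ given by the contraction argument, and set $\lambda^\vep:=\vep\beta^\vep\in\mathbb C_{\lambda_*}$. With $a^\vep=1$, define $b^\vep$ by \eqref{eqn:b-2}, define $\eta^\vep$ by \eqref{eqn:w^vep} using $\psi^\vep_{\lambda^\vep}$, and put $\Phi^\vep=\phi_0^{\vep,k}+\eta^\vep$. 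Integrating \eqref{eqn:w^vep} shows that $\int_\Omega\eta^\vep\,dx$ satisfies \eqref{eqn:b}. Claim \ref{clm:b-solvable} says that this solution is unique, so $\int_\Omega\eta^\vep\,dx=b^\vep$. The second equation of \eqref{eqn:a_and_w^vep} then holds by construction, and the first is exactly the fixed-point relation \eqref{eqn:lambda^vep=mu^vep-...}. Adding the two equations recovers $\mathscr L_k^\vep\Phi^\vep=\lambda^\vep\Phi^\vep$. Since $\Phi^\vep$ has a nonzero component along $\phi_0^{\vep,k}$, it is nonzero, and $\lambda^\vep$ is an eigenvalue.

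Geometric simplicity follows from the same reduction. Any eigenfunction belonging to $\lambda^\vep$ is nonzero with $a^\vep\neq0$, by Claim \ref{clm:a neq 0}. After normalizing $a^\vep=1$, the function $\eta^\vep$ is forced uniquely, through \eqref{eqn:b-2} and \eqref{eqn:w^vep}. So the kernel of $\mathscr L_k^\vep-\lambda^\vep$ is one-dimensional.

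Algebraic simplicity is the main obstacle. Suppose there were a generalized eigenvector $\Psi$ with $(\mathscr L_k^\vep-\lambda^\vep)\Psi=\Phi^\vep$. I would decompose $\Psi=\alpha\phi_0^{\vep,k}+\zeta$ with $\zeta\perp\phi_0^{\vep,k}$ and repeat the reduction. Solving for $\zeta$ by Proposition \ref{pro:L^vep_k properties}(i) and then for $\int_\Omega\zeta\,dx$ by Claim \ref{clm:b-solvable} leaves a single scalar compatibility condition. Because $\lambda^\vep$ is a simple root, that condition should amount to
\[
\Big(1-\tfrac{d}{d\beta}\Gamma_k^\vep(\beta)\Big)\Big|_{\beta=\beta^\vep}\cdot(\text{nonzero factor})=0 .
\]
The derivative of $\Gamma_k^\vep$ with respect to $\beta$ is bounded by $\tfrac12$: by the Lipschitz estimate $\|\psi^\vep_{\vep\beta_1}-\psi^\vep_{\vep\beta_2}\|_{L^\infty}\le C_k\vep|\beta_1-\beta_2|$, in fact $|\partial_\beta\Gamma_k^\vep|\le C_k\vep$. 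The factor $1-\tfrac{d}{d\beta}\Gamma_k^\vep$ is therefore nonzero, which gives a contradiction.

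A cleaner way to run this step is the characteristic-function viewpoint. Let
\[
F^\vep(\lambda):=\lambda-\mu_0^{\vep,k}+\tfrac{1}{|\Omega|}\big(1-\tfrac{\vep}{D}\big)\frac{\int_\Omega\phi_0^{\vep,k}\,dx\int_\Omega f_v^{\vep,k}\phi_0^{\vep,k}\,dx}{1-(1-\frac{\vep}{D})\frac{1}{|\Omega|}\int_\Omega\psi^\vep_\lambda\,dx}.
\]
This function is analytic on $\mathbb C_{\lambda_*}$, and its zeros are exactly the eigenvalues there. Near $\lambda^\vep$ one has $F^\vep(\lambda)=\vep\big(\beta-\Gamma_k^\vep(\beta)\big)$ with $\lambda=\vep\beta$, so $(F^\vep)'(\lambda^\vep)=1-\partial_\beta\Gamma_k^\vep\neq0$. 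The zero is therefore simple, and I would identify the multiplicity of the zero of $F^\vep$ with the algebraic multiplicity of $\lambda^\vep$ via the Riesz projection and the Schur-complement structure of the rank-one perturbation. The delicate point I expect is making rigorous that derivative bound, since $\partial_\lambda\psi^\vep_\lambda=(\mathcal L_k^\vep-\lambda)^{-2}\mathcal P^\vep f_v^{\vep,k}$. For this I would use the resolvent identity \eqref{eqn:psi_lambda^vep-2} together with the uniform $L^\infty$ bound from Lemma \ref{lem:L^vep_k -mu_0^vep Inv bdd}, exactly as in the proof of \eqref{eqn:psi_sup_norm}.
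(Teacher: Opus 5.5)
Your proposal is correct. For existence, uniqueness and geometric simplicity it follows the paper's argument: once $\lambda^\vep$ is fixed by Claim~\ref{clm:lambda^vep_Unique}, $\eta^\vep$ is forced by \eqref{eqn:w^vep_2}, so the eigenfunction is unique up to a multiple. Your explicit check that the reconstructed pair satisfies both equations of \eqref{eqn:a_and_w^vep} fills in a step the paper leaves implicit.

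The genuine difference is algebraic simplicity. The paper's proof of this claim establishes only the geometric statement. The absence of a Jordan chain comes later, in Claim~\ref{eqn:lambda_0=hat_lambda_0}, from $\langle \Phi_0^{\vep,k},\widehat\Phi_0^{\vep,k}\rangle\neq 0$: a relation $(\mathscr L_k^\vep-\lambda)\Psi=\Phi_0^{\vep,k}$ would force $\langle\widehat\Phi_0^{\vep,k},\Phi_0^{\vep,k}\rangle=\langle(\widehat{\mathscr L}_k^\vep-\lambda)\widehat\Phi_0^{\vep,k},\Psi\rangle=0$.

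Your direct Jordan-chain computation closes, and you do not need the Riesz-projection identification. Write
\begin{itemize}
\item $\kappa=(1-\vep/D)/|\Omega|$,
\item $A=\int_\Omega\phi_0^{\vep,k}\,dx$ and $B=\int_\Omega f_v^{\vep,k}\phi_0^{\vep,k}\,dx$,
\item $X(\lambda)=\kappa\int_\Omega\psi^\vep_\lambda\,dx$.
\end{itemize}
Use $(\mathcal L_k^\vep-\lambda)^{-1}\psi^\vep_\lambda=\partial_\lambda\psi^\vep_\lambda$ together with the eigenvalue relation. The coefficient $\alpha$ then drops out, and the solvability condition is exactly
\[
1+\frac{\kappa AB\,X'(\lambda^\vep)}{\bigl(1-X(\lambda^\vep)\bigr)^2}=0,
\]
that is, $(F^\vep)'(\lambda^\vep)=1-\partial_\beta\Gamma_k^\vep(\beta^\vep)=0$.

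This quantity is in fact $1+O(\vep)\neq0$, for three reasons:
\begin{itemize}
\item $\kappa AB=O(\vep)$;
\item $|1-X|\ge\bar c_1$;
\item $X'$ is bounded by the uniform $L^2$ resolvent bound of Proposition~\ref{pro:L^vep_k properties}(i), so the $L^\infty$ lemma you planned to invoke is not needed.
\end{itemize}

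Your route gives a self-contained proof of algebraic simplicity at the level of the scalar reduced equation. It reuses the same $O(\vep)$ contraction estimate that gave uniqueness and never touches the adjoint problem. The paper's route through the adjoint additionally yields the normalization $|\widehat a^\vep-1|\le C_k\vep$ and the bounds on $\widehat\Phi_0^{\vep,k}$, which the projection $\mathcal Q^\vep$ in Section~4 requires anyway.
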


Indeed, by Claim \ref{clm:lambda^vep_Unique}, $\lambda^{\vep}$ is uniquely determined from the equation \eqref{eqn:lambda^vep=mu^vep-...}. Substituting \eqref{eqn:b=int_w}, \eqref{eqn:psi_lambda^vep}, \eqref{eqn:b-2} and $a^\vep=1$ into \eqref{eqn:w^vep}, we obtain that
\begin{align}\label{eqn:w^vep_2}
	\eta^{\vep} = \frac{1}{|\Omega|}\left(1-\frac{\vep}{D}\right)\frac{\int_{\Omega}\phi_0^{\vep, k}\,dx}{1 -\left(1-\frac{\vep}{D}\right) \frac{1}{|\Omega|}\int_{\Omega}\psi_{\lambda^{\vep}}^{\vep}\,dx}\cdot \psi_{\lambda^{\vep}}^{\vep}.
\end{align} 
This implies that the second equation in \eqref{eqn:a_and_w^vep} also admits a unique solution for given $\lambda^{\vep}$. Therefore, $\lambda^{\vep}$ is the unique eigenvalue of $\mathscr L_k^{\vep}$ in $\mathbb C_{\lambda_*}$, and there exists an eigenfunction $\Phi^{\vep} =\phi_0^{\vep, k} + \eta^{\vep}$ of $\mathscr L_k^{\vep}$ corresponding to $\lambda^{\vep}$, which is unique up to  a constant multiple. This finishes the proof of Claim \ref{clm:lambda^vep_Simple}. 

\medskip

Now Theorem \ref{thm:mathscr_L_k^vep} (i) and (ii) follows directly from Claims \ref{clm:lambda/vep->Lambda*}-\ref{clm:lambda^vep_Simple}.

\medskip

For definiteness, we henceforth denote the critical eigenvalue $\lambda^{\vep}$ of $\mathscr L_k^{\vep}$ in $\mathbb C_{\lambda_*}$ by $\lambda_0^{\vep, k}$ and the corresponding eigenfunction $\Phi^{\vep}$ by $\Phi_0^{\vep, k}$ in the remainder of this proof.

\medskip

Next, we prove Theorem \ref{thm:mathscr_L_k^vep} (iii).
Plugging the estimates \eqref{eqn:phi_0^vep_Norms_est}, \eqref{eqn:denominator converge} and \eqref{eqn:psi_sup_norm} into \eqref{eqn:w^vep_2}, we immediately obtain that there exists a constant $C_k=C_k(M, D_0, \bar\vep)$ such that for all $\vep\in(0,\bar\vep]$ and $D\geq D_0$,
\begin{align}\label{eqn:w^vep_Norms_est}
\|\eta^{\vep}\|_{L^\infty(\Omega)}	+ \|\eta^{\vep}\|_{L^2(\Omega)} + \|\eta^{\vep}\|_{L^1(\Omega)}	 \leq C_k\sqrt{\vep}.
\end{align}
Therefore, dividing $\Phi_0^{\vep, k} = \phi_0^{\vep, k} + \eta^{\vep}$ by $\|\Phi_0^{\vep, k}\|_{L^2}$ so that the resulting function is $L^2$-normalized and still denoting the resulting function by $\Phi_0^{\vep, k}$, we see from \eqref{eqn:phi_0^vep_Norms_est} and \eqref{eqn:w^vep_Norms_est} that 
\begin{align*}
\|\Phi_0^{\vep, k}\|_{L^1(\Omega)}	 \leq C_k\sqrt{\vep},\quad \|\Phi_0^{\vep, k}\|_{L^\infty(\Omega)} \leq \frac{C_k}{\sqrt{\vep}}
\end{align*}
for all $\vep\in(0,\bar\vep]$ and $D\geq D_0$.
This finishes the proof of Theorem \ref{thm:mathscr_L_k^vep} (iii). 

\medskip

Finally, we prove Theorem \ref{thm:mathscr_L_k^vep} (iv). Note that the adjoint operator $\widehat{\mathscr L}_k^\vep$ on $L^s(\Omega)$ with domain $W_{\nu}^{2,s}$ is defined as:
\begin{align}\label{eqn:def_hat_mathcal L_k^vep}
\begin{split}
\displaystyle	\widehat{\mathscr L}_k^\vep\psi 
:= &~\vep^2\Delta\psi + \left(f_u^{\vep,k}-\frac{\vep}{D}f_v^{\vep,k} \right)\phi -\frac{1}{|\Omega|} \left(1-\frac{\vep}{D}\right)\int_\Omega f_v^{\vep,k}\psi\,dx \\
= &~ \mathcal L_k^\vep\psi -\frac{1}{|\Omega|} \left(1-\frac{\vep}{D}\right)\int_\Omega f_v^{\vep,k}\psi\,dx.
\end{split}
\end{align}

In the sequel, we assume that $\widehat\lambda^\vep\in\mathbb C_{\lambda_*}$ is an eigenvalue of $\widehat{\mathscr L}_k^\vep$,  and let $\widehat\Phi^\vep$ be a corresponding eigenfunction.
We then proceed by applying similar arguments as those used in the proofs of Theorem \ref{thm:mathscr_L_k^vep} (i)-(iii).

Decompose $\widehat\Phi^\vep$ as
\begin{align}\label{eqn:decom of hat Phi_0^vep}
	\widehat\Phi^\vep = \widehat a^{\vep} \phi_0^{\vep, k} + \widehat \eta^{\vep}\in{\rm span}\{\phi_0^{\vep, k}\}\oplus[\phi_0^{\vep, k}]^{\perp},
\end{align}
%where $\widehat a^{\vep}\in\mathbb C$ and $\widehat \eta^{\vep}\in[\phi_0^{\vep, k}]^{\perp}$. 
then the equation for $\widehat\Phi^\vep$ can be reformulated as follows:
\begin{align}\label{eqn:hat_a_and_w^vep}
\begin{array}{l}
	\displaystyle	(\mu_0^{\vep, k}-\widehat\lambda^\vep)\widehat a^{\vep} =\frac{1}{|\Omega|} \Big(1-\frac{\vep}{D}\Big)\Big(\widehat a^{\vep}\int_{\Omega}f_v^{\vep,k}\phi_0^{\vep, k}\,dx +\int_{\Omega}f_v^{\vep,k}\widehat \eta^{\vep}\,dx\Big)\int_{\Omega}\phi_0^{\vep, k}\,dx, \vspace{1ex}\\
	\displaystyle (\mathcal L_k^\vep-\widehat\lambda^\vep)\widehat \eta^{\vep} =\frac{1}{|\Omega|}\Big(1-\frac{\vep}{D}\Big)\Big(\widehat a^{\vep}\int_{\Omega}f_v^{\vep,k}\phi_0^{\vep, k}\,dx +\int_{\Omega}f_v^{\vep,k}\widehat \eta^{\vep}\,dx\Big) \mathcal P^\vep \mathds{1},
\end{array}
\end{align}
where $\mathds{1}$ denotes the constant function taking value $1$ on $\Omega$. Thanks to Proposition \ref{pro:L^vep_k properties} (i) and the assumption that $\widehat\lambda^\vep \in \mathbb C_{\lambda_*}$, we can solve for $\widehat \eta^{\vep}$ from the second equation in \eqref{eqn:hat_a_and_w^vep}. Multiplying the resulting expression of $\widehat \eta^{\vep}$ by $f_v^{\vep,k}$ yields
\begin{align}\label{eqn:hat_w^vep}
	f_v^{\vep,k}\widehat \eta^{\vep} = \frac{1}{|\Omega|}\Big(1-\frac{\vep}{D}\Big)\Big(\widehat a^{\vep}\int_{\Omega}f_v^{\vep, k}\phi_0^{\vep, k}\,dx +\int_{\Omega}f_v^{\vep, k}\widehat \eta^{\vep}\,dx\Big)f_v^{\vep,k}(\mathcal L_k^\vep-\widehat \lambda^\vep)^{-1}\mathcal P^\vep \mathds{1}.
\end{align}
For notational simplicity, denote
\begin{align}\label{eqn:hat b=int_hat w}
	\widehat b^\vep := \int_{\Omega}f_v^{\vep,k}\widehat \eta^{\vep}\,dx
\quad \text{ and }\quad
	\widehat \psi_{\widehat\lambda^\vep}^{\vep} :=f_v^{\vep,k}(\mathcal L_k^\vep-\widehat\lambda^\vep)^{-1}\mathcal P^\vep \mathds{1}.
\end{align}
Integrating equation \eqref{eqn:hat_w^vep} over $\Omega$, we obtain that
\begin{align}\label{eqn:hat_b}
\widehat b^\vep = \frac{\widehat b^\vep}{|\Omega|}\left(1-\frac{\vep}{D}\right)\int_{\Omega}\widehat \psi_{\widehat\lambda^\vep}^{\vep}\,dx +\frac{
\widehat a^{\vep}}{|\Omega|} \left(1-\frac{\vep}{D}\right)\int_{\Omega}f_v^{\vep,k}\phi_0^{\vep, k}\,dx\int_{\Omega}\widehat \psi_{\widehat\lambda^\vep}^{\vep}\,dx.
\end{align}

By similar arguments as in the proof of Claim \ref{clm:b-solvable}, we see that there exists a constant $\bar\vep =\bar\vep(M, D_0, k)>0$ such that for each $(\vep, \widehat\lambda^{\vep})\in(0, \bar\vep] \times\mathbb C_{\lambda_*}$ and $D\geq D_0$, equation \eqref{eqn:hat_b} admits a unique solution given explicitly by
\begin{align}\label{eqn:hat_b-2}
\widehat b^\vep = \widehat a^{\vep}\cdot\frac{\left(1-\frac{\vep}{D}\right) \frac{1}{|\Omega|}\int_{\Omega}\widehat \psi_{\widehat\lambda^\vep}^{\vep}\,dx}{1 -\left(1-\frac{\vep}{D}\right) \frac{1}{|\Omega|} \int_{\Omega}\widehat \psi_{\widehat\lambda^\vep}^{\vep}\,dx}\int_{\Omega}f_v^{\vep,k}\phi_0^{\vep, k}\,dx.
\end{align}

Then, by reasoning analogous to that used in the proof of Claim~\ref{clm:a neq 0}, we deduce from \eqref{eqn:hat_b-2} that for $\widehat\lambda^{\vep}\in \mathbb C_{\lambda_*}$ to be an eigenvalue of $\widehat{\mathscr L}_k^{\vep}$, it is necessary that 
\begin{align}\label{eqn:hat a^vep neq 0}
	\widehat a^{\vep}\neq 0 \quad \forall \;\vep\in(0, \bar\vep]\text{ and }D\geq D_0.
\end{align}

Substituting \eqref{eqn:hat b=int_hat w} and \eqref{eqn:hat_b-2} into the first equation of \eqref{eqn:hat_a_and_w^vep} and canceling the factor $\widehat a^{\vep}\neq 0$ on both sides, we obtain the following explicit expression for $\widehat\lambda^\vep$:
\begin{align}\label{eqn:hat_lambda^vep=mu^vep-...}
\displaystyle
\widehat\lambda^\vep = \mu_0^{\vep, k} - \frac{1}{|\Omega|} \left(1-\frac{\vep}{D}\right)\frac{\int_{\Omega}\phi_0^{\vep, k}\,dx}{1 -\left(1-\frac{\vep}{D}\right) \frac{1}{|\Omega|}\int_{\Omega}\widehat\psi_{\widehat\lambda^\vep}^{\vep}\,dx}\int_{\Omega}f_v^{\vep, k}\phi_0^{\vep, k}\,dx.
\end{align} 

Then by similar arguments to the proofs used in Claims \ref{clm:lambda/vep->Lambda*}-\ref{clm:lambda^vep_Simple}, we obtain: 

\begin{claim}\label{clm:hat_lambda^vep_Simple}
Let $\vep\in(0, \bar\vep]$ and $D\geq D_0$. Then the $L^2$-adjoint operator $\widehat{\mathscr L}_k^\vep$ of $\mathscr L_k^{\vep}$ has a unique eigenvalue $\widehat\lambda^{\vep}$ in $\mathbb C_{\lambda_*}$, and it is a simple. Moreover, $\widehat\lambda^\vep$ is real and satisfies
\begin{align}\label{eqn:hat_lambda/vep=Lambda*}	\lim_{\vep\to0}\frac{\widehat\lambda^\vep}{\vep} =\Lambda^*\in\mathbb R\backslash \{0\} \quad \text{ uniformly in }D\geq D_0,
\end{align}
where $\Lambda^*$ is defined in \eqref{eqn:Lambda^*}.  
\end{claim}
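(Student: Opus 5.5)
The plan is to run the argument of Claims \ref{clm:b-solvable}--\ref{clm:lambda^vep_Simple} again, now for the adjoint. We start from the representation \eqref{eqn:hat_lambda^vep=mu^vep-...} together with the decomposition \eqref{eqn:decom of hat Phi_0^vep}, where $\widehat a^\vep\neq0$ by \eqref{eqn:hat a^vep neq 0}. The first step is to compute the limit of $\widehat\psi^\vep_\xi=f_v^{\vep,k}(\mathcal L_k^\vep-\xi)^{-1}\mathcal P^\vep\mathds 1$. We apply Proposition \ref{pro:L^vep_k properties} (ii) with $F\equiv1$ and $g\equiv1$, which gives $(\mathcal L_k^\vep-\xi)^{-1}\mathcal P^\vep\mathds1\to (f_u^*-\xi)^{-1}$ in $L^2_{rad}$, uniformly in $\xi\in\mathbb C_{\lambda_*}$ and $D\ge D_0$. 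Since $f_v^{\vep,k}\to f_v^*$ boundedly and a.e. (Theorem \ref{thm:approx_sol_scalarEQ}), we obtain $\int_\Omega\widehat\psi^\vep_\xi\,dx\to\int_\Omega f_v^*/(f_u^*-\xi)\,dx$. This is exactly the limit of $\int_\Omega\psi^\vep_\xi\,dx$ in \eqref{eqn:psi_lambda^vep_limit}. Consequently the denominator in \eqref{eqn:hat_lambda^vep=mu^vep-...} has the same limit \eqref{eqn:1-psi_lambda^vep_limit}, and therefore obeys the two-sided bounds \eqref{eqn:1-int_psi_lamba bdd}.

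Next we bound $|\widehat\lambda^\vep|\le C_k\vep$ and identify $\widehat\lambda^\vep/\vep\to\Lambda^*$. The inputs are \eqref{eqn:mu_0^vep=o(vep)}, the limits \eqref{eqn:phi_0^vep_integral}--\eqref{eqn:f_vphi_0^vep_integral} for the two remaining integrals, and the denominator limit \eqref{eqn:denominator converge}. The argument is the same as in Claim \ref{clm:lambda/vep->Lambda*}.

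For reality and uniqueness we need the analogue of \eqref{eqn:psi_sup_norm}. Set $\chi_\xi:=(\mathcal L_k^\vep-\xi)^{-1}\mathcal P^\vep\mathds1$, so that $\widehat\psi_\xi^\vep=f_v^{\vep,k}\chi_\xi$. We rewrite $\chi_\xi=(\xi-\mu_0^{\vep,k})(\mathcal L_k^\vep-\mu_0^{\vep,k})^{-1}\chi_\xi+(\mathcal L_k^\vep-\mu_0^{\vep,k})^{-1}\mathcal P^\vep\mathds1$ and use Lemma \ref{lem:L^vep_k -mu_0^vep Inv bdd}. The term $\|\mathcal P^\vep\mathds1\|_{L^\infty}$ is bounded, since $\langle\mathds1,\phi_0^{\vep,k}\rangle\|\phi_0^{\vep,k}\|_\infty=O(\sqrt\vep)\,O(1/\sqrt\vep)$ by \eqref{eqn:phi_0^vep_Norms_est}. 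This yields $\|\chi_{\widehat\lambda^\vep}\|_\infty\le C_k$ and $\|\im\chi_{\widehat\lambda^\vep}\|_\infty\le C_k|\im\widehat\lambda^\vep|$. Multiplying by the real bounded function $f_v^{\vep,k}$ and inserting into the imaginary part of \eqref{eqn:hat_lambda^vep=mu^vep-...} gives $|\im\widehat\lambda^\vep|\le C_k\vep|\im\widehat\lambda^\vep|$, hence $\widehat\lambda^\vep\in\mathbb R$ for small $\vep$. Uniqueness comes from the contraction argument of Claim \ref{clm:lambda^vep_Unique}, applied to $\widehat\Gamma_k^\vep(\beta)$ on $I_{\Lambda^*}$, using the Lipschitz bound $\|\chi_{\vep\beta_1}-\chi_{\vep\beta_2}\|_\infty\le C_k\vep|\beta_1-\beta_2|$. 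Simplicity follows as in Claim \ref{clm:lambda^vep_Simple}: once $\widehat\lambda^\vep$ is fixed and $\widehat a^\vep=1$, the component $\widehat\eta^\vep$ is uniquely determined through \eqref{eqn:hat_b-2} and the second equation of \eqref{eqn:hat_a_and_w^vep}.

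The main obstacle is the uniformity of the $\mathcal P^\vep\mathds1$ limit and of the $L^\infty$ resolvent bounds over $\xi\in\mathbb C_{\lambda_*}$ and $D\ge D_0$. Here we rely on Proposition \ref{pro:L^vep_k properties} (ii) holding uniformly in $\mu$, and we check that $\mathcal P^\vep\mathds1$ stays bounded in $L^\infty$. Once these two facts are verified, every remaining estimate transfers verbatim from the nonadjoint case.
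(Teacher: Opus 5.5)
Your proposal is correct and follows the paper's own route: the paper simply reruns Claims \ref{clm:lambda/vep->Lambda*}--\ref{clm:lambda^vep_Simple} with $\widehat\psi^\vep_\xi$ in place of $\psi^\vep_\xi$. The two extra inputs you isolate are precisely what its ``similar arguments'' leave implicit: Proposition \ref{pro:L^vep_k properties}~(ii) with $F\equiv 1$, and the $L^\infty$ bound on $\mathcal P^\vep\mathds{1}$ from \eqref{eqn:phi_0^vep_Norms_est}. (As an aside, $(\mathcal L^\vep_k-\xi)^{-1}\mathcal P^\vep$ is symmetric for the bilinear pairing $\int_\Omega gh\,dx$, so $\int_\Omega\widehat\psi^\vep_\xi\,dx=\int_\Omega\psi^\vep_\xi\,dx$ holds exactly and \eqref{eqn:hat_lambda^vep=mu^vep-...} is literally the same scalar equation as \eqref{eqn:lambda^vep=mu^vep-...}, which would let you skip the re-derivation altogether.)
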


We henceforth denote the critical eigenvalue $\widehat\lambda^{\vep}$ of $\widehat{\mathscr L}_k^{\vep}$ in $\mathbb C_{\lambda_*}$ by $\widehat\lambda_0^{\vep, k}$ and the corresponding eigenfunction $\widehat\Phi^{\vep}$ by $\widehat\Phi_0^{\vep, k}$ in the remainder of this paper.

By similar arguments to the proof of \eqref{eqn:w^vep_Norms_est}, we can show that
there exists a constant $C_k = C_k(M, D_0, \bar\vep)>0$ such that for all $\vep\in(0,\bar\vep]$ and $D\geq D_0$,
\begin{align}\label{eqn:hat_w^vep_Norms_est}
	\|\widehat \eta^{\vep}\|_{L^\infty(\Omega)} + \|\widehat \eta^{\vep}\|_{L^2(\Omega)} + \|\widehat \eta^{\vep}\|_{L^1(\Omega)}\leq C_k\widehat a^{\vep}\sqrt{\vep}.
\end{align}

We now claim that
\begin{claim}\label{eqn:lambda_0=hat_lambda_0}
$\lambda_0^{\vep, k} =\widehat\lambda_0^{\vep, k}$ and $\langle \Phi_0^{\vep, k}, \widehat\Phi_0^{\vep, k} \rangle\neq 0$
for all $\vep\in(0, \bar\vep]$ and $D\geq D_0$.
\end{claim}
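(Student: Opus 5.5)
We will prove the two assertions separately, using the standard duality between $\mathscr L_k^{\vep}$ and its $L^2$-adjoint $\widehat{\mathscr L}_k^\vep$ together with the localisation already obtained: both operators have exactly one eigenvalue in $\mathbb C_{\lambda_*}$, each real and simple.

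\emph{Step 1: $\lambda_0^{\vep,k}=\widehat\lambda_0^{\vep,k}$.} The resolvent of $\mathscr L_k^{\vep}$ is compact: it is a rank-one perturbation of $\mathcal L_k^\vep$, which has compact resolvent. Hence the spectrum of $\mathscr L_k^{\vep}$ consists of isolated eigenvalues of finite algebraic multiplicity. For such operators the spectrum of the adjoint is the complex conjugate of the spectrum of the operator, and $\lambda$ is an eigenvalue of $\mathscr L_k^{\vep}$ if and only if $\bar\lambda$ is an eigenvalue of $\widehat{\mathscr L}_k^\vep$. This uses the Fredholm alternative, since $\mathscr L_k^{\vep}-\lambda$ is Fredholm of index zero off the spectrum.

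Since $\lambda_0^{\vep,k}$ is real and lies in $\mathbb C_{\lambda_*}$, it is an eigenvalue of $\widehat{\mathscr L}_k^\vep$ in $\mathbb C_{\lambda_*}$. By the uniqueness part of Claim \ref{clm:hat_lambda^vep_Simple}, it must coincide with $\widehat\lambda_0^{\vep,k}$.

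\emph{Step 2: $\langle \Phi_0^{\vep,k},\widehat\Phi_0^{\vep,k}\rangle\neq0$.} For this we need the \emph{algebraic} simplicity of $\lambda_0$, not only the geometric simplicity proved in Claim \ref{clm:lambda^vep_Simple}.

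Suppose, for contradiction, that $\langle \Phi_0,\widehat\Phi_0\rangle=0$. The range of $\mathscr L_k^{\vep}-\lambda_0$ is closed and equals $(\Ker(\widehat{\mathscr L}_k^\vep-\lambda_0))^{\perp}=[\widehat\Phi_0]^\perp$. Hence $\Phi_0$ lies in this range, and there is $\Psi$ with $(\mathscr L_k^{\vep}-\lambda_0)\Psi=\Phi_0$, i.e.\ a generalized eigenvector.

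We then run the same decomposition used in the proofs of Claims \ref{clm:b-solvable}--\ref{clm:lambda^vep_Simple}. Write $\Psi=a\phi_0^{\vep,k}+\eta$ and $\Phi_0=\phi_0^{\vep,k}+\eta^\vep$. This gives a system analogous to \eqref{eqn:a_and_w^vep} with inhomogeneities. Solve for $\eta$ via $(\mathcal L_k^\vep-\lambda_0)^{-1}\mathcal P^\vep$ and eliminate $b=\int\eta$ using \eqref{eqn:1-int_psi_lamba bdd}. The solvability condition then reduces to the $\beta$-derivative of the scalar equation $\beta-\Gamma_k^\vep(\beta)$ vanishing at $\beta=\lambda_0/\vep$.

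By the contraction estimate in Claim \ref{clm:lambda^vep_Unique}, $|(\Gamma_k^\vep)'|\le 1/2$. Therefore
\[
\frac{d}{d\beta}\bigl(\beta-\Gamma_k^\vep(\beta)\bigr)\ge \tfrac12>0,
\]
so the solvability condition fails, which is the desired contradiction.

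Equivalently, we can compute directly. Express $\widehat\Phi_0=\widehat a\phi_0+\widehat\eta$ with $\widehat a\neq0$, as in \eqref{eqn:hat a^vep neq 0}. By \eqref{eqn:w^vep_Norms_est} and \eqref{eqn:hat_w^vep_Norms_est}, the inner products involving $\eta^\vep$ and $\widehat\eta^\vep$ are $O(\sqrt\vep)$ in $L^2$, and they are orthogonal to $\phi_0$. Hence
\[
\langle\Phi_0,\widehat\Phi_0\rangle=\widehat a+\langle\eta^\vep,\widehat\eta^\vep\rangle=\widehat a\bigl(1+O(\vep)\bigr)\neq0
\]
for $\vep$ small. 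This direct estimate is the route we would actually use, as it is simplest. The main obstacle is making the bound $\|\widehat\eta^\vep\|_{L^2}\le C_k|\widehat a|\sqrt\vep$ uniform in $D\ge D_0$, which follows from \eqref{eqn:hat_w^vep_Norms_est}. This shrinks $\bar\vep$ once more.
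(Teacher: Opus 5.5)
Your proposal is correct. Your Step 2, the direct estimate $\langle\Phi_0^{\vep,k},\widehat\Phi_0^{\vep,k}\rangle=\widehat a^{\vep}+\langle\eta^{\vep},\widehat\eta^{\vep}\rangle=\widehat a^{\vep}(1+O(\vep))$ from \eqref{eqn:w^vep_Norms_est} and \eqref{eqn:hat_w^vep_Norms_est}, is exactly the paper's argument.

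The difference lies in how the eigenvalue identity is obtained, and in the order of the two steps. The paper proves the nonvanishing first; that estimate only uses the two eigenfunction decompositions, not $\lambda_0^{\vep,k}=\widehat\lambda_0^{\vep,k}$. It then gets the equality in one line: pair $\mathscr L_k^{\vep}\Phi_0^{\vep,k}=\lambda_0^{\vep,k}\Phi_0^{\vep,k}$ with $\widehat\Phi_0^{\vep,k}$, and $\widehat{\mathscr L}_k^{\vep}\widehat\Phi_0^{\vep,k}=\widehat\lambda_0^{\vep,k}\widehat\Phi_0^{\vep,k}$ with $\Phi_0^{\vep,k}$, and subtract to get $(\lambda_0^{\vep,k}-\widehat\lambda_0^{\vep,k})\langle\Phi_0^{\vep,k},\widehat\Phi_0^{\vep,k}\rangle=0$.

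You instead derive the equality from abstract spectral theory: compact resolvent, $\sigma(\widehat{\mathscr L}_k^\vep)=\overline{\sigma(\mathscr L_k^\vep)}$ consisting only of eigenvalues, plus the uniqueness of the adjoint eigenvalue in $\mathbb C_{\lambda_*}$. This is valid, and it makes the equality a structural fact independent of the asymptotics. The same Fredholm-index-zero reasoning would even give existence, uniqueness and geometric simplicity of the adjoint eigenvalue in $\mathbb C_{\lambda_*}$ for free. However, once you have the direct estimate, Step 1 is redundant: the paper's ordering needs no adjoint or Fredholm machinery at all.

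Your Jordan-chain alternative is also sound. Using $\partial_\lambda\psi^{\vep}_{\lambda}=(\mathcal L_k^\vep-\lambda)^{-1}\psi^{\vep}_{\lambda}$, the solvability condition for $(\mathscr L_k^\vep-\lambda_0^{\vep,k})\Psi=\Phi_0^{\vep,k}$ reduces exactly to $(\Gamma_k^\vep)'(\lambda_0^{\vep,k}/\vep)=1$, which the contraction bound excludes. It explains conceptually why the pairing is nonzero (algebraic simplicity), but it is not needed.
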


We now prove Claim \ref{eqn:lambda_0=hat_lambda_0}. Since $\Phi_0^{\vep, k} = \phi_0^{\vep, k} + \eta^{\vep}$ and 
$\widehat\Phi_0^{\vep, k} = \widehat a^{\vep} \phi_0^{\vep, k} + \widehat \eta^{\vep}$,
where $0\neq\widehat a^{\vep}\in\mathbb R$, $\eta^{\vep}, \widehat \eta^{\vep}\in[\phi_0^{\vep, k}]^{\perp}$ are real functions, it follows from \eqref{eqn:w^vep_Norms_est} and \eqref{eqn:hat_w^vep_Norms_est} that
\begin{align}\label{eqn:for_hat a}
\Big|\displaystyle \frac{\langle \Phi_0^{\vep, k}, \widehat\Phi_0^{\vep, k} \rangle}{\widehat a^{\vep}}
-1 \Big| = \Big|\frac{1}{\widehat a^{\vep}}\int_{\Omega}\eta^{\vep}\widehat \eta^{\vep}\,dx \Big| \leq C_k\vep,\quad \forall \;\vep\in(0, \bar\vep] \text{ and }D\geq D_0.
\end{align}
Therefore, by choosing $\bar\vep$ even smaller if necessary, we see that $\langle \Phi_0^{\vep, k}, \widehat\Phi_0^{\vep, k} \rangle\neq 0$ for all $\vep\in(0, \bar\vep]$ and $D\geq D_0$. Now, multiplying the equation $\mathscr L_k^{\vep}\Phi_0^{\vep, k} =  \lambda_0^{\vep, k}\Phi_0^{\vep, k}$ by $\widehat\Phi_0^{\vep, k}$ and the equation $\widehat{\mathscr L}_k^\vep\widehat\Phi_0^{\vep, k} = \widehat\lambda_0^{\vep, k} \widehat\Phi_0^{\vep, k}$ by $\Phi_0^{\vep, k}$, integrating over $\Omega$ and subtracting the resulting equation, we obtain that
\begin{align*}
	(\lambda_0^{\vep, k}-\widehat\lambda_0^{\vep, k})\cdot\langle \Phi_0^{\vep, k}, \widehat\Phi_0^{\vep, k} \rangle =0.
\end{align*}
It then follows from $\langle \Phi_0^{\vep, k}, \widehat\Phi_0^{\vep, k} \rangle\neq 0$ that $\widehat\lambda_0^{\vep, k}=\lambda_0^{\vep, k}$. This finishes the proof of Claim \ref{eqn:lambda_0=hat_lambda_0}. 

Finally, normalize the eigenfunction $\widehat\Phi_0^{\vep, k}$ of the adjoint operator $\widehat{\mathscr L}_k^\vep$ corresponding to $\widehat\lambda_0^{\vep, k}$ so that $\langle\widehat\Phi_0^{\vep, k}, \Phi_0^{\vep, k} \rangle =1$, we obtain from \eqref{eqn:for_hat a} that 
\begin{align}\label{eqn:hat_a^vep_est}
|\widehat a^{\vep}- 1|\leq C_k\vep.
\end{align}
This combined with \eqref{eqn:hat_w^vep_Norms_est} completes the proof of Theorem \ref{thm:mathscr_L_k^vep} (iv).
\end{proof}

\subsection{Proof of Lemma \ref{lem:mathscr L_k^vep inver_bdd}}\label{sec:Proof of Lemma 3.3}
\begin{proof}
We prove \eqref{eqn:mathscr L_inver_bdd} by contradiction.  Assume that there exist two sequences $\vep_j\to 0$, $D_j\geq D_0$ and a sequence of functions $\{g_j\}\subset X^\infty$ such that
\begin{align*}
\|g_j\|_{L^{\infty}}  =1 \quad \text{and} \quad \|(\mathscr L^{\vep_j}_k -\lambda_0^{\vep_j, k})^{-1}g_j \|_{L^{\infty}} \geq j.
\end{align*}
Put 
\begin{align*}
h_j := \frac{(\mathscr L^{\vep_j}_k -\lambda_0^{\vep_j, k})^{-1}g_j}{\|(\mathscr L^{\vep_j}_k -\lambda_0^{\vep_j, k})^{-1}g_j \|_{L^{\infty}} }\quad  \text{and}\quad 
q_j:= \frac{g_j}{\|(\mathscr L^{\vep_j}_k -\lambda_0^{\vep_j, k})^{-1}g_j \|_{L^{\infty}} }. 
\end{align*}
Then $h_j$ satisfies 
\begin{align}\label{eqn:mathscr_L_k^vep_chi_j=q_j}
(\mathscr L^{\vep_j}_k -\lambda_0^{\vep_j, k})h_j = q_j, \quad \mbox{with } \|h_j\|_{L^{\infty}} = 1\; \text{ and }\;  \|q_j\|_{L^{\infty}} \leq j^{-1}.
\end{align}

We will apply to \eqref{eqn:mathscr_L_k^vep_chi_j=q_j} the same procedures as in the proof of Theorem \ref{thm:mathscr_L_k^vep}. Note that since $\lambda_0^{\vep_j, k}\in\mathbb R$, we can decompose $h_j$ as 
$$h_j = c_j\phi_0^{\vep_j, k} + \theta_j\in{\rm span}\{\phi_0^{\vep_j, k}\}\oplus[\phi_0^{\vep_j, k}]^{\perp}.$$
%where $c_j\in\mathbb R$ and $\theta_j\in[\phi_0^{\vep, k}]^{\perp}$ is a real function. 
Then equation \eqref{eqn:mathscr_L_k^vep_chi_j=q_j} can be reformulated as follows:
\begin{align}
\label{eqn:c^j}
\begin{split}
%\footnotesize
\big(\mu_0^{\vep_j, k}-\lambda_0^{\vep_j, k}\big) c_j & =\frac{1}{|\Omega|} \Big(1-\frac{\vep_j}{D_j}\Big)\Big(c_j\int_{\Omega}\phi_0^{\vep_j, k}\,dx +\int_{\Omega}\theta_j\,dx\Big)\int_{\Omega}f_v^{\vep_j, k}\phi_0^{\vep_j, k}\,dx\\
& \quad + \int_{\Omega}q_j\phi_0^{\vep_j, k}dx,
\end{split}
\end{align}
and
\begin{align}
\label{eqn:theta^j}
\begin{split}
\theta_j & = \frac{1}{|\Omega|}\Big(1-\frac{\vep_j}{D_j}\Big)\Big(c_j\int_{\Omega}\phi_0^{\vep_j, k}dx +\int_{\Omega}\theta_j\,dx\Big) \big(\mathcal L_k^{\vep_j}-\lambda_0^{\vep_j, k}\big)^{-1}\mathcal P^{\vep_j} f_v^{\vep_j, k}\\
& \quad + \big(\mathcal L_k^{\vep_j}-\lambda_0^{\vep_j, k}\big)^{-1}\mathcal P^{\vep_j}q_j.
\end{split}
\end{align}
Denote
\begin{align*}%\label{eqn:bj&psi^j&chi^j}
b_j:=\int_{\Omega}\theta_j\,dx, \quad \psi_j:= (\mathcal L_k^{\vep_j}-\lambda_0^{\vep_j, k})^{-1}\mathcal P^{\vep_j} f_v^{\vep_j, k} \quad \text{ and } \quad \rho_j := (\mathcal L_k^{\vep_j}-\lambda_0^{\vep_j, k})^{-1}\mathcal P^{\vep_j}q_j.
\end{align*}
Then the equation in \eqref{eqn:theta^j} can be rewritten as
\begin{align}\label{eqn:theta_j-rewrite}
	\theta_j = \frac{1}{|\Omega|}\Big(1-\frac{\vep_j}{D_j}\Big)\Big(c_j\int_{\Omega}\phi_0^{\vep_j, k}dx +b_j\Big)\psi_j +\rho_j.
\end{align}
By similar argument as in the proof of  \eqref{eqn:psi_sup_norm}, we can show that there exists a constant $C_k  = C_k(M, D_0)>0$ such that for all $j$ large,
\begin{align}\label{eqn:psi_j,rho_j-EST}
\|\psi_j\|_{L^\infty}\leq C_k\quad \text{ and }\quad \|\rho_j\|_{L^\infty}\leq C_k\|q_j\|_{L^\infty}.
\end{align}
Integrating equation \eqref{eqn:theta_j-rewrite} over $\Omega$, we see that
\begin{align}\label{eqn:b_j}
b_j = \frac{1}{|\Omega|}\Big(1-\frac{\vep_j}{D_j}\Big)\Big(\displaystyle c_j\int_{\Omega}\phi_0^{\vep_j, k}dx +b_j\Big)\int_{\Omega} \psi_j dx+\int_{\Omega}\rho_j dx.
\end{align}
By similar arguments to the proof of \eqref{eqn:denominator converge}, we see that
\begin{align}\label{eqn:1-int_psi^j}
\lim_{j\to0}\Big[1 -\frac{1}{|\Omega|}\Big(1-\frac{\vep_j}{D_j}\Big) \int_{\Omega} \psi_j\,dx\Big] = \frac{G}{f_u^{*, +}f_u^{*, -}}>0.
\end{align} 
Therefore, we can solve $b_j$ from \eqref{eqn:b_j} and for all $j$ sufficiently large and obtain that
\begin{align}\label{eqn:b_j_solved}
b_j = \frac{c_j\frac{1}{|\Omega|}\left(1-\frac{\vep_j}{D_j}\right)\int_{\Omega}\phi_0^{\vep_j, k}dx\int_{\Omega} \psi_j dx +\int_{\Omega}\rho_j dx}{1 -\frac{1}{|\Omega|}\left(1-\frac{\vep_j}{D_j}\right) \int_{\Omega} \psi_j\,dx}.
\end{align}
Plugging \eqref{eqn:b_j_solved} into \eqref{eqn:theta_j-rewrite}, we obtain that
\begin{align*}
\theta_j = \frac{1}{|\Omega|}\Big(1-\frac{\vep_j}{D_j}\Big) \frac{c_j\int_{\Omega}\phi_0^{\vep_j, k}dx +\int_{\Omega}\rho_j dx}{1 -\frac{1}{|\Omega|}\left(1-\frac{\vep_j}{D_j}\right) \int_{\Omega} \psi_j\,dx}\psi_j +\rho_j,	
\end{align*}
which combined with \eqref{eqn:psi_j,rho_j-EST}, \eqref{eqn:1-int_psi^j} and \eqref{eqn:phi_0^vep_integral} implies that there exists a constant $C_k = C_k(M, D_0)>0$ such that for all $j$ sufficiently large,
\begin{align}\label{eqn:theta_j-L^inf}
	\|\theta_j\|_{L^{\infty}}\leq C_k\left(|c_j|\sqrt{\vep_j} + \|q_j\|_{L^\infty}\right).
\end{align}

On the other hand, since $h_j\in X^{\infty}$, see \eqref{eqn:X^s}, we have $\langle h_j,  \widehat\Phi_0^{\vep_j, k}\rangle =0$,
where $\widehat\Phi_0^{\vep_j, k}$ is the eigenfunction of the adjoint operator $\widehat{\mathscr L}_k^{\vep_j}$ corresponding to $\lambda_0^{\vep_j, k}$. It follows from the proof of Theorem \ref{thm:mathscr_L_k^vep} (iv) that
$\widehat\Phi_0^{\vep_j, k} = \widehat a^{\vep_j}\phi_0^{\vep_j, k}+\widehat \eta^{\vep_j}$, where $\widehat \eta^{\vep_j}\in [\phi_0^{\vep_j, k}]^{\perp}$ and $\widehat a^{\vep_j}\in\mathbb R\backslash \{0\}$ satisfy \eqref{eqn:hat_w^vep_Norms_est} and \eqref{eqn:hat_a^vep_est} respectively.
Consequently, 
\begin{align*}
	0 = \big\langle h_j, \widehat\Phi_0^{\vep_j, k}\big\rangle = \big\langle c_j\phi_0^{\vep_j, k} + \theta_j,\, \widehat a^{\vep_j}\phi_0^{\vep_j, k}+\widehat \eta^{\vep_j}\big\rangle 
	= c_j\widehat a^{\vep_j} + \int_{\Omega} \theta_j\widehat \eta^{\vep_j}\,dx,
\end{align*}
which combined with \eqref{eqn:hat_w^vep_Norms_est} and \eqref{eqn:hat_a^vep_est} implies that
\begin{align*}
	|c_j| = \Big|
-\frac{1}{\widehat a^{\vep_j}}\int_{\Omega} \theta_j\widehat \eta^{\vep_j}\,dx
\Big| \leq C_k\sqrt{\vep_j} \|\theta_j\|_{L^{\infty}}.
\end{align*}
Plugging this back into \eqref{eqn:theta_j-L^inf}, we obtain that
\begin{align*}
\|\theta_j\|_{L^{\infty}}\leq C_k\left(\vep_j\|\theta_j\|_{L^{\infty}} +\|q_j\|_{L^\infty}\right).
\end{align*}
Since $\lim_{j\to\infty}\vep_j = 0$ and $\|q_j\|_{L^{\infty}} \leq j^{-1}$, this implies that 
\begin{align*}
\|\theta_j\|_{L^{\infty}}\leq	C_k\|q_j\|_{L^\infty}\to 0  \quad \text{ and hence } \quad \frac{|c_j|}{\sqrt{\vep_j}}\to 0, \quad \text{as }j\to\infty.
\end{align*}
Therefore, we see from \eqref{eqn:phi_0^vep_Norms_est}  that 
\begin{align*}
	\|h_j\|_{L^{\infty}} = 	\|c_j\phi_0^{\vep_j, k} + \theta_j\|_{L^{\infty}}\to 0, \quad \text{as }j\to\infty,
\end{align*}
which is a contradiction to \eqref{eqn:mathscr_L_k^vep_chi_j=q_j}. This completes the proof of Lemma \ref{lem:mathscr L_k^vep inver_bdd}.  
\end{proof}

\section{Proof of the main result} 
In this section, we prove our main result Theorem \ref{thm:main-scalar}. 

\begin{proof}[Proof of Theorem \ref{thm:main-scalar}]
We fix an order of approximation $k \geq 2$ in Theorem \ref{thm:approx_sol_scalarEQ},
and look for solutions of equation \eqref{eqn:MCRD_nonlocal_scalar} in the following form:
\begin{align*}
u = u_k^{\vep, D} + p, \quad \text{where} p\in  L_{rad}^{\infty}.
\end{align*}
In terms of $p$, equation \eqref{eqn:MCRD_nonlocal_scalar} is equivalent to 
\begin{align}\label{eqn:mathscr L^vep_k p}
\mathscr L_k^{\vep} p +  \mathcal F^\vep[p] + \mathcal R^\vep[u^{\vep,D}_k] = 0,
\end{align}
where $\mathcal R^\vep[u^{\vep,D}_k]$ is defined in Theorem \ref{thm:approx_sol_scalarEQ} (ii) and 
\begin{align}\label{eqn:mathcal F^vep}
\begin{split}
\mathcal F^\vep[p] & := f\left(u_k^{\vep, D} + p,\, \mathcal S^{\vep}[u_k^{\vep, D} + p]-\frac{\vep}{D}(u_k^{\vep, D} + p)\right) 
-  f\left(u^{\vep,D}_k,\,\mathcal S^{\vep}[u^{\vep,D}_k]-\frac{\vep}{D}u^{\vep,D}_k \right)\\
&\quad - f_u^{\vep,k}p+\Big[\frac{\vep}{D} p +\frac{1}{|\Omega|} \Big(1-\frac{\vep}{D}\Big)\int_\Omega p(x)\,dx\Big]f_v^{\vep,k}. \end{split}
\end{align}
Note that $f_u^{\vep,k}$ and $f_v^{\vep,k}$ are defined in \eqref{eqn:f_u,v^vep,k}.

Denote $\mathcal Q^{\vep}$ the $L^2$-projection operator onto $[\Phi_0^{\vep, k}]$ along $\widehat\Phi_0^{\vep, k}$ defined by 
\begin{align*}
\mathcal Q^{\vep} p = \langle  \widehat\Phi_0^{\vep, k},\, p\rangle \Phi_0^{\vep, k}.
\end{align*}
Rewrite $p$ in the form
\begin{align*}
p = \alpha \vep^{k+\frac{1}{2}}\Phi_0^{\vep, k} + \vep^k q,
\end{align*}
where $\alpha  =\langle  \widehat\Phi_0^{\vep, k},\, p\rangle/\vep^{k+\frac{1}{2}} \in\mathbb R$ and $q\in X^{\infty}$. Note that $X^{\infty}$ is defined in \eqref{eqn:X^s}.
Then \eqref{eqn:mathscr L^vep_k p} is reformulated as
\begin{align}\label{eqn:alpha&w}
\begin{split}
\vep^{k+\frac{1}{2}}\alpha \lambda_0^{\vep,k} +  \langle  \widehat\Phi_0^{\vep, k},\, \mathcal F^\vep[\alpha \vep^{k+\frac{1}{2}}\Phi_0^{\vep, k} + \vep^k q]\rangle & = -\langle  \widehat\Phi_0^{\vep, k},\, \mathcal R^\vep[u^{\vep,D}_k]\rangle, \\
\vep^k (\mathscr L_k^{\vep} -\lambda_0^{\vep,k}) q + (I - \mathcal Q^{\vep}) \mathcal F^\vep[\alpha \vep^{k+\frac{1}{2}}\Phi_0^{\vep, k} + \vep^k q] & = -\vep^k\lambda_0^{\vep,k} q -(I - \mathcal Q^{\vep})\mathcal R^\vep[u^{\vep,D}_k].	
\end{split}
\end{align}
According to Lemma \ref{lem:mathscr L_k^vep inver_bdd}, the inverse operator $(\mathscr L_k^{\vep} -\lambda_0^{\vep,k})^{-1}$ is a bounded operator from $X^{\infty}$ to itself. Moreover, by the comment just above Lemma \ref{lem:mathscr L_k^vep inver_bdd}, we can actually work in the space
\begin{align*}
X_0 := C_{rad}(\overline\Omega)\cap X^{\infty}
\end{align*}
and consider the system: 
\begin{align}\label{eqn:eqn:alpha&w rewrite}
\alpha  =  \mathcal T_1[\alpha, q], \; q = \mathcal T_2[\alpha, q], \quad \text{ in } \mathbb R\times X_0,
\end{align}
where
\begin{align*}
\mathcal T_1[\alpha, q] & :=	-\frac{1}{\vep^{k+\frac{1}{2}} \lambda_0^{\vep,k}}\Big\langle  \widehat\Phi_0^{\vep, k},\, \mathcal F^\vep[\alpha \vep^{k+\frac{1}{2}}\Phi_0^{\vep, k} + \vep^k q]\Big\rangle+ \Big\langle  \widehat\Phi_0^{\vep, k},\, \mathcal R^\vep[u^{\vep,D}_k]\Big\rangle,\\
\mathcal T_2[\alpha, q] & := -(\mathscr L_k^{\vep} -\lambda_0^{\vep,k})^{-1}\Big(\frac{1}{\vep^k}(I - \mathcal Q^{\vep}) \mathcal F^\vep[\alpha \vep^{k+\frac{1}{2}}\Phi_0^{\vep, k} + \vep^k q]+\lambda_0^{\vep,k} q\Big)\\
& \qquad -\frac{1}{\vep^k}(\mathscr L_k^{\vep} -\lambda_0^{\vep,k})^{-1}\Big((I - \mathcal Q^{\vep})\mathcal R^\vep[u^{\vep,D}_k]\Big).
\end{align*}
Observe that if $(\alpha, q) \in \mathbb R\times X_0$ sovles \eqref{eqn:eqn:alpha&w rewrite}, then actually $q$ is a classical solution to the second equation in \eqref{eqn:alpha&w}. In fact, by the definition of $\mathcal Q^{\vep}$, we see that
$$\frac{1}{\vep^k}(I - \mathcal Q^{\vep}) \mathcal F^\vep[\alpha  \vep^{k+\frac{1}{2}}\Phi_0^{\vep, k} + \vep^k q]+\lambda_0^{\vep,k} q+\frac{1}{\vep^k}(I - \mathcal Q^{\vep})\mathcal R^\vep[u^{\vep,D}_k] \in X^{\infty}.$$
Therefore, $q\in X_{\nu}^{2,s}\subset W_{\nu}^{2,s}$ for any $s\in(1,\infty)$ by Lemma \ref{lem:mathscr L_k^vep invertible}. Choosing $s>N$, we may infer that $q\in C^{1+\alpha}(\overline\Omega)$ for some $\alpha\in(0,1)$ by the Sobolev embedding theorem. Then by the Schauder theory, $w$ turns out to be of class $C^{2+\alpha}(\overline\Omega)$ and satisfies the desired zero-Newmann boundary condition.

We now solve \eqref{eqn:eqn:alpha&w rewrite} by applying Banach's contraction mapping principle. To this end, we define a mapping $\mathcal T: \mathbb R\times X_0\to\mathbb R\times X_0$ by $\mathcal T = (\mathcal T_1, \mathcal T_2)$. We equip $\mathbb R\times X_0$ with the norm
\begin{align*}
	\triplenorm{(\alpha, q)}:=|\alpha|+ \|q\|_{L^{\infty}(\Omega)},
\end{align*}
under which it becomes a Banach space.
We first establish the following preparatory lemma.

\begin{lemma}\label{lem:mathcal F^vep}
Assume that conditions {\upshape(A1)}-{\upshape(A3)} hold. Let $M\in  I_{v^*}$, $D_0>0$, $\vep_0>0$ and integer $k\geq 2$ be given. 
Then, for each constant $R>0$, there exists a constant $\widehat C_k = \widehat C_k(M, D_0, \vep_0, R) >0$ such that for any $(\alpha, q), (\alpha_i, q_i)\in \mathbb R\times X_0$ satisfying
\begin{align*}
	\triplenorm{(\alpha, q)}\leq R, \quad 	\triplenorm{(\alpha_i, q_i)}\leq R \quad (i=1, 2),
\end{align*} 
the following estimates hold for all $\vep\in(0,\vep_0]$ and $D\geq D_0$:
\begin{gather}
\big\|\mathcal F^\vep[\alpha  \vep^{k+\frac{1}{2}}\Phi_0^{\vep, k} + \vep^k q]\big\|_{L^{\infty}(\Omega)} \leq \widehat C_k\vep^{2k}\triplenorm{(\alpha, q)}^2,\label{eqn:mathcal F^vep bdd}\\
\big\|\mathcal F^\vep[\alpha_1\vep^{k+\frac{1}{2}}\Phi_0^{\vep, k} + \vep^k q_1] -\mathcal F^\vep[\alpha_2\vep^{k+\frac{1}{2}}\Phi_0^{\vep, k} + \vep^k q_2]\big\|_{L^{\infty}(\Omega)}\label{eqn:mathcal F^vep_Lip}\\
\hspace{10ex}\leq \widehat C_kR \vep^{2k}\triplenorm{(\alpha_1-\alpha_1, q_1-q_2)}. \notag
\end{gather}
\end{lemma}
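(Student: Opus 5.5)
The plan is to write $\mathcal F^\vep[p]$ as the second-order Taylor remainder of the map $p\mapsto f\big(u_k^{\vep,D}+p,\ \mathcal S^\vep[u_k^{\vep,D}+p]-\tfrac{\vep}{D}(u_k^{\vep,D}+p)\big)$ around $p=0$. Since $\mathcal S^\vep$ is affine, $\mathcal S^\vep[u_k^{\vep,D}+p]=\mathcal S^\vep[u_k^{\vep,D}]-\frac{1}{|\Omega|}(1-\frac{\vep}{D})\int_\Omega p$. The second argument of $f$ is therefore shifted by
\[
\delta v[p]:=-\tfrac{\vep}{D}p-\tfrac{1}{|\Omega|}\big(1-\tfrac{\vep}{D}\big)\int_\Omega p\,dx .
\]
By definition \eqref{eqn:mathcal F^vep}, $\mathcal F^\vep[p]=f(U+p,V+\delta v)-f(U,V)-f_u(U,V)p-f_v(U,V)\delta v$, where $(U,V)$ denotes the base point $\big(u_k^{\vep,D},\,\mathcal S^\vep[u_k^{\vep,D}]-\frac{\vep}{D}u_k^{\vep,D}\big)$. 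Taylor's formula with integral remainder then gives the pointwise bound
\[
|\mathcal F^\vep[p](x)|\le C\big(|p(x)|^2+|\delta v[p](x)|^2\big),
\]
where $C$ bounds $D^2f$ on a compact set containing all points $(U+tp,V+t\delta v)$. The $L^\infty$ bounds on $u_k^{\vep,D}$ and $\mathcal S^\vep[u_k^{\vep,D}]$ come from Theorem \ref{thm:approx_sol_scalarEQ} (ii),(iii), uniformly in $\vep\le\vep_0$ and $D\ge D_0$.

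The key step is controlling $p=\alpha\vep^{k+1/2}\Phi_0^{\vep,k}+\vep^k q$ in $L^\infty$. Theorem \ref{thm:mathscr_L_k^vep} (iii) gives $\|\Phi_0^{\vep,k}\|_{L^\infty}\le C_k\vep^{-1/2}$ (for $\vep\le\bar\vep$; on $[\bar\vep,\vep_0]$ I would use a compactness/continuity bound, or simply shrink $\vep_0$ to $\bar\vep$, as the lemma is applied only for small $\vep$). Hence
\[
\|p\|_{L^\infty}\le C_k\vep^k\big(|\alpha|+\|q\|_{L^\infty}\big)=C_k\vep^k\triplenorm{(\alpha,q)}.
\]
Also $\|\delta v[p]\|_{L^\infty}\le (\vep/D_0+1)\|p\|_{L^\infty}$; the $L^1$ bound $\|\Phi_0^{\vep,k}\|_{L^1}\le C_k\sqrt\vep$ even improves the integral term, but this is not needed. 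Because $\triplenorm{\cdot}\le R$, the arguments of $f$ stay in a fixed compact set, and squaring yields \eqref{eqn:mathcal F^vep bdd} with $\vep^{2k}$.

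For \eqref{eqn:mathcal F^vep_Lip}, I write $\mathcal F^\vep[p_1]-\mathcal F^\vep[p_2]=\int_0^1\frac{d}{dt}\mathcal F^\vep[p_2+t(p_1-p_2)]\,dt$. The derivative $D\mathcal F^\vep[p]h=(f_u(U+p,V+\delta v[p])-f_u(U,V))h+(f_v(U+p,V+\delta v[p])-f_v(U,V))\delta v[h]$ is bounded pointwise by $C(\|p\|_\infty+\|\delta v[p]\|_\infty)(\|h\|_\infty+\|\delta v[h]\|_\infty)$. With $\|p\|_\infty\le C_k\vep^kR$ along the segment and $\|h\|_\infty\le C_k\vep^k\triplenorm{(\alpha_1-\alpha_2,q_1-q_2)}$, this gives the claimed $\widehat C_kR\vep^{2k}$ factor. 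The only delicate point is the uniformity of all constants in $D\ge D_0$ and $\vep$, which follows because $\vep/D\le\vep_0/D_0$ and all base-point quantities are uniformly bounded. No step is a real obstacle; the main care is the $\vep^{-1/2}$ size of $\Phi_0^{\vep,k}$, which is exactly compensated by the extra $\vep^{1/2}$ in the scaling of $\alpha$.
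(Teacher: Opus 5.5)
Your proposal is correct and follows essentially the same route as the paper. The paper also uses a second-order Taylor bound $|\mathcal F^\vep[p]|\le C\big(|p|^2+|\delta v[p]|^2\big)$ and the Lipschitz bound $C\max\{\|p_1\|_{L^\infty},\|p_2\|_{L^\infty}\}\|p_1-p_2\|_{L^\infty}$, combined with $\|p\|_{L^\infty}\le C_k\vep^k\triplenorm{(\alpha,q)}$, which comes from $\|\Phi_0^{\vep,k}\|_{L^\infty}\le C_k\vep^{-1/2}$ in Theorem~\ref{thm:mathscr_L_k^vep}(iii). Your remark that $\vep_0$ must be taken $\le\bar\vep$ correctly handles a point the paper leaves implicit. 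The compactness alternative you mention is not available, since $\Phi_0^{\vep,k}$ is only defined for $\vep\le\bar\vep$.
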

\begin{proof}
We apply Taylor expansion with remainder to the function $F^\vep[p]$ defined in \eqref{eqn:mathcal F^vep}, yielding that 
\begin{align*}
\big|\mathcal F^\vep[p](x) \big|\leq C \Big[|p(x)|^2+\Big(\frac{\vep}{D}p(x)+\frac{1}{|\Omega|} \left(1-\frac{\vep}{D}\right)\int_\Omega p(x)\,dx\Big)^2\Big],
\end{align*}
where $C>0$ is a constant depending only on the sup-norm of $\nabla^2f$ on a compact region in $\mathbb R^2$ determined by $\|u_k^{\vep, D}\|_{L^{\infty}(\Omega)}$ and $\|p\|_{L^{\infty}(\Omega)}$. Setting $p = \alpha \vep^{k+\frac{1}{2}}\Phi_0^{\vep, k} + \vep^k q$ in the above inequality, we obtain \eqref{eqn:mathcal F^vep bdd} from  Theorem \ref{thm:approx_sol_scalarEQ} and Theorem \ref{thm:mathscr_L_k^vep} (iii). 

By similar calculation, we can show that
\begin{align*}
\big\|\mathcal F^\vep[p_1]-\mathcal F^\vep[p_2]\big\|_{L^{\infty}(\Omega)}
\leq C\max\big\{\|p_1\|_{L^{\infty}(\Omega)}, \|p_2\|_{L^{\infty}(\Omega)}\big\}\times\|p_1 -p_2\|_{L^{\infty}(\Omega)}.
\end{align*}
Setting $p_i = \alpha_i\vep^{k+\frac{1}{2}}\Phi_0^{\vep, k} + \vep^k q_i$ for $i=1, 2$ in the above inequality, we obtain
\eqref{eqn:mathcal F^vep_Lip}. \end{proof}

By Theorem \ref{thm:approx_sol_scalarEQ} (ii) and Theorem \ref{thm:mathscr_L_k^vep} (iii)-(iv), we have
\begin{align*}
	\big|\langle  \widehat\Phi_0^{\vep, k},\, \mathcal R^\vep[u^{\vep,D}_k]\rangle\big| \leq \|\mathcal R^\vep[u^{\vep,D}_k]\|_{L^{\infty}(\Omega)}|\langle\widehat\Phi_0^{\vep, k},\, 1\rangle|\leq C_k\vep^{k+\frac{3}{2}},
\end{align*}
which implies that
\begin{gather*}
\|\mathcal Q^{\vep}\mathcal R^\vep[u^{\vep,D}_k]\|_{L^{\infty}(\Omega)} = |\langle\widehat\Phi_0^{\vep, k},\, \mathcal R^\vep[u^{\vep,D}_k] \rangle| \cdot \|\Phi_0^{\vep, k}\|_{L^{\infty}(\Omega)} \leq  C_k\vep^{k+1},\\
\|(I-\mathcal Q^{\vep})\mathcal R^\vep[u^{\vep,D}_k]\|_{L^{\infty}(\Omega)}  \leq C_k\vep^{k+1}.
\end{gather*}
Therefore, using the above estimates,
Theorem \ref{thm:mathscr_L_k^vep}, Lemmas \ref{lem:mathscr L_k^vep inver_bdd} and \ref{lem:mathcal F^vep}, we obtain that
\begin{align*}
|\mathcal T_1[\alpha, q]|& \leq  C_k\widehat C_k\vep^{k-1}\triplenorm{(\alpha, q)}^2+C_k,\\
\|\mathcal T_2[\alpha, q]\|_{L^{\infty}(\Omega)} & \leq C_k\widehat C_k\vep^{k}\triplenorm{(\alpha, q)}^2+C_k\vep\|q\|_{L^{\infty}(\Omega)}+C_k\vep. 	
\end{align*}
Thus, if we define 
$$\mathcal B := \big\{(\alpha, q) \in \mathbb R\times X_0 \,\big|\, \triplenorm{(\alpha, q)} \leq 4C_k\big\}$$
and let $R = 4C_k$ in Lemma \ref{lem:mathcal F^vep}, then $\mathcal T(\mathcal B)\subset \mathcal B$ provided that
\begin{align*}
\displaystyle 0<\vep\leq \min\Big\{\Big(16C_k^2\widehat C_k\Big)^{-\frac{1}{k-1}},\,  \frac{1}{2},\, \frac{1}{4C_k}\Big\} =:\overline\vep(M, D_0, k)
\text{ and }D\geq D_0.
\end{align*} 
Moreover, for any $(\alpha_1, q_1)$, $(\alpha_2, q_2)\in \mathcal B$,
\begin{align*}
|\mathcal T_1[\alpha_1, q_1] - \mathcal T_1[\alpha_2, q_2]| & \leq 
4C_k^2 \widehat C_k \vep^{k-1}\triplenorm{(\alpha_1-\alpha_1, q_1-q_2)}, \\
|\mathcal T_2[\alpha_1, q_1] - \mathcal T_2[\alpha_2, q_2] | & \leq 
4C_k^2\widehat C_k \vep^{k}\triplenorm{(\alpha_1-\alpha_1, q_1-q_2)} + C_k\vep\|q_1-q_2\|_{L^{\infty}(\Omega)}.
\end{align*}
Thus, with the above choice of $\overline\vep$, we obtain that
\begin{align*}
\triplenorm{\mathcal T[\alpha_1, q_1] -\mathcal T[\alpha_2, q_2]}\leq\frac{5}{8}
\triplenorm{(\alpha_1 -\alpha_2, q_1-q_2)}, \quad \forall \;\vep\in(0, \overline\vep] \text{ and }D\geq D_0.
\end{align*}

Consequently, by the contraction mapping principle, the mapping $\mathcal T$ admits a unique fixed point $(\alpha^{\vep}, q^{\vep})$ in the ball $\mathcal B$ for all $\vep\in(0, \overline\vep]$ and $D \geq D_0$. Since $q^{\vep}$ actually belongs to class $C^{2+\alpha}(\overline\Omega)$ and satisfies the zero-Newmann boundary condition, $p= \alpha^{\vep} \vep^{k+\frac{1}{2}}\Phi_0^{\vep, k} + \vep^k q^{\vep}$ is a classical solution to \eqref{eqn:mathscr L^vep_k p}. Therefore, 
$$u^{\vep, D}:= u_k^{\vep, D} + \alpha^{\vep} \vep^{k+\frac{1}{2}}\Phi_0^{\vep, k} + \vep^k q^{\vep}$$
yields a family of classical solutions to the scalar nonlocal problem \eqref{eqn:MCRD_nonlocal_scalar} for all $\vep\in(0, \overline\vep]$ and $D\geq D_0$. This finishes the proof of existence of $u^{\vep, D}$ in  Theorem \ref{thm:main-scalar}. 

Now, Part (i) of Theorem \ref{thm:main-scalar} follows directly from Theorem \ref{thm:approx_sol_scalarEQ} (i) and the above construction. 
\end{proof}

Note that in order to prove Theorem~\ref{thm:main-scalar}, it suffices to apply Theorem~\ref{thm:approx_sol_scalarEQ} with $k=2$. Nevertheless, one advantage of establishing Theorem~\ref{thm:approx_sol_scalarEQ} for general integers $k\geq2$ is that, in combination with the estimate$\|\Phi_0^{\vep, k}\|_{L^\infty(\Omega)} = O(\vep^{-\frac{1}{2}})$ from Theorem~\ref{thm:mathscr_L_k^vep} (iii), the proof of Theorem~\ref{thm:main-scalar} yields the following higher-order approximation estimates for the solutions.

\begin{corollary}\label{cor:vep^k-accuracy}
Assume that conditions {\upshape (A1)-(A3)} hold. Then, for each given $M\in I_{v^*}$, $D_0> 0$ and integer $k\geq 2$, there exists a constant $\overline\vep = \overline\vep(M, D_0, k)>0$ such that the solution $(u^{\vep, D}, v^{\vep, D})$ to system \eqref{eqn:MCRD} obtained in Theorem \ref{thm:main} is approximated by the pair of functions $(u_k^{\vep, D}, v_k^{\vep, D})$ given in Theorem \ref{thm:approx_sol_scalarEQ} with accuracy
\begin{align*}
\big\|
(u^{\vep, D}, v^{\vep, D})-(u_k^{\vep, D} v_k^{\vep, D})\big\|_{L^{\infty}(\Omega)}\leq C_k\vep^{k}
\end{align*}
for all $\vep\in(0, \vep_0]$ and $D\geq D_0$, where $C_k = C_k(M, D_0, \overline\vep)>0$ is a constant depending only on the variables indicated. 
\end{corollary}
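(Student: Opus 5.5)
The estimate is read off directly from the fixed-point construction in the proof of Theorem~\ref{thm:main-scalar}, run with the given order $k\geq 2$ instead of a fixed small one. That proof produces
\[
u^{\vep,D}=u_k^{\vep,D}+\alpha^\vep\vep^{k+\frac12}\Phi_0^{\vep,k}+\vep^k q^\vep ,
\]
with $(\alpha^\vep,q^\vep)$ in the ball $\mathcal B=\{\triplenorm{(\alpha,q)}\le 4C_k\}$. This holds for all $\vep\in(0,\overline\vep]$ with $\overline\vep=\overline\vep(M,D_0,k)$ and all $D\ge D_0$. The only ingredient needed beyond that construction is the uniform sup-norm bound $\|\Phi_0^{\vep,k}\|_{L^\infty}\le C_k\vep^{-1/2}$ from Theorem~\ref{thm:mathscr_L_k^vep}(iii).

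\textbf{Step 1 (the $u$-component).} We have
\[
\|u^{\vep,D}-u_k^{\vep,D}\|_{L^\infty}\le |\alpha^\vep|\,\vep^{k+\frac12}\|\Phi_0^{\vep,k}\|_{L^\infty}+\vep^k\|q^\vep\|_{L^\infty}.
\]
Since $|\alpha^\vep|,\|q^\vep\|_{L^\infty}\le 4C_k$ and $\|\Phi_0^{\vep,k}\|_{L^\infty}\le C_k\vep^{-1/2}$, the first term is at most $4C_k^2\vep^k$ and the second at most $4C_k\vep^k$. So the difference is bounded by $C\vep^k$, uniformly in $D\ge D_0$.

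\textbf{Step 2 (the $v$-component).} By Theorem~\ref{thm:main}(iii) and Theorem~\ref{thm:approx_sol_scalarEQ}(iii), both $v^{\vep,D}$ and $v_k^{\vep,D}$ are given by the same linear formula $v=\mathcal S^\vep[u]-\frac{\vep}{D}u$. Writing $p=u^{\vep,D}-u_k^{\vep,D}$, this gives
\[
v^{\vep,D}-v_k^{\vep,D}=-\tfrac1{|\Omega|}\bigl(1-\tfrac{\vep}{D}\bigr)\int_\Omega p\,dx-\tfrac{\vep}{D}p .
\]
Hence $\|v^{\vep,D}-v_k^{\vep,D}\|_{L^\infty}\le (2+\vep_0/D_0)\|p\|_{L^\infty}\le C\vep^k$, again uniformly in $D\ge D_0$. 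For the mean-value term one could even use $\|\Phi_0^{\vep,k}\|_{L^1}\le C_k\sqrt\vep$ to get $O(\vep^{k+1})$, but the cruder bound is enough.

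\textbf{Main point to check.} There is no real obstacle here. The one thing to verify is that the radius $4C_k$ of $\mathcal B$ and the bound on $\|\Phi_0^{\vep,k}\|_{L^\infty}$ do not depend on $\vep$ or $D$. This holds because every constant in Lemmas~\ref{lem:mathscr L_k^vep inver_bdd} and \ref{lem:mathcal F^vep} and in Theorem~\ref{thm:mathscr_L_k^vep} depends only on $(M,D_0,k)$. The statement's range $\vep\in(0,\vep_0]$ should be read as $\vep\in(0,\overline\vep]$.
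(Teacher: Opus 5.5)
Your proposal is correct and follows the paper's own argument. The paper also reads the estimate directly off the fixed point $u^{\vep,D}=u_k^{\vep,D}+\alpha^\vep\vep^{k+\frac12}\Phi_0^{\vep,k}+\vep^k q^\vep$, with $(\alpha^\vep,q^\vep)$ in the ball of radius $4C_k$, combined with $\|\Phi_0^{\vep,k}\|_{L^\infty(\Omega)}=O(\vep^{-1/2})$. You are right to run the construction with the same order $k$, since independence of $k$ is left open in the paper's closing remark, and right that $\vep_0$ should read $\overline\vep$.

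Your explicit treatment of the $v$-component through $v=\mathcal S^\vep[u]-\frac{\vep}{D}u$ fills in a step the paper leaves implicit. Only your constant there is slightly off: the correct bound is $|1-\frac{\vep}{D}|+\frac{\vep}{D}\le 1+\frac{2\vep_0}{D_0}$, not $2+\frac{\vep_0}{D_0}$. This does not affect the $O(\vep^k)$ conclusion.
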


On the other hand, in \eqref{eqn:U^{pm,0} = h^mp(A_0)}, if we choose $U^{\pm,0}(x)\equiv h^\pm(A_0)$ in $\Omega^\pm$
instead, then the resulting approximate solution leads to a different but structurally symmetric family of functions to the one obtained in Theorem \ref{thm:approx_sol_scalarEQ}. This yields the following alternative existence result of Theorem~\ref{thm:main}, where we still denote the family of subsequent radial solutions by $(u^{\vep, D}, v^{\vep, D})$.
\begin{theorem}\label{thm:main2}
Assume that conditions {\upshape (A1)-(A3)} hold. Then, for each given
\begin{align*}
M\in  I_{v^*} \text{ and } D_0> 0,
\end{align*}
there exists a constant $\overline\vep = \overline\vep(M, D_0)>0$ such that system \eqref{eqn:MCRD-ss} has a family of spherically symmetric classical solutions $(u^{\vep, D}(r), v^{\vep, D}(r))$ defined for $\vep\in(0, \overline\vep]$ and $D\geq D_0$ and satisfying
\begin{align*}
\frac{1}{|\Omega|}\int_{\Omega} [u^{\vep,D}(x) +  v^{\vep,D}(x)] dx = M.
\end{align*}  
Moreover, these solutions satisfy the following properties:
\begin{enumerate}
\item  There exists a constant $C = C(M, D_0, \overline\vep)>0$ such that
\begin{align*}
\| u^{\vep,D}\|_{L^{\infty}(\Omega)}+\| v^{\vep,D}\|_{L^{\infty}(\Omega)}\leq C, \quad \forall \;\vep\in(0,\overline\vep] \text{ and }D\geq D_0.
\end{align*}
\item For each $\eta>0$, there exist two constants $K=K(\eta)
>0$ and $\overline\vep'=\overline\vep'(M, D_0,\eta)\leq \overline\vep$ such that for any $\vep\in (0, \overline\vep']$, $D\geq D_0$, there hold
\begin{align*}
|u^{\vep, D}(r)-h^-(v^*)|<\eta \; \text{ on } [0, \widehat R_*-\vep K] \text{ and }
|u^{\vep, D}(r)-h^+(v^*)|<\eta \; \text{on }  [\widehat R_*+\vep K, 1],
\end{align*}
where 
\begin{align}\label{eqn:hat R_*}
\widehat R_* = \widehat R_*(M):= \Big[\frac{v^*+h^+(v^*)-M}{h^+(v^*)-h^-(v^*)}\Big]^{1/N}\in(0,1).
\end{align}
\item $v^{\vep,D}(x)=\mathcal S^{\vep}[u^{\vep,D}]-\frac{\vep}{D} u^{\vep,D}(x)$ and
\begin{align*}
\lim\limits_{\vep\to0} \|v^{\vep, D} - v^*\|_{L^{\infty}(\Omega)} =0 \text{ uniformly with }D\geq D_0.
\end{align*}
\end{enumerate}
\end{theorem}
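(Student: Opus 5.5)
The plan is to repeat the whole construction of Sections 2--4 with the roles of $h^+$ and $h^-$ swapped; the argument is symmetric. In \eqref{eqn:U^{pm,0} = h^mp(A_0)} we now take $U^{\pm,0}\equiv h^\pm(A_0)$ in $\Omega^\pm$, so the region inside the interface carries the low state. Concretely, the outer equations \eqref{eqn:outer-u-equs} are solved as in \eqref{eqn:outer-u-solutions} with $h^\mp$ replaced by $h^\pm$. Since (A1) gives $f_u(h^\pm(v),v)<0$ at both stable roots, each $U^{\pm,j}$ is still a uniquely determined constant.

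For the inner problem, the profile must connect $h^-$ at $-\infty$ to $h^+$ at $+\infty$. We therefore set $w^0(z):=Q(-z+a_0)$, which solves $w_{zz}+f(w,v^*)=0$ exactly because $c(v^*)=0$ by (A3) and Proposition~\ref{pro:Q(z;v)}. Its derivative $w^0_z=-Q_z(-z+a_0)>0$ spans the kernel of $L^0$. Lemma~\ref{lem:L^0-solvabilty} carries over verbatim, since it uses only $L^0w^0_z=0$, exponential decay and $w^0_z\neq0$; the only change is that \eqref{eqn:w_+-infty} now reads $\varphi_{\pm\infty}=g_{\pm\infty}/f_u(h^\pm(v^*),v^*)$.

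The solvability conditions of Proposition~\ref{pro:sov of uj_condi on A_j_aj} keep the same structure. The identity \eqref{eqn:J'(v*)} changes sign, $\int f_v(w^0,v^*)w^0_z\,dz=J'(v^*)$, so $A_j$ is still obtained by dividing by $\pm J'(v^*)\neq0$. The cancellation of the $a_j^2$ and $a_j$ coefficients uses only integration by parts and $L^0w^0_z=0$, so it persists.

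In the mass matching of Proposition~\ref{pro:ajbjR_*determine}, the leading condition becomes
\[
B_0=v^*+h^-(v^*)\widehat R_*^N+h^+(v^*)(1-\widehat R_*^N)=M,
\]
whose solution is exactly \eqref{eqn:hat R_*}. This lies in $(0,1)$ precisely when $M\in I_{v^*}$. For $j\ge0$ the coefficient of $a_j$ in $K_j^0$ is now $+(h^+(v^*)-h^-(v^*))\neq0$, so each $a_j$ is again uniquely solvable and the uniform bounds in $D\ge D_0$ follow as before. Corollaries~\ref{cor:outer-estimate}--\ref{cor:width_interface_u^vep,D_} and Theorem~\ref{thm:approx_sol_scalarEQ} then hold with $R_*$ replaced by $\widehat R_*$ and $h^\pm$ interchanged in part (i).

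For the spectral part, Section 3 goes through after relabeling. The test function and the limit profile of $\phi_0^{\vep,k}$ become $+w^0_z$ (now positive), and \eqref{eqn:phi_0^vep} holds with the sign flipped. The only point that needs checking, and the step I expect to require care, is Claim~\ref{clm:lambda/vep->Lambda*}: the critical eigenvalue must still satisfy $\lambda_0^{\vep,k}/\vep\to\widehat\Lambda^*\neq0$. I would recompute Corollary~\ref{cor:phi_0^vep_integral}. Its two integrals become $\sqrt{\cdot}\,(h^+-h^-)$ and $-\sqrt{\cdot}\,J'(v^*)$, or both change sign together, depending on the orientation convention. In either case $\widehat\Lambda^*$ is a nonzero multiple of $J'(v^*)$, with $G$ replaced by its analogue with $\widehat R_*$; the analogue is still positive by (A1)--(A2). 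Nonvanishing is all that is used later; the sign of $\widehat\Lambda^*$ is irrelevant for existence.

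With $\widehat\Lambda^*\neq0$, Theorem~\ref{thm:mathscr_L_k^vep}, Lemma~\ref{lem:mathscr L_k^vep inver_bdd} and the contraction argument of Section 4 apply unchanged, giving $u^{\vep,D}=u_k^{\vep,D}+p$. Properties (i)--(iii) of the theorem then follow from the reflected versions of Theorem~\ref{thm:approx_sol_scalarEQ} together with $\|p\|_{L^\infty}\le C\vep^k$. In particular, (iii) follows from \eqref{eqn:mathcal S^vep=sum_A_j...} with $A_0=v^*$.
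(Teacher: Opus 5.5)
Your proposal is correct and is the argument the paper intends: the paper obtains this theorem only by remarking that rerunning Sections 2--4 with $U^{\pm,0}\equiv h^\pm(A_0)$ in $\Omega^\pm$ gives a structurally symmetric family, and you carry out the relevant checks in more detail than the paper does (the reversed profile $Q(-z+a_0)$, the formula for $\widehat R_*$, nonvanishing of the $a_j$-coefficients, positivity of the swapped analogue of $G$, and $\widehat\Lambda^*\neq0$). There are two harmless sign slips. First, with $w^0(z)=Q(-z+a_0)$ the $a_0$-coefficient of $(N\widehat R_*^{N-1})^{-1}K_0^0$ is $-(h^+(v^*)-h^-(v^*))$; the $+$ sign holds only for $j\ge1$. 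Second, in the analogue of Corollary~\ref{cor:phi_0^vep_integral} neither limit changes sign, because $\phi_0^{\vep,k}\sim+w^0_z$ compensates the sign change in \eqref{eqn:J'(v*)}; so $\widehat\Lambda^*$ is given by \eqref{eqn:Lambda^*} with $R_*,G$ replaced by $\widehat R_*$ and the swapped $G$, and it has the same sign as $\Lambda^*$, though as you note only $\widehat\Lambda^*\neq0$ is used.
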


We conclude this section with the following remark.
\begin{remark}\label{rem:dependence on M}
\begin{enumerate}
\item For any fixed integer $k\geq2$, it follows directly from the proof of Theorem~\ref{thm:main-scalar} that the family of solutions $(u^{\vep, D}, v^{\vep, D})$ obtained in Theorem \ref{thm:main} depends smooth on the parameters $(M, D, \vep) \in I_{v^*}\times [D_0, \infty)\times (0,\overline\vep]$. 
\item An interesting question is to determine whether the family of solutions $(u^{\vep, D}, v^{\vep, D})$ obtained in Theorem \ref{thm:main} is independent of the choice of $k$. We can even ask more: For $\vep > 0$ small, is there {\it local uniqueness} of solutions to \eqref{eqn:MCRD-ss-T} near the limiting transition layer $(u^*, v^*)$ in suitable topology? Here, $v^*$ is defined as in condition {\upshape (A3)} and 
\begin{align*}
u^*(r) :=\begin{cases}
		h^+(v^*) &\text{ in } [0, R_*), \\
		h^-(v^*) &\text{ in } (R_*, 1].
		\end{cases}
\end{align*}
Recently, the local uniqueness of spike/bubble solutions has been widely investigated for singularly perturbed nonlinear elliptic problems; see \cite{CPY} and references therein. 
\end{enumerate}
\end{remark}

\section*{Acknowledgments}  The research of He is supported in part by NSFC grant No.~12071141. The research of Liu and He is supported in part by the Shanghai Science and Technology Innovation Action Plan in Basic Research Area (23JC1402600). He and Ye are partially supported by Science and Technology Commission of Shanghai Municipality (No. 22DZ2229014).

\end{document}